\theoremstyle{definition}
\newtheorem{thm}{Theorem}[section]
\newtheorem{definition}[thm]{Definition}
\newtheorem{prop}[thm]{Proposition}
\newtheorem{lem}[thm]{Lemma}
\newtheorem{rem}[thm]{Remark}
\newtheorem{cor}[thm]{Corollary}
\newtheorem{ex}[thm]{Example}
\newtheorem{ques}[thm]{Question}
\newtheorem*{ack}{Acknowledgement}
\newtheorem{axio}[thm]{Axiom}
\numberwithin{equation}{section}
\newcommand{\Z}{\mathbb{Z}}
\newcommand{\Q}{\mathbb{Q}}
\newcommand{\Spec}{\operatorname{Spec}}
\newcommand{\Hom}{{\rm Hom}}
\newcommand{\Coh}{\mathsf{Coh}}
\newcommand{\Rep}{\mathsf{Rep}}
\newcommand{\Vecf}{\mathsf{Vec}}
\newcommand{\Vect}{\mathsf{Vect}}
\newcommand{\Qcoh}{\mathsf{QCoh}}
\newcommand{\Qcohfp}{\mathsf{QCoh}_{\rm fp}}
\newcommand{\Aut}{{\rm Aut}}
\newcommand{\Dim}{\operatorname{dim}}
\newcommand{\End}{{\rm End}}
\newcommand{\Ker}{\operatorname{Ker}}
\newcommand{\im}{\operatorname{Im}}
\newcommand{\Gal}{{\rm Gal}}
\newcommand{\calO}{\mathscr{O}}
\newcommand{\scrO}{\mathscr{O}}
\newcommand{\calC}{\mathcal{C}}
\newcommand{\calD}{\mathcal{D}}
\newcommand{\calX}{\mathcal{X}}
\newcommand{\calY}{\mathcal{Y}}
\newcommand{\calP}{\mathcal{P}}
\newcommand{\calE}{\mathcal{E}}
\newcommand{\calF}{\mathcal{F}}
\newcommand{\calG}{\mathcal{G}}
\newcommand{\calT}{\mathcal{T}}
\newcommand{\Def}{\overset{{\rm def}}{=}}
\newcommand{\ab}{{\rm ab}}
\newcommand{\et}{\text{{\rm \'et}}}
\newcommand{\lr}{{\rm lin.red}}
\newcommand{\loc}{{\rm loc}}
\newcommand{\fppf}{{\rm fppf}}
\newcommand{\liset}{\text{lis-\'et}}
\newcommand{\Liset}{\text{Lis-\'et}}
\newcommand{\Diag}{\operatorname{Diag}}
\newcommand{\X}{\mathbb{X}}
\newcommand{\G}{\mathbb{G}}
\newcommand{\F}{\mathbb{F}}
\newcommand{\A}{\mathbb{A}}
\newcommand{\C}{\mathbb{C}}
\newcommand{\unit}{\mathbf{1}}
\newcommand{\Pic}{\mathrm{Pic}}
\newcommand{\Aff}{\mathrm{Aff}}
\newcommand{\TORS}{\mathsf{TORS}}
\newcommand{\fD}{\mathfrak{D}}
\newcommand{\fX}{\mathfrak{X}}
\newcommand{\cB}{\mathcal{B}}
\newcommand{\bfD}{\mathbf{D}}
\newcommand{\bfr}{\mathbf{r}}
\newcommand{\calI}{\mathcal{I}}
\newcommand{\bfs}{\mathbf{s}}
\newcommand{\Fdiv}{\mathrm{Fdiv}}
\newcommand{\lto}{\longrightarrow}
\newcommand{\N}{\mathrm{N}}
\newcommand{\EFin}{\mathrm{EFin}}
\newcommand{\uAut}{\underline{\mathrm{Aut}}}
\title{\bf An embedding problem for finite local torsors\\ 
over twisted curves
}
\author{Shusuke Otabe}
\date{\vspace{-10mm}}
\begin{document}

\maketitle

\begin{abstract}
In his previous paper, the author proposed as a problem a purely inseparable analogue of the Abhyankar conjecture for affine curves in positive characteristic and gave a partial answer to it, which includes a complete answer for finite local nilpotent group schemes.  
In the present paper, motivated by the Abhyankar conjectures with restricted ramifications due to Harbater and Pop, we study a refined version of the analogous problem, based on a recent work on tamely ramified torsors due to Biswas--Borne, which is formulated in terms of root stacks. We study an embedding problem to conclude that the refined analogue is true in the solvable case. 
\end{abstract}

\thispagestyle{fancy}

\renewcommand{\headrulewidth}{0pt}
\renewcommand{\footrulewidth}{0.1mm}
\lfoot{{\footnotesize \textit{Date}: February 9, 2020. 5th version.\\
\textit{2010 Mathematics Subject Classification}: 14H30, 14D23, 14L15.\\
\textit{Keywords}: fundamental group schemes, embedding problems, root stacks, tamely ramified torsors.\\
~ 
}}
\cfoot{}

\section{Introduction}

\subsection{The Abhyankar conjectures with restricted ramifications}

In \cite{gr71}, Grothendieck defined the \textit{\'etale fundamental group} $\pi^{\et}_1(X)$ for a scheme $X$ as a \textit{profinite} group which controls the Galois theory for finite \'etale coverings of $X$. If $X/\C$ is a complex algebraic variety, then one can calculate the \'etale fundamental group $\pi^{\et}_1(X)$ of $X$ as the \textit{profinite completion} of the topological fundamental group $\pi_1^{\rm top}(X(\C))$ of the associated analytic space $X(\C)$,
\begin{equation*}
\pi^{\et}_1(X)\simeq\pi^{\rm top}_1(X(\C))^{\widehat{}}.
\end{equation*}    
In fact, by the Lefschetz principle, this description can be adopted to any algebraically closed field of characteristic 0. For example, we can find that the \'etale fundamental group $\pi^{\et}_1(\A^1_k)$ of the affine line $\A^1_k$ over an algebraically closed field $k$ of characteristic 0 is trivial, i.e.\ $\pi^{\et}_1(\A^1_{k})=0$ because the associated Riemann surface $\A^1(\C)$ is simply connected.

However, if the base field $k$ is of positive characteristic $p>0$, then the situation is more complicated. For example, it is known that the \'etale fundamental group $\pi_1^{\et}(\A_k^1)$ of the affine line $\A^1_{k}$ over an algebraically closed field of characteristic $p>0$ is highly nontrivial. Indeed, the \textit{Artin--Schreier coverings} of $\A_k^1$ contribute to make the fundamental group very big,
\begin{equation*}
\Dim_{\F_p}\Hom(\pi^{\et}_1(\A^1_k),\F_p)=\infty.
\end{equation*}
In particular, $\pi^{\et}_1(\A_k^1)$ is far from topologically finitely generated.

The \textit{Abhyankar conjecture} for affine curves partially describes the \'etale fundamental group $\pi^{\et}_1(U)$ of an affine smooth curve $U$ in positive characteristic $p>0$ from the viewpoint of the \textit{inverse Galois problem}. In \cite{ab57}, Abhyankar asked which finite groups occur as a quotient of $\pi^{\et}_1(U)$ for an affine smooth curve $U$ defined over an algebraically closed field $k$ of characteristic $p>0$ and proposed a conjectural answer to the question. The conjecture was solved affirmatively by Raynaud and Harbater~\cite{ra94}\cite{ha94}. The Abhyankar conjecture says that contrary to the \'etale fundamental group $\pi^{\et}_1(U)$ itself, the inverse Galois problem over $U$ has a concise answer and the finite quotients of $\pi^{\et}_1(U)$ can be completely determined by the topological one $\pi^{\rm top}_1(U(\C))$ of a Riemann surface $U(\C)$ of the same type $(g,n)$, where $g=\Dim_k H^1(X,\scrO_X)$ denotes the genus of the smooth compactification $X$ of $U$ and $n$ denotes the cardinality of the complement $X\setminus U$ of $U$. 

\begin{thm}(The Abhyankar conjecture, cf.\cite{ab57}\cite{ra94}\cite{ha94})\label{thm:AC}
With the above notation, let $G$ be a finite group. Then $G$ occurs as a quotient of $\pi^{\et}_1(U)$ if and only if the quotient $G^{(p')}=G/p(G)$ can be generated by at most $2g+n-1$ elements, where $p(G)$ is the normal subgroup of $G$ generated by all the $p$-Sylow subgroups.
\end{thm}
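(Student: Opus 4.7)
The proof naturally divides into the (easier) necessity direction and the (deep) sufficiency direction.

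For necessity, observe first that $G/p(G)$ is always prime to $p$, since any $p$-Sylow of the quotient would lift to a $p$-subgroup contained in $p(G)$ and hence be trivial. So it suffices to bound the number of generators of a prime-to-$p$ finite quotient of $\pi_1^{\et}(U)$. Any such quotient factors through the tame fundamental group $\pi_1^{\tame}(U)$, and so through its maximal prime-to-$p$ quotient. By Grothendieck's specialisation theorem for tame fundamental groups --- lift the pair $(X,X\setminus U)$ to a smooth pointed family over a complete DVR with characteristic-$0$ generic fibre, and compare with the transcendental fundamental group --- the prime-to-$p$ quotient of $\pi_1^{\tame}(U)$ is isomorphic to the prime-to-$p$ completion of $\pi_1^{\rm top}(U(\C))$, which is free on $2g+n-1$ generators for $n\geq 1$. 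The desired bound follows.

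For sufficiency, assume $G^{(p')}$ is generated by at most $2g+n-1$ elements; the plan is to build a $G$-cover of $U$ in two stages. First, realise $G^{(p')}$ as the Galois group of a tame cover $V\to U$, which is possible by running the necessity argument backwards through the tame theory. Second, enlarge $V\to U$ to a $G$-cover by formally patching in, around carefully chosen closed points of $U$, local Galois covers of small discs, each realising a quasi-$p$ subgroup of $p(G)$, chosen so as to normally generate $p(G)$ collectively. This is Harbater's \emph{embedding lemma}, and it reduces the theorem to a single remaining statement: every finite quasi-$p$ group $Q$ occurs as the Galois group of an étale cover of $\A^1_k$ (ramified only at infinity). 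This is Raynaud's theorem.

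The main obstacle is Raynaud's theorem, and it is the true technical heart of the whole result. I would attack it by induction on $|Q|$: choose a proper subgroup $H\leq Q$ (for instance the normaliser of a $p$-Sylow, or a suitable maximal subgroup), apply the inductive hypothesis to realise an auxiliary $H$-cover on the generic fibre of a semistable model of $\PL^1$ over a complete DVR of mixed characteristic $(0,p)$, then assemble and contract unwanted components of the special fibre so as to produce a $Q$-cover of $\A^1_k$. The hard part is the simultaneous control of (a) the inertia along each irreducible component of the special fibre, (b) the requirement that after contraction only the point at infinity be ramified, and (c) the formal patching that glues wild local Galois covers into a global one. Once Raynaud's theorem is in hand, the passage from $\A^1_k$ to an arbitrary affine curve $U$ inside the embedding lemma is a second application of formal patching, carried out along a degeneration of $(X,X\setminus U)$ to a configuration containing an affine-line component.
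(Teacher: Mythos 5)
The paper does not prove Theorem~\ref{thm:AC} at all; it is quoted from the literature (Abhyankar, Raynaud, Harbater) as background for the questions the paper actually studies, so there is no in-paper argument to compare against.

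Taken on its own terms, your sketch correctly identifies the standard architecture and its division of labor. The necessity direction is right: $G/p(G)$ is a $p'$-group, any prime-to-$p$ finite quotient of $\pi_1^{\et}(U)$ factors through the maximal prime-to-$p$ quotient of the tame fundamental group, and Grothendieck's specialization theorem identifies this with $\widehat{F}_{2g+n-1}^{(p')}$ --- exactly the isomorphism $\pi^{\et}_1(U)^{(p')}\simeq\widehat{F}_{2g+n-1}^{(p')}$ the paper records in (1.1) --- from which the generation bound is immediate. The sufficiency direction is also correctly structured: Harbater's formal patching reduces the general affine curve to the affine line (this is precisely what the paper attributes to~\cite{ha94}, and, in the embedding-problem reformulation, to~\cite{pop}), and the affine-line case is Raynaud's theorem. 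Where your proposal stops short of a proof, as you yourself flag, is Raynaud's theorem: ``choose a proper subgroup such as the normalizer of a $p$-Sylow and induct, using semistable models and contraction'' is the barest headline of an argument that in~\cite{ra94} occupies a substantial paper, splits into several cases according to the structure of the quasi-$p$ group (normal $p$-Sylow or not, $p$-rank considerations, reduction to the case $Q=p(Q)$ with no nontrivial quasi-$p$ quotient of smaller order, etc.), and requires rigid-analytic/formal patching along annuli together with a delicate control of the stable reduction. None of that is supplied or summarizable in two sentences. So the honest status of the proposal is: a faithful and correctly organized roadmap of the cited proofs, appropriate for a statement the paper itself only cites, but not a self-contained proof of the hard half.
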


Note that the `only if' part is proved by Abhyankar himself~\cite{ab57}. In fact, it can be also deduced from Grothendieck's result, which gives a description of the maximal pro-prime-to-$p$ quotient of the \'etale fundamental group, i.e.
\begin{equation}\label{eq:p' etale pi1}
\pi^{\et}_1(U)^{(p')}\simeq \widehat{F}_{2g+n-1}^{(p')},
\end{equation}
where $\widehat{F}_{2g+n-1}$ denotes the free profinite group of rank $2g+n-1$. 
For example, in the case where $U=\A^1_k$ the affine line, the conjecture claims that the set of finite quotients of $\pi^{\et}_1(\A^1_k)$ coincides with the set of \textit{quasi-$p$-groups}, where a finite group $G$ is said to be \textit{quasi-$p$} if $G=p(G)$. The conjecture had been widely open even in the case where $U=\A^1_k$ until Serre gave a partial answer to the problem. 

\begin{thm}(Serre, cf.~\cite{se90})\label{thm:Serre}
Suppose given a short exact sequence of finite groups,
\begin{equation*}
1\lto G'\lto G\lto G''\lto 1
\end{equation*}
which satisfies the following conditions.
\begin{enumerate}
\renewcommand{\labelenumi}{(\roman{enumi})}
\item $G$ is a quasi-$p$-group. 
\item $G''$ appears as a quotient of $\pi^{\et}_1(\A^1_k)$. 
\item $G'$ is solvable. 
\end{enumerate}
Then $G$ also appears as a quotient of $\pi^{\et}_1(\A^1_k)$. In particular, any solvable quasi-$p$-group appears as a quotient of $\pi^{\et}_1(\A_k^1)$.  
\end{thm}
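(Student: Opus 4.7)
My plan is to translate the theorem into an embedding problem for $\pi_1^{\et}(\A^1_k)$ and solve it by d\'evissage together with an explicit production of torsors on a suitable \'etale cover of $\A^1_k$. Fix the given surjection $\phi\colon\pi_1^{\et}(\A^1_k)\twoheadrightarrow G''$; it corresponds to a connected $G''$-Galois \'etale cover $f\colon Y\to\A^1_k$, where $Y$ is a smooth affine curve over $k$ of some genus $g_Y$ with $n_Y$ points missing from its smooth compactification. A surjective lift $\tilde\phi\colon\pi_1^{\et}(\A^1_k)\twoheadrightarrow G$ of $\phi$ along $G\twoheadrightarrow G''$ is equivalent to exhibiting a connected $G'$-torsor $Z\to Y$ carrying a compatible $G''$-action, so that $Z\to\A^1_k$ is a $G$-Galois cover realizing the given extension.

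Since $G'$ is solvable, I would inductively reduce the kernel. Picking a minimal normal subgroup $N$ of $G$ contained in $G'$, one has $N\simeq(\Z/\ell\Z)^r$ for some prime $\ell$. Because the quasi-$p$ property descends to quotients, $G/N$ is again quasi-$p$; by induction on $|G'|$ we may assume $G/N$ is already realized as a quotient of $\pi_1^{\et}(\A^1_k)$. The task reduces to the embedding problem with elementary abelian kernel $N$, and after renaming we may assume $G'=N$.

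The argument then splits according to the prime $\ell$. In the \emph{wild} case $\ell=p$, Artin--Schreier(--Witt) theory on the affine smooth curve $Y$ produces an infinite-dimensional supply of $N$-torsors, and since $\A^1_k$ has \'etale cohomological dimension one the cohomological obstruction to lifting in $H^2(\A^1_k,N)$ vanishes; among the many weak lifts one then selects a connected one, using that $H^1(Y,\F_p)$ is large enough as a $G''$-module to meet the surjectivity requirement. In the \emph{tame} case $\ell\neq p$, the relevant $N$-torsors of $Y$ are tame at the points above infinity and are controlled by the prime-to-$p$ fundamental group (\ref{eq:p' etale pi1}) applied to $Y$; Riemann--Hurwitz applied to $f$ (which is \'etale over $\A^1_k$, hence ramified only above infinity) expresses $2g_Y+n_Y-1$ in terms of $|G''|$ and the ramification of $f$, and one must produce a $G''$-equivariant surjection from $\pi_1^{\et}(Y)^{(p')}$ onto $N$ realizing the given extension class.

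The main obstacle is the tame subcase $\ell\neq p$, because $\pi_1^{\et}(\A^1_k)^{(p')}$ is itself trivial (by (\ref{eq:p' etale pi1}) with $g=0$, $n=1$), so an $\ell$-part of a cover of $\A^1_k$ can only arise after passing through some $p$-power cover. This is precisely where the hypothesis that $G$ --- and not merely $G''$ --- is quasi-$p$ becomes essential: I would exploit it by enlarging $Y$ along an auxiliary quasi-$p$ cover of $\A^1_k$ furnished by Theorem \ref{thm:AC} (applied to a quasi-$p$ group built from $G''$ and a $p$-Sylow of $G$), constructing the required $N$-torsor on this enlarged cover with a compatible Galois action, and then patching the two pieces to obtain a connected $G$-cover of $\A^1_k$. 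Verifying that this combined construction simultaneously preserves $G''$-equivariance, solves the extension class in $H^2(G'',N)$, and yields a \emph{connected} cover is the technical heart of the proof; it is also where the geometric specifics of the affine line, rather than those of a general smooth affine curve, are used.
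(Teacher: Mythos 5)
The paper does not prove Theorem~\ref{thm:Serre}; it is quoted from Serre's 1990 note~\cite{se90} as background, so there is no in-paper argument to compare against. Evaluating your sketch on its own terms: the d\'evissage to a minimal normal subgroup $N\simeq(\Z/\ell\Z)^r$, the observation that quotients of quasi-$p$ groups are quasi-$p$ (so the inductive hypothesis applies to $G/N$), and the separation into the cases $\ell=p$ and $\ell\neq p$ are all standard and sound, and your treatment of the wild case via Artin--Schreier theory and the vanishing of the $H^2$-obstruction is essentially correct in outline.

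The tame case, however, contains a genuine gap in the form of a circularity. You propose to build the required auxiliary cover by applying Theorem~\ref{thm:AC} (the Raynaud--Harbater solution of the Abhyankar conjecture) to a quasi-$p$ group involving $G''$ and a $p$-Sylow of $G$. But, as the introduction itself notes, Serre's Theorem~\ref{thm:Serre} is an ingredient in Raynaud's proof of Theorem~\ref{thm:AC} for $\A^1_k$; invoking the full Abhyankar conjecture inside a proof of Serre's theorem is therefore not admissible unless you restrict to $p$-groups, where the statement reduces to elementary Artin--Schreier--Witt theory that is not what your construction appears to need. Relatedly, the ``patching'' of an $N$-torsor on an enlarged cover with the given $G''$-cover is an anachronism: formal/rigid patching is the machinery of Harbater's later work, not of Serre's 1990 argument, which is purely cohomological and group-theoretic. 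Serre's actual handling of the tame kernel exploits the quasi-$p$ hypothesis on $G$ at the group-theoretic level (via the $p$-Sylow subgroups of $G$, which meet $N$ trivially and map isomorphically to $p$-Sylows of $G''$) together with the structure of wild ramification at the single point at infinity of the given $G''$-cover, rather than geometric gluing. As written, the step you flag as the ``technical heart'' is exactly where the argument is missing, and the one tool you reach for to fill it is unavailable.
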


Serre solved an \textit{embedding problem} to prove the theorem. More precisely, let $\overline{\phi}:\pi^{\et}_1(\A^1_k)\twoheadrightarrow G''$ be a chosen surjective homomorphism. He proved that the corresponding embedding problem
\begin{equation*}
\begin{xy}
\xymatrix{
&&&\pi^{\et}_1(\A^1_k)\ar@{->>}[d]^{\overline{\phi}}&\\
1\ar[r]&G'\ar[r]&G\ar[r]&G''\ar[r]&1,
}
\end{xy}
\end{equation*}
always has a solution, i.e.\ there exists a surjective lifting $\phi:\pi^{\et}_1(\A^1_k)\twoheadrightarrow G$ of $\overline{\phi}$ (after modifying $\overline{\phi}$).

Serre's theorem was used in Raynaud's proof. After Serre, in \cite{ra94}, Raynaud proved the conjecture for the affine line $U=\A^1_k$ and, soon after, in \cite{ha94}, Harbater solved the conjecture for general $U$ by applying the method of formal patching and by making use of Raynaud's result. Actually, Harbater proved the conjecture in a stronger form, which we call the \textit{strong Abhyankar conjecture}. 

\begin{thm}(The strong Abhyankar conjecture due to Harbater, cf.~\cite{ha94})\label{thm:SAC}
With the above notation, let $U$ be an affine smooth curve over $k$ of type $(g,n)$ and $G$ a finite group such that $G^{(p')}=G/p(G)$ can be generated by $2g+n-1$ elements. Then there exists a finite \'etale connected Galois covering $V\lto U$ with Galois group $G$ which is tamely ramified along $X\setminus U$ except for one point $x_0\in X\setminus U$.
\end{thm}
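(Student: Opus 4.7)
My plan is to follow Harbater's formal patching strategy, combining Raynaud's resolution of the Abhyankar conjecture over $\A^1_k$ with Grothendieck's description of the tame pro-prime-to-$p$ fundamental group stated in~(\ref{eq:p' etale pi1}). Write $U=X\setminus\{x_0,x_1,\ldots,x_{n-1}\}$, where $x_0$ is the distinguished point that will carry the wild ramification, and set $P=p(G)$, so that $P$ is a normal quasi-$p$-subgroup and $Q:=G/P=G^{(p')}$ is a $p'$-group generated by $2g+n-1$ elements.

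The argument has two ingredients, which I would construct separately and then glue. First, using~(\ref{eq:p' etale pi1}) and the generation hypothesis, I would produce a connected tame Galois $Q$-cover $W\to U$, ramified only at the $n$ punctures, by choosing a surjection from $\pi_1^{\et}(U)^{(p')}$ onto $Q$ and allowing the inertia at $x_0$ to be determined by those at $x_1,\ldots,x_{n-1}$ via the single defining relation among the $2g+n$ topological generators. Second, working on the formal disc $D_0=\Spec\scrO^{\wedge}_{X,x_0}$, I would use Raynaud's theorem for $\A^1_k$ (the quasi-$p$ case of Theorem~\ref{thm:AC}) to construct a Galois $G$-cover $V_0\to D_0$, \'etale over the punctured disc $D_0^\circ=D_0\setminus\{x_0\}$, whose quotient by $P$ agrees with the restriction of $W$ to $D_0$. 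Existence of such a $V_0$ reduces to producing a quasi-$p$-cover of $\A^1_k$ via Raynaud, restricting to the formal neighborhood of $\infty$, and solving the local embedding problem
\begin{equation*}
1\lto P\lto G\lto Q\lto 1
\end{equation*}
over $k((t))$ with prescribed tame quotient.

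Given both pieces, I would invoke Harbater's formal patching theorem: the pullback of $W$ to a punctured formal neighborhood of $x_0$, induced up from $Q$ to $G$, and the local cover $V_0$ are isomorphic as $G$-torsors on the overlap $D_0^\circ$ by construction, so they glue to a formal $G$-torsor over $X$ whose restriction to $U$ is \'etale. Artin's algebraization then produces an honest \'etale Galois $G$-cover $V\to U$, tame over $x_1,\ldots,x_{n-1}$ and with ramification concentrated at $x_0$. Connectedness of $V$ is forced by the fact that its $P$-quotient recovers the connected cover $W$ while the inertia at $x_0$ exhausts $G$.

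The principal obstacle is the local-global matching at $x_0$: the $G$-torsor on $D_0$ must realize a tame $Q$-quotient \emph{prescribed} by the global cover $W$, and this is not a purely local problem. If the inertia in $Q$ at $x_0$ inherited from $W$ cannot be lifted to a $G$-inertia realized by Raynaud's quasi-$p$-cover, one must modify $W$ --- rechoosing the $2g+n-1$ generators, or their images in $Q$, possibly using an embedding argument in the spirit of Theorem~\ref{thm:Serre} on the $Q$-side --- and iterate until compatibility is reached. Once this compatibility is secured, the patching and algebraization steps are routine consequences of Harbater's formalism.
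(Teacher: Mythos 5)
The paper does not supply its own proof of this statement: Theorem~\ref{thm:SAC} is quoted from Harbater~\cite{ha94} as background and motivation, so there is no internal argument to compare against. I will therefore assess your outline on its own terms, as a sketch of Harbater's formal patching strategy.

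The overall shape is right: split $G$ along $1\to p(G)\to G\to G^{(p')}\to 1$, build a global tame $G^{(p')}$-cover of $U$ using~(\ref{eq:p' etale pi1}), build a local wild $G$-cover of a formal disc at $x_0$ using Raynaud, and glue by patching. But there are two genuine gaps. First, the local--global matching at $x_0$ is not a peripheral difficulty to be noted and deferred --- it \emph{is} the theorem. Raynaud's result supplies some quasi-$p$ cover of $\A^1_k$, but you need a $G$-cover of the punctured formal disc whose tame $G^{(p')}$-quotient agrees, as a pointed torsor on the overlap, with the prescribed restriction of $W$; producing such a cover requires a nontrivial combination of embedding-problem solutions over the local field, control over the higher ramification subgroup, and an explicit construction of ``mock covers.'' Writing ``iterate until compatibility is reached'' is a statement of the problem rather than a solution, and it is exactly what Harbater's paper is devoted to establishing; in a blind proof attempt this step cannot be waved through. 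Second, the claim that connectedness ``is forced because the inertia at $x_0$ exhausts $G$'' is incorrect: there is no reason the inertia group at $x_0$ need be all of $G$, and in general it is not. Connectedness in Harbater's argument comes from a separate group-theoretic verification that the subgroup of $G$ generated by the local contributions at $x_0$ together with the image of the global tame piece is all of $G$; that argument must be made, not asserted. Until these two points are filled in, what you have is an accurate roadmap of where the proof must go, not a proof.
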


Independently of Harbater, in \cite{pop}, Pop gave a further refinement of Theorem \ref{thm:AC} in terms of embedding problems. Let us recall his approach. With the notation in Theorem \ref{thm:SAC}, let $G$ be a  finite group with $G^{(p')}$ generated by $2g+n-1$ elements. Then we get an embedding problem
\begin{equation}\label{eq:EP Pop}
\begin{xy}
\xymatrix{
&&&\pi^{\et}_1(U)\ar@{->>}[d]^{\overline{\phi}}&\\
1\ar[r]&p(G)\ar[r]&G\ar[r]&G^{(p')}\ar[r]&1,
}
\end{xy}
\end{equation}
where $\overline{\phi}$ is a fixed surjection, which always exists due to (\ref{eq:p' etale pi1}). 
Let us fix an \'etale Galois $G^{(p')}$-cover
\begin{equation}
V_0\lto U
\end{equation}
which realizes the surjective homomorphism $\overline{\phi}$. Note that $V_0\lto U$ is tamely ramified along $X\setminus U$. Furthermore, let $Y_0$ be the normalization of $X$ in $V_0$. Under this situation, Pop solved the embedding problem (\ref{eq:EP Pop}) in the following form (see \cite{pop} for the full statements).

\begin{thm}(Pop, cf.~\cite[Theorem B and Corollary]{pop})\label{thm:Pop}
With the above notation, if the quasi-$p$-group $p(G)$ appears as a quotient of $\pi^{\et}_1(\A^1_k)$, then there exists a surjective lifting $\phi:\pi^{\et}_1(U)\twoheadrightarrow G$ of $\overline{\phi}$ in (\ref{eq:EP Pop}) such that a corresponding \'etale Galois $G$-cover $V\lto U$ is ramified only at one point in $Y_0$. In particular, Theorem \ref{thm:SAC} is a consequence of Raynaud's solution of the Abhyankar conjecture for the affine line, i.e.\ Theorem \ref{thm:AC} for the affine line $\A^1_k$.    
\end{thm}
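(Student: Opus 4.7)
The plan is to solve the embedding problem~(\ref{eq:EP Pop}) by formal/algebraic patching in the spirit of Harbater and Pop. The input from the hypothesis is a local wild $p(G)$-Galois extension, which will be installed at a single chosen closed point $y_0\in Y_0$ lying over $X\setminus U$ and then glued to the tame cover $Y_0\to X$ already at hand.

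To produce the local datum, I use the assumption that $p(G)$ is a quotient of $\pi^{\et}_1(\A^1_k)$: fix a connected \'etale Galois cover $W\lto\A^1_k$ with group $p(G)$, embed $\A^1_k\hookrightarrow\PL^1_k$, and complete at $\infty$ to obtain a connected, totally wildly ramified Galois extension $L/k((t))$ with group $p(G)$. Transport $L$ to an extension of $\Frac(\widehat{\scrO}_{Y_0,y_0})\simeq k((s))$ via a chosen uniformizer at $y_0$.

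Next, apply Harbater--Pop formal patching to extend this local wild datum across $Y_0$. Because the inertia prescribed by $L$ already generates all of $p(G)$ (total wild ramification), the patching machinery yields a connected $p(G)$-Galois cover $Y^\ast\lto Y_0$ which is \'etale away from $y_0$ and whose local extension at $y_0$ is exactly $L/k((s))$. To promote this to a $G$-Galois cover $Y\lto X$ realizing the embedding problem, one carries out the patching $G^{(p')}$-equivariantly along the tame cover $Y_0\to X$: this uses that $G\to G^{(p')}$ admits a set-theoretic splitting and that the branch locus on the global side is $G^{(p')}$-stable, so the local wild datum at $y_0$ propagates compatibly across its $G^{(p')}$-orbit. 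Restricting $Y\lto X$ to $U$ then gives $V\lto U$, \'etale Galois with group $G$, whose classifying map $\phi:\pi^{\et}_1(U)\twoheadrightarrow G$ lifts $\overline{\phi}$ and is wildly ramified only at (the orbit of) $y_0\in Y_0$.

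The main obstacle is the final equivariant patching step: one must verify that the resulting cover $Y\lto X$ is $G$-Galois realizing exactly the prescribed extension $1\to p(G)\to G\to G^{(p')}\to 1$, rather than a larger or twisted extension. This is the content of the appropriate equivariant version of Pop's Half Riemann Existence Theorem. Once it is in place, the surjectivity of $\phi$ onto $G$ is immediate: the image contains $G^{(p')}$ since $\overline{\phi}$ is surjective, and it contains $p(G)$ since the local inertia at $y_0$ surjects onto $p(G)$ by total ramification of $L$. The reduction of Theorem~\ref{thm:SAC} to Theorem~\ref{thm:AC} for $\A^1_k$ then follows by applying the embedding result iteratively to any $G$ with $G^{(p')}$ generated by $2g+n-1$ elements, using (\ref{eq:p' etale pi1}) to realize $\overline{\phi}$ at the start.
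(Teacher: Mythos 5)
The paper does not prove Theorem~\ref{thm:Pop}: it is recalled as a known result, with the explicit citation to Pop's paper and the remark ``see \cite{pop} for the full statements'', and it serves in the introduction only as motivation for the stacky analogues established later. So there is no internal proof of this statement to compare your sketch against.

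Taken as a reconstruction of Pop's argument, your outline is in the right general spirit --- patch a local wild $p(G)$-datum at a single point of the tame cover $Y_0$ onto the given $G^{(p')}$-cover, then descend equivariantly to obtain a $G$-cover of $U$ --- but two steps need repair before it is a proof. First, knowing that $p(G)$ is a quotient of $\pi_1^{\et}(\A^1_k)$ only yields a connected $p(G)$-Galois cover $W\to\A^1_k$; the decomposition group at a chosen point above $\infty$ of its smooth compactification is a priori a \emph{proper} subgroup of $p(G)$, so completing at $\infty$ does not automatically produce a Galois extension $L/k((t))$ with $\Gal(L/k((t)))=p(G)$, let alone a totally ramified one. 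Manufacturing a local wild datum with prescribed full inertia is itself a substantial step and is not a formal consequence of the hypothesis. Second, the $G^{(p')}$-equivariant patching step --- ensuring the patched cover is $G$-Galois realizing precisely the extension $1\to p(G)\to G\to G^{(p')}\to 1$ and not some twist --- is the technical heart of Pop's theorem (his Half Riemann Existence Theorem); as you acknowledge, you invoke it rather than prove it. As written, the proposal is a plausible roadmap but leaves both of the hardest steps as black boxes.
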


Note that the theorem holds for an arbitrary affine curve $U$ over $k$, a priori which has no relation with the affine line $\A^1_k$.

\subsection{Main theorem}

In \cite{no76}\cite[Chapter II]{no82}, Nori introduced a new invariant $\pi^{\N}(X)$, which he  called the \textit{fundamental group scheme}, as a generalization of Grothendieck's \'etale fundamental group $\pi^{\et}_1(X)$ for a scheme $X$ defined over a field $k$. The fundamental group scheme $\pi^{\N}(X)$ (if it exists) is a profinite $k$-group scheme which classifies $G$-torsors over $X$, where we can take as $G$ an arbitrary (not necessarily \'etale) finite $k$-group scheme. In the case where $k$ is of characteristic 0, then, by a theorem of Cartier, any finite $k$-group scheme $G$ is \'etale. Hence, if $k$ is an algebraically closed field of characteristic 0, then we can compute $\pi^{\N}(X)$ as a pro-constant $k$-group scheme associated with $\pi^{\et}_1(X)$. On the other hand, if $k$ has positive characteristic $p>0$, then the former group 
cannot be recovered from the latter one in general.    
For example, \textit{finite local torsors}, which are sometimes called \textit{purely inseparable coverings}, over $X$ make the group scheme $\pi^{\N}(X)$ larger. 
In \cite{ot17}, the author formulated a \textit{purely inseparable analogue of the Abhyankar conjecture} to estimate the difference between these two fundamental groups $\pi^{\et}_1(X)$ and $\pi^{\N}(X)$ from the viewpoint of the inverse Galois problem.

\begin{ques}(Purely inseparable analogue of the Abhyankar conjecture~\cite[Question 3.3]{ot17})\label{ques:PIAC}
Let $U$ be an affine smooth curve over an algebraically closed field $k$ of positive characteristic $p>0$ and $G$ a finite local $k$-group scheme. If there exists an injective homomorphism $\X(G)\hookrightarrow(\Q_p/\Z_p)^{\oplus \gamma+n-1}$, then does $G$ appear as a finite quotient of $\pi^{\N}(U)$? 
Here, $\X(G)$ stands for the group of characters of $G$, i.e.\ $\X(G)\Def\Hom(G,\G_{m,k})$, and $\gamma$ is the $p$-rank of the compactification $X$ of $U$ and $n=\#(X\setminus U)$. 
\end{ques}  

Here recall that if a finite local $k$-group scheme $G$ appears as a quotient of $\pi^{\N}(U)$, then the character group $\X(G)$ must be embedded into the group $(\Q_p/\Z_p)^{\oplus\gamma+n-1}$~(cf.~\cite[Proposition 3.1]{ot17}).

In \cite{bb17preprint}, Biswas--Borne introduced the notion of \textit{tamely ramified $G$-torsors} for an arbitrary finite group scheme $G$ as a generalization of tamely ramified Galois covers in the usual sense~(cf.~Definition \ref{def:tame ramif}). Therefore, we can formulate a purely inseparable analogue of the strong Abhyankar conjecture due to Harbater~(cf.~Theorem \ref{thm:SAC}). 

\begin{ques}(Purely inseparable analogue of the strong Abhyankar conjecture)\label{ques:PISAC}
Let $U$ be an affine smooth curve over an algebraically closed field $k$ of positive characteristic $p>0$ and $G$ a finite local $k$-group scheme. If there exists an injective homomorphism $\X(G)\hookrightarrow(\Q_p/\Z_p)^{\oplus \gamma+n-1}$, then does there exist a $G$-torsor over $U$ which is tamely ramified above $X\setminus U$ except for one point such that it realizes a surjective homomorphism $\pi^{\N}(U)\twoheadrightarrow G$?
\end{ques} 

In this paper, toward the strong analogue, we will consider the inverse Galois problem over \textit{root stacks}. 
Let $X$ be a projective smooth curve over $k$ of genus $g\ge 0$ and of $p$-rank $\gamma\ge 0$. Let $\emptyset\neq U\subsetneq X$ be a nonempty open subscheme with $\# X\setminus U=n\ge 1$. Let $X\setminus U=\{x_0,x_1,\cdots,x_{n-1}\}$. We shall denote by $X_0$ the one-punctured smooth affine curve $X\setminus\{x_0\}$ and consider $\bfD=(x_i)_{i=1}^{n-1}$ as a family of reduced distinct Cartier divisors on $X$. For each family $\bfr=(r_i)_{i=1}^{n-1}\in\prod_{i=1}^{n-1}\Z_{\ge 0}$ of integers, we denote by $\sqrt[p^{\bfr}]{\bfD/X_0}$ the root stack associated with $X_0$ and the data $(\bfD,p^{\bfr})$.
Biswas--Borne showed that finite flat torsors which are representable by a $k$-scheme over root stacks give candidates of tamely ramified torsors~(cf.~\cite[\S3.3]{bb17preprint}; see also Theorem \ref{thm:biswas-borne}(1)).
Hence, Question \ref{ques:PISAC} has the following  interpretation, but the author is not sure if these are equivalent to each other~(cf.~Remark \ref{rem:intro}(1)). 

\begin{ques}(Stacky counterpart of Question \ref{ques:PISAC})\label{ques:PISAC stacky}
With the above notation, let $G$ be a finite local $k$-group scheme. Suppose that there exists an injective homomorphism $\X(G)\hookrightarrow(\Q_p/\Z_p)^{\oplus \gamma+n-1}$ of abelian groups. Then, do there exist an $(n-1)$-tuple $\bfr=(r_i)_{i=1}^{n-1}$ of integers $r_i\ge 0$ and a Nori-reduced $G$-torsor $Y\lto\sqrt[p^{\bfr}]{\bfD/X_0}$ such that $Y$ is representable by a $k$-scheme?
\end{ques}

Note that if Question \ref{ques:PISAC} has an affirmative answer, then so does Question \ref{ques:PISAC stacky}~(cf.~\cite[\S3.3]{bb17preprint}; see also Theorem \ref{thm:biswas-borne}(1)). 
Toward the strong analogue, we shall study an embedding problem over the root stacks and conclude that Question \ref{ques:PISAC stacky} has an affirmative answer at least in the solvable case. Namely, as the main result of this paper, we will prove the following result.

\begin{thm}(cf.~Theorem \ref{thm:PISAC solv})\label{main thm:int}
Suppose given an exact sequence of finite local $k$-group schemes
\begin{equation}\label{eq:thm IAC solv}
1\lto G'\lto G\lto G''\lto 1.
\end{equation}
Suppose that the following conditions are satisfied.
\begin{enumerate}
\renewcommand{\labelenumi}{(\roman{enumi})}
\item There exists an injective homomorphism $\X(G)\hookrightarrow(\Q_p/\Z_p)^{\oplus\gamma+n-1}$.
\item There exists a surjective $k$-homomorphism $\pi^{\N}(\sqrt[\mathbf{p}^{\infty}]{\bfD/X_0})\Def\varprojlim_{\bfr}\pi^{\N}(\sqrt[p^{\bfr}]{\bfD/X_0})\twoheadrightarrow G''$. 
\item $G'$ is solvable.
\end{enumerate}
Then there exist an $(n-1)$-tuple $\bfr$ of non-negative integers and a Nori-reduced $G$-torsor $Y\lto\sqrt[p^{\bfr}]{\bfD/X_0}$ which is representable by a $k$-scheme. In particular, Question \ref{ques:PISAC stacky} has an affirmative answer for any finite local solvable $k$-group scheme $G$. 
\end{thm}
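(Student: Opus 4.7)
My plan is to proceed by d\'evissage, reducing the embedding problem for a general solvable $G'$ to the two atomic base cases $G' = \mu_p$ (multiplicative) and $G' = \alpha_p$ (unipotent). These are the only simple finite local commutative $k$-group schemes, and since every minimal normal subgroup of a finite solvable group scheme is simple abelian, they exhaust the simple quotients in a composition series of~$G'$.

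\emph{D\'evissage.} I induct on $|G'|$. Choose a minimal nontrivial characteristic subgroup $H \subseteq G'$; solvability forces $H$ to be simple abelian, so $H \cong \mu_p$ or $H \cong \alpha_p$. Since $H$ is characteristic in $G'$ and $G'$ is normal in $G$, $H$ is normal in $G$, and setting $\bar{G} := G/H$ we obtain the two short exact sequences
\begin{equation*}
1 \to G'/H \to \bar{G} \to G'' \to 1, \qquad 1 \to H \to G \to \bar{G} \to 1.
\end{equation*}
Characters of $\bar{G}$ pull back injectively to characters of $G$, so condition~(i) transfers to $\bar{G}$, and the inductive hypothesis applied to the first sequence (whose kernel $G'/H$ is still solvable of smaller order) yields a Nori-reduced $\bar{G}$-torsor $\bar{Y} \to \sqrt[p^{\mathbf{r}'}]{\mathbf{D}/X_0}$ representable by a $k$-scheme. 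What remains is to lift $\bar{Y}$ through the second sequence in each of the two base cases.

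\emph{Multiplicative case $H = \mu_p$.} Here the lift is governed by characters. The multiplicative quotient of $\pi^{\N}(\sqrt[p^{\mathbf{r}}]{\mathbf{D}/X_0})$ has character group identified with the $p^{\mathbf{r}}$-torsion of $\Pic(\sqrt[p^{\mathbf{r}}]{\mathbf{D}/X_0})$, and taking the limit over $\mathbf{r}$ yields $(\Q_p/\Z_p)^{\oplus\gamma+n-1}$, with $\gamma$ copies supplied by $\Pic^0(X)[p^{\infty}]$ and one copy by the stacky inertia at each of $x_1, \dots, x_{n-1}$. By condition~(i), the character of $G^{\ab}$ extending the one already realized on $\bar{G}^{\ab}$ sits in this limit; choosing $\mathbf{r} \ge \mathbf{r}'$ large enough that this character descends to level $\mathbf{r}$, I form the corresponding $\mu_p$-extension of $\bar{Y}$ by extracting the $p$-th root of the associated line bundle on the root stack. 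Nori-reducedness is automatic since the classifying quotient has genuinely enlarged, and representability is preserved by this $\mu_p$-construction.

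\emph{Unipotent case $H = \alpha_p$ (main obstacle).} This is the hardest step and what I expect to be the core technical difficulty. No character theory is available; instead, $\bar{G}$-equivariant $\alpha_p$-torsors over $\bar{Y}$ are classified by
\begin{equation*}
H^1(\bar{Y}, \alpha_p)^{\bar{G}} = \Ker\bigl(F \colon H^1(\bar{Y}, \scrO_{\bar{Y}}) \to H^1(\bar{Y}, \scrO_{\bar{Y}})\bigr)^{\bar{G}},
\end{equation*}
and the obstruction to the existence of a lift classifying $G$ lives in a twisted $H^2(\bar{G}, \alpha_p)$ controlled by the extension class $1 \to \alpha_p \to G \to \bar{G} \to 1$. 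The key new input over classical Abhyankar-style arguments is that stacky twists at $x_1, \dots, x_{n-1}$ enlarge the relevant additive cohomology: as the components of $\mathbf{r}$ grow, the structure sheaf of $\sqrt[p^{\mathbf{r}}]{\mathbf{D}/X_0}$ acquires new sections supported on the stacky fibres, enlarging $H^1(\sqrt[p^{\mathbf{r}}]{\mathbf{D}/X_0}, \scrO)$, in analogy with how Artin--Schreier covers populate $\pi^{\N}(\A^1_k)$ and make $H^1(\A^1_k, \scrO)$ infinite-dimensional. I therefore plan to increase $\mathbf{r}$ coordinatewise until the obstruction class vanishes and abundantly many lifts are produced; among these I would then select one that is Nori-reduced and, by the Biswas--Borne representability criterion~\cite{bb17preprint}, representable by a $k$-scheme. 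The latter amounts to arranging that the inertia of the root stack at each stacky point acts faithfully on the constructed torsor, which is ensured provided the local Artin--Schreier datum of the added $\alpha_p$-cocycle is nontrivial at each $x_i$ --- a condition that can be met by choosing the lift generically within the larger cohomology supplied by the stacky enlargement.
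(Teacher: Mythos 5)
Your overall skeleton --- d\'evissage, a character argument for $\mu_p$, an obstruction-plus-counting argument for $\alpha_p$ --- matches the paper's in spirit, but the representability step is handled incorrectly, and this is the central gap. You propose to make the constructed $G$-torsor over $\sqrt[p^{\bfr}]{\bfD/X_0}$ representable by ``arranging that the inertia of the root stack at each stacky point acts faithfully.'' By the Biswas--Borne criterion, faithfulness at $x_i$ means the induced homomorphism $\mu_{p^{r_i}} \to G$ (from the residual gerbe $\cB\mu_{p^{r_i}} \to \cB G$) is a monomorphism, which is impossible whenever $G$ contains no diagonalizable subgroup of order $p^{r_i}$ --- exactly the typical situation in the unipotent step. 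No choice of $\alpha_p$-cocycle can remedy this. The paper's route is the reverse: it first solves the embedding problem abstractly, producing a Nori-reduced $G$-torsor $\calY \to \sqrt[p^{\bfr}]{\bfD/X_0}$ that need \emph{not} be representable, and then invokes the descent Lemma~\ref{lem:tors rs}, which produces some $\bfr' \mid \bfr$ and a representable $G$-torsor $Y' \to \sqrt[p^{\bfr'}]{\bfD/X_0}$ pulling back to $\calY$. One \emph{shrinks} the stacky level until the torsor is forced to become representable, rather than trying to make the torsor compatible with the chosen level. The same issue undermines your multiplicative case, where ``representability is preserved by this $\mu_p$-construction'' is asserted but is false in general, since the chosen character need not be ramified at all of the $x_i$.

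Two further points. Your obstruction analysis is imprecise: by Giraud's non-abelian cohomology, the liftings of $\overline{\phi}$ through $G \to G''$ form a pseudo-torsor under $H^1_{\fppf}(\sqrt[p^{\bfr}]{\bfD/X_0}, N)$ with $N = P'' \wedge^{G''} G'$ the twisted form of $G'$ over the root stack, and the obstruction lies in $H^2_{\fppf}(\sqrt[p^{\bfr}]{\bfD/X_0}, N)$; since $N$ and $N^D$ are of height one, this group already vanishes for \emph{every} $\bfr$ (Proposition~\ref{prop:inf alpha p}), so no limit over $\bfr$ is needed. The substantive work is then the dimension comparison $\Dim_k Z(G'',G') < \infty = \Dim_k H^1_{\fppf}(\sqrt[p^{\bfr}]{\bfD/X_0}, N)$ that forces a surjective lift to exist, which you gesture at but do not carry out. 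Finally, a minimal nontrivial characteristic subgroup of a solvable local $k$-group scheme need not be simple --- it can be $\alpha_p^t$ or $\mu_p^t$ with $t > 1$ --- so the base case must treat $\alpha_p^t$ with a $G''$-conjugation action and use irreducibility of that action (after a further reduction) to convert ``$\im(\psi)\cap G' \neq 1$'' into ``$\psi$ is surjective,'' as the paper does.
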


Here, let us recall the definition of \textit{solvable} group schemes $G$. For an affine group scheme $G$ over a field $k$, we denote by $D(G)$ the \textit{derived subgroup} of $G$~(cf.~\cite[\S10.1]{wa79}\cite[Definition A.1]{bb17preprint}). Then it turns out that the quotient $G^{\rm ab}\Def G/D(G)$ is abelian and the natural surjection $G\lto G^{\rm ab}$ is universal among morphisms $G\lto H$ into abelian $k$-group schemes~(cf.~\cite[Appendix \S A.1]{bb17preprint}). In particular, $G$ is abelian if and only if $D(G)=1$. Moreover, by the universality of the morphism $G\lto G^{\rm ab}$, it can be seen that any automorphism of an affine $k$-group scheme $G$ induces an automorphism of the derived subgroup $D(G)$. In particular, if $G'$ is a normal closed subgroup scheme of $G$, then the derived subgroup $D(G')$ of $G'$ is also normal in $G$. 
For any integer $m>0$, we define the subgroup scheme $D^m(G)$ of $G$ inductively by
\begin{equation*}
D^0(G)\Def G,\quad D^m(G)\Def D(D^{m-1}(G)).
\end{equation*}
Then an affine $k$-group scheme $G$ is said to be \textit{solvable} if $D^m(G)=0$ for sufficiently large $m>0$~(cf.~\cite[\S10.1]{wa79}). It turns out that an affine $k$-group scheme $G$ is solvable if and only if it admits a sequence of closed subgroup schemes
\begin{equation*}
1=G_l<G_{l-1}<\cdots<G_0=G
\end{equation*}
such that each $G_i$ is normal in $G_{i-1}$ with $G_{i-1}/G_i$ abelian~(cf.~\cite[\S16 Exercise 2]{wa79}).

By restricting to the affine curve $U$, as a consequence of Theorem \ref{main thm:int}, we get the following result, which is motivated by Serre's Theorem \ref{thm:Serre}. This is a generalization of one of the previous results due to the author~(cf.~\cite[Proposition 3.4]{ot17}).

\begin{cor}(cf.~Corollary \ref{cor:PIAC solv})\label{cor:int 1}
For any finite local solvable $k$-group scheme $G$, the group scheme $G$ appears as a quotient of $\pi^{\N}(U)$ if and only if the character group $\X(G)$ can be embedded into the abelian group $(\Q_p/\Z_p)^{\oplus\gamma+n-1}$. Namely, the purely inseparable analogue of the Abhyankar conjecture (Question \ref{ques:PIAC}) has an affirmative answer for any finite local solvable $k$-group scheme $G$.  
\end{cor}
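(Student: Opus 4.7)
The `only if' direction is already known via \cite[Proposition 3.1]{ot17}, so my plan focuses on the converse, which I would derive as the special case $G''=1$ of Theorem \ref{main thm:int}.

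Concretely, I apply that theorem to the trivial short exact sequence
\begin{equation*}
1\lto G\lto G\lto 1\lto 1,
\end{equation*}
so that $G'=G$ and $G''=1$. Hypothesis (i) coincides with the assumed embedding $\X(G)\hookrightarrow(\Q_p/\Z_p)^{\oplus\gamma+n-1}$; hypothesis (ii) is vacuous because the target is trivial; and hypothesis (iii) is precisely the assumed solvability of $G$. Theorem \ref{main thm:int} therefore produces an $(n-1)$-tuple $\bfr$ of non-negative integers together with a Nori-reduced $G$-torsor $Y\lto\sqrt[p^{\bfr}]{\bfD/X_0}$ which is representable by a $k$-scheme.

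To descend from the root stack to the affine curve $U$, I would restrict along the canonical open immersion $U\hookrightarrow\sqrt[p^{\bfr}]{\bfD/X_0}$; this immersion exists because the root stack agrees with $X_0$ away from $\bfD=(x_i)_{i=1}^{n-1}$ and $U=X_0\setminus\{x_1,\ldots,x_{n-1}\}$. The pullback $Y_U\Def Y\times_{\sqrt[p^{\bfr}]{\bfD/X_0}}U$ is then a $G$-torsor over $U$, and what remains is to check that it is Nori-reduced, i.e.\ that the corresponding homomorphism $\pi^{\N}(U)\lto G$ is surjective.

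The main technical point of the plan is exactly this last surjectivity, which I would obtain from the surjectivity of the restriction map $\pi^{\N}(U)\twoheadrightarrow\pi^{\N}(\sqrt[p^{\bfr}]{\bfD/X_0})$ induced by the open immersion. Equivalently, one may argue at the level of total spaces: since $Y$ is representable and Nori-reduced, it is a connected $k$-scheme, and its pullback to the schematically dense open $U\subset\sqrt[p^{\bfr}]{\bfD/X_0}$ remains a connected $k$-scheme, so that $Y_U\lto U$ is a Nori-reduced $G$-torsor. Composing with the surjection corresponding to $Y$ then yields the desired surjection $\pi^{\N}(U)\twoheadrightarrow G$, completing the argument.
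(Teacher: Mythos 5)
Your route is correct in outline but it is genuinely different from the paper's, and one of your two justifications for the crucial Nori-reducedness step is wrong.

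The paper's own proof of Corollary \ref{cor:PIAC solv} is a one-liner: the internal statement Theorem \ref{thm:PISAC solv} is proved for every choice of $0\le m\le n-1$, and taking $m=n-1$ makes the family $\bfD=(x_i)_{i=m+1}^{n-1}$ empty, so that $\fX_m^{p^{\bfr}}=X_{n-1}=U$ and the theorem directly produces a Nori-reduced $G$-torsor on $U$. You instead fix $m=0$ (which is what the introduction's Theorem \ref{main thm:int} records), obtain a torsor on $\sqrt[p^{\bfr}]{\bfD/X_0}$, and then restrict along the open immersion $U\hookrightarrow\sqrt[p^{\bfr}]{\bfD/X_0}$. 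That descent step is not free: its legitimacy rests on the surjectivity $\pi^{\N}(U)\twoheadrightarrow\pi^{\N}(\sqrt[p^{\bfr}]{\bfD/X_0})$, which is precisely Proposition \ref{prop:Nori ger sm root stack} (the morphism $\Pi^{\N}_U\to\Pi^{\N}_{\fX^{\bfr}}$ is a gerbe, and over the algebraically closed $k$ both sides are neutral). Your first argument for Nori-reducedness is exactly this, and it is correct, though you should cite the proposition; the paper bypasses the issue entirely by having the $m$-parameter available.

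Your ``equivalently'' argument via connectedness of total spaces is wrong and should be dropped. For $G$ finite \emph{local}, a $G$-torsor $P\to Z$ is a universal homeomorphism, so $P$ is connected whenever $Z$ is connected, regardless of whether the torsor is Nori-reduced. Connectedness of $Y_U$ therefore carries no information about surjectivity of the classifying map $\pi^{\N}(U)\to G$; it cannot be used as a substitute for the fundamental-group surjectivity coming from Proposition \ref{prop:Nori ger sm root stack}. This is a genuine conceptual error peculiar to the infinitesimal setting (for finite \'etale $G$ the connectedness criterion would be fine), so it is worth flagging even though your other argument repairs the proof.
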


On the other hand, in \cite{bb17preprint}, Biswas--Borne also proved that if $G$ is abelian, then there exists an equivalence of categories between the category of tamely ramified $G$-torsors and the category of $G$-torsors of root stacks which are representable by a scheme~(cf.~\cite[\S3.4]{bb17preprint}; see also Theorem \ref{thm:biswas-borne}(2)). Therefore, as a consequence of Theorem \ref{main thm:int}, we have the following result.

\begin{cor}(cf.~Corollary \ref{cor:PISAC solv})\label{cor:int 2}
With the above notation, let $G$ be a finite local abelian $k$-group scheme. Suppose that there exists an injective homomorphism $\X(G)\hookrightarrow(\Q_p/\Z_p)^{\oplus \gamma+n-1}$. Then there exists an $(n-1)$-tuple $\bfr=(r_i)_{i=1}^{n-1}$ of integers $r_i\ge 0$ and a tamely ramified $G$-torsor $Y\lto X_0$ with ramification data $(\bfD,p^{\bfr})$ such that the restriction $Y\times_{X_0} U\lto U$ gives a Nori-reduced $G$-torsor. Namely, the purely inseparable analogue of the strong Abhyankar conjecture (Question \ref{ques:PISAC}) has an affirmative answer for any finite local abelian $k$-group scheme $G$.  
\end{cor}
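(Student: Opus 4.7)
The plan is to deduce this corollary directly from Theorem~\ref{main thm:int} combined with the abelian case of the Biswas--Borne correspondence (Theorem~\ref{thm:biswas-borne}(2)). Since $G$ is abelian it is automatically solvable (one has $D(G)=1$), so hypothesis (iii) of Theorem~\ref{main thm:int} is free; moreover, it is precisely in the abelian setting that one has the tool to translate a $G$-torsor over a root stack into a tamely ramified $G$-torsor over $X_0$.

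First I would apply Theorem~\ref{main thm:int} to the trivially split exact sequence
\begin{equation*}
1 \lto G \lto G \lto 1 \lto 1,
\end{equation*}
with $G' = G$ and $G'' = 1$. Hypothesis (i) is the standing assumption of the corollary, hypothesis (ii) is the tautological surjection onto the trivial group, and hypothesis (iii) is just the abelianness of $G$. The theorem thereby produces an $(n-1)$-tuple $\bfr = (r_i)_{i=1}^{n-1}$ of non-negative integers together with a Nori-reduced $G$-torsor
\begin{equation*}
\widetilde{Y} \lto \sqrt[p^{\bfr}]{\bfD/X_0}
\end{equation*}
that is representable by a $k$-scheme.

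Next, since $G$ is abelian, I would translate $\widetilde{Y}$ across the Biswas--Borne equivalence (Theorem~\ref{thm:biswas-borne}(2)) to obtain a tamely ramified $G$-torsor $Y \lto X_0$ with ramification data $(\bfD, p^{\bfr})$. Because the support of $\bfD$ is disjoint from $U$, the canonical morphism $\sqrt[p^{\bfr}]{\bfD/X_0} \lto X_0$ restricts to an isomorphism above $U$, and consequently $Y \times_{X_0} U$ is naturally identified with the pullback of $\widetilde{Y}$ along the open immersion $U \hookrightarrow \sqrt[p^{\bfr}]{\bfD/X_0}$.

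Finally, to check that $Y \times_{X_0} U \lto U$ is Nori-reduced as a $G$-torsor over $U$, I would observe that its classifying homomorphism factors as
\begin{equation*}
\pi^{\N}(U) \lto \pi^{\N}\bigl(\sqrt[p^{\bfr}]{\bfD/X_0}\bigr) \twoheadrightarrow G,
\end{equation*}
where the second arrow is surjective by the Nori-reducedness of $\widetilde{Y}$ and the first arrow is surjective because $U$ is a dense open substack of the normal connected root stack. I do not foresee any substantive obstacle beyond Theorem~\ref{main thm:int} itself; the only point requiring a modicum of care is this preservation of Nori-reducedness under open restriction, which amounts to a standard surjectivity property for fundamental group schemes of normal connected stacks under dense open inclusion.
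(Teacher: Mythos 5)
Your proof is correct and follows the same route as the paper: the paper's proof is the one-liner ``immediate from Theorems \ref{thm:biswas-borne}(2) and \ref{thm:PISAC solv}'', which is exactly the pairing you use of the main theorem (via the trivially split sequence $1\to G\to G\to 1\to 1$ with $G'=G$, $G''=1$) and the abelian Biswas--Borne correspondence. Your closing observation, that Nori-reducedness over the root stack restricts to Nori-reducedness over $U$, is precisely the content of Proposition \ref{prop:Nori ger sm root stack}, which the paper has already established and which the paper's ``immediate'' implicitly invokes.
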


The following are further remarks. 

\begin{rem}\label{rem:intro}~
\begin{enumerate}
\renewcommand{\labelenumi}{(\arabic{enumi})}
\item As we state just before Corollary \ref{cor:int 2}, by \cite[\S3.4]{bb17preprint}, for a finite abelian $k$-group scheme $G$, a $G$-torsor $Y\lto\sqrt[\bfr]{\bfD/X}$ which is representable by a $k$-scheme gives rise to a tamely ramified $G$-torsor $Y\lto X$ with ramification data $(\bfD,\bfr)$. However, the argument in \cite[\S3.4]{bb17preprint} does not work for non-abelian $k$-group schemes, which is noticed by David Rydh~(cf.~\cite[Appendix \S B]{bb17preprint}, see also Remark \ref{rem:Rydh}). In this point, the author is not sure whether or not Questions \ref{ques:PISAC} and \ref{ques:PISAC stacky} are equivalent to each other. 

\item Although Questions \ref{ques:PISAC} and \ref{ques:PISAC stacky} are motivated by Harbater's Theorem \ref{thm:SAC}, our proof of the main theorem is done by solving embedding problems and it is more like  Pop's approach~(cf.~Theorem \ref{thm:Pop}). In fact, one can ask whether an analogous result holds in our situation as follows.  
\end{enumerate}
\end{rem}

Let $G$ be a finite local $k$-group scheme. Then we have an  exact sequence
\begin{equation}\label{eq:int G to DXG}
1\lto G'\lto G\lto\Diag(\X(G))\lto 0, 
\end{equation}
where $\Diag(\X(G))$ is the diagonalizable group scheme associated with the group $\X(G)$ of characters of $G$, and $G'$ is the kernel of the natural surjective homomorphism $G\twoheadrightarrow\Diag(\X(G))$. Then one can check that $\X(G')=1$.
Now we have reached the following question motivated by Pop's Theorem \ref{thm:Pop}.

\begin{ques}(Purely inseparable analogue of Theorem \ref{thm:Pop})\label{ques:PISAC Pop}
With the same notation as in Questions \ref{ques:PISAC} and \ref{ques:PISAC stacky}, suppose given a finite local $k$-group scheme $G$ whose character group $\X(G)$ can be embedded into the group $(\Q_p/\Z_p)^{\oplus \gamma+n-1}$. Suppose further that $G'\Def\Ker(G\twoheadrightarrow\Diag(\X(G)))$ is a quotient of the Nori fundamental group scheme $\pi^{\N}(\A^1_k)$ of the affine line $\A^1_k$, i.e.\ Question \ref{ques:PIAC} has an affirmative answer for $U=\A^1_k$ and for $G'$. Then do there exist an $(n-1)$-tuple $\bfr$ and a Nori-reduced $G$-torsor over $\sqrt[p^{\bfr}]{\bfD/X_0}$ which is representable by a $k$-scheme? Moreover, does there exist a Nori-reduced $G$-torsor over $U$ which is tamely ramified above $X\setminus U$ except for one point? 
\end{ques}

One of our missing points for this direction is that we have no answer to the embedding problem 
\begin{equation*}
\begin{xy}
\xymatrix{
&&&\pi^{\N}(\sqrt[\mathbf{p}^{\infty}]{\bfD/X_0})\ar@{->>}[d]&\\
1\ar[r]&G'\ar[r]&G\ar[r]&\Diag(\X(G))\ar[r]&1
}
\end{xy}
\end{equation*}
unless $G'$ is solvable~(cf.~Lemma \ref{lem:PISAC solv}). On the other hand, under the assumption that $G'$ is solvable, the former question in Question \ref{ques:PISAC Pop} has an affirmative answer without the further assumption on $G'$, which is a  part of the main theorem (Theorem \ref{main thm:int}).

Finally we explain the organization of the present paper.

In \S\ref{sec:FG}, we recall the notion of \textit{fundamental group schemes} and \textit{fundamental gerbes}, and their tannakian interpretations, following Borne--Vistoli \cite{bv15} and Tonini--Zhang \cite{tz17}. 
In \cite{bb17}, Biswas--Borne studied the fundamental gerbes of proper tame stacks, e.g.\ root stacks  associated with proper $k$-schemes. On the other hand, we have to deal with root stacks associated with non-proper curves, so more general setups, e.g.\ a tannakian interpretation of the Nori fundamental gerbes for non-proper tame stacks might be required. However,  our main interest is the fundamental group schemes of smooth root stacks, for which a more elementary setup is sufficient, hence one can access the proof of the main theorem in a more direct way~(cf.~\S\ref{subsec:Nori ger root stack}), especially without relying on the tannakian interpretation in terms of \textit{Frobenius divided sheaves}~(cf.~\S\ref{sec:Fdiv}), which will be used for dropping the smoothness assumption~(cf.~Appendix \ref{sec:Nori ger root stack appendix}).      

In \S\ref{sec:RS}, we recall the definition and basic properties of \textit{root stacks}, for which the main references are \cite{cadman} and \cite[Chapter 1]{ta}. In \S\ref{sec:FG} and \S\ref{sec:RS}, there are no original results and, all the results are well-understood by the experts.
 
In \S\ref{sec:EP}, after reviewing a recent work due to Biswas--Borne on tamely ramified torsors~\cite[\S3]{bb17preprint}, we shall give a proof of Theorem \ref{main thm:int}. The idea is that a quite similar argument as in the proof of \cite[Proposition 3.4]{ot17} still works even if one replaces an affine curve $U$ by the pro-system of root stacks $\{\sqrt[p^{\bfr}]{\bfD/X_0}\}_{\bfr}$. The main ingredients of the proof are Lieblich's work on \textit{twisted sheaves}~\cite{li08} (cf.~\S\ref{subsec:Br}) and a theorem of Alper on vector bundles over algebraic stacks having \textit{good moduli spaces}~\cite{alp13}. The former one ensures that our  embedding problem is always unobstructed~(cf.~Lemma \ref{lem:PISAC solv}), and the latter one ensures the existence of representable torsors~(cf.~Lemma \ref{lem:tors rs}).

In Appendix \S\ref{sec:local global}, as a related topic, we consider the possibility of a generalization of the \textit{Katz--Gabber correspondence}~(cf.~\cite{ka86}) for finite local torsors, and prove that a na\"ive generalization is far from true~(cf.~Proposition  \ref{prop:ab varpi eta}).

\begin{ack}
The present paper is based on a part of the author's doctoral thesis. The author would like to express his gratitude to his advisor Professor Takao Yamazaki for his long-term support and guidance. 
The work in the present paper was started in Berlin. The author would like to thank Professor H\'el\`ene Esnault for accepting his stay at Freie Universit\"at Berlin and all of the members of her workgroup for their kindness. The author especially would like to say thank you to Doctor Fabio Tonini for answering his questions and having discussions on root stacks and the fundamental gerbes of them and Doctor Lei Zhang for answering his questions on algebraic stacks. 
The author thanks Professor Takeshi Saito and Professor Takehiko Yasuda for the suggestions he received about the Katz--Gabber correspondence. 
The author thanks Professor Niels Borne for sending him the latest updated version of the paper on tamely ramified torsors and for kindly informing about the counter-examples. The author thanks  Professor Florian Pop for sending him the paper on the further generalization of Harbater's strong Abhyankar conjecture and for suggesting proceeding the present work in that direction. The author thanks Professor Madhav Nori for accepting his visit to Chicago and for fruitful discussions and suggestions. The author thanks the two referees for carefully reading the first version of this paper and for fruitful criticisms and detailed comments.   
The author's visit to Berlin was supported by JSPS Overseas Challenge Program for Young Researchers. During the whole his doctoral course, the author was supported by JSPS Grant-in-Aid for JSPS Research Fellow, Grant Number 16J02171. The author is supported by JSPS Grant-in-Aid for JSPS Research Fellow, Grant Number 19J00366. 
\end{ack}


\section*{Notation}

In the present paper, $k$ always means a field. 
We denote by $\Vecf_k$ the category of finite dimensional vector spaces over a field $k$. For an affine $k$-group scheme $G$, we denote by $\Rep(G)$ the category of finite dimensional left $k$-linear representations of $G$. For each $(V,\rho)\in\Rep(G)$, we denote by $V^G$ the $G$-invariant subspace of $V$, i.e.\ $V^G=\{v\in V\,|\,\rho(v)=v\otimes 1\}$.

In the present paper, for the basic definitions and notions about algebraic stacks, we follow Stacks Project~\cite{stack}. In particular, as a definition of algebraic spaces and algebraic stacks, we adopt \cite[Definition 025X]{stack} and \cite[Definition 026O]{stack} respectively. For any algebraic stack $\calX$, we denote by $|\calX|$ the underlying topological space~(cf.~\cite[04XE]{stack}) and by $|\calX|_0$ the set of closed points of $\calX$.

For a ring $A$, we denote by $(\Aff/A)$ the category of affine $A$-schemes. We consider the fibered categories
\begin{equation*}
\Vect\subseteq\Qcohfp\subseteq\Qcoh
\end{equation*} 
over $(\Aff/A)$ of locally free sheaves of finite rank, quasi-coherent sheaves of finite presentation and quasi-coherent sheaves respectively. For any fibered category $\calX$ over $(\Aff/A)$, we define the categories
\begin{equation*}
\Vect(\calX)\subseteq\Qcohfp(\calX)\subseteq\Qcoh(\calX)
\end{equation*}
to be
\begin{equation*}
\Vect(\calX)\Def\Hom_{A}(\calX,\Vect)
\end{equation*}
and similarly $\Qcohfp(\calX)\Def\Hom_A(\calX,\Qcohfp)$ and $\Qcoh(\calX)\Def\Hom_A(\calX,\Qcoh)$.
For a field $k$, we have $\Vect(\Spec k)=\Vecf_k$. If $\calX$ is a Noetherian algebraic stack over a scheme $S$, we can also consider the category $\Coh(\calX)$ of coherent sheaves on $\calX$. If $\calX=\cB_S G$ is the classifying stack of an affine flat and finitely presented $S$-group scheme $G$, then  $\Qcoh(\cB_S G)$ is nothing but the category of \textit{$G$-equivariant sheaves} over $S$, i.e.\  quasi-coherent sheaves $\calF$ on $S$ endowed with an action of $G$~(cf.~\cite[\S2.1]{aov08}). Therefore, we get a natural functor $\Qcoh(\cB_S G)\lto\Qcoh(S)$ which maps each $G$-equivariant sheaf $\calF$ to the $G$-invariant subsheaf $\calF^G$. In the case where $S=\Spec k$ is the spectrum of a field $k$, all the three categories $\Coh(\cB G)$, $\Vect(\cB G)$ and $\Rep(G)$ are canonically equivalent to each other,
\begin{equation*}
\Coh(\cB G)=\Vect(\cB G)=\Rep(G).
\end{equation*} 

For a scheme $X$ over a field $k$ of characteristic $p>0$ and a positive integer $n>0$, we define the $n$th Frobenius twist $X^{(n)}$ by the Cartesian diagram
\begin{equation*}
\begin{xy}
\xymatrix{\ar@{}[rd]|{\square}
X^{(n)}\ar[r]\ar[d]&X\ar[d]\\
\Spec k\ar[r]^{F^n}&\Spec k,
}
\end{xy}
\end{equation*} 
where $F$ is the absolute Frobenius of $\Spec k$. Then the $n$th power of the absolute Frobenius morphism $F^n:X\lto X$ factors uniquely through $X^{(n)}$ and we get a $k$-morphism $X\lto X^{(n)}$, which is denoted by $F^{(n)}$ and is called the $n$th relative Frobenius morphism of $X$. If $k$ is perfect, the projection $X^{(n)}\lto X$ is an isomorphism of schemes, but not of $k$-schemes. Moreover, in this case, we can also consider the $(-n)$th Frobenius twist $X^{(-n)}$ of $X$ by using the inverse morphism $F^{-n}$ of $F^n:\Spec k\xrightarrow{~\simeq~}\Spec k$. If $\calX\lto\Spec k$ is a fibered category of groupoids  over a field $k$ of characteristic $p>0$, we define the absolute Frobenius morphism $F:\calX\lto\calX$ as the collection of maps
\begin{equation*}
\calX(S)\lto\calX(S)\,;\,\xi\longmapsto F_S^*(\xi)=\xi\circ F_S
\end{equation*}
for $S\in(\Aff/k)$~(cf.~\cite[Notations and conventions]{tz17}). If $\calX=X$ is a scheme over $k$, then this coincides with the one in the previous sense. Moreover, we can also define the relative Frobenius morphisms $F^{(n)}:\calX\lto\calX^{(n)}$ for $n>0$ in the same manner as before.


\section{Fundamental gerbes and their tannakian interpretations}\label{sec:FG}

\subsection{Fundamental gerbes}

Let $k$ be a field. A \textit{finite stack} over $k$ is an algebraic stack $\Gamma$ over $k$ which has finite flat diagonal and admits a flat surjective morphism $U\lto\Gamma$ for some finite $k$-scheme $U$~(cf.~\cite[Definition 4.1]{bv15}). A \textit{finite gerbe} over $k$ is a finite stack over $k$ which is a gerbe in the fppf topology. A finite stack $\Gamma$ is a finite gerbe if and only if it is geometrically connected and geometrically reduced~(cf.~\cite[Proposition 4.3]{bv15}). A finite gerbe $\Gamma$ over $k$ is a \textit{tannakian gerbe} over $k$ in the sense of \cite[III \S2]{sa72}. A \textit{profinite gerbe} over $k$ is a tannakian gerbe over $k$ which is equivalent to a projective limit of finite gerbes over $k$~(cf.~\cite[Definition 4.6]{bv15}).

Let $\calX$ be a fibered category in groupoids over $k$. Suppose that $\calX$ is \textit{inflexible} over $k$ in the sense of \cite[Definition 5.3]{bv15}, namely, it admits a profinite gerbe $\Pi$ over $k$ together with a morphism $\calX\lto\Pi$ such that, for any finite stack $\Gamma$ over $k$, the induced functor
\begin{equation*}
\Hom_k(\Pi,\Gamma)\lto\Hom_k(\calX,\Gamma)
\end{equation*} 
is an equivalence of categories~(cf.~\cite[Theorem 5.7]{bv15}). If such a gerbe $\Pi$ exists for $\calX/k$, it  is unique up to unique isomorphism, so we denote it by $\Pi_{\calX/k}^{\N}$, or $\Pi^{\N}_{\calX}$ for simplicity, and call it the \textit{Nori fundamental gerbe} for $\calX$ over $k$~(cf.~\cite{bv15}). If $\calX$ is an algebraic stack of finite type over $k$, from the definition of $\Pi_{\calX/k}^{\N}$, for any finite group scheme $G$ over $k$, there exists a natural bijection
\begin{equation*}
\Hom_k(\Pi_{\calX/k}^{\N},\cB_kG)\xrightarrow{~\simeq~}\Hom_k(\calX,\cB_kG)=H_{\fppf}^1(\calX,G).
\end{equation*}
In this sense, the Nori fundamental gerbe $\Pi_{\calX/k}^{\N}$ classifies $G$-torsors over $\calX$ for any finite group scheme $G$ over $k$. 
If we suppose that an inflexible algebraic stack $\calX$ over $k$ has a $k$-rational point $x\in \calX(k)$, then the composition $\Spec k\xrightarrow{~x~}\calX\lto\Pi_{\calX/k}^{\N}$ defines a section $\xi\in\Pi_{\calX/k}^{\N}(k)$, whence $\Pi^{\N}_{\calX/k}\simeq\cB_k\underline{\mathrm{Aut}}_k(\xi)$. We denote by $\pi^{\N}(\calX,x)$ the group scheme $\underline{\mathrm{Aut}}_k(\xi)$ over $k$, which is nothing other than the \textit{fundamental group scheme} of $(\calX,x)$ in the sense of Nori~\cite[Chapter II]{no82}. Namely, for any finite group scheme $G$ over $k$, the set of homomorphisms $\Hom_k(\pi^{\N}(\calX,x),G)$ is naturally bijective onto the set of isomorphism classes of pointed $G$-torsors over $(\calX,x)$. We call the profinite group scheme $\pi^{\N}(\calX,x)$ the \textit{Nori fundamental group scheme} of $(\calX,x)$.

Let $\calX$ be an inflexible algebraic stack of finite type over $k$. A morphism $\calX\lto\Gamma$ into a finite gerbe $\Gamma$ is said to be \textit{Nori-reduced}~(cf.~\cite[Definition 5.10]{bv15}) if for any factorization $\calX\lto\Gamma'\lto\Gamma$ where $\Gamma'$ is a finite gerbe and $\Gamma'\lto\Gamma$ is faithful, then $\Gamma'\lto\Gamma$ is an isomorphism. According to \cite[Lemma 5.12]{bv15}, for any morphism $\calX\lto\Gamma$ into a finite gerbe, there exists a unique factorization $\calX\lto\Delta\lto\Gamma$, where $\Delta$ is a finite gerbe,  $\calX\lto\Delta$ is Nori-reduced and $\Delta\lto\Gamma$ is representable. A $G$-torsor $\calP\lto \calX$ is said to be \textit{Nori-reduced} if the morphism $\calX\lto\cB G$ is Nori-reduced.

Next we introduce two variants of the Nori fundamental gerbe, namely, the \textit{\'etale} and \textit{local fundamental gerbes}~(cf. \cite[\S8]{bv15}\cite[\S4 Definition 4.1]{tz17}).

\begin{definition}(cf.~\cite[\S3 Definition 3.1]{tz17})
A finite stack $\Gamma$ over $k$ is said to be \textit{\'etale} if it admits a finite \'etale surjective  morphism $U\lto\Gamma$ from a finite \'etale $k$-scheme $U$.\end{definition}

If $k$ is of characteristic 0, then any finite gerbe is \'etale. Hence, suppose that $k$ is of positive characteristic $p>0$. 
To define a well-defined notion of \textit{finite local stack}, we need more preliminaries. 
For each $k$-algebra $A$, we denote by $A_{\et}$ as the union of $k$-subalgebras of $A$ which are finite \'etale over $k$~(cf.~\cite[\S2 Definition 2.1]{tz17}). Note that $A_{\et}$ depends on the base field $k$. If $A\twoheadrightarrow B$ is a surjective $k$-algebra homomorphism with nilpotent kernel, then the induced homomorphism $A_{\et}\to B_{\et}$ is an isomorphism~(cf. \cite[\S2 Remark 2.2]{tz17}). Moreover, for the $i$th relative Frobenius homomorphism
\begin{equation*}
A^{(i)}=A\otimes_{k}k\lto A~;~a\otimes\lambda\longmapsto a^{p^i}\lambda,
\end{equation*}
the induced morphism $(A^{(i)})_{\et}\to A_{\et}$ is an isomorphism~(cf. \cite[\S2 Remark 2.3]{tz17}). If $A$ is a finite $k$-algebra, then there exists an integer $n\ge 0$ such that the image of the relative Frobenius homomorphism 
$A^{(n)}\to A$ is an \'etale $k$-algebra and the residue fields of $A^{(n)}$ are separable over $k$~(cf. \cite[Lemma 2.4]{tz17}). In particular, the surjective homomorphism $A^{(n)}\twoheadrightarrow (A^{(n)})_{\rm red}$ induces an isomorphism $(A^{(n)})_{\et}\xrightarrow{\simeq}(A^{(n)})_{\rm red}$. 

For each affine scheme $U=\Spec A$ over $k$, we define $U^{\et}\Def\Spec A_{\et}$. Note that the canonical morphism $U\to U^{\et}$ is faithfully flat. Let $R\rightrightarrows U$ be a flat groupoid where $R$ and $U$ finite over $k$. Then the induced morphisms $R^{\et}\rightrightarrows U^{\et}$ define a groupoid. Moreover, if the residue fields of $R$ and $U$ are separable over $k$, then the induced morphisms $R_{\rm red}\rightrightarrows U_{\rm red}$ also define a groupoid which is canonically isomorphic to the first one $R^{\et}\rightrightarrows U^{\et}$~(cf.~\cite[\S3 Lemma 3.4]{tz17}). 

\begin{definition}(cf.~\cite[\S3 Definition 3.5]{tz17})\label{def:et quot ger}
Let $\Gamma$ be a finite stack over a field $k$ and let $U\lto\Gamma$ be a finite atlas from an affine $k$-scheme $U$ with $R=U\times_{\Gamma}U$. We define an \textit{\'etale quotient} $\Gamma^{\et}$ as the stack associated with the groupoid $R^{\et}\rightrightarrows U^{\et}$.
\end{definition}

\begin{lem}(cf.~\cite[\S3 Lemma 3.6]{tz17})
With the same notation as in Definition \ref{def:et quot ger}, for any finite \'etale stack $E$ over $k$, the induced morphism
\begin{equation*}
\Hom_k(\Gamma^{\et},E)\lto\Hom_k(\Gamma,E)
\end{equation*}
is an equivalence. Moreover, for any $i\ge 0$, the morphism $\Gamma^{\et}\lto(\Gamma^{(i)})^{\et}$ is an equivalence and for any sufficiently large $i\gg 0$, the functor $\Gamma^{(i)}\lto(\Gamma^{(i)})^{\et}$ has a section, whence the relative Frobenius morphism $\Gamma\lto\Gamma^{(i)}$ factors through $\Gamma^{\et}$. 
\end{lem}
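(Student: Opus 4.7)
The plan is to argue entirely at the level of the groupoid presentation $R\rightrightarrows U$ and its \'etale version $R^{\et}\rightrightarrows U^{\et}$, exploiting the universal property of $A_{\et}$: every $k$-algebra homomorphism from the finite $k$-algebra $A$ to a finite \'etale $k$-algebra factors uniquely through $A_{\et}$. For the first assertion, pick a finite \'etale atlas $V\to E$. A morphism $\Gamma\to E$ unpacks to an $R$-equivariant object of $E(U)$, which after pullback along $V$ corresponds to a finite \'etale $U$-algebra carrying descent data along $R$. The universal property above forces the underlying morphism $U\to E$ to factor uniquely through $U^{\et}=\Spec A_{\et}$, and by applying the same universal property to $R$ in place of $U$ the $R$-equivariant structure descends uniquely to an $R^{\et}$-equivariant structure. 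This produces a functor $\Hom_k(\Gamma,E)\to\Hom_k(\Gamma^{\et},E)$ which is quasi-inverse to restriction along the canonical $\Gamma\to\Gamma^{\et}$.

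For the second assertion, recall that the relative Frobenius $A^{(i)}\to A$, $a\otimes\lambda\mapsto a^{p^i}\lambda$, induces an isomorphism $(A^{(i)})_{\et}\xrightarrow{\sim}A_{\et}$. Applying this simultaneously to $U=\Spec A$ and to $R=\Spec B$ (where $R\rightrightarrows U$ corresponds to a cosimplicial diagram of finite $k$-algebras) yields a canonical isomorphism of groupoids $\bigl((R^{(i)})^{\et}\rightrightarrows(U^{(i)})^{\et}\bigr)\simeq\bigl(R^{\et}\rightrightarrows U^{\et}\bigr)$, hence an equivalence $\Gamma^{\et}\xrightarrow{\sim}(\Gamma^{(i)})^{\et}$.

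For the third assertion, for $i\gg0$ the residue fields of $A^{(i)}$ are separable over $k$ and the surjection $A^{(i)}\twoheadrightarrow(A^{(i)})_{\mathrm{red}}$ induces an isomorphism $(A^{(i)})_{\et}\xrightarrow{\sim}(A^{(i)})_{\mathrm{red}}$, as recalled just before the definition. Composing the reduction map with the inverse gives a $k$-algebra retraction $A^{(i)}\twoheadrightarrow(A^{(i)})_{\et}$, i.e.\ a scheme-theoretic section of $U^{(i)}\to(U^{(i)})^{\et}$. Performing the same construction on $R$ and using functoriality of reduction and of $(\cdot)^{\et}$, these sections are compatible with source, target, identity and composition, and therefore descend to a section $s\colon(\Gamma^{(i)})^{\et}\to\Gamma^{(i)}$. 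Finally, since for $i\gg0$ the image of $F^{(i)}\colon A^{(i)}\to A$ is an \'etale $k$-subalgebra of $A$, it is contained in $A_{\et}$, so the Frobenius has a canonical factorization $A^{(i)}\to A_{\et}\hookrightarrow A$; at the stack level this produces the desired factorization $\Gamma\to\Gamma^{\et}\to\Gamma^{(i)}$ which, under the equivalence of the second assertion, realises $s$.

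The main obstacle is the descent of equivariant structures along $R\to R^{\et}$ and the verification that the section $s$ in the third part fits into a strictly commutative $2$-diagram with the Frobenius. Both reduce to the uniqueness half of the universal property of $(\cdot)_{\et}$: once we know that every relevant morphism into an \'etale target factors through $(\cdot)^{\et}$, any two candidate factorizations or compatibilities coincide, which upgrades the componentwise constructions to genuine morphisms of stacks.
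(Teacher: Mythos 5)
The paper gives no proof of this lemma---it simply cites \cite[\S3 Lemma 3.6]{tz17}---so the comparison is only against correctness.

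Your overall plan (work on a finite groupoid presentation $R\rightrightarrows U$, reduce to statements about $A_{\et}\subseteq A$, and propagate through functoriality) is the right one, and your treatment of the second and third assertions is essentially sound: the isomorphism $(A^{(i)})_{\et}\xrightarrow{\sim}A_{\et}$ applied to $U$ and $R$ gives $\Gamma^{\et}\xrightarrow{\sim}(\Gamma^{(i)})^{\et}$, and for $i\gg0$ the retraction $A^{(i)}\twoheadrightarrow(A^{(i)})_{\mathrm{red}}\cong(A^{(i)})_{\et}$ (valid because $(A^{(i)})_{\et}$ is reduced, so meets the nilradical trivially) is natural and hence descends to a section of $\Gamma^{(i)}\to(\Gamma^{(i)})^{\et}$, while the fact that the image of $F^{(i)}\colon A^{(i)}\to A$ is \'etale for $i\gg 0$ gives the factorization of the Frobenius directly. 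One should check that $(R^{(i)})_{\mathrm{red}}\rightrightarrows(U^{(i)})_{\mathrm{red}}$ is in fact a groupoid, which is exactly where the separability hypothesis on residue fields enters, but you gesture at the right mechanism.

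The genuine gap is in the first assertion, and it comes in two pieces. First, the ``universal property'' you invoke is stated with the arrows reversed: as written (``every $k$-algebra homomorphism from $A$ to a finite \'etale $k$-algebra factors uniquely through $A_{\et}$''), it does not even typecheck with $A_{\et}\subseteq A$, since there is no canonical map $A\to A_{\et}$. What is true, and what you need, is the opposite direction: every $k$-algebra homomorphism from a finite \'etale $k$-algebra $B$ into $A$ factors uniquely through $A_{\et}$, because its image is a finite \'etale $k$-subalgebra of $A$. Second, and more seriously, even the correctly-stated universal property only gives $\Hom_k(U,V)\simeq\Hom_k(U^{\et},V)$ for $V$ a finite \'etale $k$-\emph{scheme}; to conclude $\Hom_k(U,E)\simeq\Hom_k(U^{\et},E)$ for a finite \'etale \emph{stack} $E$, an object of $E(U)$ is (after choosing an atlas $V\to E$) a finite \'etale cover $W\to U$ together with a map $W\to V$ and gluing data, and one must prove that the pullback functor identifies finite \'etale covers of $U^{\et}$ with finite \'etale covers of $U$ (and likewise the maps between them). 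That statement does not follow from the universal property of $A_{\et}$; it requires knowing that $U\to U^{\et}$ is a finite universal homeomorphism (integral, surjective, universally injective) and invoking topological invariance of the small \'etale site, or an equivalent henselian-local computation. Your sketch silently assumes this equivalence in the sentence ``\dots after pullback along $V$ corresponds to a finite \'etale $U$-algebra\dots The universal property above forces\dots'', and this is precisely the step that needs an argument.
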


In particular, the \'etale quotient $\Gamma^{\et}$ for a finite stack $\Gamma$ is independent of the choice of atlas $U\lto\Gamma$ and is unique up to unique isomorphism. 

\begin{definition}(cf.~\cite[\S3 Definition 3.9]{tz17})
A finite stack $\Gamma$ over $k$ is said to be \textit{local} if $\Gamma^{\et}=\Spec k$. Moreover, we define a \textit{pro-\'etale gerbe}, or \textit{pro-local gerbe} in the same manner as \cite[Definition 4.6]{bv15}.
\end{definition}

\begin{definition}\label{def:et loc ger}
Let $\calX$ be a fibered category in groupoids over $k$. An \textit{\'etale fundamental gerbe} (respectively \textit{local fundamental gerbe}) for $\calX$ is a pro-\'etale gerbe (respectively a pro-local gerbe) over $k$ together with a morphism $\calX\lto\Pi$ such that for any finite \'etale stack (respectively finite local stack) $\Gamma$ over $k$, the induced functor
\begin{equation*}
\Hom_k(\Pi,\Gamma)\lto\Hom_k(\calX,\Gamma)
\end{equation*}
is an equivalence of categories. If such a one $\Pi$ exists, it is unique up to unique isomorphism and we denote by $\Pi_{\calX/k}^{\et}$ (respectively $\Pi_{\calX/k}^{\loc}$) the \'etale fundamental gerbe (respectively the local fundamental gerbe) for $\calX$.
\end{definition}

\begin{prop}\label{prop:exist et loc ger}
Let $\calX$ be an algebraic stack of finite type over $k$. Then we have the following.
\begin{enumerate}
\renewcommand{\labelenumi}{(\arabic{enumi})}
\item The \'etale fundamental gerbe $\Pi_{\calX/k}^{\et}$ exists if and only if $\calX$ is geometrically connected, or equivalently, $H^0(\scrO_{\calX})_{\et}=k$.
\item Suppose that $\calX$ is reduced. Then the local fundamental gerbe $\Pi_{\calX/k}^{\loc}$ exists if and only if $H^0(\scrO_{\calX})$ does not contain nontrivial purely inseparable extensions of $k$.
\item If $\calX$ is inflexible, then the \'etale fundamental gerbe $\Pi^{\et}_{\calX/k}$ and the local fundamental gerbe $\Pi^{\loc}_{\calX/k}$ exist.
\end{enumerate} 
\end{prop}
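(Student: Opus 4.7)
The plan is to adapt the construction of the Nori fundamental gerbe from Borne--Vistoli (\cite{bv15}, Theorem 5.7) to the étale and local settings. For part (1), assuming $\calX$ is geometrically connected, I consider the 2-category of Nori-reduced morphisms $\calX \to \Gamma$ where $\Gamma$ ranges over finite étale gerbes over $k$. The first task is to verify that this 2-category is essentially small and cofiltered, so that its pro-limit exists as a pro-étale gerbe and solves the universal problem. Given two objects $\calX \to \Gamma_1$ and $\calX \to \Gamma_2$, I form $\Gamma_1 \times_{\Spec k} \Gamma_2$ and take the Nori-reduced factorization $\calX \to \Delta \to \Gamma_1 \times_{\Spec k} \Gamma_2$ (provided by Lemma 5.12 of \cite{bv15}); the key claim is that $\Delta$ is again a finite étale gerbe. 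Since $\Delta \to \Gamma_1 \times_{\Spec k} \Gamma_2$ is representable and its target is a finite étale stack, $\Delta$ is finite étale; Nori-reducedness together with the geometric connectedness of $\calX$ then forces $\Delta$ to be a gerbe. Equalizers of parallel morphisms in the system are handled analogously, yielding the cofiltered property.

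For the necessity of geometric connectedness in (1), suppose $\Pi^{\et}_{\calX/k}$ exists. For each finite separable extension $L/k$ the scheme $\Spec L$ is a finite étale stack, so the defining universal property gives
\begin{equation*}
\Hom_{k\text{-alg}}(L, k) = \Hom_k(\Pi^{\et}_{\calX/k}, \Spec L) \xrightarrow{\sim} \Hom_k(\calX, \Spec L) = \Hom_{k\text{-alg}}(L, H^0(\scrO_\calX)),
\end{equation*}
where the first equality uses that a profinite gerbe $\Pi$ over $k$ satisfies $H^0(\scrO_\Pi) = k$. Ranging over all finite separable $L$ yields $H^0(\scrO_\calX)_{\et} = k$. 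Part (2) follows by the same plan with ``finite étale gerbe'' replaced by ``finite local gerbe'' throughout, and using the étale-quotient functor $\Gamma \mapsto \Gamma^{\et}$ recalled before Definition \ref{def:et loc ger} to detect local-ness. The reducedness hypothesis is used so that $H^0(\scrO_\calX)$ has no nilpotents, which makes the absence of nontrivial purely inseparable field subextensions inside $H^0(\scrO_\calX)$ equivalent to the nonexistence of nontrivial morphisms $\calX \to \Spec K$ with $K/k$ purely inseparable; the latter is precisely the obstruction to factoring through a pro-local gerbe.

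For part (3), if $\calX$ is inflexible then it admits a morphism $\calX \to \Pi^{\N}_{\calX/k}$ into a profinite gerbe, and since $H^0(\scrO_{\Pi^{\N}}) = k$ the same holds for $\calX$. Hence both hypotheses of (1) and (2) are automatically satisfied, so $\Pi^{\et}_{\calX/k}$ and $\Pi^{\loc}_{\calX/k}$ exist; alternatively one can cut them out directly as the sub-limits of $\Pi^{\N}_{\calX/k}$ corresponding to Nori-reduced factorizations through finite étale (resp.\ local) gerbes. The main obstacle I anticipate is the stability check in the cofiltered argument: one must verify that the Nori-reduced factorization of $\calX \to \Gamma_1 \times_{\Spec k} \Gamma_2$ stays within the étale (resp.\ local) class. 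I expect this to follow from the compatibility of the étale-quotient functor $(-)^{\et}$ with Nori-reduced factorization on the source, combined with the characterization of local finite stacks as those with trivial étale quotient.
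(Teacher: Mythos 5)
Your proof takes a genuinely different route from the paper's: the paper simply cites Tonini--Zhang \cite{tz17} (the references in \S2.2 and \S7 of that paper), and the cited results for the local case are obtained there via the tannakian machinery (the categories $\calD_i(\calX)$ and their limit $\calD_\infty(\calX)$, cf.\ Theorem~\ref{thm:tann loc ger}), which constructs $\Pi^{\loc}_{\calX/k}$ directly as the gerbe of fiber functors and thereby sidesteps any appeal to Nori-reduced factorization. Your plan instead transposes the Borne--Vistoli cofiltered-limit construction of $\Pi^{\N}$ to the \'etale and local settings, which is a coherent alternative strategy, and the necessity directions (reading off the condition on $H^0(\scrO_{\calX})$ from the universal property applied to $\Spec L$) are sound.

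However, there are two genuine gaps in the sufficiency direction. First, you invoke \cite[Lemma 5.12]{bv15} to produce the Nori-reduced factorization $\calX\to\Delta\to\Gamma_1\times_k\Gamma_2$, but that lemma is stated for \emph{inflexible} $\calX$. Inflexibility is strictly stronger than the hypotheses available here: in part (1) you only have geometric connectedness, and in part (2) only reducedness together with the condition on purely inseparable subextensions. So the existence of the factorization through a finite gerbe --- which is precisely the nontrivial content one is trying to establish --- cannot be taken as a black box; you would need to re-prove the factorization lemma under these weaker hypotheses, restricted to \'etale (resp.\ local) targets, which is essentially what \cite{tz17} does by other means. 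Second, even granting the factorization, the stability of the class under it is not established: for the local case you correctly flag ``does $\Delta$ stay local?'' (i.e.\ $\Delta^{\et}=\Spec k$) as the main obstacle, but you leave it as an expectation rather than an argument. The needed compatibility of $(-)^{\et}$ with Nori-reduced factorization and with fiber products of gerbes is not formal and is the crux of the proof; in your sketch for part (1) the corresponding claim (``since $\Delta\to\Gamma_1\times_k\Gamma_2$ is representable with \'etale target, $\Delta$ is \'etale'') is also stated without the justification that the band of $\Delta$ injects into the band of the target after base change to $\bar k$. Both points are resolvable, but as written they are genuine gaps rather than routine omissions.

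A minor point in part (3): the sentence ``since $H^0(\scrO_{\Pi^{\N}})=k$ the same holds for $\calX$'' is not correct as stated --- the induced map $H^0(\scrO_{\Pi^{\N}})\to H^0(\scrO_{\calX})$ goes the wrong way for this conclusion. What inflexibility actually gives is that $H^0(\scrO_{\calX})$ is a purely inseparable algebraic field extension of $k$, from which $H^0(\scrO_{\calX})_{\et}=k$ and the absence of nontrivial purely inseparable subextensions beyond $H^0(\scrO_{\calX})$ itself follow; your alternative suggestion of carving $\Pi^{\et}$ and $\Pi^{\loc}$ out of $\Pi^{\N}$ as sub-limits is the cleaner route.
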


\begin{proof}
(1) See \cite[\S2 Lemma 2.7; \S4 Proposition 4.3]{tz17}.
 
(2) See \cite[\S7 Theorem 7.1]{tz17}.
 
(3) See \cite[\S4 Remark 4.2]{tz17}.
\end{proof}

\subsection{Tannakian reconstruction and recognition}\label{sec:tann reconst}

A \textit{pseudo-abelian} category is an additive category $\calC$ together with a family $J_{\calC}$ of sequences of the form $c'\longrightarrow c\longrightarrow c''$ in $\calC$~(cf.~\cite[Definition]{tz17}). A $\Z$-linear functor $\Phi:\calC\lto\calD$ of pseudo-abelian categories is said to be \textit{exact} if it maps any sequence of $J_{\calC}$ to a sequence isomorphic to a one of $J_{\calD}$.

Let $R$ be a ring. If $\calX$ is a fibered category over $(\Aff/R)$, then we consider the category $\Vect(\calX)$ of vector bundles on $\calX$ as a pseudo-abelian category with the family $J_{\Vect(\calX)}$ of \textit{pointwise exact} sequences
\begin{equation*}
\calF'\longrightarrow\calF\longrightarrow\calF''
\end{equation*}
i.e.\ for any affine $R$-scheme $T$ and any object $\xi\in\calX(T)$, the sequence
\begin{equation*}
\calF'(\xi)\longrightarrow\calF(\xi)\longrightarrow\calF''(\xi)
\end{equation*}
of vector bundles on $T$ is exact. If $\calC$ and $\calD$ are $R$-linear, monoidal and pseudo-abelian categories, we denote by $\Hom_{R,\otimes}(\calC,\calD)$ the category whose objects are $R$-linear exact monoidal functors and whose morphisms are natural monoidal isomorphisms. If $f:\calX\lto\calY$ is a base preserving functor of categories $\calX$ and $\calY$ over $(\Aff/R)$, $f^*\in\Hom_{R,\otimes}(\Vect(\calY),\Vect(\calX))$.

Let $\calC$ be a pseudo-abelian monoidal $R$-linear category. We define an fpqc stack $\Pi_{\calC}$ in groupoids over $R$ by attaching to each affine $R$-scheme $T$ the category
\begin{equation*}
\Pi_{\calC}(T)\Def\Hom_{R,\otimes}(\calC,\Vect(T)).
\end{equation*}
Note that there exists a natural monoidal $R$-linear functor over $(\Aff/R)$
\begin{equation}\label{eq:tann recg}
\Phi:\calC\lto\Vect(\Pi_{\calC})~;~c\longmapsto\bigl(\Pi_{\calC}(T)\ni\xi\mapsto\xi(c)\in\Vect(T)\bigl).
\end{equation}
A pseudo-abelian monoidal $R$-linear category $\calC$ over $(\Aff/R)$ is said to satisfy \textit{tannakian recognition}~(cf.~\cite[\S1]{tz17}) if the functor (\ref{eq:tann recg}) is an equivalence and for any sequence $\chi:c'\longrightarrow c\longrightarrow c''$, $\Phi(\chi)$ is pointwise exact if and only if $\chi\in J_{\calC}$.

Let $\calX$ be a fibered category over $(\Aff/R)$. Then there exists a base preserving functor
\begin{equation}\label{eq:tann recn}
\calX\lto\Pi_{\Vect(\calX)}.
\end{equation}
A fibered category $\calX$ in groupoids over $(\Aff/R)$ is said to satisfy \textit{tannakian reconstruction}~(cf.~\cite[\S1]{tz17}) if the functor (\ref{eq:tann recn}) is an equivalence.

Then a classical Tannaka duality (cf.~\cite[Th\'eor\`eme 1.12]{de90}) can be restated in the following way.

\begin{thm}(cf.~\cite[\S1 Example 1.5]{tz17})\label{thm:tann duality}
Let $k$ be a field. If $\calT$ is a $k$-tannakian category, then it satisfies tannakian recognition and $\Pi_{\calT}$ is a tannakian gerbe over $k$. Conversely, if $\Pi$ is a tannakian gerbe over $k$, then it satisfies tannakian reconstruction and $\Vect(\Pi)$ is a $k$-tannakian category.  
\end{thm}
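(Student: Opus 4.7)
The plan is to deduce the statement from the classical Tannaka duality theorem \cite[Théorème 1.12]{de90}, which is explicitly cited in the statement. The content of the theorem is a reformulation of that result in the language of ``tannakian recognition'' (a property of the source category $\calC$) and ``tannakian reconstruction'' (a property of the target fibered category $\calX$); these conditions are dualized to one another by the mutually (quasi-)inverse constructions $\calC \mapsto \Pi_{\calC}$ and $\calX \mapsto \Vect(\calX)$.

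For the first direction, I would start with a $k$-tannakian category $\calT$ and invoke Deligne's theorem to obtain a gerbe $\Pi$ over $k$ (the gerbe of fibre functors) together with a monoidal $k$-linear equivalence $\calT \xrightarrow{\simeq} \Vect(\Pi)$, under which the designated exact sequences of $\calT$ correspond to the pointwise exact sequences in $\Vect(\Pi)$. By the very definition of $\Pi_{\calT}$, its $T$-points are exactly the $k$-linear exact monoidal functors $\calT \to \Vect(T)$, i.e. the fibre functors; hence $\Pi_{\calT} \simeq \Pi$ as fpqc stacks, which shows that $\Pi_{\calT}$ is a tannakian gerbe. Tracing through the definitions, the functor $\Phi$ of \eqref{eq:tann recg} is then identified with Deligne's equivalence, proving tannakian recognition.

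For the converse, I would start with a tannakian gerbe $\Pi$ over $k$. To see that $\Vect(\Pi)$ is $k$-tannakian, I would choose a faithfully flat atlas $T \to \Pi$; then $\Vect(\Pi)$ is equivalent to the category of vector bundles on $T$ equipped with a descent datum along the associated flat affine groupoid, i.e. to the category of finite comodules over the corresponding Hopf algebroid, and this category is tannakian by \cite[III~\S2]{sa72}. Tannakian reconstruction amounts to the statement that the canonical morphism $\Pi \to \Pi_{\Vect(\Pi)}$ of \eqref{eq:tann recn} is an equivalence, which is exactly the assertion that a tannakian gerbe is recovered (up to canonical equivalence) from its category of representations; this is again part of \cite[Théorème 1.12]{de90}.

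The main obstacle is purely bookkeeping: one has to verify that the pseudo-abelian structure on $\Vect(\calX)$ defined here via pointwise exact sequences, together with the present notion of $k$-linear exact monoidal functor, agree with the conventions used by Deligne and Saavedra, so that the two formulations can be transported into one another. Once this comparison has been settled, both halves of the theorem reduce immediately to the classical Tannaka duality, and no further geometric input is needed.
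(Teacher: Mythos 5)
The paper does not prove this statement; it is recalled verbatim from Tonini--Zhang \cite[\S1 Example 1.5]{tz17}, so there is no internal argument to compare against. Your reduction to Deligne's Th\'eor\`eme 1.12 is the standard and correct way to see it, and it matches the proof given in the cited source in substance.

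Two small points worth making explicit when you carry out the ``bookkeeping.'' First, the identification $\Pi_{\calT}(T)\simeq\{\text{fibre functors }\calT\to\Vect(T)\}$ uses that any $k$-linear \emph{exact} monoidal functor out of a tannakian category is automatically \emph{faithful} (Deligne, Cor.~2.10); otherwise the $T$-points of $\Pi_{\calT}$ as defined in \S\ref{sec:tann reconst} would be a priori a larger groupoid than the gerbe of fibre functors. Second, the ``if and only if'' in the definition of tannakian recognition needs the same fact: exactness of $\chi$ clearly implies pointwise exactness of $\Phi(\chi)$ because every fibre functor is exact, while the converse uses that some (hence any) fibre functor is faithful exact and therefore reflects exactness, so pointwise exactness of $\Phi(\chi)$ forces $\chi\in J_{\calT}$. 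With those remarks inserted, both halves of your argument go through.
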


Moreover, Nori's reconstruction theorem (cf.~\cite[Chapter I, \S2.2 Proposition 2.9]{no82}) can be restated in the following way.

\begin{prop}\label{prop:nori reconst}
Let $G$ be an affine group scheme over $k$ and $\calX$ a fibered category in goupoids over $k$. Then there exists a natural equivalence of categories,
\begin{equation*}
\Hom_k(\calX,\cB G)\xrightarrow{~\simeq~}\Hom_{k,\otimes}(\Vect(\cB G),\Vect(\calX)).
\end{equation*}
\end{prop}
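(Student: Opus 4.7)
The plan is to reduce the statement to the tannakian reconstruction property of $\cB G$. Recall from Theorem \ref{thm:tann duality} that $\Rep(G) = \Vect(\cB G)$ is a $k$-tannakian category and that $\cB G$ satisfies tannakian reconstruction, i.e., the canonical base-preserving functor
\begin{equation*}
\cB G \xrightarrow{~\simeq~} \Pi_{\Vect(\cB G)}
\end{equation*}
is an equivalence. Consequently
\begin{equation*}
\Hom_k(\calX, \cB G) \simeq \Hom_k\bigl(\calX, \Pi_{\Vect(\cB G)}\bigr),
\end{equation*}
and the task is to identify the right-hand side with $\Hom_{k,\otimes}(\Vect(\cB G), \Vect(\calX))$. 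This identification is essentially the defining adjunction between the constructions $\calX \mapsto \Vect(\calX)$ and $\calC \mapsto \Pi_{\calC}$.

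In the forward direction, I would send a morphism $f \colon \calX \to \cB G$ to its pullback $f^{*} \colon \Vect(\cB G) \to \Vect(\calX)$, which is manifestly $k$-linear, monoidal, and pointwise exact. For the backward direction, given a functor $F \in \Hom_{k,\otimes}(\Vect(\cB G), \Vect(\calX))$, I construct a morphism $\Phi_F \colon \calX \to \Pi_{\Vect(\cB G)}$ as follows: for each affine $k$-scheme $T$ and each $\xi \in \calX(T)$, set
\begin{equation*}
\Phi_F(\xi) \Def \bigl(\,\Vect(\cB G) \xrightarrow{~F~} \Vect(\calX) \xrightarrow{~\xi^{*}~} \Vect(T)\,\bigr) \in \Pi_{\Vect(\cB G)}(T),
\end{equation*}
where the composite is exact, $k$-linear and monoidal because each factor is. Naturality in $\xi$ and in $T$ is immediate from the functoriality of pullbacks, so this assembles into a morphism of fibered categories $\calX \to \Pi_{\Vect(\cB G)} \simeq \cB G$.

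It then remains to verify that these two assignments are mutually quasi-inverse and natural in $\calX$. One composition is handled by the tautology $\xi^{*} \circ f^{*} = (f \circ \xi)^{*}$, so that starting from $f$, then passing through the associated tensor functor and back, recovers $f$ up to canonical isomorphism via the universal property of $\Pi_{\Vect(\cB G)}$. The other composition uses tannakian recognition for $\Vect(\cB G)$: a tensor functor $F$ is determined, up to unique monoidal isomorphism, by the compatible family $\{\xi^{*} \circ F\}_{\xi \in \calX(T),\, T \in (\Aff/k)}$, which is precisely the data read off from $\Phi_F$.

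The main obstacle I foresee is purely bookkeeping: checking 2-categorical naturality (on both objects and morphisms of the $\Hom$-categories) rigorously, and making sure that the pointwise exactness condition built into the definition of $J_{\Vect(\calX)}$ matches what pullbacks of short exact sequences on $\cB G$ give. None of this is conceptually difficult once one accepts the tannakian reconstruction statement in Theorem \ref{thm:tann duality}; indeed, the proposition is essentially a restatement of that reconstruction relative to a variable test category $\calX$, in the spirit of Nori's original argument.
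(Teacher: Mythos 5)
Your proposal is correct and follows essentially the same route as the paper: invoke tannakian reconstruction to identify $\cB G$ with $\Pi_{\Vect(\cB G)}$, then identify $\Hom_k(\calX,\Pi_{\Vect(\cB G)})$ with $\Hom_{k,\otimes}(\Vect(\cB G),\Vect(\calX))$ via the currying/adjunction built into the definition of $\Pi_{(-)}$ and $\Vect(-)$. The paper simply cites \cite[\S1]{tz17} for the second equivalence rather than unwinding it, and the second composition you check is really a tautology from the definition $\Vect(\calX)=\Hom_k(\calX,\Vect)$ rather than an instance of tannakian recognition, but this does not affect the correctness of the argument.
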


This is valid because $\cB G=\Pi_{\Vect(\cB G)}$ and the natural functor
\begin{equation*}
\Hom_k(\calX,\Pi_{\Vect(\cB G)})\lto\Hom_{k,\otimes}(\Vect(\cB G),\Vect(\calX)).
\end{equation*}
is an equivalence of categories~(cf.~\cite[\S1]{tz17}).

\subsection{Tannakian interpretation in the pseudo-proper case: Essentially finite bundles}

In this subsection, we recall a tannakian interpretation of the Nori fundamental gerbe under a properness assumption,  which was originally given by Nori \cite{no76} for the fundamental group scheme. We shall follow a simplified argument due to Borne--Vistoli~\cite[\S7]{bv15}.

\begin{definition}(cf.~\cite[\S7 Definition 7.1]{bv15})
A fibered category $\calX$ over $k$ is said to be \textit{pseudo-proper} if it satisfies the following conditions.
\begin{enumerate}
\renewcommand{\labelenumi}{(\roman{enumi})}
\item There exists a quasi-compact scheme $U$ and a morphism $U\lto\calX$ which is representable, faithfully flat, quasi-compact and quasi-separated.
\item For any locally free sheaf $E$ of $\scrO_{\calX}$-modules on $\calX$, the $k$-vector space $H^0(\calX,E)$ is finite-dimensional.  
\end{enumerate}
\end{definition}

\begin{ex}(cf.~\cite[\S7 Examples 7.2]{bv15})
\begin{enumerate}
\renewcommand{\labelenumi}{(\arabic{enumi})}
\item A finite stack $\Gamma$ over $k$ is pseudo-proper. 
\item A tannakian gerbe $\Phi$ over $k$ is pseudo-proper. 
\end{enumerate}
\end{ex}

Now let $\calX$ be a pseudo-proper fibered category in groupoids over a field $k$. Then, the category $\Vect(\calX)$ of vector bundles on $\calX$ is a $k$-linear rigid tensor category with finite-dimensional Hom vector spaces, in which the idempotents split, and the Krull--Schmidt theorem holds in $\Vect(\calX)$. Namely, every object $E$ of $\Vect(\calX)$ can be described as a direct product of indecomposable objects $E_i~(1\le i\le n)$,
$E\simeq\oplus_{i=1}^n E_i$,
and moreover such a description of $E$ is unique up to isomorphism.    

\begin{definition}(cf.~\cite[\S7 Definition 7.5]{bv15})
A vector bundle $E\in\Vect(\calX)$ is said to be \textit{finite} if there exist $f$ and $g$ in $\mathbb{N}[t]$ with $f\neq g$ such that $f(E)\simeq g(E)$, or equivalently, the set of isomorphism classes of indecomposable components of all the powers of $E$ is finite.   
\end{definition}

In particular, if $E$ is a finite bundle over $\calX$, then all the indecomposable components are also finite.  
Moreover, if $E$ and $E'$ are finite bundles on $\calX$, then the direct sum $E\oplus E'$, the tensor product $E\otimes_{\scrO_{\calX}} E'$ and the dual $E^{\vee}$ are also finite~(cf.~\cite[\S7 Proposition 7.6]{bv15}).

\begin{definition}(cf.~\cite[\S7 Definition 7.7]{bv15})
A vector bundle $E$ over a pseudo-proper fibered category in groupoids over $k$ is said to be \textit{essentially finite} if it is the kernel of a homomorphism between two finite bundles. We denote by $\EFin(\calX)$ the category of essentially finite bundles over $\calX$.
\end{definition}

\begin{ex}(cf.~\cite[\S7 Proposition 7.8]{bv15})\label{ex:EFin prof ger}
Let $\Phi$ be a profinite gerbe over a field $k$. Then all the vector bundles on $\Phi$ are essentially finite, whence
\begin{equation*}
\Vect(\Phi)=\EFin(\Phi).
\end{equation*}
Indeed, if we write $\Phi=\varprojlim_{i\in I}\Gamma_i$ with $\Gamma_i$ finite, then $\Vect(\Phi)=\varinjlim_{i\in I}\Vect(\Gamma_i)$. Therefore, it suffices to prove the claim in the case where $\Phi=\Gamma$ is finite. Since $\Gamma$ is a finite gerbe over $k$, there exists a faithfully flat representable morphism $\pi:T\Def\Spec K\lto\Gamma$ from the spectrum of a field $K$ which is finite over $k$. First note that the vector bundle $\pi_*\scrO_{T}$ is finite on $\Gamma$ 
because
\begin{equation*}
\pi_*\scrO_{T}\otimes\pi_*\scrO_T\simeq\pi_*(\scrO_T\otimes\pi^*\pi_*\scrO_{T})\simeq\pi_*(\scrO_T^{\oplus d}),
\end{equation*}
for some $d>0$. Now fix an arbitrary vector bundle $E\in\Vect(\Gamma)$. Let $E$ be of rank $r>0$. Since $T$ is the spectrum of a field, we have $\pi^*E\simeq\scrO_{T}^{\oplus r}$, whence $E\hookrightarrow\pi_*\pi^*E\simeq\pi_*\scrO_T^{\oplus r}$. 
Again by using the fact that $T$ is the spectrum of a field together with flat descent, we can easily see that the cokernel of this inclusion is also a vector bundle over $\Gamma$ and can be embedded into $\pi_*\scrO_{T}^{\oplus m}$ for some $m>0$. As $\pi_*\scrO_T$ is a finite bundle over $\Gamma$, we can conclude that $E$ is essentially finite. 
\end{ex}

\begin{thm}(Nori, Borne--Vistoli, cf.~\cite{no76}\cite[\S7 Theorem 7.9]{bv15})\label{thm:EFin}
Let $\calX$ be an inflexible and pseudo-proper fibered category over a field $k$ and $\calX\lto\Pi^{\N}_{\calX/k}$ the Nori fundamental gerbe for $\calX/k$. Then the pullback functor
\begin{equation*}
\Vect(\Pi^{\N}_{\calX/k})\lto\Vect(\calX)
\end{equation*}
is fully faithful and gives an equivalence of tensor categories between $\Vect(\Pi^{\N}_{\calX/k})$ and $\EFin(\calX)$. In particular, the category $\EFin(\calX)$ of essentially finite bundles over $\calX$ is a tannakian category over $k$. 
\end{thm}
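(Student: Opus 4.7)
The plan is to combine Tannaka duality (Theorem \ref{thm:tann duality}) with the universal property of $\Pi_{\calX/k}^{\N}$. The first step is to check that the pullback functor takes values in $\EFin(\calX)$: by Example \ref{ex:EFin prof ger} every vector bundle on a profinite gerbe is essentially finite, and pullback along $\calX \to \Pi_{\calX/k}^{\N}$ is an exact $k$-linear tensor functor, hence preserves finite bundles and commutes with the formation of kernels. So the functor factors as $\Vect(\Pi_{\calX/k}^{\N}) \to \EFin(\calX) \hookrightarrow \Vect(\calX)$.

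For essential surjectivity, I would fix $E = \Ker(F_1 \to F_2)$ with $F_1, F_2$ finite and consider the smallest full $k$-linear rigid tensor subcategory $\calT_E \subseteq \Vect(\calX)$ containing $F_1, F_2$ and closed under tensor products, duals, direct sums, subquotients and kernels. Pseudo-properness supplies finite-dimensional Hom spaces and the Krull--Schmidt theorem in $\Vect(\calX)$, while the very definition of a finite bundle guarantees that only finitely many indecomposable isomorphism classes appear in all tensor powers of $F_1 \oplus F_2$, so in $\calT_E$. Granting that $\calT_E$ is abelian, it is therefore a finite tannakian category over $k$, and Theorem \ref{thm:tann duality} produces a finite gerbe $\Gamma$ with $\Vect(\Gamma) \simeq \calT_E$. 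Via Proposition \ref{prop:nori reconst}, the tautological inclusion $\calT_E \hookrightarrow \Vect(\calX)$ corresponds to a morphism $\calX \to \Gamma$, which by the universal property of the Nori fundamental gerbe factors as $\calX \to \Pi_{\calX/k}^{\N} \to \Gamma$. Hence $E$ is the pullback of the corresponding object of $\Vect(\Pi_{\calX/k}^{\N})$.

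For fully faithfulness, given $E, F \in \Vect(\Pi_{\calX/k}^{\N})$ with $V \Def E^{\vee} \otimes F$, it suffices to show that $H^0(\Pi_{\calX/k}^{\N}, V) \to H^0(\calX, V|_\calX)$ is a bijection. By the previous step $V$ descends to a bundle $V_\Gamma$ on some finite gerbe $\Gamma$, so the claim reduces to showing that for any finite gerbe $\Gamma'$ and any $W \in \Vect(\Gamma')$ the natural map on global sections is an isomorphism. This is obtained by encoding sections of $W$ as morphisms into an auxiliary finite affine stack over $\Gamma'$ (for instance the torsor of splittings of $0 \to W \to W \oplus \scrO_{\Gamma'} \to \scrO_{\Gamma'} \to 0$) and invoking the universal property $\Hom_k(\Pi_{\calX/k}^{\N}, \Gamma'') \simeq \Hom_k(\calX, \Gamma'')$ for finite stacks $\Gamma''$.

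The principal obstacle is verifying abelianness of $\calT_E$, equivalently, that kernels and cokernels of morphisms between finite bundles on a pseudo-proper $\calX$ remain essentially finite. The ingredients supplied by pseudo-properness (Krull--Schmidt and finite-dimensional Hom spaces) reduce the issue to a semistability-type argument in the spirit of Nori's original treatment in \cite{no76}, as streamlined in \cite{bv15}. Once this is in place, the remainder of the theorem follows formally from Tannaka duality and the defining universal property of $\Pi_{\calX/k}^{\N}$.
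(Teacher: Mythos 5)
The paper offers no proof of this theorem; it is stated by citation to Nori \cite{no76} and Borne--Vistoli \cite[\S 7 Theorem 7.9]{bv15}, so the reference proof is the one in \cite{bv15}. Your sketch has a genuine gap at its center, together with a secondary error. The assertion that $\EFin(\calX)$ is abelian --- equivalently, that kernels and cokernels of morphisms between (essentially) finite bundles are again vector bundles --- \emph{is} the substantive content of the theorem, not a lemma that can be bracketed by the phrase ``granting that $\calT_E$ is abelian.'' Indeed the category $\calT_E$ as you define it, ``closed under \ldots subquotients and kernels,'' is not a priori a subcategory of $\Vect(\calX)$ at all, since kernels of morphisms of vector bundles need not be vector bundles; that they are (within the generated subcategory) is precisely the difficulty. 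The proposed way to close the gap --- a ``semistability-type argument in the spirit of Nori's original treatment'' --- is not available in this generality: Nori's slope argument requires $\calX$ to be a proper polarized variety so that degrees exist, whereas a pseudo-proper fibered category (a finite gerbe, a classifying stack $\cB G$, a root stack) carries no polarization and no notion of degree. The whole point of \cite[\S 7]{bv15} is to replace the semistability step by a purely categorical argument, using Krull--Schmidt and the finiteness of the set of indecomposable summands of the tensor powers $(E\oplus E^{\vee})^{\otimes n}$ to build the finite gerbe directly, with abelianness of $\EFin(\calX)$ emerging as a consequence rather than an input.

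Separately, your full-faithfulness step fails as written. You encode a section of $W\in\Vect(\Gamma')$ as a morphism into the ``torsor of splittings of $0\to W\to W\oplus\scrO_{\Gamma'}\to\scrO_{\Gamma'}\to 0$,'' described as a finite affine stack over $\Gamma'$. But that torsor is a torsor under the vector group attached to $W$, hence has positive relative dimension over $\Gamma'$ whenever $W\neq 0$; it is affine but \emph{not finite}, and the universal property defining $\Pi^{\N}_{\calX/k}$ concerns only finite stacks over $k$, so it says nothing about morphisms into this torsor. In \cite{bv15} full faithfulness is instead obtained by characterizing Nori-reduced morphisms $\calX\to\Gamma$ via the property that $\Vect(\Gamma)\to\Vect(\calX)$ is fully faithful with essential image closed under subobjects, combined with the fact that $\Pi^{\N}_{\calX/k}$ is a cofiltered limit of such Nori-reduced quotients.
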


\begin{cor}(cf.~\cite[\S7 Corollary 7.10]{bv15})\label{cor:EFin}
Let $\Phi$ be a tannakian gerbe over a field $k$. Then, the full subcategory $\EFin(\Phi)$ consisting of essentially finite bundles is a tannakian category over $k$.  
\end{cor}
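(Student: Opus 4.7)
The plan is to deduce the corollary from Theorem \ref{thm:EFin} applied to $\calX=\Phi$. Every tannakian gerbe is pseudo-proper by the example preceding that theorem, so the only nontrivial step is to verify that $\Phi$ is inflexible, i.e.\ that it admits a Nori fundamental gerbe $\Pi_{\Phi/k}^{\N}$ in the sense of Definition 5.3 of Borne--Vistoli.

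To construct $\Pi_{\Phi/k}^{\N}$, I would form the $2$-limit of all pairs $(\Gamma,\Phi\to\Gamma)$ with $\Gamma$ a finite gerbe over $k$. Two things have to be checked: (i) that this indexing $2$-category is essentially small and cofiltered, and (ii) that the resulting $2$-limit $\Pi$ satisfies the universal property against every finite stack, not merely against every finite gerbe. For (i), given two such factorizations $\Phi\to\Gamma_{1}$ and $\Phi\to\Gamma_{2}$, the joint morphism $\Phi\to\Gamma_{1}\times_{k}\Gamma_{2}$ lands in its scheme-theoretic image, which is again a finite gerbe because $\Phi$ is itself a gerbe and images of gerbes are gerbes. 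For (ii), any morphism $\Phi\to\Gamma$ into a finite stack factors through a finite gerbe quotient of $\Gamma$ by the same gerbe-image argument, reducing the universal property to finite gerbes.

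With inflexibility established, Theorem \ref{thm:EFin} produces an equivalence of $k$-linear tensor categories $\Vect(\Pi_{\Phi/k}^{\N})\simeq\EFin(\Phi)$. Because $\Pi_{\Phi/k}^{\N}$ is a profinite, hence tannakian, gerbe, its category of vector bundles is a $k$-tannakian category by Theorem \ref{thm:tann duality}, and the equivalence transports this tannakian structure onto $\EFin(\Phi)$, which is the content of the corollary.

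I expect the principal obstacle to be step (i), the cofilteredness of the system of finite gerbe quotients of $\Phi$: one must confirm that scheme-theoretic images of $\Phi$ inside finite products of finite gerbes remain finite gerbes, which uses in an essential way that $\Phi$ is itself a gerbe. Once this point is settled, the remainder of the argument is a formal combination of Theorem \ref{thm:EFin} and Theorem \ref{thm:tann duality}, both already recorded above.
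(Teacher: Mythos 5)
Your strategy matches the one behind \cite[Corollary 7.10]{bv15}, which the paper cites without reproducing: show that a tannakian gerbe $\Phi$ is pseudo-proper (already supplied by the example preceding Theorem \ref{thm:EFin}) and inflexible, apply Theorem \ref{thm:EFin} to obtain an equivalence $\Vect(\Pi^{\N}_{\Phi/k})\simeq\EFin(\Phi)$, and transport the tannakian structure from the left side using Theorem \ref{thm:tann duality}. You correctly single out inflexibility as the one genuine input. However, rebuilding the Nori fundamental gerbe by hand as a $2$-limit is more than is required and, as written, leaves gaps: cofilteredness of a $2$-category also requires coequalizing parallel $1$-morphisms (not just producing common lower bounds for pairs of objects), and essential smallness is asserted rather than argued. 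What you actually need is only the bare factorization condition, namely Borne--Vistoli's Definition 5.3 of inflexibility: every morphism $\Phi\to\Gamma$ into a finite stack factors through a finite gerbe. Once that is in hand, the existence and universal property of $\Pi^{\N}_{\Phi/k}$ is precisely \cite[Theorem 5.7]{bv15}, which the paper already invokes, so there is no need to reconstruct the pro-object yourself.

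For that factorization statement — your ``images of gerbes are gerbes'' — a clean argument is available: choose an fppf extension $k'/k$ neutralizing $\Phi$, so that $\Phi_{k'}\simeq\cB_{k'}G$ for some affine $k'$-group scheme $G$; a morphism $\cB_{k'}G\to\Gamma_{k'}$ corresponds to an object $\gamma\in\Gamma(k')$ together with a homomorphism $\rho\colon G\to\uAut_{k'}(\gamma)$, where $\uAut_{k'}(\gamma)$ is a \emph{finite} $k'$-group scheme because $\Gamma$ is a finite stack; setting $H=\im(\rho)$ gives a finite gerbe $\cB_{k'}H$ through which $\cB_{k'}G\to\Gamma_{k'}$ factors, and since this is the canonical Nori-reduced factorization it satisfies fppf descent (cf.\ the uniqueness in \cite[Lemma 5.12]{bv15}) and produces a finite gerbe over $k$ through which $\Phi\to\Gamma$ factors. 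So your identified key lemma is sound; the overall argument is simply tidier if you establish inflexibility directly in this form and then cite \cite[Theorem 5.7]{bv15} instead of re-deriving the $2$-limit.
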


Let us introduce the following notation.

\begin{definition}\label{def:EFin sub}
Let $k$ be a field and $\calT$ a tannakian category over $k$. We define the full tannakian subcategory $\EFin(\calT)$ of $\calT$ to be the essential image of the fully faithful functor $\EFin(\Pi_{\calT})\hookrightarrow\Vect(\Pi_{\calT})\xleftarrow{~\simeq~}\calT$~(cf. Theorem \ref{thm:tann duality} and Corollary \ref{cor:EFin}). We denote by $\widehat{\Pi}_{\calT}$ the tannakian gerbe $\Pi_{\EFin(\calT)}$ associated with the tannakian category $\EFin(\calT)$ over $k$. By definition, $\widehat{\Pi}_{\calT}$ is a profinite gerbe over $k$.  
\end{definition}

\subsection{Formalism for tannakian interpretations}\label{subsec:tann}

Let $k$ be a field and consider two fibered categories $\calX$ and $\calX_{\calT}$ over $(\Aff/k)$ together with a base preserving functor $\pi_{\calT}:\calX\lto\calX_{\calT}$. Define $\calT(\calX)\Def\Vect(\calX_{\calT})$, which is a pseudo-abelian rigid monoidal $k$-linear category and which admits a $k$-linear monoidal exact functor $\pi_{\calT}^*:\calT(\calX)\lto\Vect(\calX)$. 
We will apply the following formalism to the fibered categories $\calX_{\calT}=\calX^{(\infty)}$~(cf.~\S\ref{sec:Fdiv}) and $\calX_{\calT}=\Spec k$~(cf.~\S\ref{sec:loc FG})

\begin{axio}(cf.~\cite[\S5 Axioms 5.2]{tz17})\label{axio:tann}
Let $L\Def \mathrm{End}_{\calT(\calX)}(\unit_{\calT(\calX)})$ and consider the following conditions. 
\begin{description}
\item[(A)] $\calT(\calX)=\Qcohfp(\calX_{\calT})$.
\item[(B)] The functor $\pi_{\calT}^*:\calT(\calX)\lto\Vect(\calX)$ is faithful.
\item[(C)] For all finite \'etale stacks $\Gamma$ over $L$, the following functor is an equivalence of categories,
\begin{equation*}
\Hom_L(\calX_{\calT},\Gamma)\lto\Hom_L(\calX,\Gamma).
\end{equation*}
\end{description}
\end{axio}

The following provides us a formalism to get a tannakian interpretation of the \'etale fundamental gerbe.

\begin{thm}(Tonini--Zhang, cf.~\cite[\S5 Theorem 5.8]{tz17})\label{thm:tann et ger}
\begin{enumerate}
\renewcommand{\labelenumi}{(\arabic{enumi})}
\item Suppose that Axiom \ref{axio:tann}(A) is satisfied and $L$ is a field. Then Axiom \ref{axio:tann}(B) is satisfied and $\calT(\calX)$ is an $L$-tannakian category. Moreover, for any tannakian gerbe $\Gamma$ over $L$, the functor
\begin{equation*}
\Hom_L(\Pi_{\calT(\calX)},\Gamma)\lto\Hom_L(\calX_{\calT},\Gamma)
\end{equation*}
is an equivalence of categories.
\item (cf.~\cite[\S5 Proposition 5.7]{tz17}) 
Suppose that Axiom \ref{axio:tann}(A) is satisfied. If $\calX$ is connected and Axiom \ref{axio:tann}(B) is satisfied, then $L$ is a field. 
\item Suppose that Axioms \ref{axio:tann}(A), (C) are satisfied and that $L$ is a field. Then the morphism $\calX\lto\Pi_{\calT(\calX)}^{\et}\Def(\Pi_{\calT(\calX)})^{\et}$ gives the \'etale fundamental gerbe for $\calX$ over $L$.
\begin{equation*}
\Vect(\Pi^{\et}_{\calX/L})\simeq\mathrm{EFin}(\calT(\calX)).
\end{equation*}
\end{enumerate}
\end{thm}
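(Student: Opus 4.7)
The plan is to handle the three parts in the order (2), (1), (3), since (2) establishes a basic algebraic fact about $L$ that feeds into (1), and (3) is a refinement of (1) using the extra Axiom \ref{axio:tann}(C).

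For part (2), I would first observe that $L=\End_{\calT(\calX)}(\unit)$ is a commutative $k$-algebra because $\calT(\calX)=\Qcohfp(\calX_{\calT})$ is a symmetric monoidal category. Under Axiom (A) this category is abelian, so for each $\lambda\in L$ one can form $\Ker(\lambda:\unit\to\unit)$ and $\mathrm{Im}(\lambda)$, which correspond to an ideal decomposition of $L$. The faithful monoidal functor $\pi_{\calT}^{*}$ sends $\unit$ to $\scrO_{\calX}$ and hence gives an injective ring homomorphism $L\hookrightarrow H^{0}(\calX,\scrO_{\calX})$. If $\calX$ is connected, then $H^{0}(\calX,\scrO_{\calX})$ contains no nontrivial idempotents, so neither does $L$. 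Combined with the splitting of $L$ into kernels/images of elements coming from the abelian structure of $\calT(\calX)$, this forces every nonzero $\lambda\in L$ to be invertible, i.e.\ $L$ is a field.

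For part (1), I would proceed as follows. First, Axiom (B) should be extracted from Axiom (A) and $L$ being a field: any nonzero map $f:E\to F$ in $\calT(\calX)=\Qcohfp(\calX_{\calT})$ has nonzero image $\mathrm{Im}(f)\subseteq F$, and since $\pi_{\calT}^{*}$ is exact and monoidal (so $\pi_{\calT}^{*}(\unit)=\scrO_{\calX}$ is nonzero by $L$ a field), the pullback $\pi_{\calT}^{*}(\mathrm{Im}(f))$ is nonzero, whence $\pi_{\calT}^{*}(f)\neq 0$. Next, to recognize $\calT(\calX)$ as $L$-tannakian, I would verify the axioms of a tannakian category: $L$-linear, abelian (by (A)), rigid symmetric monoidal (dualizability for objects in $\Vect(\calX_{\calT})\subseteq\Qcohfp(\calX_{\calT})$), with $\End(\unit)=L$ a field, and admitting a fiber functor. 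The fiber functor part is the main obstacle: one typically needs a point of $\calX_{\calT}$ valued in an extension of $L$, together with Deligne's criterion to ensure neutrality or at least a tannakian (non-neutral) structure. Granting this, Theorem \ref{thm:tann duality} yields that $\Pi_{\calT(\calX)}$ is a tannakian gerbe over $L$ satisfying tannakian reconstruction, and
\begin{equation*}
\Hom_{L}(\Pi_{\calT(\calX)},\Gamma)\xrightarrow{\simeq}\Hom_{L,\otimes}(\Vect(\Gamma),\Vect(\Pi_{\calT(\calX)}))=\Hom_{L,\otimes}(\Vect(\Gamma),\calT(\calX))
\end{equation*}
for every tannakian gerbe $\Gamma$, and this last category identifies (via Axiom (A) and Proposition \ref{prop:nori reconst}-style reasoning applied to $\calX_{\calT}$) with $\Hom_{L}(\calX_{\calT},\Gamma)$.

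For part (3), I would combine part (1) with Axiom (C). Given a finite \'etale stack $\Gamma$ over $L$, any morphism $\calX_{\calT}\to\Gamma$ factors uniquely through the \'etale quotient $(\Pi_{\calT(\calX)})^{\et}=:\Pi^{\et}_{\calT(\calX)}$ by the universal property of \'etale quotients of finite gerbes, and part (1) shows this gives an equivalence $\Hom_{L}(\Pi^{\et}_{\calT(\calX)},\Gamma)\xrightarrow{\simeq}\Hom_{L}(\calX_{\calT},\Gamma)$. Applying Axiom (C) to translate between morphisms from $\calX_{\calT}$ and morphisms from $\calX$, we obtain
\begin{equation*}
\Hom_{L}(\Pi^{\et}_{\calT(\calX)},\Gamma)\xrightarrow{\simeq}\Hom_{L}(\calX,\Gamma),
\end{equation*}
which (after passing to projective limits of finite \'etale $\Gamma$) characterizes $\Pi^{\et}_{\calT(\calX)}$ as the \'etale fundamental gerbe of $\calX$ over $L$ in the sense of Definition \ref{def:et loc ger}. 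The identification $\Vect(\Pi^{\et}_{\calX/L})\simeq\EFin(\calT(\calX))$ then follows from Example \ref{ex:EFin prof ger} (every bundle on a profinite gerbe is essentially finite) together with the tannakian reconstruction of $\Pi^{\et}_{\calT(\calX)}$ inside $\Pi_{\calT(\calX)}$ via the full subcategory $\EFin$ of Definition \ref{def:EFin sub}. The main obstacle throughout is the production of a fiber functor in part (1), which I expect requires a careful use of the geometric hypotheses built into the setup.
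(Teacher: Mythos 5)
The paper offers no proof of this theorem: it is stated purely as a citation of Tonini--Zhang \cite[\S5 Theorem 5.8]{tz17} and used as a black box, with the text moving on immediately afterwards. So there is no proof of the paper's own to compare your argument against; your proposal is a reconstruction of the argument in \cite{tz17}.

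Taken on its own terms, the reconstruction has several genuine gaps. In (2), the step from ``$L$ has no nontrivial idempotents'' to ``$L$ is a field'' does not follow: a commutative ring can be an integral domain (hence idempotent-free) without being a field, for instance $k[t]$, and nothing in your sketch forces $L$ to be finite over $k$ or artinian. The phrase ``combined with the splitting of $L$ into kernels/images'' gestures at the needed argument but does not supply it; one has to use the exactness and faithfulness of $\pi_{\calT}^{*}$ in the abelian category $\Qcohfp(\calX_{\calT})$ to turn a non-invertible nonzero $\lambda$ into a contradiction, and that is precisely the nontrivial content of \cite[\S5 Proposition 5.7]{tz17}. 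In (1), you explicitly flag the missing fiber functor, and that gap is real: a rigid abelian $L$-linear monoidal category with $\End(\unit)=L$ a field is not automatically tannakian; one needs Deligne's internal characterization, and verifying its hypotheses is where the identification $\calT(\calX)=\Qcohfp(\calX_{\calT})$ does actual geometric work. Your final identification $\Hom_{L,\otimes}(\Vect(\Gamma),\calT(\calX))\simeq\Hom_{L}(\calX_{\calT},\Gamma)$ also does not follow from Proposition \ref{prop:nori reconst} as invoked, since that proposition is about morphisms into $\cB G$, not into an arbitrary tannakian gerbe $\Gamma$; a form of tannakian recognition for $\calX_{\calT}$ is what is really being used. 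In (3), the equivalence $\Vect(\Pi^{\et}_{\calX/L})\simeq\EFin(\calT(\calX))$ needs the identification of $(\Pi_{\calT(\calX)})^{\et}$ with $\Pi_{\EFin(\calT(\calX))}$ in the sense of Definition \ref{def:EFin sub}; your appeal to Example \ref{ex:EFin prof ger} gives one inclusion (every bundle on a profinite gerbe is essentially finite) but not the converse characterization that the \'etale quotient is precisely the gerbe cut out by $\EFin(\calT(\calX))$. In short, the architecture of your outline is compatible with the expected argument, but (2) contains a logical gap, (1) hinges on a substantive verification you only name, and the final statement of (3) is asserted rather than derived.
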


From now on, we shall assume that $k$ is a field of positive characteristic $p>0$. If $\calX$ be a fibered category over $(\Aff/k)$, then the \textit{Frobenius pullback} 
\begin{equation*}
F^*:\Vect(\calX)\lto\Vect(\calX)
\end{equation*}
is defined by applying the absolute Frobenius pointwise. The functor $F^*$ is an $\F_p$-linear exact monoidal functor.

\begin{definition}(cf.~\cite[\S5 Definition 5.11]{tz17})
For each integer $i\ge 0$, we define the category $\calT_i(\calX)$ to be the category of tuples $(\calF,\calG,\lambda)$ where 
\begin{itemize}
\item $\calF\in\Vect(\calX)$,
\item $\calG\in\calT(\calX)$, and
\item $\lambda:F^{i*}\calF\xrightarrow{~\simeq~}\calG|_{\calX}\Def\pi_{\calT}^*\calG$ is an isomorphism. 
\end{itemize}
A morphism $(\calF,\calG,\lambda)\lto(\calF',\calG',\lambda')$ is a pair of morphisms $\calF\lto\calF'$ and $\calG\lto\calG'$ which are compatible with the isomorphisms $\lambda$ and $\lambda'$. The category $\calT_i(\calX)$ is $\F_p$-linear monoidal and rigid with the unit object $\unit_{\calT_i(\calX)}=(\scrO_{\calX},\scrO_{\calX_{\calT}},{\rm id})$. We endow $\calT_i(\calX)$ with a $k$-structure via
\begin{equation*}
k\longrightarrow\End_{\calT_i(\calX)}(\unit_{\calT_i(\calX)})~;~a\longmapsto (a,a^{p^i}).
\end{equation*}
We consider $\calT_i(\calX)$ as a pseudo-abelian category together the distinguished set $J_{\calT_i(\calX)}$ of sequences which are pointwise exact. The forgetful functor $\calT_i(\calX)\lto\Vect(\calX)$ is $k$-linear monoidal and exact. 

There exists a $k$-linear monoidal and exact functor
\begin{equation}\label{eq:transition map}
\calT_i(\calX)\longrightarrow\calT_{i+1}(\calX)~;~(\calF,\calG,\lambda)\longmapsto (\calF,F^*\calG,F^*\lambda).
\end{equation}
We define $\calT_{\infty}(\calX)$ as the direct limit of the categories $\calT_i(\calX)$. The category $\calT_{\infty}(\calX)$ is a $k$-linear monoidal rigid category.
\end{definition}

The following provides us a formalism to get a tannakian interpretation of the Nori fundamental gerbe.

\begin{thm}(Tonini--Zhang, cf.~\cite[\S5 Theorem 5.14]{tz17})\label{thm:tann Nori ger}
Suppose that Axiom \ref{axio:tann}(A) is satisfied for $\pi_{\calT}:\calX\longrightarrow\calX_{\calT}$, that $L=L_0=\End_{\calT(\calX)}(\unit_{\calT(\calX)})$ is a field and the following condition holds for $\calX$. 
\begin{equation}\label{eq:tann Nori ger}
\text{For any $\calF\in\Qcohfp(\calX)$, if $F^*\calF\in\Vect(\calX)$, then $\calF\in\Vect(\calX)$}.
\end{equation}
Then we have the following.
\begin{enumerate}
\renewcommand{\labelenumi}{(\arabic{enumi})}
\item For any $i\in\mathbb{N}\cup\{\infty\}$, the ring $L_i\Def\End_{\calT_i(\calX)}(\unit_{\calT_i(\calX)})$ is a field, $\calT_i(\calX)$ is an $L_i$-tannakian category and $\Pi_{\calT_i(\calX)}$ is a tannakian gerbe over $L_i$, and the functor $\calT_i(\calX)\longrightarrow\Vect(\calX)$ is faithful monoidal and exact. 
\item The functors $\calT_i(\calX)\longrightarrow\calT_{i+1}(\calX)$ and $\calT_i(\calX)\longrightarrow\calT_{\infty}(\calX)$ are faithful monoidal,  exact and compatible with the forgetful functors $\calT_i(\calX)\longrightarrow\Vect(\calX)$. The functor $\calT_{\infty}(\calX)\lto\Vect(\calX)$ induces a morphism $\calX\lto\Pi_{\calT_{\infty}}(\calX)$, whence $\calX$ is a fibered category over $L_{\infty}$. Moreover,
\begin{equation*}
L_{\infty}=\{x\in H^0(\scrO_{\calX})~|~x^{p^i}\in L_0~\text{for some $i\ge 0$}\}
\end{equation*}
is purely inseparable over $L_0$.
\item For any $i\in\mathbb{N}\cup\{\infty\}$, we have 
\begin{equation*}
\mathrm{EFin}(\calT_i(\calX))=\{(\calF,\calG,\lambda)~|~\calG\in\mathrm{EFin}(\calT(\calX))\}
\end{equation*}
in $\calT_i(\calX)$, and $\mathrm{EFin}(\calT_{\infty}(\calX))\simeq\varinjlim_{i}\mathrm{EFin}(\calT_i(\calX))$. Moreover, the morphism $\calX\longrightarrow\Pi_{\calT_{\infty}(\calX)}^{\loc}\Def(\Pi_{\calT_{\infty}(\calX)})^{\loc}$ is the local fundamental gerbe for $\calX$ over $L_{\infty}$. 
\item Suppose also that Axiom \ref{axio:tann}(C) holds. Then $\calX\longrightarrow\widehat{\Pi}_{\calT_{\infty}(\calX)}$~(cf.~Definition \ref{def:EFin sub}) is the Nori fundamental gerbe for $\calX$ over $L_{\infty}$ and we have an equivalence of $L_{\infty}$-tannakian categories,
\begin{equation*}
\Vect(\Pi^{\N}_{\calX/L_{\infty}})~\simeq~\mathrm{EFin}(\calT_{\infty}(\calX)).
\end{equation*} 
\end{enumerate}
\end{thm}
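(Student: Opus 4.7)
The plan is to prove the four parts in sequence, bootstrapping each from the previous and the general machinery of Theorem \ref{thm:tann et ger}. For part (1) at $i=0$, the assignment $\calG\mapsto(\calG|_\calX,\calG,\mathrm{id})$ realizes an equivalence $\calT_0(\calX)\simeq\calT(\calX)$, so the claim is just Theorem \ref{thm:tann et ger}(1)(2). For $i\ge 1$, I would analyze the forgetful functor $u_i:\calT_i(\calX)\to\Vect(\calX)$, $(\calF,\calG,\lambda)\mapsto\calF$: it is $k$-linear monoidal and exact by inspection, and it is faithful because if a morphism $(f,g)$ has $f=0$, then $F^{i*}(f)=0$, and the commutative squares involving $\lambda,\lambda'$ force $\pi_\calT^*(g)=0$; faithfulness of $\pi_\calT^*$ from Theorem \ref{thm:tann et ger}(1) then yields $g=0$. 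Together with connectedness of $\calX$ (inherited from $\calT(\calX)$), this lets me invoke tannakian recognition for $\calT_i(\calX)$, giving $L_i$ as a field and $\Pi_{\calT_i(\calX)}$ as an $L_i$-tannakian gerbe.

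For part (2), the transition functor $\calT_i\to\calT_{i+1}$ in (\ref{eq:transition map}) is monoidal and exact by inspection. Faithfulness reduces to faithfulness of $F^*:\calT(\calX)\to\calT(\calX)$, which follows from Axiom \ref{axio:tann}(A) combined with condition (\ref{eq:tann Nori ger}) applied to potential kernels. Colimits preserve these properties. The identification of $L_\infty$ is obtained by unwinding the $k$-structure $a\mapsto(a,a^{p^i})$: a compatible endomorphism of the unit in $\calT_\infty$ reduces to an element $x\in H^0(\scrO_\calX)$ some $p^i$-th power of which lies in $L_0$, whence $L_\infty/L_0$ is purely inseparable.

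Part (3) is the core of the argument. I would prove that an object $(\calF,\calG,\lambda)\in\calT_i(\calX)$ is essentially finite if and only if $\calG\in\mathrm{EFin}(\calT(\calX))$. The forward direction is immediate from monoidality and exactness of the $\calG$-projection. For the converse, if $\calG$ is essentially finite, then $\calG|_\calX\simeq F^{i*}\calF$ is essentially finite in $\Vect(\calX)$; an iterated application of the Frobenius-descent hypothesis (\ref{eq:tann Nori ger}) to indecomposable summands transfers the property back to $\calF$. The colimit identity $\mathrm{EFin}(\calT_\infty)\simeq\varinjlim_i\mathrm{EFin}(\calT_i)$ is then formal. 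To identify $\calX\to\Pi^{\loc}_{\calT_\infty(\calX)}$ with the local fundamental gerbe over $L_\infty$, I would use that any morphism $\calX\to\Gamma$ with $\Gamma$ a finite local stack corresponds under Theorem \ref{thm:EFin} and Example \ref{ex:EFin prof ger} to a finite family of essentially finite bundles on $\calX$ equipped with Frobenius-compatible descent data, which by the very construction of $\calT_i$ factors uniquely through $\calT_\infty$; the \'etale quotient of the resulting gerbe is trivial over $L_\infty$ because all of the purely inseparable structure has been absorbed into the base.

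Finally, for part (4), Axiom \ref{axio:tann}(C) combined with Theorem \ref{thm:tann et ger}(3) identifies the \'etale fundamental gerbe of $\calX/L_\infty$ with $\Pi^{\et}_{\calT(\calX)}$. Together with the local fundamental gerbe from part (3), and using that the Nori fundamental gerbe is recovered from its pro-local and pro-\'etale parts (since any finite gerbe admits a Nori-reduced factorization through its \'etale quotient followed by a local cover), I conclude that $\calX\to\widehat{\Pi}_{\calT_\infty(\calX)}$ is the Nori fundamental gerbe over $L_\infty$, and tannakian duality (Theorem \ref{thm:tann duality}) gives the asserted equivalence $\Vect(\Pi^{\N}_{\calX/L_\infty})\simeq\mathrm{EFin}(\calT_\infty(\calX))$. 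The hard part will be part (3), specifically verifying that the pro-local quotient of $\widehat{\Pi}_{\calT_\infty(\calX)}$ satisfies the universal property of the local fundamental gerbe — this requires tracking how the Frobenius twist in the definition of $\calT_i$ precisely captures the purely inseparable part of the Nori fundamental gerbe, and is where both Axiom \ref{axio:tann}(A) and condition (\ref{eq:tann Nori ger}) are indispensable.
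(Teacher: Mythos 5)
The paper does not actually prove this theorem — it is quoted from Tonini--Zhang \cite[\S5 Theorem 5.14]{tz17} and used as a black box — so there is no proof in the paper to compare against; I assess your proposal on its own terms. The critical gap is in your part (4), where you rest the argument on the claim that "the Nori fundamental gerbe is recovered from its pro-local and pro-\'etale parts." This is false: the \'etale and local fundamental gerbes are two distinct quotients of the Nori fundamental gerbe, and a profinite group scheme (or gerbe) is not determined by two of its quotients. Concretely, for $X$ pointed over an algebraically closed field the kernel of $\pi^{\N}(X)\twoheadrightarrow\pi^{\et}_1(X)$ is pro-local but is in general much larger than anything reflected in the maximal local quotient $\pi^{\loc}(X)$, and the extension data linking the \'etale and local parts are invisible to both. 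Your parenthetical — that $\calX\to\Gamma$ factors through $\Gamma\to\Gamma^{\et}$ with local relative band — is true, but that band lives over $\Gamma^{\et}$ rather than over $\Spec L_\infty$, so the lifting problem from $\Gamma^{\et}$ to $\Gamma$ is not governed by $\Pi^{\loc}_{\calX/L_\infty}$. The universal property of $\widehat{\Pi}_{\calT_\infty(\calX)}$ has to be proved directly, by showing that every exact monoidal functor $\Vect(\Gamma)\to\Vect(\calX)$ from a finite stack $\Gamma$ factors uniquely through $\mathrm{EFin}(\calT_\infty(\calX))$, using Axiom (C) and the Frobenius structure of $\calT_\infty(\calX)$ simultaneously rather than one at a time.

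A secondary gap is in the converse direction of part (3). You propose to transfer essential finiteness from $\calG$ to $(\calF,\calG,\lambda)$ by "an iterated application of the Frobenius-descent hypothesis (\ref{eq:tann Nori ger}) to indecomposable summands." But (\ref{eq:tann Nori ger}) is a local-freeness criterion — if $F^*\calF$ is a vector bundle then $\calF$ is — and since $\calF$ is by definition already a vector bundle, it gives you nothing here; it is not an essential-finiteness-descent statement. Moreover "essentially finite in $\Vect(\calX)$" is not even a defined notion absent a pseudo-properness assumption on $\calX$, which the theorem does not make. What the converse actually requires is that the $\calG$-projection $\calT_i(\calX)\to\calT(\calX)$, being a faithful exact tensor functor of tannakian categories, induces a faithfully flat affine morphism of the associated gerbes and that such morphisms reflect essential finiteness (pulling back along a surjection of group schemes detects factoring through a finite quotient); that is the missing lemma, and it is not deducible from (\ref{eq:tann Nori ger}). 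The appeal to "tannakian recognition" in part (1) for $i\ge 1$ has a similar flavour of invoking a label rather than verifying the axioms, but is less clearly fatal than the two points above.
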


\begin{ex}(cf.~\cite[\S 5 Remark 5.15]{tz17})
If $\calX$ is a reduced fibered category in groupoids over a field $k$. Then the condition (\ref{eq:tann Nori ger}) is fulfilled.
\end{ex}

\subsection{Example: Frobenius divided sheaves}\label{sec:Fdiv}

The results of this subsection will be used only in Appendix \S\ref{sec:Nori ger root stack appendix}. In \S\ref{sec:Nori ger root stack appendix}, the Nori fundamental gerbes of non-smooth tame stacks will be  studied, which is not necessary for the proof of the main theorem. Thus, the reader can access the main theorem without following the results in this subsection.

Let $k$ be a perfect field of characteristic $p>0$. 

\begin{definition}(cf.~\cite[\S6 Definition 6.20]{tz17})
Let $\calX$ be a fibered category in groupoids over $k$. We define the fibered category $\calX^{(\infty)}$ in groupoids over $k$ together with a morphism $\calX\lto\calX^{(\infty)}$ as follows. Consider the sequence of the relative Frobenius morphisms
\begin{equation*}
\calX\lto\calX^{(1)}\lto\calX^{(2)}\lto\cdots\lto\calX^{(i)}\lto\cdots
\end{equation*}
and define $\calX^{(\infty)}$ to be the limit
\begin{equation*}
\calX^{(\infty)}\Def\varinjlim_{i\in\mathbb{N}}\calX^{(i)}
\end{equation*}
and the morphism $\calX\lto\calX^{(\infty)}$ to be the natural morphism. We define the category $\Fdiv(\calX)$ of \textit{Frobenius divided sheaves} on $\calX$, or shortly \textit{$F$-divided sheaves} on $\calX$, by
\begin{equation*}
\Fdiv(\calX)\Def\Vect(\calX^{(\infty)}).
\end{equation*}
\end{definition}

We apply the formalism discussed in the previous section to the morphism of fibered categories
\begin{equation*}
\pi=\pi_{\Fdiv}:\calX\lto\calX_{\Fdiv}=\calX^{(\infty)}.
\end{equation*}

\begin{thm}(dos Santos, Tonini--Zhang, cf.~\cite{ds07}\cite[\S6 Theorem 6.23]{tz17})
\label{thm:Fdiv}
Let $\calX$ be a geometrically connected algebraic stack of finite type over $k$. Then all the Axioms \ref{axio:tann}(A), (B) and (C) are satisfied for $\calX\lto\calX^{(\infty)}$ and $L=\End_{\Fdiv(\calX)}(\unit_{\Fdiv(\calX)})=k$. Therefore, $\Fdiv(\calX)$ is a $k$-tannakian category.
\end{thm}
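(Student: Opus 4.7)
The plan is to verify Axioms \ref{axio:tann}(A) and (C) for the morphism $\pi_{\Fdiv}:\calX\to\calX^{(\infty)}$ and to identify $L=\End_{\Fdiv(\calX)}(\unit_{\Fdiv(\calX)})$ with $k$; Axiom (B) and the $k$-tannakian structure on $\Fdiv(\calX)$ are then automatic consequences of Theorem \ref{thm:tann et ger}(1). The main obstacle will be Axiom (A).

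First I would address Axiom (A): every $\calF\in\Qcohfp(\calX^{(\infty)})$ is locally free. Unwinding the colimit description $\calX^{(\infty)}=\varinjlim_i\calX^{(i)}$, such an $\calF$ is a compatible system $(\calF_i)_{i\ge 0}$ with $\calF_i\in\Qcohfp(\calX^{(i)})$ and transition isomorphisms $F^*\calF_{i+1}\xrightarrow{\sim}\calF_i$ along the relative Frobenius. The goal is to show each $\calF_i$ is a vector bundle, which after choosing a smooth affine atlas reduces to the algebraic statement that a finitely presented module over a Noetherian $\F_p$-algebra admitting an infinite chain of compatible Frobenius descents is projective. For smooth varieties this is the theorem of dos Santos~\cite{ds07}; in the present generality one combines a Kunz-style flatness criterion (infinite compatible Frobenius descents force flatness of a finitely presented module) with the standard fact that finitely presented flat modules over a Noetherian ring are projective. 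This is the only substantial analytic input beyond formalities.

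Next I would identify $L$. By the colimit description, $L=H^0(\calX^{(\infty)},\scrO_{\calX^{(\infty)}})=\varprojlim_i H^0(\calX^{(i)},\scrO)$ with transitions given by Frobenius pullback; under the identification $\calX^{(i)}\simeq\calX$ afforded by $k$ being perfect, an element of $L$ becomes a compatible tower $(a_i)_{i\ge 0}$ in $H^0(\calX,\scrO)$ with $a_{i+1}^{p}=a_i$. The finite $k$-algebra $A=H^0(\calX,\scrO)$ decomposes into its nilpotent radical and its residue fields; a nilpotent element admitting infinite compatible $p$-th roots must vanish (since $A$ is Noetherian, the nilpotency index of $a_i$ would have to grow like $p^i$), while a residue-field element admitting such roots must lie in the maximal separable subextension. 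Combined with the fact that geometric connectedness of $\calX$ yields $A_{\et}=k$ via Proposition \ref{prop:exist et loc ger}(1), one concludes $a_0\in k$ and thus $L=k$.

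Finally, for Axiom (C), let $\Gamma$ be a finite étale stack over $k$. The relative Frobenius $F:\calX^{(i)}\to\calX^{(i+1)}$ is a universal homeomorphism, and the small étale site of a scheme, algebraic space, or algebraic stack is invariant under universal homeomorphisms; hence the pullback functor $\Hom_k(\calX^{(i+1)},\Gamma)\to\Hom_k(\calX^{(i)},\Gamma)$ is an equivalence for every $i\ge 0$. Passing to the limit over $i$ then yields $\Hom_k(\calX^{(\infty)},\Gamma)\simeq\Hom_k(\calX,\Gamma)$, which is Axiom (C). With Axioms (A), (C) and $L=k$ established, Theorem \ref{thm:tann et ger}(1) simultaneously furnishes Axiom (B) and the $k$-tannakian structure on $\Fdiv(\calX)$. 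The difficult step throughout is Axiom (A), requiring the extension of the dos Santos $F$-divided theorem from smooth varieties to an arbitrary algebraic stack of finite type over $k$.
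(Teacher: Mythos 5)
The paper does not actually prove this statement: it is cited as a theorem of dos Santos~\cite{ds07} and Tonini--Zhang~\cite[Theorem 6.23]{tz17}, and no proof appears in the text. So there is no internal argument to compare against, only the cited sources. With that caveat, your reconstruction is broadly consistent with what those sources do, and the logical skeleton (establish (A) and (C), identify $L=k$, then invoke Theorem~\ref{thm:tann et ger}(1) to get (B) and the tannakian structure for free) is exactly the intended route.

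Two points deserve correction, however. First, in your identification of $L$ you write ``the finite $k$-algebra $A=H^0(\calX,\scrO_{\calX})$''; this is false in the generality of the theorem, since $\calX$ is only assumed of finite type and not proper (e.g.\ $\calX=\A^1_k$ gives $A=k[t]$). Your decomposition of $A$ into a nilradical plus residue fields is only available for Artinian $A$. What the argument actually requires is the statement that for a Noetherian $\F_p$-algebra $A$ with $A_{\et}=k$ over a perfect field $k$, a compatible tower $(a_i)_{i\ge 0}$ with $a_{i+1}^p=a_i$ has $a_0\in k$. The nilpotent part of your argument survives (Krull intersection on the nilradical handles it), but the reduced case must be argued directly; the Artinian structure theory you appeal to is not available. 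Second, Axiom~(A) is the heart of the theorem, and you rightly flag it as the one nontrivial input: the claim that a finitely presented quasi-coherent sheaf on a Noetherian stack which admits Frobenius descents of arbitrary depth is locally free. You call this ``Kunz-style,'' which is a fair label, but left as a black box it is precisely where all the work of~\cite{ds07} and~\cite{tz17} lives; a complete proof would need to supply the flatness lemma (roughly: infinitely many compatible Frobenius descents bound the depth-wise Fitting/Tor behavior, forcing flatness, hence local freeness for finitely presented modules) or cite it explicitly.
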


\begin{rem}
Moreover, the associated tannakian gerbe $\Pi_{\Fdiv(\calX)}$ is banded by a pro-smooth affine group scheme~(cf.~\cite{ds07}\cite{tz17}).
\end{rem}

As a consequence, we get tannakian interpretations of the fundamental gerbes in positive characteristic.

\begin{cor}(Gieseker, dos Santos, Esnault--Hogadi, Tonini--Zhang, cf.~\cite{gi75}\cite{ds07}\cite{eh12}\cite{tz17})\label{cor:Fdiv}
Let $\calX$ be a geometrically connected algebraic stack of finite type over $k$. Then we have the following. 
\begin{enumerate}
\renewcommand{\labelenumi}{(\arabic{enumi})}
\item There exists a natural equivalence of tannakian categories over $k$,
\begin{equation*}
\Vect(\Pi_{\calX/k}^{\et})\simeq\EFin(\Fdiv(\calX)).
\end{equation*} 
\item If moreover $\calX$ is geometrically reduced, then there exists a natural equivalence of tannakian categories over $k$, which extends the one given in (1),
\begin{equation*}
\Vect(\Pi_{\calX/k}^{\N})\simeq\EFin(\Fdiv_{\infty}(\calX)).
\end{equation*} 
\end{enumerate}
\end{cor}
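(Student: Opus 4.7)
The plan is to apply the two tannakian formalisms of Theorems \ref{thm:tann et ger} and \ref{thm:tann Nori ger} directly to the morphism of fibered categories $\pi_{\Fdiv}\colon\calX\lto\calX^{(\infty)}$, using Theorem \ref{thm:Fdiv} as the verification that the required axioms hold. Thus the corollary is really a formal consequence of results stated earlier, and the work consists in checking that the hypotheses propagate and that the equivalences in (1) and (2) are compatible.

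For part (1), I would first invoke Theorem \ref{thm:Fdiv}, which asserts that Axioms \ref{axio:tann}(A), (B) and (C) are all satisfied for $\pi_{\Fdiv}\colon\calX\to\calX^{(\infty)}$ and that $L=\End_{\Fdiv(\calX)}(\unit_{\Fdiv(\calX)})=k$. Since $k$ is a field, I am in the situation of Theorem \ref{thm:tann et ger}(3), which yields that the canonical morphism $\calX\lto\Pi^{\et}_{\Fdiv(\calX)}$ realizes the \'etale fundamental gerbe for $\calX$ over $k$, together with the desired equivalence
\[
\Vect(\Pi^{\et}_{\calX/k})\simeq\EFin(\Fdiv(\calX)).
\]

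For part (2), I would additionally verify the condition \eqref{eq:tann Nori ger}: if $F^{*}\calF\in\Vect(\calX)$ for some $\calF\in\Qcohfp(\calX)$, then $\calF\in\Vect(\calX)$. Under the extra assumption that $\calX$ is geometrically reduced (hence reduced), this is precisely the example placed immediately after Theorem \ref{thm:tann Nori ger}. Together with Axiom (A), which still holds by Theorem \ref{thm:Fdiv}, the hypotheses of Theorem \ref{thm:tann Nori ger} are met. By part (2) of that theorem, the field $L_{\infty}$ is purely inseparable over $L_{0}=k$; since $k$ is perfect, this forces $L_{\infty}=k$. Then part (4) of Theorem \ref{thm:tann Nori ger}, which uses Axiom (C) also ensured by Theorem \ref{thm:Fdiv}, gives that $\calX\lto\widehat{\Pi}_{\calT_{\infty}(\calX)}$ is the Nori fundamental gerbe of $\calX$ over $k$ together with
\[
\Vect(\Pi^{\N}_{\calX/k})\simeq\EFin(\calT_{\infty}(\calX)),
\]
with $\calT=\Fdiv$, which is the second stated equivalence.

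To see that the equivalence in (2) extends that of (1), I would use the canonical faithful exact monoidal functor $\Fdiv(\calX)=\calT_{0}(\calX)\lto\calT_{\infty}(\calX)$ furnished by the transition maps \eqref{eq:transition map}, which by construction is compatible with the forgetful functors into $\Vect(\calX)$. By Theorem \ref{thm:tann Nori ger}(3), the essentially finite objects in $\calT_{\infty}(\calX)$ are exactly those whose $\Fdiv(\calX)$-component is essentially finite, so this functor restricts to a fully faithful inclusion $\EFin(\Fdiv(\calX))\hookrightarrow\EFin(\calT_{\infty}(\calX))$ which matches, under the two equivalences, the faithful representable morphism $\Pi^{\N}_{\calX/k}\lto\Pi^{\et}_{\calX/k}$ of the Nori gerbe onto the \'etale gerbe. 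The only step that requires more than invoking a previously stated theorem is the identification of this compatibility, but it is routine bookkeeping once one unwinds the forgetful functors; the substantive content is entirely carried by Theorems \ref{thm:Fdiv}, \ref{thm:tann et ger} and \ref{thm:tann Nori ger}, so the corollary reduces to assembling these three pieces with the perfectness of $k$.
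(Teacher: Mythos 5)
Your derivation is exactly the intended one: the paper states the corollary without proof (citing Gieseker, dos Santos, Esnault--Hogadi, Tonini--Zhang), and the content is precisely the assembly of Theorem \ref{thm:Fdiv} with Theorems \ref{thm:tann et ger}(3) and \ref{thm:tann Nori ger}(4), together with the observations that geometric reducedness gives condition (\ref{eq:tann Nori ger}) and that $L_\infty = k$ since $k$ is perfect. One small slip in the final paragraph: the morphism $\Pi^{\N}_{\calX/k}\lto\Pi^{\et}_{\calX/k}$ is a gerbe, not a ``faithful representable morphism'' (a representable morphism of gerbes over a point would force an equivalence); it is this gerbe structure, rather than Theorem \ref{thm:tann Nori ger}(3) alone, that gives the full faithfulness of $\EFin(\Fdiv(\calX))\hookrightarrow\EFin(\Fdiv_\infty(\calX))$, since the transition functors $\calT_i(\calX)\lto\calT_{i+1}(\calX)$ are a priori only faithful.
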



\section{Root stacks and quasi-coherent sheaves of them}\label{sec:RS}

\subsection{Finite linearly reductive group schemes}

In this subsection, we recall the definition of \textit{linearly reductive} group schemes.
First, a finite flat group scheme $G$ over a scheme $S$ is called \textit{diagonalizable} if it is isomorphic to the Cartier dual of a constant abelian group scheme. To a finite abelian group $\Gamma$, one can associate the diagonalizable group scheme, which we denote by $\Diag_S(\Gamma)$, over $S$~(cf.~\cite[Section 2.2]{wa79}). Note that the Cartier dual of $\Diag_S(\Gamma)$ is canonically isomorphic to the constant group scheme associated with $\Gamma$. 
The formulation of $\Diag_S(\Gamma)$ is compatible with any base change, i.e.\ for any morphism $S'\lto S$ of schemes, there exists a canonical isomorphism
\begin{equation*}
\Diag_{S'}(\Gamma)\xrightarrow{~\simeq~}\Diag_S(\Gamma)\times_S S'
\end{equation*}
(cf.~\cite[\S1 1.1.2]{gm71}).

Let $S$ be the spectrum of a strictly henselian local ring or a separably closed field. For an affine flat $S$-group scheme $G$, we will denote by $\X(G)$ the group of \textit{characters} of $G$, namely $\X(G)\Def\Hom_{S\text{-gr}}(G,\G_{m,S})$. If $G$ is diagonalizable, then the character group $\X(G)$ recovers the original group scheme $G$, i.e.\ there exists a canonical isomorphism 
\begin{equation*}
G\xrightarrow{~\simeq~}\Diag_S(\X(G)).
\end{equation*}
In fact, the correspondence $\Gamma\longmapsto\Diag_S(\Gamma)$ gives an anti-equivalence between the category of finite abelian groups and the category of finite diagonalizable $S$-group schemes and a quasi-inverse functor is given by $G\mapsto\X(G)$.

A finite flat group scheme $G$ over a scheme $S$ is said to be \textit{linearly reductive}~(cf.~\cite[\S2]{aov08}) if the functor
\begin{equation*}
\Qcoh(\cB_S G)\lto\Qcoh(S);~\calF\longmapsto\calF^G
\end{equation*}
is exact. If $S=\Spec k$ is the spectrum of a field $k$, then the latter condition can be replaced by the condition that the functor
\begin{equation*}
\Rep(G)\lto\Vecf_k;~V\longmapsto V^G
\end{equation*}
is exact, or equivalently, $\Rep(G)$ is a semisimple category. 
In \cite{aov08}, a classification of finite flat linearly reductive group schemes~(cf.~\cite[Theorem 2.16]{aov08}) is given for a general base scheme $S$. Here, we recall the classification theorem only in a special case. 

\begin{prop}(cf.~\cite[Proposition 2.10]{aov08})\label{prop:classif lin red}
If $S$ is the spectrum of a separably closed field $k$, then a finite flat group scheme $G$ over $S$ is linearly reductive if and only if it admits an exact sequence
\begin{equation*}
1\lto \Delta\lto G\lto H\lto 1
\end{equation*}
such that $H$ is constant and tame and $\Delta$ is a diagonalizable group scheme. Moreover, the extension admits a splitting after an fppf cover of $S$.
Here, a finite \'etale group scheme over $S$ is said to be tame if the degree is prime to the characteristic of $k$.   
\end{prop}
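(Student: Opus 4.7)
The plan is to prove the two implications of the equivalence separately, and then address the splitting assertion. One may assume $\mathrm{char}(k)=p>0$ since in characteristic zero every finite flat $k$-group scheme is \'etale by Cartier's theorem, $\Delta$ is trivial, and the conclusion is immediate. For the implication $(\Leftarrow)$, the strategy is to observe that both $\Delta$ and $H$ are linearly reductive individually and that the class of linearly reductive finite flat group schemes is closed under extensions. For $\Delta=\Diag_k(\X(\Delta))$, every representation decomposes into character eigenspaces $V=\bigoplus_{\chi\in\X(\Delta)}V_{\chi}$, so the functor $V\mapsto V^{\Delta}=V_0$ is a projection onto a direct summand and is exact. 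For $H$ constant tame, the order $|H|$ is invertible in $k$, so Maschke's theorem yields semisimplicity of $\Rep(H)$. For the extension $1\to\Delta\to G\to H\to 1$, the $G$-invariants factor as
\begin{equation*}
\calF\longmapsto\calF^{\Delta}\longmapsto(\calF^{\Delta})^H=\calF^G,
\end{equation*}
using that $\calF^{\Delta}$ carries a residual $H$-action since $\Delta$ is normal in $G$; both steps are exact, so $G$ is linearly reductive.

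For the converse $(\Rightarrow)$, I would use the connected--\'etale sequence $1\to G^0\to G\to\pi_0(G)\to 1$. Since $k$ is separably closed, $\pi_0(G)$ is constant. As a quotient of $G$ it inherits linear reductivity; if $p\mid|\pi_0(G)|$, Cauchy's theorem would produce a $\Z/p$ subgroup, whose regular representation $k[x]/(x-1)^p$ is a non-split iterated self-extension of the trivial representation, contradicting linear reductivity. Inheritance of linear reductivity by a closed subgroup scheme $H\subset G$ can be proved via Frobenius reciprocity: induction $\mathrm{Ind}_H^G$ is exact for a finite-index inclusion and satisfies $(\mathrm{Ind}_H^G V)^G=V^H$, so exactness of $G$-invariants forces exactness of $H$-invariants. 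Hence $\pi_0(G)$ is tame, and setting $\Delta:=G^0$, the remaining task is to show that a connected finite linearly reductive group scheme over a separably closed field of characteristic $p$ is diagonalizable.

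The technical core therefore lies in the diagonalizability of $G^0$. The decisive input is that $\alpha_p$ is not linearly reductive: a representation of $\alpha_p$ is a pair $(V,D)$ with $D^p=0$, and the two-dimensional example $V=k^2$, $D=\bigl(\begin{smallmatrix}0&1\\0&0\end{smallmatrix}\bigr)$ yields a short exact sequence $0\to\Ker D\to V\to V/\Ker D\to 0$ on which the $\alpha_p$-invariants functor fails to be right exact. Consequently, no subgroup scheme of $G^0$ can be isomorphic to $\alpha_p$. In the commutative case, the classical decomposition of a finite commutative local group scheme into its local--local and local--\'etale parts, together with the fact that a nonzero local--local part necessarily contains an $\alpha_p$-subscheme, forces $G^0$ to be of multiplicative type and hence diagonalizable over the separably closed field $k$. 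The non-commutative case is handled via the Frobenius kernel $G^0_1:=\Ker(G^0\to(G^0)^{(p)})$: this is a normal subgroup, it inherits linear reductivity, and it corresponds to a $p$-restricted Lie algebra $\fg$ whose restricted enveloping algebra $u(\fg)$ must be semisimple, forcing $\fg$ to be abelian and toral; so $G^0_1$ is diagonalizable, and iterating the argument with higher Frobenius kernels yields diagonalizability of all of $G^0$.

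Finally, for the splitting assertion, the extension class lies in an fppf cohomology group (with its induced $H$-action on $\Delta$) whose elements are trivialized by a suitable fppf cover of $\Spec k$, by a standard cocycle--descent argument. The principal obstacle in the whole proof is the diagonalizability of $G^0$: while ruling out $\alpha_p$-subgroup schemes is straightforward, upgrading this to the full conclusion that $G^0$ is of multiplicative type in the non-commutative setting requires the fine structure theory of finite local group schemes and restricted Lie algebras in characteristic $p$.
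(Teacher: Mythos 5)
The paper does not give a proof of this proposition: it is stated as a citation to Abramovich--Olsson--Vistoli \cite[Proposition 2.10]{aov08}, so there is no internal argument to compare against. Evaluating your reconstruction on its own terms: the easy direction $(\Leftarrow)$ and the reduction of $(\Rightarrow)$ via the connected--\'etale sequence, tameness of $\pi_0(G)$ through the regular representation of $\Z/p$, and inheritance of linear reductivity under subgroups by Frobenius reciprocity are all correct and essentially how one would proceed.

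The gap is in the passage from the first Frobenius kernel to all of $G^0$. The restricted Lie algebra argument legitimately shows $G^0_1$ is diagonalizable (hence abelian and central, since $\uAut$ of a diagonalizable group is \'etale and $G^0$ is connected), and an induction on height gives that $G^0/G^0_1$ is diagonalizable. But ``iterating the argument with higher Frobenius kernels'' does not by itself yield diagonalizability of $G^0$: higher Frobenius kernels do not correspond to restricted Lie algebras, and what the induction actually hands you is a connected group scheme that is a \emph{central} extension of a diagonalizable group by a diagonalizable group. Such an object is a priori only nilpotent of class $\le 2$; to conclude it is abelian you must rule out a nontrivial commutator pairing $(G^0/G^0_1)\times(G^0/G^0_1)\to G^0_1$ of diagonalizable groups, and this is precisely where the real content of the proposition sits. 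Your sketch silently assumes it. Separately, the appeal in the commutative case to ``the classical decomposition of a finite commutative local group scheme into its local--local and local--\'etale parts'' uses that the connected--\'etale sequence of the Cartier dual splits, which requires $k$ perfect; the proposition is only assuming $k$ separably closed, so either you should assume $k$ algebraically closed (which is what the paper uses in practice) or supply an argument valid over a non-perfect separably closed field.
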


\subsection{Tame stacks}\label{subsec:tame stack}

Let $S$ be a scheme and $\calX$ be an algebraic stack locally of finite presentation over $S$ with finite inertia $\calI_{\calX/S}$. In this case, according to \cite{km97}\cite[Definition 11.1.1]{ol16}, there exists a \textit{coarse moduli space} $\pi:\calX\longrightarrow X$, i.e.\ a morphism into an algebraic space over $S$ satisfying the following two conditions.
\begin{enumerate}
\renewcommand{\labelenumi}{(\roman{enumi})}
\item The induced map $\calX(\xi)\lto X(\xi)$ is bijective for any geometric point $\xi$. 
\item The morphism $\pi$ is universal for maps to algebraic spaces.
\end{enumerate}
The morphism $\pi:\calX\lto X$ is proper, and the map $\scrO_{X}\lto\pi_*\scrO_{\calX}$ is an isomorphism~(cf.~\cite[Theorem 11.1.2(ii)]{ol16}). 

\begin{definition}\label{def:tame}
Under the above situation, $\calX$ is said to be \textit{tame} if the functor $\pi_*:\Qcoh(\calX)\longrightarrow\Qcoh(X)$ is exact.
\end{definition}

\begin{ex}
Let $G$ be a finite flat group scheme of finite presentation over $S$. Then the structure morphism $\pi:\cB_S G\lto S$ is a coarse moduli of the classifying stack $\cB_SG$ and if we identify $\Qcoh(\cB_SG)$ with the category of $G$-equivariant sheaves over $S$, the functor $\pi_*:\Qcoh(\cB_SG)\longrightarrow\Qcoh(S)$ is nothing but the one taking the $G$-invariant part, i.e.\ $\pi_*\calF=\calF^G$. Therefore, $\cB_S G$ is tame if and only if $G$ is a linearly reductive $S$-group scheme.
\end{ex}

\begin{thm}(cf.~\cite[\S3 Theorem 3.2]{aov08})\label{thm:tame}
With the same notation as in Definition \ref{def:tame}, the following are equivalent.
\begin{enumerate}
\renewcommand{\labelenumi}{(\alph{enumi})}
\item $\calX$ is tame.
\item There exists an fppf cover $X'\longrightarrow X$, a linearly reductive group scheme $G$ over $X'$ acting on a finite $X'$-scheme $U$ of finite presentation such that
\begin{equation*}
\calX\times_X X'\simeq[U/G].
\end{equation*} 
\end{enumerate}
\end{thm}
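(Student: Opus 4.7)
The two directions call for rather different techniques: (b)$\Rightarrow$(a) is essentially a descent plus a factorisation through the coarse moduli, whereas (a)$\Rightarrow$(b) requires a local structure theorem producing a quotient presentation fppf-locally on $X$.

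For (b)$\Rightarrow$(a), I would first argue that tameness is local on $X$ in the fppf topology, so that it suffices to verify exactness of the pushforward for the coarse moduli morphism of $[U/G]$ after base change along $X'\to X$. The coarse moduli space of $[U/G]$ is the quotient $U/G$, which is finite over $X'$ since $U$ is, and the coarse moduli morphism factors as $[U/G]\to U/G\to X'$. Pushforward along the first map coincides with the functor of $G$-invariants on $\Qcoh$, hence is exact by the defining property of linear reductivity, while pushforward along the second map, being finite, is manifestly exact. Composing and descending yields the desired exactness of $\pi_*$ on $\calX$.

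For (a)$\Rightarrow$(b), I would work locally around a point $x\in X$ and consider a geometric point $\bar{x}$ of $\calX$ above it with automorphism group scheme $G_{\bar{x}}$, a finite flat group scheme over $k(\bar{x})$. The first step is to extract linear reductivity of $G_{\bar{x}}$ from the hypothesis: exactness of $\pi_*$ restricts, at the residual gerbe $\cB G_{\bar{x}}\hookrightarrow\calX$, to exactness of the invariants functor on $\Rep(G_{\bar{x}})$, so $G_{\bar{x}}$ is linearly reductive. After an fppf base change $X'\to X$ realising the splitting of Proposition \ref{prop:classif lin red}, I would lift $G_{\bar{x}}$ to a linearly reductive group scheme $G$ over $X'$, and then produce a $G$-torsor $U\to\calX\times_X X'$ with $U$ a finite $X'$-scheme, yielding the required presentation $\calX\times_X X'\simeq[U/G]$.

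The crux is this last step — constructing the representable, finite $G$-torsor $U\to\calX\times_X X'$. My plan is deformation-theoretic: inductively lift the canonical $G_{\bar{x}}$-torsor $\Spec k(\bar{x})\to\cB G_{\bar{x}}$ over infinitesimal thickenings of $\bar{x}$, using that the obstructions lie in cohomology groups which vanish for linearly reductive group schemes, and then apply Artin approximation to pass from the formal datum to an honest fppf-local torsor. Finiteness of $U$ over $X'$ will be forced by finiteness of the inertia of $\calX$ together with the properness of its coarse moduli map. Proposition \ref{prop:classif lin red} enters crucially by reducing the local analysis to the two elementary cases of a diagonalizable group and a tame constant group, each of which admits a direct lifting.
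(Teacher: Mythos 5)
The paper does not prove this theorem; it is cited verbatim from Abramovich--Olsson--Vistoli \cite[Theorem 3.2]{aov08} with no argument reproduced, so there is no ``paper's own proof'' to compare against --- only the cited source. Your outline matches the AOV strategy on both directions. For (a)$\Rightarrow$(b), the sequence of moves you propose --- reduce to a strictly henselian local base, read off linear reductivity of the closed-point automorphism group scheme $G_0$ from exactness of $\pi_*$ restricted to the residual gerbe, lift $G_0$ over the base using the diagonalizable-by-tame-constant classification (Proposition \ref{prop:classif lin red}), and then construct a representable finite $G$-torsor $U\to\calX$ by a deformation argument whose obstructions vanish thanks to linear reductivity, finished off by Artin approximation --- is precisely AOV's plan (their Lemma 3.3). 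Be aware, though, that your last paragraph compresses what is by far the hardest part of the proof into a sketch: one must lift the full semidirect-product structure $G_0=\Delta_0\rtimes H_0$ including the $H_0$-action on $\Delta_0$, push the formal lift to an honest morphism $\calX\to\cB_R G$, and then verify representability and finiteness of $U$ by comparing residual gerbes over the closed point; none of this is automatic. For (b)$\Rightarrow$(a), a small correction: exactness of $\Qcoh([U/G])\to\Qcoh(U/G)$ is not literally the defining exactness statement of linear reductivity (which concerns $\Qcoh(\cB_{X'}G)\to\Qcoh(X')$). One should rather observe that $[U/G]\to\cB_{X'}G$ is affine (since $U\to X'$ is finite) so its pushforward is exact, compose with the exact invariants functor, and then reflect exactness back through the finite affine pushforward $\Qcoh(U/G)\to\Qcoh(X')$, which is exact and faithful. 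With these caveats, your high-level plan is the right one.
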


\begin{cor}(cf.~\cite[\S3 Corollaries 3.3, 3.4 and 3.5]{aov08};~\cite[\S7 Proposition 7.4]{alp13})\label{cor:tame}
Let $\calX\longrightarrow S$ be as above and $\pi:\calX\longrightarrow X$ a coarse moduli space. Then we have the following.
\begin{enumerate}
\renewcommand{\labelenumi}{(\arabic{enumi})}
\item Suppose that $\calX$ is tame. For any morphism of $S$-algebraic spaces $X'\longrightarrow X$, the coarse moduli space of $\calX\times_X X'$ is the projection $\calX\times_X X'\longrightarrow X'$.
\item Suppose that $\calX$ is tame. If $\calX$ is flat over $S$, then $X$ is also flat over $S$.
\item If $\calX$ is tame, then for any morphism of schemes $S'\longrightarrow S$, $\calX\times_S S'$ is a tame stack over $S'$.
\item The algebraic stack $\calX$ over $S$ is tame if and only if for any morphism $\Spec k\lto S$ with $k$ algebraically closed, the fiber $\calX\times_S\Spec k$ is tame.
\item The morphism $\pi:\calX\lto X$ is a universal homeomorphism.
\end{enumerate}
\end{cor}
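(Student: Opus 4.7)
The plan is to leverage Theorem \ref{thm:tame}(b) as the central tool throughout, reducing every assertion to an fppf-local statement on $X$ where $\calX \simeq [U/G]$ for a finite $X'$-scheme $U$ carrying the action of a linearly reductive group scheme $G/X'$. In that local model, each of the five assertions becomes a standard property of linearly reductive group schemes: exactness of the invariant functor and its compatibility with arbitrary base change.

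I would prove (3) first, since it gives a clean template. Given the local presentation $\calX \simeq [U/G]$ over an fppf cover $X' \to X$, pulling back by $S' \to S$ yields $\calX \times_S S' \simeq [U \times_S S'/G \times_S S']$ over the fppf cover $X' \times_S S' \to X \times_S S'$, and linear reductivity is preserved under arbitrary base change (clear from the classification in Proposition \ref{prop:classif lin red}), so $\calX \times_S S'$ is tame. For (1), after reducing to $\calX = [\Spec A/G]$ with $X = \Spec A^G$, any $\Spec A' \to X$ gives $\calX \times_X \Spec A' = [\Spec(A \otimes_{A^G} A')/G]$, and the identity $(A \otimes_{A^G} A')^G = A'$ (valid because $A'$ is a $G$-trivial $A^G$-algebra and taking $G$-invariants commutes with base change along $A^G \hookrightarrow A$ when $G$ is linearly reductive) identifies $\Spec A'$ as the coarse moduli. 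For (2), with $X = \Spec A^G$ and $A$ flat over $S$, any short exact sequence $0 \to M' \to M \to M'' \to 0$ of $S$-modules gives, after $\otimes_S A$, a $G$-equivariant exact sequence whose $G$-invariants coincide with $A^G \otimes_S M'$, $A^G \otimes_S M$, $A^G \otimes_S M''$ (since the $M$'s are $G$-trivial), and linear reductivity preserves exactness on taking invariants, proving flatness of $A^G$ over $S$.

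For (4) the forward direction is just (3); the converse asserts that fiberwise tameness implies tameness. Here one uses that linear reductivity of a finite flat group scheme over a local base can be detected on the closed fiber (a deformation-theoretic consequence of the classification in Proposition \ref{prop:classif lin red}), so at each point of $X$ the inertia group scheme is linearly reductive, and one builds an fppf chart trivializing $\calX$ as a quotient $[U/G]$ with $G$ linearly reductive, as in \cite{aov08}. Finally, for (5), the morphism $\pi:\calX \lto X$ is proper (a general property of coarse moduli for algebraic stacks with finite inertia) and induces a bijection on geometric points by the definition of coarse moduli. By (1), formation of coarse moduli commutes with arbitrary base change, so this bijection persists under base change, making $\pi$ universally bijective. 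A proper, universally bijective morphism of algebraic spaces is a universal homeomorphism.

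The main obstacle I anticipate is the converse direction of (4): promoting fiberwise tameness to a global linearly reductive quotient presentation is not a formal manipulation, since one must actually produce the fppf chart on which the pulled-back stack has the shape $[U/G]$ with $G$ linearly reductive. The argument in \cite{aov08} combines a Keel--Mori-type construction with a deformation-theoretic lift of the linearly reductive automorphism group scheme of the tame geometric fiber, and this is the only step that is not a one-line consequence of the local model.
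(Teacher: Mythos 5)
The paper does not prove this corollary; it is cited verbatim from \cite[Corollaries 3.3--3.5]{aov08} and \cite[Proposition 7.4]{alp13}, so there is no internal argument to compare against. Your reconstruction faithfully follows the strategy of those references: reduce everything via Theorem \ref{thm:tame} to the fppf-local model $[\Spec A/G]$ with $G$ linearly reductive and $A$ finite, where each item becomes a statement about exactness and base-change compatibility of the $G$-invariant functor, and then defer the converse of (4) to the genuinely nontrivial deformation-theoretic lifting in \cite{aov08}.

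Two subtleties are worth flagging. First, (1) and (3) are logically interdependent: to verify tameness of $\calX \times_S S'$ one must already know its coarse moduli space, which is the content of (1). This is why \cite{aov08} proves them jointly (their Corollary 3.3(i) asserts both at once), after first isolating the key lemma that for $G$ linearly reductive over $R$ acting on a finite $R$-scheme $\Spec A$, the coarse moduli space of $[\Spec A/G]$ is $\Spec A^G$ and its formation commutes with every ring map $R \to R'$. Proving (3) before (1) as you propose is not wrong, but it tacitly uses this lemma, which should be stated up front. Second, the identity $(A \otimes_{A^G} A')^G = A'$ you invoke in (1), and the analogous $(A \otimes_S M)^G = A^G \otimes_S M$ in (2), are both instances of that same base-change lemma. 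It holds because when $G$ is linearly reductive and finite flat, the trivial module is finitely presented and projective in $\Rep(G)$, so $(-)^G$ is exact and commutes with all colimits, hence with arbitrary base change. This is the technical heart of the entire corollary and is false for general finite flat group schemes, so it deserves to be isolated as a lemma rather than mentioned parenthetically. With those caveats, your outline is correct, and your identification of the converse of (4) as the one step requiring genuine new input (lifting the linearly reductive inertia of a geometric fiber to an fppf chart) matches the structure of the cited proof.
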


\subsection{Root stacks and quasi-coherent sheaves of them}\label{subsec:root stack}

Let $S=\Spec k$ be the spectrum of a field $k$ and $\calX$ be an algebraic stack over $k$. Recall that there exists a natural equivalence of groupoids
\begin{equation*}
\mathsf{Pic}(\calX)\simeq\Hom_k(\calX,\cB_k\G_m),
\end{equation*} 
where $\mathsf{Pic}(\calX)$ is the groupoid of invertible sheaves over $\calX$. Under this equivalence, each invertible sheaf $\mathcal{L}$ on $\calX$ corresponds to a $\G_m$-torsor $\calP(\mathcal{L})\lto\calX$ defined by 
$\calP(\mathcal{L})\Def\mathbf{Spec}_{\calX}(\bigoplus_{i\in\Z}\mathcal{L}^{\otimes i})$.
Moreover, under the above equivalence, each global section $s\in H^0(\calX,\mathcal{L})$ of an invertible sheaf $\mathcal{L}$ corresponds to a $\G_m$-equivariant function $\calP(\mathcal{L})\longrightarrow\A^1_k$. This amounts to saying that there exists a natural equivalence between the groupoid $\Hom_k(\calX,[\A_k^1/\G_m])$ and the groupoid of all the pairs $(\mathcal{L},s)$ of an invertible sheaf $\mathcal{L}$ on $\calX$ together with a global section $s\in H^0(\calX,\mathcal{L})$, where the action of $\G_m$ on $\A^1_k$ is defined by the natural multiplication. Since
\begin{equation*}
[\A_k^n/\G_m^n]=[\A^1_k/\G_m]\times_k\cdots\times_k[\A_k^1/\G_m]
\end{equation*} 
for each positive integer $n>0$, the groupoid $\Hom_k(\calX,[\A_k^n/\G_m^n])$ is equivalent to the groupoid of families $(\mathbf{L},\mathbf{s})=(\mathcal{L}_i,s_i)_{i=1}^n$ of such pairs $(\mathcal{L}_i,s_i)$. 

\begin{definition}\label{def:root stack}
Suppose given a morphism $\calX\longrightarrow[\A_k^n/\G_m^n]$, which corresponds to a family $(\mathbf{L},\mathbf{s})$, and an $n$-tuple $\bfr=(r_i)_{i=1}^n$ of positive integers $r_i>0$. We define the algebraic stack $\sqrt[\bfr]{(\mathbf{L},\mathbf{s})/\calX}$ as the 2-fiber product
\begin{equation*}
\begin{xy}
\xymatrix{\ar@{}[rd]|{\square}
\sqrt[\bfr]{(\mathbf{L},\mathbf{s})/\calX}\ar[r]\ar[d]&[\A_k^n/\G_m^n]\ar[d]^{\theta_{\bfr}}\\
\calX\ar[r]&[\A_k^n/\G_m^n],
}
\end{xy}
\end{equation*} 
where $\theta_{\bfr}$ is the $\bfr$th power map $\mathbf{a}=(a_i)_{i=1}^n\longmapsto \mathbf{a}^{\bfr}=(a_i^{r_i})_{i=1}^n$. We denote by $\pi_{\bfr}$, or simply $\pi$ if no confusion occurs, the natural map $\sqrt[\bfr]{(\mathbf{L},\mathbf{s})/\calX}\longrightarrow\calX$.
\end{definition}

From the definition, for any two $n$-tuples $\bfr$ and $\bfr'$ with $\bfr\mid\bfr'$, which means $r_i\mid r_i'$ for any $i$, there exists a natural $\calX$-morphism
\begin{equation*}
\sqrt[\bfr']{(\mathbf{L},\mathbf{s})/\calX}\longrightarrow
\sqrt[\bfr]{(\mathbf{L},\mathbf{s})/\calX}
\end{equation*} 
which stems from the endomorphism $\theta_{\bfr'/\bfr}$ of $[\A_k^n/\G_m^n]$, where $\bfr'/\bfr=(r_i'/r_i)_{i=1}^n$. In fact, if we put $\calY=\sqrt[\bfr]{(\mathbf{L},\mathbf{s})/\calX}$, then we have $\sqrt[\bfr'/\bfr]{(\mathbf{L}',\mathbf{s}')/\calY}=\sqrt[\bfr']{(\mathbf{L},\mathbf{s})/\calX}$, where $(\mathbf{L}',\mathbf{s}')$ is the family of pairs which defines the morphism $\calY\longrightarrow[\A_k^n/\G_m^n]$.

Moreover, if $f:\calY\longrightarrow\calX$ is a morphism of algebraic stacks over $k$, then we have
\begin{equation*}
\calY\times_{\calX}\sqrt[\bfr]{(\mathbf{L},\mathbf{s})/\calX}
=\sqrt[\bfr]{(f^*\mathbf{L},f^*\mathbf{s})/\calY}.
\end{equation*}

\begin{ex}\label{ex:root stack}~
\begin{enumerate}
\renewcommand{\labelenumi}{(\arabic{enumi})}
\item Let $X$ be a scheme over $k$. 
If all the $s_i$ are nowhere vanishing sections, or equivalently, if the associated $\G_m$-equivalent morphisms $\mathcal{P}(\mathcal{L}_i)\longrightarrow\A_k^1$ factor through $\G_m$, then the morphism $X\longrightarrow[\A_k^n/\G_m^n]$ factors through $[\G_m^n/\G_m^n]=S$, whence $\pi:\sqrt[\bfr]{(\mathbf{L},\mathbf{s})/X}\xrightarrow{~\simeq~}X$.
\item Let $X=\Spec A$ be an affine scheme. If $\mathcal{L}_i=\scrO_X$ for any $i$, then we have
\begin{equation*}
\sqrt[\bfr]{(\mathbf{L},\mathbf{s})/X}\simeq[\bigl(\Spec A[\mathbf{t}]/(\mathbf{t}^{\bfr}-\bfs)\bigl)/\mu_{\bfr}].
\end{equation*}
\end{enumerate}
\end{ex}

\begin{prop}\label{prop:root stack}
With the above notation, put $\calY=\sqrt[\bfr]{(\mathbf{L},\mathbf{s})/\calX}$. Then we have the following.
\begin{enumerate}
\renewcommand{\labelenumi}{(\arabic{enumi})}
\item (cf.~\cite[Theorem 2.3.2]{cadman}) The diagonal $\Delta:\calY\longrightarrow\calY\times_{\calX}\calY$ is finite.
\item (cf.~\cite[Corollary 2.3.6]{cadman}) The morphism $\pi:\calY\longrightarrow\calX$ is faithfully flat and quasi-compact.
\item (cf.~\cite[Theorem 1.2.31 and Proposition 1.2.32]{ta}) If $\calX=X$ is an algebraic space over $k$, then $\calY$ is a tame stack over $k$ with 
$\pi:\calY\longrightarrow X$ the coarse moduli space of $\calY$.
\end{enumerate}
\end{prop}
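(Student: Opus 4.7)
The strategy I would adopt is to reduce all three claims to the \'etale-local model of $\calY$ provided by Example \ref{ex:root stack}(2). Since the formation of the root stack is compatible with base change, after choosing a smooth atlas $V = \Spec A \to \calX$ on which every $\mathcal{L}_i$ becomes trivial (shrinking $V$ to a Zariski refinement if necessary), that example provides a Cartesian identification
\begin{equation*}
\calY \times_{\calX} V \simeq [U/\mu_{\bfr}], \qquad U = \Spec A[\mathbf{t}]/(\mathbf{t}^{\bfr} - \bfs),
\end{equation*}
where $\mu_{\bfr} = \prod_{i=1}^n \mu_{r_i}$ acts on $U$ by scaling each $t_i$ and $\bfs = (s_i) \in A^n$ collects the images of the sections. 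Since finiteness of a morphism and the combination of faithful flatness with quasi-compactness are smooth-local on the target, and the formation of coarse moduli spaces of tame stacks commutes with flat base change by Corollary \ref{cor:tame}(1), it suffices to verify the three assertions for $[U/\mu_{\bfr}] \to V$.

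For (2), I would observe that $A[\mathbf{t}]/(\mathbf{t}^{\bfr} - \bfs)$ is free of rank $\prod_i r_i$ over $A$, with basis the monomials $\mathbf{t}^{\mathbf{a}}$ for $0 \le a_i < r_i$. Hence $U \to V$ is finite, free, and surjective, in particular faithfully flat and quasi-compact. As $U \to [U/\mu_{\bfr}]$ is a $\mu_{\bfr}$-torsor, it is fppf surjective, and fppf descent yields that $[U/\mu_{\bfr}] \to V$ is faithfully flat and quasi-compact, proving (2).

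For (1), I would pull $\Delta_\pi$ back along the smooth cover $U \times_V U \to [U/\mu_{\bfr}] \times_V [U/\mu_{\bfr}]$. Using that $U \to [U/\mu_{\bfr}]$ is a $\mu_{\bfr}$-torsor, a direct $2$-categorical computation identifies the pullback with the action morphism
\begin{equation*}
\mu_{\bfr} \times U \to U \times_V U,\qquad (\zeta, u) \longmapsto (u, \zeta \cdot u).
\end{equation*}
Both $\mu_{\bfr} \times U$ and $U \times_V U$ are finite over $U$ via the first projection (the former because $\mu_{\bfr}$ is finite, the latter because $U \to V$ is finite), and the action morphism commutes with these projections, hence is itself finite. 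Finiteness then descends along the fppf cover to show that $\Delta_\pi$ is finite.

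For (3), when $\calX = X$ is an algebraic space, the group scheme $\mu_{\bfr}$ is diagonalizable and hence linearly reductive over $k$ by Proposition \ref{prop:classif lin red}, so Theorem \ref{thm:tame} implies that $[U/\mu_{\bfr}]$ is a tame stack, whence so is $\calY$. To identify the coarse moduli I would note that a monomial $\mathbf{t}^{\mathbf{a}}$ is $\mu_{\bfr}$-invariant if and only if $r_i \mid a_i$ for every $i$, so the invariant subalgebra of $A[\mathbf{t}]$ is $A[t_1^{r_1}, \ldots, t_n^{r_n}]$; its image in the quotient $A[\mathbf{t}]/(\mathbf{t}^{\bfr} - \bfs)$ collapses to $A$ itself via $t_i^{r_i} \mapsto s_i$. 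Hence the coarse moduli of $[U/\mu_{\bfr}]$ is $V$, and these \'etale-local identifications glue by Corollary \ref{cor:tame}(1) to exhibit $X$ as the coarse moduli of $\calY$. The main obstacle I anticipate is the explicit identification of the pullback of $\Delta_\pi$ in step (1): tracking the $\mu_{\bfr}$-torsor structure through the $2$-categorical fiber product requires care, but this is a standard calculation for quotient stacks by finite group schemes.
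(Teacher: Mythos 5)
Your proof is correct and takes essentially the same approach as the paper: both reduce, via compatibility of root stacks with base change, to the local affine model $[U/\mu_{\bfr}] \to V$, and both identify the pullback of the diagonal as the action groupoid $\mu_{\bfr}\times U \to U\times_V U$ (the paper records this as the explicit Cartesian diagram involving $\Spec k[\mathbf{x},\mathbf{y},\mathbf{t}]/(\mathbf{x}-\mathbf{t}\mathbf{y},\mathbf{t}^{\bfr}-1)$, which is the same object). The only cosmetic difference is that the paper first reduces to the universal map $\theta_{\bfr}\colon[\A^n_k/\G_m^n]\to[\A^n_k/\G_m^n]$ before passing to the affine atlas, whereas you pass to a trivializing smooth atlas of $\calX$ directly.
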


\begin{proof}
(1) It suffices to show the map $\theta_{\bfr}:[\A^n_k/\G_m^n]\lto[\A^n_k/\G^n_m]$ has finite diagonal. Since the natural projection $\A^n_k\lto[\A^n_k/\G^n_m]$ is a smooth surjective morphism, we are reduced to the case where $\calX=\A^n_k=\Spec k[x_1,\dots,x_n]$ and $\calY=\sqrt[\bfr]{(\scrO_{\A^n_k},x_i)_{i=1}^n/\A^n_k}=[\A^n_k/\mu_{\bfr}]$. Namely, it suffices to show that the morphism $[\A^n_k/\mu_{\bfr}]\lto\A^n_k$ has finite diagonal. However, since the natural morphism $\A^n_k\lto[\A^n_k/\mu_{\bfr}]$ is a finite flat surjective morphism, we get a Cartesian diagram with two vertical arrows finite flat and surjective,
\begin{equation*}
\begin{xy}
\xymatrix{\ar@{}[rd]|{\square}
\Spec \frac{k[\mathbf{x},\mathbf{y},\mathbf{t}]}{(\mathbf{x}-\mathbf{t}\mathbf{y},\mathbf{t}^{\bfr}-1)}\ar[r]\ar[d]&\Spec \frac{k[\mathbf{x},\mathbf{y}]}{(\mathbf{x}^{\bfr}-\mathbf{y}^{\bfr})}\ar[d]\\
[\A^n_k/\mu_{\bfr}]\ar[r]&[\A^n_k/\mu_{\bfr}]\times_{\A^n_k} [\A^n_k/\mu_{\bfr}].
}
\end{xy}
\end{equation*} 
Since the top horizontal arrow is finite, this implies that the bottom one is also finite. This completes the proof. 

(2) It suffices to show the claims for $\theta_{\bfr}:[\A^n_k/\G_m^n]\longrightarrow[\A^n_k/\G_m^n]$. However, since the diagram
\begin{equation*}
\begin{xy}
\xymatrix{
\A_k^n\ar[r]\ar[d]&[\A_k^n/\G_m^n]\ar[d]^{\theta_{\bfr}}\\
\A_k^n\ar[r]&[\A_k^n/\G_m^n]
}
\end{xy}
\end{equation*}
is commutative, where the two horizontal arrows are the universal $\G^n_m$-torsors and the left vertical arrow is the $\bfr$th power map, it suffices to show the claims for the $\bfr$th power map $\A_k^n\longrightarrow\A_k^n$~(cf.~\cite[Lemma 06FM]{stack}), which are standard facts. 

(3) Since the problem is Zariski local for $X$, we may assume that $X=\Spec A$ is an affine scheme over $k$ and the invertible sheaves $\mathcal{L}_i$ are trivial. Then, thanks to Theorem \ref{thm:tame}, the claim follows from the description given in Example \ref{ex:root stack}(2).
\end{proof}

\begin{prop}(cf.~\cite[Proposition 1.2.35]{ta})\label{prop:root stack qcoh}
With the same notation as in Proposition \ref{prop:root stack}, we have the following.
\begin{enumerate}
\renewcommand{\labelenumi}{(\arabic{enumi})}
\item $\scrO_{\calX}\xrightarrow{~\simeq~}\pi_*\scrO_{\calY}$.
\item For any quasi-coherent sheaves $\calE$ on $\calX$ and $\calF$ on $\calY$,
\begin{equation*}
\pi_*\calF\otimes\calE\xrightarrow{~\simeq~}\pi_*(\calF\otimes\pi^*\calE).
\end{equation*}
\item For any quasi-coherent sheaf $\calE$ on $\calX$,
\begin{equation*}
\calE\xrightarrow{~\simeq~}\pi_*\pi^*\calE.
\end{equation*}
Therefore, the functor $\pi^*:\Qcoh(\calX)\longrightarrow\Qcoh(\calY)$ is fully faithful.
\end{enumerate}
\end{prop}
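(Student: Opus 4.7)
The plan is to reduce all three statements to an explicit computation on an fppf cover of $\calX$ over which the line bundles $\mathcal{L}_i$ become trivial, and then to carry out the computation by writing $\calY$ as a quotient stack via Example \ref{ex:root stack}(2). The starting observation is that (3) is a formal consequence of (1) and (2) applied to $\calF=\scrO_{\calY}$, since then $\pi_*\pi^*\calE\simeq\pi_*(\scrO_{\calY}\otimes\pi^*\calE)\simeq\pi_*\scrO_{\calY}\otimes\calE\simeq\scrO_{\calX}\otimes\calE\simeq\calE$. So I would focus on establishing (1) and (2) and extract (3) at the end.

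For the reduction, I would use that $\pi$ is faithfully flat and quasi-compact by Proposition \ref{prop:root stack}(2), so the formation of $\pi_*$ commutes with flat base change on $\calX$. Hence it is enough to prove the two displayed isomorphisms after pulling back along a smooth surjective morphism $X\to\calX$ from an affine $k$-scheme $X=\Spec A$ on which each $\mathcal{L}_i$ is trivialized; then $s_i$ becomes an element of $A$. Under this trivialization, Example \ref{ex:root stack}(2) identifies the base change of $\calY$ with the quotient stack $\calY_A=[\Spec B/\mu_{\bfr}]$, where $B=A[t_1,\dots,t_n]/(t_1^{r_1}-s_1,\dots,t_n^{r_n}-s_n)$ and $\mu_{\bfr}=\mu_{r_1}\times\cdots\times\mu_{r_n}$ acts diagonally with $t_i\mapsto \zeta_i t_i$.

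The key computation is then purely algebraic. Quasi-coherent sheaves on $\calY_A$ correspond to $\mu_{\bfr}$-equivariant $B$-modules, equivalently to $\Gamma$-graded $B$-modules with $\Gamma=\prod_i \Z/r_i\Z$ (the character group of $\mu_{\bfr}$). As a $\Gamma$-graded $A$-algebra, $B$ decomposes as
\begin{equation*}
B \;=\; \bigoplus_{\mathbf{a}\in\Gamma} A\cdot t^{\mathbf{a}},
\end{equation*}
with degree-zero part $B_{\mathbf{0}}=A$. The pushforward $\pi_*$ sends a graded module $M$ to its degree-zero component $M_{\mathbf{0}}$ viewed as an $A$-module, while $\pi^*$ sends an $A$-module $N$ to $B\otimes_A N$ with its induced $\Gamma$-grading. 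Then (1) becomes $\pi_*\scrO_{\calY_A}=B_{\mathbf{0}}=A=\scrO_X$, and (2) follows by writing
\begin{equation*}
\pi_*(\calF\otimes\pi^*\calE) \;=\; \bigl(M\otimes_B (B\otimes_A N)\bigr)_{\mathbf{0}} \;=\; (M\otimes_A N)_{\mathbf{0}} \;=\; M_{\mathbf{0}}\otimes_A N \;=\; \pi_*\calF\otimes\calE.
\end{equation*}

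The main thing to be careful about is the descent step: one must verify that the canonical maps whose isomorphism is asserted in (1) and (2) are genuinely compatible with the flat base change, so that checking they are isomorphisms on an fppf cover of $\calX$ suffices. This is standard once one phrases both sides as values of functors on $\Qcoh(\calX)$ that commute with flat pullback, but it is the only part of the argument requiring care; the core algebraic computation on the chart is routine.
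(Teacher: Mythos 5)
Your proof is correct, and it takes a genuinely different route from the paper's. Both arguments start with the same fppf-local reduction (use that $\pi$ is faithfully flat, quasi-compact, and has finite diagonal so that the formation of $\pi_*$ and the two canonical maps — the unit $\scrO_{\calX}\to\pi_*\scrO_{\calY}$ for (1), the projection-formula map $\pi_*\calF\otimes\calE\to\pi_*(\calF\otimes\pi^*\calE)$ for (2) — commute with flat base change), and both derive (3) formally from (1) and (2). But after the reduction the paper outsources the work: it identifies $\pi:\calY\to X$ with the coarse-moduli-space map of a tame stack (Proposition \ref{prop:root stack}(3)), cites $\scrO_X\simeq\pi_*\scrO_{\calY}$ as a general property of coarse moduli spaces for (1), and cites Alper's projection formula for cohomologically affine morphisms \cite[Proposition 4.5]{alp13} for (2). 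You instead carry out an explicit computation: identifying $\Qcoh([\Spec B/\mu_{\bfr}])$ with $\Gamma$-graded $B$-modules for $\Gamma=\prod_i\Z/r_i\Z$, writing $B=\bigoplus_{\mathbf{a}\in\Gamma}A\cdot t^{\mathbf{a}}$ with $B_{\mathbf{0}}=A$, and reading off $\pi_*$ as the degree-zero part. Your approach is more elementary and self-contained (it essentially re-proves the relevant special case of the tame-stack/good-moduli-space machinery by hand), at the cost of being specific to this diagonalizable quotient presentation; the paper's citations to \cite{aov08} and \cite{alp13} give shorter prose but rely on heavier general theory. One small point to make explicit in your write-up: the line bundles $\mathcal{L}_i$ pulled back to a scheme atlas are only Zariski-locally trivial, so the affine chart $\Spec A$ should be taken as a member of a Zariski refinement of the atlas; this does not affect the argument.
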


\begin{proof}
(3) follows from (1) and (2). Therefore, it suffices to show  (1) and (2). As explained in the proof of \cite[Proposition 1.2.35]{ta}, by considering an fppf cover $U\longrightarrow\calX$ from a $k$-scheme $U$, the problems can be reduced to the case where $\calX=X$ is a scheme, whence, by Proposition \ref{prop:root stack}, $\calY$ is a tame stack with $\pi:\calY\longrightarrow X$ the coarse moduli space. Then (1) holds as we recall at the beginning of the previous subsection \S\ref{subsec:tame stack}. Moreover, for the second claim (2), see  \cite[Proposition 4.5]{alp13}.
\end{proof}

We will apply the above arguments to the following specific situation. 

\begin{definition}\label{def:root stack 2}
Let $X$ be a locally Noetherian scheme over $k$. Let $\bfD=(D_i)_{i\in I}$ be a finite family of reduced irreducible distinct effective Cartier divisors $D_i$ on $X$. We set $D\Def\bigcup_{i\in I}D_i\subset X$. Furthermore, suppose given a family $\bfr=(r_i)_{i\in I}$ of integers $r_i>0$. For each $i\in I$, we denote by $s_{D_i}$ the canonical section of $\scrO_X(D_i)$, i.e.\ $s_{D_i}:\scrO_{X}\hookrightarrow\scrO_{X}(D_i)$ is the natural inclusion. Let $\scrO_{X}(\bfD)=(\scrO_X(D_i))_{i\in I}$ and $s_{\bfD}=(s_{D_i})_{i\in I}$. For each $\bfr=(r_i)_{i\in I}$ with $r_i>0$, we define 
\begin{equation*}
\fX^{\bfr}~\Def~\sqrt[\bfr]{\bfD/X}~\Def~\sqrt[\bfr]{(\scrO_{X}(\bfD),s_{\bfD})/X}
\end{equation*}
and call it the \textit{root stack} associated with $X$ and  the data $(\bfD,\bfr)$.
\end{definition}

\begin{prop}(cf.~\cite[\S 2.4.1]{bo09}\cite[Lemmas 3.2 and 3.4]{bb17preprint})\label{prop:root stack D}
With the above notation, we have the following.
\begin{enumerate}
\renewcommand{\labelenumi}{(\arabic{enumi})}
\item For any open affine neighbourhood $U=\Spec A$ together with local equations $s_i=0$ for $D_i$ on $U$ so that $s_i:\scrO_{X}(D_i)|_U\xrightarrow{~\simeq~}\scrO_U$, 
we have an isomorphism
\begin{equation*}
U\times_X\sqrt[\bfr]{\bfD/X}~\simeq~[\bigl(\Spec A[\mathbf{t}]/(\mathbf{t}^{\bfr}-\bfs)\bigl)/\mu_{\bfr}].
\end{equation*}
\item For any closed point $\xi=[x]\in |\sqrt[\bfr]{\bfD/X}|_0=|X|_0$, there exists a unique gerbe $\calG_{\xi}\lto\Spec k(x)$, which we call the \textit{residual gerbe} at $\xi$, such that $\calG_{\xi}$ is a Noetherian algebraic substack of $\sqrt[\bfr]{\bfD/X}$ and the image of $|\calG_{\xi}|_0\lto|\sqrt[\bfr]{\bfD/X}|_0$ is $\xi$. Moreover, for any $\xi=[x]\in |\sqrt[\bfr]{\bfD/X}|_0$, the residual gerbe $\calG_{\xi}$ at $\xi$ is neutral and non-canonically isomorphic to $\cB_{k(x)}\mu_{\bfr_x}$, where the index $\bfr_x=(r_{x,i})_{i\in I}$ is defined to be
\begin{equation}\label{eq:bfr_x}
r_{x,i}=
\begin{cases}
1&\text{if $x\not\in D_i$,}\\
r_i&\text{if $x\in D_i$}.
\end{cases}
\end{equation}
\end{enumerate}
\end{prop}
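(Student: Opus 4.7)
For part (1), the claimed isomorphism is a direct consequence of two facts recorded earlier: the functoriality of the root-stack construction under base change, and the explicit model of Example \ref{ex:root stack}(2). Concretely, restricting along $U \hookrightarrow X$ gives
\begin{equation*}
U \times_X \sqrt[\bfr]{\bfD/X} \;\simeq\; \sqrt[\bfr]{(\scrO_X(\bfD)|_U,\; s_\bfD|_U)/U},
\end{equation*}
and the trivializations $s_i : \scrO_X(D_i)|_U \xrightarrow{\simeq} \scrO_U$ identify the pair $(\scrO_X(\bfD)|_U, s_\bfD|_U)$ with $(\scrO_U^{\oplus |I|}, (s_i)_{i\in I})$, where the $s_i\in A$ now denote the local equations of the $D_i$. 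Example \ref{ex:root stack}(2) then yields the stated quotient presentation.

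For part (2), the existence and uniqueness of the residual gerbe $\calG_\xi$ follow from the general theory of algebraic stacks: because $\sqrt[\bfr]{\bfD/X}$ is a locally Noetherian algebraic stack with finite diagonal (Proposition \ref{prop:root stack}(1)), for every closed point $\xi = [x]$ there is a unique reduced locally closed substack supported on $\{\xi\}$ whose coarse moduli space is $\Spec k(x)$. Note also that, since $\pi_\bfr$ is a universal homeomorphism by Corollary \ref{cor:tame}(5), the identification $|\sqrt[\bfr]{\bfD/X}|_0 = |X|_0$ is automatic.

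To identify $\calG_\xi$ with $\cB_{k(x)}\mu_{\bfr_x}$, I would set $J \Def \{i \in I : x \in D_i\}$ and shrink $X$ to an affine open $U = \Spec A$ around $x$ small enough that $U \cap D_i = \emptyset$ for $i \notin J$. For such $i$ the local equation $s_i$ is a unit on $U$, and Example \ref{ex:root stack}(1) shows that the corresponding factor of the root stack is trivial over $U$. Combined with part (1), this gives a presentation
\begin{equation*}
U \times_X \sqrt[\bfr]{\bfD/X} \;\simeq\; [V/\mu_{\bfr_x}],\qquad V \Def \Spec A[(t_i)_{i\in J}]/(t_i^{r_i}-s_i)_{i\in J},
\end{equation*}
where $\mu_{\bfr_x} = \prod_{i\in J}\mu_{r_i}$ acts by scalar multiplication on the $t_i$. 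Because $s_i(x)=0$ for $i \in J$, the scheme-theoretic fiber of $V$ over $x$ is $\Spec k(x)[(t_i)_{i\in J}]/(t_i^{r_i})_{i\in J}$, whose unique closed $\mu_{\bfr_x}$-orbit is the fixed point $t_i = 0$; passing to the reduced substack therefore produces
\begin{equation*}
\calG_\xi \;\simeq\; [\Spec k(x)/\mu_{\bfr_x}] \;=\; \cB_{k(x)}\mu_{\bfr_x},
\end{equation*}
which is manifestly neutral, the fixed point supplying a $k(x)$-section. The non-canonicity reflects the freedom in choosing the trivializations $s_i$.

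The only mildly delicate point is that the scheme-theoretic fiber of $\pi_\bfr$ over a point $x \in D$ is non-reduced, so one must explicitly pass to the underlying reduced substack to recover the residual gerbe. Once the affine local presentation above is in hand, this is purely formal, and the rest of the argument is a routine manipulation of the quotient stack $[V/\mu_{\bfr_x}]$.
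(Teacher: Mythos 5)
Your proof is correct and follows essentially the same route as the paper. For (1), both arguments reduce to Example \ref{ex:root stack}(2) via the trivializations $s_i$. For (2), both arguments compute the (scheme-theoretic) fiber of the coarse moduli map over $x$ using the local quotient presentation and then pass to the reduced substack; the only cosmetic difference is that you first shrink $U$ so that the factors indexed by $i \notin J$ drop out via Example \ref{ex:root stack}(1), whereas the paper keeps all the indices and observes directly in the computation that the factors with $s_i(x)\neq 0$ contribute trivially to $\calX\times_X x$.
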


\begin{proof}
(1) follows from the description given in Example \ref{ex:root stack}(2) together with the fact that
\begin{equation*}
(\scrO_U(D_i),{s_{D_i}})\xrightarrow[~\simeq~]{~s_i~}(\scrO_{U},s_i)
\end{equation*}
for any $i\in I$.

(2) The uniqueness of such a gerbe follows from \cite[Lemma 06MT]{stack}. Let us prove the existence. The problem is Zariski local, by (1), we can replace $\sqrt[\bfr]{\bfD/X}$ by the quotient stack $\calX\Def [\bigl(\Spec A[\mathbf{t}]/(\mathbf{t}^{\bfr}-\mathbf{s})\bigl)/\mu_{\bfr}]$. For any $x\in\Spec A$, we have
\begin{equation*}
\begin{aligned}
\calX\times_{X}x
&=[\bigl(\Spec k(x)[\mathbf{t}]/(\mathbf{t}^{\bfr}-\mathbf{s}(x))\bigl)/\mu_{\bfr}]\\
&\simeq\prod_{s_i(x)=0}[(\Spec k(x)[t_i]/(t_i^{r_i}))/\mu_{r_i}],
\end{aligned}
\end{equation*}  
hence
\begin{equation*}
(\calX\times_{X}x)_{\rm red}=\prod_{s_i(x)=0}[\Spec k(x)/\mu_{r_i}]\simeq\cB_{k(x)}\mu_{\bfr_x}
\end{equation*}
gives a desired gerbe $\calG_{\xi}$ above $x$. 
\end{proof}

The following is a variant of a theorem due to Alper~\cite[\S10 Theorem 10.3]{alp13}.

\begin{prop}\label{prop:rel alper}
With the same notation as in Definition \ref{def:root stack 2}, let $\bfr'$ be another index with $\bfr\mid\bfr'$. Then we have the following.
\begin{enumerate}
\renewcommand{\labelenumi}{(\arabic{enumi})}
\item For any closed point $\xi$ of $\fX^{\bfr'}$, the induced morphism $\pi_{\xi}:\calG'_{\xi}\longrightarrow\calG_{\xi}$ between the residual gerbes is a gerbe. More precisely, each closed point $\xi=[x]\in |\fX^{\bfr'}|_0=|X|_0$ gives rise to the 2-Cartesian diagram
\begin{equation*}
\begin{xy}
\xymatrix{\ar@{}[rd]|{\square}
\cB_{k(x)}(\mu_{\bfr'_x/\bfr_x})\ar[r]\ar[d]&\calG'_{\xi}\ar[d]\\
\Spec k(x)\ar[r]&\calG_{\xi}. 
}
\end{xy}
\end{equation*}
\item Let $\calE$ be a vector bundle on $\fX^{\bfr'}$. Suppose that for any closed point $\xi$ of $\fX^{\bfr'}$, we have $\pi_{\xi}^*\pi_{\xi*}(\calE|_{\calG'_{\xi}})\xrightarrow{~\simeq~}\calE|_{\calG'_{\xi}}$. Then $\pi^*\pi_*\calE\xrightarrow{~\simeq~}\calE$ and $\pi_*\calE$ is a vector bundle on $\fX^{\bfr}$.
\item The functor $\pi^*:\Vect(\fX^{\bfr})\longrightarrow\Vect(\fX^{\bfr'})$ is fully faithful and the essential image consists of all the vector bundles $\calE$ on $\fX^{\bfr'}$ such that for any closed point $\xi$ of $\fX^{\bfr'}$, we have $\pi_{\xi}^*\pi_{\xi*}(\calE|_{\calG'_{\xi}})\xrightarrow{~\simeq~}\calE|_{\calG'_{\xi}}$. Moreover, the essential image of the functor $\pi^*:\Vect(\fX^{\bfr})\lto\Vect(\fX^{\bfr'})$ is closed under taking subquotients. 
\end{enumerate}
\end{prop}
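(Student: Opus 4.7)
The proof decomposes according to the three claims, with part (2) carrying the substantive content.

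For (1), I proceed by explicit local calculation. By Proposition \ref{prop:root stack D}(1), on an affine open $U=\Spec A$ with local equations $s_i$ for the $D_i$ meeting $U$, the restrictions $\fX^{\bfr}|_U$ and $\fX^{\bfr'}|_U$ acquire the explicit quotient-stack form, and the morphism $\pi$ is induced by the map $\mathbf{t}\mapsto\mathbf{t}'^{\bfr'/\bfr}$ on coordinate rings, which is equivariant for the surjection $\mu_{\bfr'}\twoheadrightarrow\mu_{\bfr}$ given componentwise by raising to the $(r_i'/r_i)$-th power. By Proposition \ref{prop:root stack D}(2), the residual gerbes at $\xi=[x]$ are $\calG_{\xi}\simeq\cB_{k(x)}\mu_{\bfr_x}$ and $\calG'_{\xi}\simeq\cB_{k(x)}\mu_{\bfr'_x}$, and the induced homomorphism $\mu_{\bfr'_x}\twoheadrightarrow\mu_{\bfr_x}$ has kernel canonically isomorphic to $\mu_{\bfr'_x/\bfr_x}$, yielding the stated 2-Cartesian diagram.

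Part (2) is the heart of the proof, and the plan is to invoke a variant of Alper's Theorem 10.3 in \cite{alp13}. Both $\fX^{\bfr}$ and $\fX^{\bfr'}$ are tame algebraic stacks (Proposition \ref{prop:root stack}(3) applied étale-locally on $X$), and by Proposition \ref{prop:root stack qcoh}(3) we already have $\pi_*\pi^*\calF\simeq\calF$ for any $\calF\in\Qcoh(\fX^{\bfr})$. The nontrivial direction is: if $\calE\in\Vect(\fX^{\bfr'})$ satisfies the residual-gerbe condition, then $\pi^*\pi_*\calE\simeq\calE$ and $\pi_*\calE$ is locally free. Working étale-locally on $X$, the morphism $\pi$ becomes $[V'/\mu_{\bfr'}]\to[V/\mu_{\bfr}]$, and the identification $V=V'^{\mu_{\bfr'/\bfr}}$ realizes $V'\to V$ as a good quotient by the linearly reductive group $\mu_{\bfr'/\bfr}$; descent of $\calE$ along $\pi$ is then governed by Alper's criterion, which says exactly that the action of the kernel $\mu_{\bfr'_x/\bfr_x}$ on each fiber of $\calE$ is trivial. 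This triviality is equivalent to $\pi_\xi^*\pi_{\xi*}(\calE|_{\calG'_\xi})\simeq\calE|_{\calG'_\xi}$. Local freeness of $\pi_*\calE$ then follows from $\pi^*\pi_*\calE\simeq\calE$ together with faithful flatness of $\pi$ (Proposition \ref{prop:root stack}(2)).

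Part (3) is a formal consequence of (2) together with Proposition \ref{prop:root stack qcoh}(3). Full faithfulness of $\pi^*$ comes from the chain of natural isomorphisms
\begin{equation*}
\Hom_{\fX^{\bfr'}}(\pi^*\calF,\pi^*\calF')\simeq\Hom_{\fX^{\bfr}}(\calF,\pi_*\pi^*\calF')\simeq\Hom_{\fX^{\bfr}}(\calF,\calF').
\end{equation*}
A pullback $\calE=\pi^*\calF$ automatically satisfies the residual-gerbe condition: by the 2-Cartesian diagram in (1), $\calE|_{\calG'_\xi}\simeq\pi_\xi^*(\calF|_{\calG_\xi})$, and applying Proposition \ref{prop:root stack qcoh}(3) to $\pi_\xi$ gives $\pi_{\xi*}\pi_\xi^*=\mathrm{id}$, whence $\pi_\xi^*\pi_{\xi*}(\pi_\xi^*)\simeq\pi_\xi^*$; conversely, (2) shows the condition is sufficient. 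Closure under subquotients reduces to the elementary representation-theoretic fact that any subquotient of a $\mu_{\bfr'_x}$-representation on which $\mu_{\bfr'_x/\bfr_x}$ acts trivially again has trivial $\mu_{\bfr'_x/\bfr_x}$-action; this then feeds back into the criterion of (2). The main obstacle is part (2), specifically combining the local good-quotient description of $V'\to V$ with \cite[Theorem 10.3]{alp13}, whose standard formulation concerns morphisms to coarse moduli spaces rather than between two stacks; this is handled by passing to a smooth atlas of $\fX^{\bfr}$ by a scheme and invoking the absolute version there.
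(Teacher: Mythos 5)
Your proposal is correct and follows essentially the same line of argument as the paper: part (1) by the explicit local description of residual gerbes from Proposition~\ref{prop:root stack D}, part (2) by pulling back along a scheme atlas of $\fX^{\bfr}$ so that $\pi$ becomes a good moduli space morphism to which Alper's Theorem~10.3 applies (the paper carries out this reduction via a flat base change computation $f^*\pi_*\calE\simeq\pi'_*g^*\calE$ rather than via the explicit $[V'/\mu_{\bfr'}]\to[V/\mu_{\bfr}]$ models you wrote out, but the substance is identical), and part (3) as a formal consequence of (1), (2) and Proposition~\ref{prop:root stack qcoh}(3). The only cosmetic quibble is the notation $V=V'^{\mu_{\bfr'/\bfr}}$, which should read as the spectrum of the $\mu_{\bfr'/\bfr}$-invariant ring rather than the fixed-point scheme, but your intent is clear.
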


\begin{proof}
(1) By taking an fppf covering $T\lto\calX$ with $T$ a scheme, the assertion can be deduced from Proposition \ref{prop:root stack D}(2) (cf.~\cite[Lemma 06QF]{stack}). 

(2) Take fppf coverings $f:T\longrightarrow\fX^{\bfr}$ and $f':T'\lto\fX^{\bfr'}$ with $T$ and $T'$ schemes and consider the commutative diagram
{\small\begin{equation*}
\begin{xy}
\xymatrix{\ar@{}[rd]|{\square}
T''\ar[r]^{~g'}\ar[d]_{f''}&T'\ar[d]^{f'}\\
\fX^{\bfr'}\times_{\fX^{\bfr}}T\ar[r]^{~~g}\ar[d]_{\pi'}\ar@{}[rd]|{\square} &\fX^{\bfr'}\ar[d]^{\pi}\\
T\ar[r]_{~~f}&\fX^{\bfr}.
}
\end{xy}
\end{equation*}}
where all the squares are 2-Cartesian. Note that $f'':T''\lto\fX^{\bfr'}\times_{\fX^{\bfr}}T$ is an fppf covering  with $T''$ a scheme. Then, for any morphism $\phi:\calE_1\lto\calE_2$ of vector bundles on $\fX^{\bfr'}$, 
$\phi$ is an isomorphism if and only if 
$g'^*f'^*\phi=f''^*g^*\phi$ is an isomorphism, and the latter condition is equivalent to the condition that   
$g^*\phi$ is an isomorphism.

Then the claims are equivalent to saying that $g^*\pi^*\pi_*\calE\xrightarrow{~\simeq~}g^*\calE$ and $f^*\pi_*\calE$ is a vector bundle on $T$.  However, since $f:T\longrightarrow\fX^{\bfr}$ is flat, by flat base changing, we get
\begin{equation*}
g^*\pi^*\pi_*\calE\simeq\pi'^*f^*\pi_*\calE\simeq\pi'^*\pi'_*g^*\calE
\end{equation*}
and similarly, $f^*\pi_*\calE\simeq \pi'_* g^*\calE$. 
Hence, the problem is reduced to the absolute case $\pi:\fX^{\bfr'/\bfr}\longrightarrow X$, which follows from \cite[\S10 Theorem 10.3]{alp13}.

(3) The description of the essential image is immediate from (2). The full faithfulness follows from Proposition \ref{prop:root stack qcoh}(3). The last assertion is a consequence of (1). 
\end{proof}

\begin{prop}(cf.~\cite[Proposition 3.9(c)]{bergh-etal})\label{prop:sm root stack}
With the same notation as in Definition \ref{def:root stack 2}, suppose further that $X$ is smooth over a perfect field $k$, that each $D_i$ is smooth over $k$, and that $D$ is a simple normal crossings divisor on $X$. Then the root stack $\sqrt[\bfr]{\bfD/X}$ is a smooth algebraic stack over $k$. 
\end{prop}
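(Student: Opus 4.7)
The plan is to reduce the smoothness of $\sqrt[\bfr]{\bfD/X}$ to the smoothness of an explicit polynomial ring via the local presentation of Proposition~\ref{prop:root stack D}(1) together with an étale straightening of the simple normal crossings divisor $D$. Since smoothness of algebraic stacks can be checked after fppf base change (it descends along fppf morphisms) and the natural map
\[Y_U~\Def~\Spec A[\mathbf{t}]/(\mathbf{t}^{\bfr}-\bfs)~\longrightarrow~[Y_U/\mu_{\bfr}]~\simeq~U\times_{X}\sqrt[\bfr]{\bfD/X}\]
from that proposition is an fppf $\mu_{\bfr}$-torsor, it suffices to show that the scheme $Y_U$ is smooth over $k$ for a cover of $X$ by suitable affine opens $U=\Spec A$.

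To carry this out, I would work Zariski-locally around a closed point $x\in X$. By Example~\ref{ex:root stack}(1), shrinking $U$ allows me to discard from the family $\bfD$ all divisors which miss $U$, so I may assume $x\in D_i$ for every $i\in I=\{1,\dots,m\}$. Since $k$ is perfect and $X$ is smooth of some dimension $n$ over $k$, the SNC hypothesis together with the smoothness of each $D_i$ implies that the local equations $s_1,\dots,s_m$ at $x$ of the components $D_1,\dots,D_m$ form part of a regular system of parameters at $x$. Choosing liftings $y_{m+1},\dots,y_n\in\scrO_{X,x}$ whose differentials complete $ds_1,\dots,ds_m$ to a basis of the cotangent space $T_x^{*}X$ and applying the Jacobian criterion, I obtain, after a further Zariski shrinkage of $U$, an étale morphism
\[\phi~:~U~\longrightarrow~\A^n_k~=~\Spec k[y_1,\dots,y_n],\qquad\phi^{*}(y_i)=s_i\quad(i=1,\dots,m).\]

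Under this étale model, the substitutions $y_i=t_i^{r_i}$ for $i\le m$ give the identification
\[A[\mathbf{t}]/(\mathbf{t}^{\bfr}-\bfs)~\simeq~A\otimes_{k[y_1,\dots,y_n]}k[t_1,\dots,t_m,y_{m+1},\dots,y_n].\]
The second tensor factor is a polynomial $k$-algebra, hence smooth over $k$, while $A$ is étale over $k[y_1,\dots,y_n]$ by construction; the right-hand side is therefore étale over the polynomial ring and in particular smooth over $k$. Hence $Y_U$ is smooth over $k$ near every closed point, and descending along the fppf $\mu_{\bfr}$-torsor $Y_U\to[Y_U/\mu_{\bfr}]$ transfers this smoothness to the root stack.

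The principal obstacle is the construction of the étale straightening $\phi$ turning the local equations $s_i$ into coordinate functions. This is the standard étale local model for an SNC divisor on a smooth variety over a perfect field, and it is essentially the only nontrivial ingredient; the remainder of the argument is a direct computation in commutative algebra together with formal fppf descent.
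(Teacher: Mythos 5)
Your proof is correct but establishes the smoothness of the Kummer scheme $Z=\Spec A[\mathbf{t}]/(\mathbf{t}^{\bfr}-\bfs)$ by a genuinely different route from the paper. The paper works by commutative algebra: it observes that $t_1,\dots,t_n$ is a regular sequence on $Z$ whose scheme-theoretic vanishing locus is $A/(s_1,\dots,s_n)=\scrO_{\cap_i D_i}$, which is regular by the SNC hypothesis (cf.\ \cite[Lemma 0BIA]{stack}), deduces that $Z$ is regular via \cite[Lemma 00NU]{stack}, and then uses perfectness of $k$ to upgrade regular to smooth. You instead construct an explicit \'etale straightening $\phi:U\to\A^n_k$ with $\phi^*(y_i)=s_i$ for $i\le m$ and realize $Z$ as the \'etale base change along $\phi$ of the standard Kummer cover $\Spec k[t_1,\dots,t_m,y_{m+1},\dots,y_n]\to\A^n_k$, which is visibly smooth; this is the standard \'etale local model for a root stack and is arguably more geometric, though it invokes the SNC hypothesis through the existence of the coordinate straightening rather than through regularity of the intersection. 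One caution on the final step: the phrase ``smoothness can be checked after fppf base change'' is loosely stated, since what is actually needed is descent of smoothness over $k$ along a flat, surjective, locally finitely presented cover of the \emph{source}, and since the $\mu_{\bfr}$-torsor $Z\to[Z/\mu_{\bfr}]$ is not itself smooth, $Z$ does not directly serve as a smooth atlas; as the paper does, one should pull back a smooth atlas $W\to[Z/\mu_{\bfr}]$ to $W'=W\times_{[Z/\mu_{\bfr}]}Z$, observe that $W'\to Z$ is smooth so $W'$ is smooth over $k$, and then descend smoothness from $W'$ to $W$ along the finite flat surjection $W'\to W$ (cf.\ \cite[Lemma 05B5]{stack}).
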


\begin{proof}
Since the problem is Zariski local, we may assume that
\begin{equation*}
\fX\Def\sqrt[\bfr]{\bfD/X}\simeq [Z/\mu_{\bfr}]
\end{equation*} 
for some $Z=\Spec A[\mathbf{t}]/(\mathbf{t}^{\bfr}-\mathbf{s})$ with $X=\Spec A$~(cf.~Proposition \ref{prop:root stack D}(1)), where $s_1,\dots,s_n$ is a regular sequence in $A$. Let us begin with showing that $Z$ is smooth over $k$. Let $p:Z\lto\fX\xrightarrow{~\pi~} X$ be the composition of the quotient map with the coarse moduli space map $\pi$. Let $\bfD'\Def p^{-1}(\bfD)$. Then note that $\bfD'=(V(t_i))_{i=1}^n$, where $V(t_i)=\{t_i=0\}$, and $t_1,\dots,t_n$ is a regular sequence. By \cite[Lemma 0BIA]{stack}, the scheme theoretic intersection $\cap_{i=1}^n D_i$ is a regular scheme. However, since  
\begin{equation*}
A[\mathbf{t}]/(\mathbf{t}^{\bfr}-{\bfs}, t_1,\dots,t_n)=A/(s_1,\dots,s_n)=\scrO_{\cap_{i=1}^nD_i},
\end{equation*}  
by \cite[Lemma 00NU]{stack}, we can find that $Z$ is regular over $k$. However, as $k$ is perfect, by \cite[Lemma 0CBP]{stack}, the same is true after base change any algebraic extension of $k$. Namely $Z$ is a geometrically regular and hence by \cite[Lemma 038X]{stack}, $Z$ is a smooth $k$-scheme. 

Let $W\lto \fX$ be a smooth atlas and put
\begin{equation*}
W'\Def W\times_{\fX}Z.
\end{equation*}
Since the projection $W'\lto Z$ is smooth and $Z$ is a smooth $k$-scheme, $W'$ is smooth over $k$. 
Then we get the commutative diagram
\begin{equation*}
\begin{xy}
\xymatrix{
W'\ar[r]\ar[rd]&W\ar[d]\\
& \Spec k.
}
\end{xy}
\end{equation*}
Since $W'\lto W$ is an fppf covering, by \cite[Lemma 05B5]{stack}, $W$ must be a smooth $k$-scheme. This completes the proof.
\end{proof}


\section{Embedding problem over root stacks}\label{sec:EP}

\subsection{Local fundamental group schemes}\label{sec:loc FG}

Let $k$ be a field of characteristic $p>0$ and $\calX$ a reduced algebraic stack over $k$ such that $H^0(\scrO_{\calX})$ contains no nontrivial purely inseparable extension of $k$, which admits the local fundamental gerbe $\calX\lto\Pi_{\calX/k}^{\loc}$~(cf. Definition \ref{def:et loc ger} and Proposition \ref{prop:exist et loc ger}(2)). By applying the formalism in \S\ref{subsec:tann}, to the structure morphism $\calX\lto\Spec k=\calX_{\calT}$, we get a tannakian interpretation of the local fundamental gerbe $\Pi^{\loc}_{\calX/k}$. With the same notation as in \S\ref{subsec:tann}, we consider the categories
\begin{equation*}
\calD_i(\calX)\Def\calT_i(\calX)
\end{equation*}
for $i\ge 0$ with $\calX_{\calT}=\Spec k$ and the functors
\begin{equation*}
\calD_i(\calX)\lto\calD_{i+1}(\calX)~;~(\calF,V,\lambda)\longmapsto(F^{*}\calF,F_k^{*}V,F^*\lambda).
\end{equation*}
Recall that $\calD_{\infty}(\calX)=\varinjlim_{i}\calD_i(\calX)$.

\begin{thm}(Tonini--Zhang, cf.~\cite[\S7 Theorem 7.1]{tz17})
\label{thm:tann loc ger}
Let $k$ be a field of characteristic $p>0$ and $\calX$ a reduced algebraic stack over $k$ such that $H^0(\scrO_{\calX})$ contains no nontrivial purely inseparable extension of $k$. Then, all the categories $\calD_{i}(\calX)~(i\ge 0)$ and $\calD_{\infty}(\calX)$ are tannakian categories over $k$ and the above functors $\calD_{i}(\calX)\lto\calD_{i+1}(\calX)$ are $k$-linear monoidal exact functors. Moreover, 
there exists a canonical equivalence of tannakian categories
\begin{equation*}
\calD_{\infty}(\calX)\xrightarrow{~\simeq~}\Vect(\Pi_{\calX/k}^{\loc}).
\end{equation*}
\end{thm}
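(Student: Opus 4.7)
The plan is to specialize Theorem \ref{thm:tann Nori ger} to the structure morphism $\pi_{\calT}\colon \calX \to \Spec k = \calX_{\calT}$, in which case $\calT(\calX) = \Vect(\Spec k) = \Vecf_k$ and the categories $\calT_i(\calX)$, $\calT_\infty(\calX)$ in that theorem literally become $\calD_i(\calX)$, $\calD_\infty(\calX)$. First I would verify the hypotheses of Theorem \ref{thm:tann Nori ger}: Axiom \ref{axio:tann}(A) is immediate from $\Qcohfp(\Spec k) = \Vecf_k$; the ring $L_0 = \End_{\Vecf_k}(k) = k$ is already a field; and condition (\ref{eq:tann Nori ger}) is valid for the reduced stack $\calX$.

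With these in hand, parts (1) and (2) of Theorem \ref{thm:tann Nori ger} immediately deliver the first half of the claim: each $\calD_i(\calX)$ is an $L_i$-tannakian category with $L_i$ a field, and the transition functors $\calD_i(\calX) \to \calD_{i+1}(\calX) \to \calD_\infty(\calX)$ are faithful, $k$-linear, monoidal and exact. The next step is to descend the base field from $L_\infty$ to $k$, using the explicit description
\[
L_\infty = \{\, x \in H^0(\scrO_\calX) \mid x^{p^i} \in L_0 = k \text{ for some } i \geq 0 \,\}
\]
from Theorem \ref{thm:tann Nori ger}(2). This set is by construction a purely inseparable extension of $k$ sitting inside $H^0(\scrO_\calX)$, so the standing hypothesis on $H^0(\scrO_\calX)$ forces $L_\infty = k$. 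This identification is the only genuinely new input required beyond the general formalism.

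For the final equivalence $\calD_\infty(\calX) \simeq \Vect(\Pi^{\loc}_{\calX/k})$, I would first observe that $\EFin(\Vecf_k) = \Vecf_k$, since every finite-dimensional $k$-vector space is trivially a finite bundle on $\Spec k$ (for instance $V^{\otimes 2} \simeq V^{\oplus \dim V}$ when $\dim V \geq 1$). The explicit formula in Theorem \ref{thm:tann Nori ger}(3) then gives
\[
\EFin(\calT_i(\calX)) = \{(\calF,V,\lambda) \mid V \in \EFin(\Vecf_k)\} = \calT_i(\calX) = \calD_i(\calX)
\]
for all $i \in \mathbb{N} \cup \{\infty\}$. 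The second half of Theorem \ref{thm:tann Nori ger}(3) identifies the pro-local gerbe $(\Pi_{\calT_\infty(\calX)})^{\loc}$ with the local fundamental gerbe $\Pi^{\loc}_{\calX/k}$ over $L_\infty = k$, and combining this with tannakian duality (Theorem \ref{thm:tann duality}) together with Example \ref{ex:EFin prof ger} (vector bundles on a profinite gerbe are automatically essentially finite) yields the desired canonical equivalence $\calD_\infty(\calX) \simeq \Vect(\Pi^{\loc}_{\calX/k})$. All substantive work is absorbed into Theorem \ref{thm:tann Nori ger}, so there is no real obstacle; the only delicate point is the book-keeping $L_\infty = k$, which is exactly where the hypothesis on $H^0(\scrO_\calX)$ enters.
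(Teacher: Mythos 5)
Your proposal is correct and follows essentially the same route as the paper's own proof: both specialize Theorem \ref{thm:tann Nori ger} to $\calX_{\calT}=\Spec k$, extract the first claims from parts (1)--(2), observe that $\EFin(\Vecf_k)=\Vecf_k$ forces $\EFin(\calD_\infty(\calX))=\calD_\infty(\calX)$, and then identify $\Pi^{\loc}_{\calX/k}$ with $\Pi_{\calD_\infty(\calX)}$ via Theorem \ref{thm:tann Nori ger}(3). You spell out the hypothesis verification and the bookkeeping $L_\infty=k$ a bit more explicitly than the paper does, but the argument is the same; the only cosmetic quibble is that the invocation of Example \ref{ex:EFin prof ger} in the last step is somewhat superfluous once you already know $\calD_\infty(\calX)=\EFin(\calD_\infty(\calX))$, which already makes $\Pi_{\calD_\infty(\calX)}$ pro-local and hence fixed by $(\cdot)^{\loc}$.
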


\begin{proof}
All the statements except for the last equivalence are immediate consequences of Theorem \ref{thm:tann Nori ger}(1) and (2). However, by Theorem \ref{thm:tann Nori ger}(3), we have $\calD_{\infty}(\calX)=\EFin(\calD_{\infty}(\calX))$ and 
the morphism $\calX\lto\Pi^{\loc}_{\calD_{\infty}(\calX)}=\Pi_{\calD_{\infty}(\calX)}$ is the local fundamental gerbe over $k$. This completes the proof.  
\end{proof}

Furthermore, if $k$ is perfect, then the local fundamental gerbe $\Pi^{\loc}_{\calX/k}$ is neutral.

\begin{prop}(Romagny--Tonini--Zhang, cf.~\cite[\S2]{rtz17})
\label{prop:loc ger neutral}
Let $\Gamma$ be a pro-local gerbe over a perfect field of characteristic $p>0$. Then $\Gamma(k)\neq\emptyset$ and, for any two objects $\xi,\xi'\in\Gamma(k)$, there exists exactly one isomorphism $\xi\xrightarrow{~\simeq~}\xi'$. In other words, the tannakian category $\Vect(\Gamma)$ has a neutral fiber functor which is unique up to unique isomorphism.  
\end{prop}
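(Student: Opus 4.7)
The plan is to reduce to the case of a single finite local gerbe, handle that case by combining the Frobenius-factorization lemma recalled earlier in this section with perfectness of $k$, and then assemble the pro-local statement as a $2$-limit.

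Suppose first that $\Gamma$ is a finite local gerbe over $k$. For nonemptiness of $\Gamma(k)$, apply the lemma just after Definition \ref{def:et quot ger}: for sufficiently large $n$, the relative Frobenius $F^{(n)}: \Gamma \lto \Gamma^{(n)}$ factors through $\Gamma^{\et} = \Spec k$, and this provides a section $\sigma: \Spec k \lto \Gamma^{(n)}$ of the structure map $\Gamma^{(n)} \lto \Spec k$. Since $k$ is perfect, $F^n: \Spec k \lto \Spec k$ is an isomorphism, so in the defining Cartesian square the first projection $\Gamma^{(n)} \lto \Gamma$ is an isomorphism of stacks (over $F^n$ on the base); composing $\sigma$ with this projection and precomposing with $(F^n)^{-1}$ then yields a morphism $\Spec k \lto \Gamma$ over $\mathrm{id}_{\Spec k}$, i.e., an object $\xi \in \Gamma(k)$.

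For the uniqueness of isomorphisms, given $\xi, \xi' \in \Gamma(k)$, let $H \Def \uAut_k(\xi)$ and $I \Def \uIsom_k(\xi, \xi')$; then $H$ is a finite local $k$-group scheme and $I$ is an $H$-torsor over $\Spec k$. The identity $e: \Spec k \hookrightarrow H$ shows that $H$ has residue field $k$, so any $k$-algebra map $\scrO_H \lto k$ factors through the augmentation, giving $H(k) = \{e\}$; in particular an isomorphism $\xi \xrightarrow{\sim} \xi'$, if one exists, is unique. To produce one, note that $I$ is fppf-locally isomorphic to $H$, so $I_{\bar k} \simeq H_{\bar k}$ consists of a single geometric point, whence $I = \Spec B$ is local with $B/\mathrm{nil}$ a purely inseparable extension of $k$; perfectness forces $B/\mathrm{nil} = k$, and the reduced closed immersion $I_{\rm red} = \Spec k \hookrightarrow I$ supplies the desired $k$-point.

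For the pro-local case, write $\Gamma = \varprojlim_i \Gamma_i$ with the $\Gamma_i$ finite local gerbes. The finite case just proved shows that each $\Gamma_i(k)$ is equivalent to the terminal groupoid $\mathrm{pt}$, so any system of objects $(\xi_i \in \Gamma_i(k))_i$ (which exists by nonemptiness at each level) extends to an object of the $2$-limit $\Gamma(k)$ via the unique transition isomorphisms (the cocycle condition is automatic since all automorphism groups are trivial), and any two such systems are linked by a unique morphism; hence $\Gamma(k)$ is also equivalent to $\mathrm{pt}$. The principal subtlety lies in the Frobenius-twist step of the finite case, where one must carefully track the two competing $k$-structures on $\Gamma^{(n)}$ and use perfectness to invert $F^n$ on the base; the remaining steps are essentially bookkeeping.
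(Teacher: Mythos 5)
Your proof is correct, and it takes a genuinely different route from the paper's. Where the paper passes through the tannakian formalism — reducing to $\Gamma=\Pi^{\loc}_{\calX/k}$ for a reduced $\calX$, building the fiber functor explicitly as $(\calF,V,\lambda)\mapsto F_k^{-i*}V$ on the categories $\calD_i(\calX)$ from Theorem \ref{thm:tann loc ger}, and then handling uniqueness via the Isom-sheaf torsor — you work directly with the stacky definitions: existence of a $k$-point from the Frobenius-factorization lemma after Definition \ref{def:et quot ger} together with the invertibility of $F$ on $\Spec k$, uniqueness from the local-torsor argument, and the pro-case as a $2$-limit of contractible groupoids. Both proofs use perfectness in the same essential place (to invert a power of the Frobenius) and both prove uniqueness by noting that the relevant Isom object is a finite local $k$-scheme whose reduction is $\Spec k$; but your existence argument bypasses Theorem \ref{thm:tann loc ger} entirely and so is more self-contained, at the modest cost of having to spell out the finite-to-pro passage which the tannakian reduction handles automatically. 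One small remark on presentation: you only need the first half of the cited lemma's conclusion (that $\Gamma^{(n)}\to(\Gamma^{(n)})^{\et}=\Spec k$ has a section); the factorization of $F^{(n)}$ through $\Gamma^{\et}$, while it does hand you that section as its second arrow, is strictly more than required, so it would be cleaner to quote the section directly.
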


\begin{proof}
Since $\Vect(\Gamma)\simeq\Vect(\Pi^{\loc}_{\Gamma/k})$, we may assume that $\Gamma=\Pi^{\loc}_{\calX/k}$ for some reduced algebraic stack $\calX$ over $k$. Then by Theorem \ref{thm:tann loc ger}, it suffices to show the claim for $\calD_{\infty}(\calX)$. Indeed, the functors $\calD_i(\calX)\lto\Vecf_k~;~(\calF,V,\lambda)\longmapsto F_k^{-i*}V$ define neutral fiber functors compatible with transition functors $\calD_i(\calX)\lto\calD_{i+1}(\calX)$, whence we get a neutral fiber functor of $\calD_{\infty}(\calX)$. Suppose given two neutral fiber functors $\omega$ and $\omega'$ of $\calD_{\infty}(\calX)$. Then the sheaf of isomorphisms $P\Def\underline{\mathrm{Isom}}_k^{\otimes}(\omega,\omega')$ is a torsor over $k$ under a pro-local $k$-group scheme, which admits a unique $k$-rational point $\Spec k=P_{\rm red}\lto P$. Therefore, there exists a unique isomorphism $\omega\xrightarrow{~\simeq~}\omega'$. This completes the proof.
\end{proof}

\begin{cor}(\"Unver, Zhang, Romagny--Tonini--Zhang, cf.~\cite{un10}\cite{zh16}\cite[\S2]{rtz17})\label{cor:loc ger neutral}
Let $\calX$ be a reduced algebraic stack over a perfect field $k$ with $\calX(k)\neq\emptyset$. Then the local fundamental gerbe $\pi^{\loc}(\calX,x)$ is independent of the choice of the $k$-rational point $x$. More precisely, for any two $k$-rational points $x,x'\in\calX(k)$, there exists exactly one isomorphism of pro-local $k$-group schemes $\pi^{\loc}(\calX,x)\xrightarrow{~\simeq~}\pi^{\loc}(\calX,x')$. Moreover, for any $k$-rational point $x\in\calX(k)$, there exist equivalences of categories
\begin{equation*}
\begin{xy}
\xymatrix{
\Hom_k(\pi^{\loc}(\calX,x),G)\ar[r]^{~\simeq}&\Hom(\Pi^{\loc}_{\calX/k},\cB G)\ar[r]^{~~\simeq}&H^1_{\fppf}(\calX,G)
}
\end{xy}
\end{equation*}
for any finite local $k$-group scheme $G$.
\end{cor}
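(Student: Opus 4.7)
The plan is to derive the corollary as a direct consequence of Proposition \ref{prop:loc ger neutral} and the universal property defining the local fundamental gerbe. First I would verify that the local fundamental gerbe $\Pi^{\loc}_{\calX/k}$ actually exists, using Proposition \ref{prop:exist et loc ger}(2): the hypothesis that $\calX$ is reduced is given, and the hypothesis that $H^0(\scrO_{\calX})$ contains no nontrivial purely inseparable extension of $k$ is automatic from $\calX(k)\neq\emptyset$, since any $k$-point $x:\Spec k\to\calX$ induces a ring map $H^0(\scrO_{\calX})\to k$ splitting the structure map $k\to H^0(\scrO_{\calX})$, which forces any field $K$ with $k\subsetneq K\subseteq H^0(\scrO_{\calX})$ to collapse to $k$.

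Next, each $k$-rational point $x\in\calX(k)$ determines via composition a point $\xi_x:\Spec k\to\calX\to\Pi^{\loc}_{\calX/k}$ of the gerbe, and by definition $\pi^{\loc}(\calX,x)=\uAut_k(\xi_x)$ with $\Pi^{\loc}_{\calX/k}\simeq \cB_k\pi^{\loc}(\calX,x)$. Given two points $x,x'\in\calX(k)$, Proposition \ref{prop:loc ger neutral} supplies a unique isomorphism $\alpha:\xi_x\xrightarrow{~\simeq~}\xi_{x'}$ in $\Pi^{\loc}_{\calX/k}(k)$; conjugation by $\alpha$ then yields an isomorphism of $k$-group schemes
\begin{equation*}
\pi^{\loc}(\calX,x)=\uAut_k(\xi_x)\xrightarrow{~\simeq~}\uAut_k(\xi_{x'})=\pi^{\loc}(\calX,x'),
\end{equation*}
and its uniqueness follows from the uniqueness of $\alpha$ (any other $\alpha'$ would give the same conjugation automorphism only if $\alpha'=\alpha$, because the isomorphism in $\Pi^{\loc}_{\calX/k}(k)$ is uniquely determined).

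For the categorical equivalences, the second one is just the defining universal property of $\Pi^{\loc}_{\calX/k}$ (Definition \ref{def:et loc ger}) applied to the finite local stack $\cB G$: it gives $\Hom_k(\Pi^{\loc}_{\calX/k},\cB G)\xrightarrow{~\simeq~}\Hom_k(\calX,\cB G)=H^1_{\fppf}(\calX,G)$. The first equivalence follows from the identification $\Pi^{\loc}_{\calX/k}\simeq\cB_k\pi^{\loc}(\calX,x)$ together with the standard fact that pointed morphisms of classifying stacks are in bijection with group scheme homomorphisms; concretely, $\Hom_k(\cB_k\pi^{\loc}(\calX,x),\cB G)$ is equivalent to $\Hom_k(\pi^{\loc}(\calX,x),G)$ (one can invoke Proposition \ref{prop:nori reconst} combined with tannakian reconstruction for $\cB G$, or just the direct description of morphisms between neutral gerbes).

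There is no serious obstacle; the whole argument is a bookkeeping exercise in translating Proposition \ref{prop:loc ger neutral} into the language of pointed fundamental group schemes. The only subtle point is ensuring the canonicity of the isomorphism $\pi^{\loc}(\calX,x)\simeq\pi^{\loc}(\calX,x')$, but this follows immediately from the uniqueness clause in Proposition \ref{prop:loc ger neutral}, which is itself the heart of the matter (and already proved).
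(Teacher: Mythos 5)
The paper states this corollary immediately after Proposition \ref{prop:loc ger neutral} without a separate proof, treating it as an immediate consequence; your proposal fills in precisely the routine translation the paper leaves implicit, and the overall route is the same. Two minor remarks: (i) your existence argument for $\Pi^{\loc}_{\calX/k}$ via the retraction $H^0(\scrO_{\calX})\to k$ is correct — a $k$-algebra section forces any intermediate field $k\subseteq K\subseteq H^0(\scrO_{\calX})$ to satisfy $[K:k]=1$ — but you should restrict attention to purely inseparable $K$ (which is all Proposition \ref{prop:exist et loc ger}(2) asks for); (ii) in the uniqueness step, the sentence about ``any other $\alpha'$'' addresses injectivity of the conjugation map rather than uniqueness of the induced isomorphism, whereas the intended content is simply that the canonical isomorphism is well defined because $\alpha$ itself is unique (this is also how the paper uses it, e.g.\ in Remark \ref{rem:pf PIAC solv} where the setoid property of $\Hom_k(\calX,\cB G)$ is drawn from the same proposition).
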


Hence, from now on, we write $\pi^{\loc}(\calX)$ instead of $\pi^{\loc}(\calX,x)$ for the local fundamental group scheme of $(\calX,x)$. The local fundamental group scheme $\pi^{\loc}(\calX)$ has a more explicit description as follows.
Let  $\mathsf{L}(X)$ be the category of pairs $(\mathcal{P},G)$ where $G$ is a finite local $k$-group scheme and $\calP\lto \calX$ is a $G$-torsor. Then the category $\mathsf{L}(\calX)$ is cofiltered and the projective limit
\begin{equation*}
\varprojlim_{(\calP,G)\in \mathsf{L}(\calX)}(\calP,G)
\end{equation*}
exists and the underlying group scheme of the limit is canonically isomorphic to $\pi^{\loc}(\calX)$.

Let $\calX$ be a reduced algebraic stack of finite type over an algebraically closed field $k$ of characteristic $p>0$. Let $\pi^{\loc}(\calX)$ be the local fundamental group scheme of $\calX$. 
As $k$ is algebraically closed of characteristic $p>0$, a finite local group scheme $G$ over $k$ is linearly reductive if and only if it is diagonalizable, i.e.\ $G\simeq\Diag(A)$ for some abelian $p$-group $A$~(cf.~Proposition  \ref{prop:classif lin red}). Therefore, the isomorphism class of the maximal linearly reductive quotient of $\pi^{\loc}(\calX)$ is determined completely by the $p$-primary torsion subgroup of the group $\X(\pi^{\loc}(\calX))=\Hom(\pi^{\loc}(\calX),\G_m)$ of characters of the local fundamental group scheme $\pi^{\loc}(\calX)$, which can be calculated as follows
\begin{equation*}
\X(\pi^{\loc}(\calX))[p^{\infty}]=\varinjlim_{m>0}\Hom_k(\pi^{\loc}(\calX),\mu_{p^m})\xrightarrow{~\simeq~}\varinjlim_{m>0}H^1_{\fppf}(\calX,\mu_{p^m}).
\end{equation*}

Hence, we have seen the following.

\begin{prop}\label{prop:max loc lr}
If $k$ is an algebraically closed field of characteristic $p>0$ and $\calX$ is a reduced algebraic stack of finite type over $k$, then there exists a canonical isomorphism:
\begin{equation*}
\pi^{\loc}(\calX)^{\lr}~\simeq~\varprojlim_{m>0}\Diag(H_{\fppf}^1(\calX,\mu_{p^m})).
\end{equation*}
\end{prop}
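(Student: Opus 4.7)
The plan is to extract the formula by computing the character group of the maximal linearly reductive quotient of $\pi^{\loc}(\calX)$ and then invoking Cartier duality. Since $k$ is algebraically closed of characteristic $p>0$, a finite local $k$-group scheme is linearly reductive if and only if it is diagonalizable (cf.\ Proposition \ref{prop:classif lin red}); moreover, a diagonalizable local $k$-group scheme is of the form $\Diag(A)$ for a finite abelian $p$-group $A$. Taking projective limits, the maximal linearly reductive quotient $\pi^{\loc}(\calX)^{\lr}$ is a pro-diagonalizable pro-local $k$-group scheme, which under Cartier duality corresponds to the $p$-primary torsion abelian group $\X(\pi^{\loc}(\calX)^{\lr})$.

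First I would verify the universal property: every homomorphism from $\pi^{\loc}(\calX)$ to a diagonalizable group $\mu_{p^m}$ factors uniquely through $\pi^{\loc}(\calX)^{\lr}$, simply because $\mu_{p^m}$ is itself linearly reductive. Consequently,
\begin{equation*}
\X(\pi^{\loc}(\calX)^{\lr}) \;=\; \varinjlim_{m>0} \Hom_k(\pi^{\loc}(\calX)^{\lr},\mu_{p^m}) \;=\; \varinjlim_{m>0} \Hom_k(\pi^{\loc}(\calX),\mu_{p^m}),
\end{equation*}
where the first equality uses that every character of a pro-local diagonalizable $k$-group scheme takes values in $\mu_{p^\infty}$ and factors through some finite subquotient.

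Next I would apply Corollary \ref{cor:loc ger neutral} with $G=\mu_{p^m}$ (which is a finite local $k$-group scheme in characteristic $p$) to identify
\begin{equation*}
\Hom_k(\pi^{\loc}(\calX),\mu_{p^m}) \;\simeq\; H^1_{\fppf}(\calX,\mu_{p^m}),
\end{equation*}
so that $\X(\pi^{\loc}(\calX)^{\lr})\simeq \varinjlim_m H^1_{\fppf}(\calX,\mu_{p^m})$. Finally, applying the anti-equivalence $A\mapsto \Diag(A)$ between abelian $p$-groups and pro-diagonalizable pro-local $k$-group schemes (which turns direct limits into inverse limits) gives
\begin{equation*}
\pi^{\loc}(\calX)^{\lr} \;\simeq\; \Diag\bigl(\varinjlim_{m>0} H^1_{\fppf}(\calX,\mu_{p^m})\bigr) \;\simeq\; \varprojlim_{m>0}\Diag\bigl(H^1_{\fppf}(\calX,\mu_{p^m})\bigr),
\end{equation*}
as required.

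There is no genuine obstacle here once the Cartier duality dictionary is set up; the only point requiring care is to justify that the characters of $\pi^{\loc}(\calX)$ that factor through finite quotients account for \emph{all} characters of the maximal linearly reductive quotient, which is immediate because $\pi^{\loc}(\calX)^{\lr}$ is itself a projective limit of finite diagonalizable group schemes and $\Hom(-,\mu_{p^m})$ commutes with such limits by passing to direct limits.
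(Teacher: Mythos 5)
Your proof is correct and follows essentially the same route as the paper: characterize linearly reductive finite local $k$-group schemes as the diagonalizable ones (via Proposition \ref{prop:classif lin red}), compute the $p$-primary character group of $\pi^{\loc}(\calX)$ as $\varinjlim_m\Hom_k(\pi^{\loc}(\calX),\mu_{p^m})\simeq\varinjlim_m H^1_{\fppf}(\calX,\mu_{p^m})$ using Corollary \ref{cor:loc ger neutral}, and apply Cartier duality to pass to the inverse limit of diagonalizable group schemes. You spell out the universal-property and duality steps a bit more explicitly than the paper, but the argument is the same.
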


\begin{cor}\label{cor:max loc lr}
Let $X$ be a proper smooth connected curve over an algebraically closed field of characteristic $p>0$ with $p$-rank $\gamma$ and $\emptyset\neq U\subseteq X$ a nonempty open subscheme of $X$ with $n=\#(X\setminus U)\ge 0$. Then we have the following.
\begin{equation*}
\pi^{\loc}(U)^{\lr}~\simeq~
\begin{cases}
\Diag((\Q_p/\Z_p)^{\oplus\gamma})&\text{if $n=0$,}\\
\Diag((\Q_p/\Z_p)^{\oplus\gamma+n-1})&\text{if $n>0$}.
\end{cases}
\end{equation*}
\end{cor}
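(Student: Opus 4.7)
By Proposition~\ref{prop:max loc lr}, together with the fact that $\Diag$ sends colimits of abelian groups to limits of group schemes, the corollary is equivalent to the statement that the abstract abelian group
$$
A(U)\Def\varinjlim_{m>0}H^1_{\fppf}(U,\mu_{p^m})
$$
is isomorphic to $(\Q_p/\Z_p)^{\oplus\gamma}$ when $n=0$, and to $(\Q_p/\Z_p)^{\oplus\gamma+n-1}$ when $n\ge 1$. My tool is the Kummer short exact sequence $1\to\mu_{p^m}\to\G_m\xrightarrow{[p^m]}\G_m\to 1$ in the fppf topology. Taking cohomology and passing to the filtered direct limit over $m$, I obtain
$$
0\to\scrO(U)^*\otimes_{\Z}\Q_p/\Z_p\to A(U)\to\Pic(U)[p^\infty]\to 0,
$$
so the task reduces to identifying the two outer terms abstractly.

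In the proper case $n=0$, $\scrO(X)^*=k^*$ is $p$-divisible since $k$ is algebraically closed of characteristic $p$, so the leftmost term vanishes; moreover $\Pic(X)/\Pic^0(X)\simeq\Z$ is torsion-free, whence $\Pic(X)[p^\infty]=J(k)[p^\infty]\simeq(\Q_p/\Z_p)^{\oplus\gamma}$ by the definition of the $p$-rank. For $n\ge 1$, I would exploit the localization sequence on the smooth curve,
$$
0\to k^*\to\scrO(U)^*\xrightarrow{\mathrm{div}}\bigoplus_{x\in X\setminus U}\Z\to\Pic(X)\to\Pic(U)\to 0,
$$
setting $D\Def\mathrm{image}(\mathrm{div})$, $I\Def\mathrm{image}(\bigoplus\Z\to\Pic(X))$, and $I^0\Def I\cap\Pic^0(X)$. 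Since some point of $X\setminus U$ has degree $1$, $I$ surjects onto $\Pic(X)/\Pic^0(X)\simeq\Z$, so $\Pic(U)\simeq J(k)/I^0$ and $\rank D=n-1-\rank I^0$; in particular $\scrO(U)^*\otimes\Q_p/\Z_p\simeq(\Q_p/\Z_p)^{\oplus(n-1-\rank I^0)}$. Then the multiplication-by-$p^m$ long exact sequence applied to $0\to I^0\to J(k)\to\Pic(U)\to 0$, combined with the divisibility of $J(k)$ and the fact that a divisible torsion $p$-group is determined by the $\F_p$-dimension of its $p$-torsion, yields $\Pic(U)[p^\infty]\simeq(\Q_p/\Z_p)^{\oplus(\gamma+\rank I^0)}$. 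Since both outer terms of the Kummer colimit sequence are divisible, hence injective as $\Z$-modules, the sequence splits and I conclude $A(U)\simeq(\Q_p/\Z_p)^{\oplus(\gamma+n-1)}$.

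The substantive point, and the step I would verify most carefully, is this last cancellation: the auxiliary invariant $\rank I^0$, which measures how many $\Z$-linear relations hold among the classes of the punctures in the Jacobian, enters the two outer pieces of the Kummer colimit sequence with opposite signs and must disappear from the final answer. Beyond this, the argument is formal manipulation with divisible abelian $p$-groups and the exactness of filtered colimits.
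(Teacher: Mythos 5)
Your argument is correct and is essentially the standard computation that the paper delegates (via the citation to Proposition~3.2 of \cite{ot17}): identify $\pi^{\loc}(U)^{\lr}$ with $\Diag\bigl(\varinjlim_m H^1_{\fppf}(U,\mu_{p^m})\bigl)$ using Proposition~\ref{prop:max loc lr}, split the colimit Kummer sequence into $\scrO(U)^*\otimes\Q_p/\Z_p$ and $\Pic(U)[p^\infty]$, and compute each via the localization sequence $0\to k^*\to\scrO(U)^*\to\bigoplus_{x\in X\setminus U}\Z\to\Pic(X)\to\Pic(U)\to 0$. Your observation that the bookkeeping invariant $\rank I^0$ cancels between the two terms is exactly the point that makes the answer depend only on $\gamma$ and $n$, and your verification — $D$ free of rank $n-1-\rank I^0$ on one side, the snake-lemma count $\dim_{\F_p}\Pic(U)[p]=\gamma+\rank I^0$ on the other, with divisibility forcing both groups to be direct sums of $\Q_p/\Z_p$ — is sound.
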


\begin{proof}
If $n=0$, this is standard. In the case where $n>0$, see \cite[Proposition 3.2]{ot17}.
\end{proof}

\begin{rem}\label{rem:loc nori red}
Let $\calX$ be a reduced algebraic stack over an algebraically closed field $k$ of characteristic $p>0$ and $\pi^{\loc}(\calX)$ the local fundamental group scheme of $\calX/k$. We can identify the local fundamental gerbe $\Pi^{\loc}_{\calX/k}$ with the classifying stack $\cB_k\pi^{\loc}(\calX)$~(cf.~Corollary \ref{cor:loc ger neutral}). Suppose given a $k$-homomorphism $\pi^{\loc}(\calX)\lto G$ to a finite local $k$-group scheme $G$ and the corresponding $G$-torsor $\calP\lto\calX$. Then, under the assumption that $k$ is an algebraically closed field, the following conditions are equivalent.
\begin{enumerate}
\renewcommand{\labelenumi}{(\alph{enumi})}
\item The homomorphism $\pi^{\loc}(\calX)\lto G$ is surjective.
\item The corresponding morphism $\Pi^{\loc}_{\calX/k}=\cB_k\pi^{\loc}(\calX)\lto\cB_kG$ is Nori-reduced.
\item There exists no strict closed subgroup scheme $H\subsetneq G$ such that the $G$-torsor $\calP\lto\calX$ is reduced to an $H$-torsor $\mathcal{Q}\lto\calX$, i.e.\ $\calP\simeq{\rm Ind}_H^G(\mathcal{Q})$.
\item If we denote by $\calE_{\infty}(\calP)$ the image of the regular representation $(k[G],\rho_{\rm reg})$ of $G$ in $\calD_{\infty}(\calX)$ under the restriction functor $\Rep(G)\lto\Rep(\pi^{\loc}(\calX))\simeq\calD_{\infty}(\calX)$~(cf.\ Theorem \ref{thm:tann loc ger}), then
\begin{equation*}
\Dim_k\Hom_{\calD_{\infty}(\calX)}(\unit,\calE_{\infty}(\calP))=1.
\end{equation*}
\end{enumerate}
Indeed, the equivalence between (a) and (b) is a consequence of Proposition \ref{prop:loc ger neutral}. The equivalence between (a) and (c) is immediate from Corollary \ref{cor:loc ger neutral}. The implication (a)$\Longrightarrow$(d) follows from the fact that $\Hom_{\Rep(G)}(\unit,(k[G],\rho_{\rm reg}))=k[G]^G=k$. Conversely, let us suppose that the condition (a) is not satisfied. If $H$ denotes the image of the given homomorphism $\pi^{\loc}(\calX)\lto G$, then $H$ is a closed subgroup scheme of $G$ with $H\neq G$ and the restriction map $\Rep(G)\lto\Rep(\pi^{\loc}(\calX))$ factors through the category $\Rep(H)$, which implies that 
\begin{equation*}
k[G]^H=\Hom_{\Rep(H)}(\unit,(k[G],\rho_{\rm reg}))\subseteq\Hom_{\calD_{\infty}(\calX)}(\unit,\calE_{\infty}(\calP)). 
\end{equation*}      
As $H\neq G$, we have $\Dim k[G]^H\neq 1$ and hence the condition (d) does not hold. This proves the implication (d)$\Longrightarrow$(a). 
\end{rem}

\subsection{Nori fundamental gerbes of root stacks in positive characteristic}\label{subsec:Nori ger root stack}

We shall use the same notation as in \S\ref{subsec:root stack}. Let $X$ be a geometrically connected and geometrically reduced scheme of finite type over the spectrum $S=\Spec k$ of a perfect field $k$ of characteristic $p>0$. Let $\bfD=(D_i)_{i\in I}$ be a finite family of reduced irreducible effective Cartier divisors on $X$ and put  $D=\cup_{i\in I}D_i\subset X$. For each $\bfr=(r_i)_{i\in I}$ with $r_i>0$, as in \S\ref{subsec:root stack}, we put
\begin{equation*}
\fX^{\bfr}=\sqrt[\bfr]{\bfD/X}.
\end{equation*} 
In this subsection, motivated by \cite{bb17}, the Nori fundamental gerbes associated with root stacks are studied, dropping the properness assumption on $X$. However, under the smoothness assumption as put in Proposition \ref{prop:sm root stack}, a more direct approach can be applied, which is enough to prove the main theorem.

\begin{prop}\label{prop:Nori ger sm root stack}
Under the same assumption as in Proposition \ref{prop:sm root stack}, if we put $U=X\setminus D$, then the induced morphisms
\begin{equation*}
\Pi^{\N}_U\lto\Pi^{\N}_{\fX^{\bfr}}
\end{equation*}
are gerbes.
\end{prop}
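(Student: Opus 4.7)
The strategy is to verify that $\Pi^{\N}_U \to \Pi^{\N}_{\fX^{\bfr}}$ is a gerbe by checking the torsor-theoretic criterion: it suffices to prove that for every finite $k$-group scheme $G$ and every Nori-reduced $G$-torsor $P \to \fX^{\bfr}$, the pullback $P|_U := P \times_{\fX^{\bfr}} U \to U$ remains a Nori-reduced $G$-torsor over $U$. This corresponds to the general fact that a morphism of profinite gerbes is a gerbe precisely when every finite Nori-reduced quotient of the target stays Nori-reduced after pulling back to the source.

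The argument proceeds by contradiction. Recall that a $G$-torsor admits a reduction to a closed subgroup scheme $H \subsetneq G$ if and only if the associated finite projection $\pi_H \colon P/H \to \fX^{\bfr}$ (finite because $G/H$ is a finite $k$-scheme and $P$ is a $G$-torsor) admits a section. Assume that $P|_U$ is not Nori-reduced, so there exist a strict closed subgroup $H \subsetneq G$ and a section $s \colon U \to (P/H)|_U$. Let $\overline{Z}$ denote the scheme-theoretic closure of the image $s(U) \subset (P/H)|_U$ inside the ambient $P/H$. Then the induced morphism $\overline{Z} \to \fX^{\bfr}$ is (i) finite, as a closed substack of the finite $\fX^{\bfr}$-stack $P/H$; (ii) surjective, since its image is closed and contains the dense open $U$; and (iii) birational, because $\overline{Z} \cap (P/H)|_U = s(U)$ maps isomorphically to $U$ via $s$.

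By Proposition \ref{prop:sm root stack}, the ambient root stack $\fX^{\bfr}$ is smooth over the perfect field $k$, hence in particular normal. A finite birational morphism onto a normal algebraic stack is an isomorphism (Zariski Main Theorem, verified \'etale-locally where one reduces to the classical statement for algebraic spaces over a normal base). Therefore $\overline{Z} \xrightarrow{\sim} \fX^{\bfr}$, and the inverse, composed with the inclusion $\overline{Z} \hookrightarrow P/H$, yields a global section $\fX^{\bfr} \to P/H$ extending $s$. This amounts to a reduction of $P$ itself to $H$ over the whole of $\fX^{\bfr}$, contradicting the Nori-reducedness of $P \to \fX^{\bfr}$.

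The heart of the argument, and its main obstacle, is the extension of the section $s$ across the codimension-one complement $\fX^{\bfr} \setminus U$; this is precisely what the normality (hence smoothness) of $\fX^{\bfr}$ supplied by Proposition \ref{prop:sm root stack} is used for. In the non-smooth setting, the Zariski-Main-Theorem step breaks down, and one is forced to argue via the tannakian interpretation in terms of $F$-divided sheaves as developed in the appendix; this more delicate route is avoided here thanks to the SNC hypothesis on $(X,\bfD)$.
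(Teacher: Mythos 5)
Your proposal is correct and follows the same structural plan as the paper's proof: reduce (via the equivalence between the morphism being a gerbe and the restricted finite quotients staying Nori-reduced, Remark \ref{rem:nori-reduced vs gerbe}) to showing that a Nori-reduced $G$-torsor on $\fX^{\bfr}$ remains Nori-reduced over $U$, and then argue by contradiction that a hypothetical reduction of structure group to $H\subsetneq G$ over $U$ would extend across the boundary. The two proofs differ only in how the extension is carried out. The paper first passes to an algebraic closure of $k$ (so that finite gerbes become $\cB G$), Zariski-localizes, replaces $\fX^{\bfr}$ by its smooth finite atlas $Z=\Spec\scrO_X[\mathbf{t}]/(\mathbf{t}^{\bfr}-\mathbf{s})$, invokes Nori's extension result \cite[Chapter II Proposition 6]{no82} on the dense open $Z\times_{\fX^{\bfr}}U\hookrightarrow Z$ to spread out the $H$-torsor, and finally descends along the $\mu_{\bfr}$-action. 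You instead stay on the stack and extend the section $s:U\to(P/H)|_U$ directly, by taking its schematic closure $\overline{Z}\subset P/H$ and applying a Zariski-main-theorem argument to the finite birational morphism $\overline{Z}\to\fX^{\bfr}$ with normal target. Both ultimately rest on the same input from Proposition \ref{prop:sm root stack} (normality of the root stack under the SNC hypothesis); your version is more self-contained in that it avoids the appeal to Nori's result, at the cost of having to verify the ZMT statement for algebraic stacks.

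Two small points to tighten. For the ZMT step you need $\overline{Z}$ to be integral — this does hold because it is the schematic closure of the integral substack $s(U)\cong U$ inside the Noetherian stack $P/H$, and integrality is preserved under flat (smooth atlas) base change so the reduction to the scheme case goes through — but it is worth saying explicitly, since without it a finite birational morphism onto a normal target need not be an isomorphism (spurious components supported in the boundary). Second, your opening torsor-theoretic criterion implicitly assumes all finite gerbes over $k$ are of the form $\cB G$; this requires first passing to an algebraic closure of the perfect field $k$, a step the paper makes explicit citing \cite[\S6]{bv15}.
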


\begin{proof}
First note that, for any finite gerbe $\Gamma$ over $k$, a morphism $\Pi^{\N}_{\fX^{\bfr}}\lto\Gamma$ into $\Gamma$ is a gerbe if and only if the composition $\fX^{\bfr}\lto\Pi^{\N}_{\fX^{\bfr}}\lto\Gamma$ is Nori-reduced~(cf.\ Remark \ref{rem:nori-reduced vs gerbe}). Therefore, it suffices to show that, for any Nori-reduced morphism $\fX^{\bfr}\lto\Gamma$ into a finite gerbe $\Gamma$, the composition $U\lto\fX^{\bfr}\lto\Gamma$ is still Nori-reduced. As $k$ is assumed to be perfect, without loss of generality, we may assume that $k$ is an algebraically closed field~(cf.~\cite[\S6]{bv15}) and hence may assume that $\Gamma=\cB G$ for some finite $k$-group scheme $G$. If the morphism $U\lto\fX^{\bfr}\lto\Gamma$ is not Nori-reduced, then it factors through $\cB H$ where $H$ is a strict subgroup scheme of $G$. One can show that the resulting morphism $U\lto\cB H$ is extended to a morphism $\fX^{\bfr}\lto\cB H$. Indeed, the problem is Zariski local~(cf.~Proposition \ref{prop:nori reconst}), we may assume that $\fX^{\bfr}=[Z/\mu_{\bfr}]$ with $Z=\Spec \scrO_X[\mathbf{t}]/(\mathbf{t}^{\bfr}-\mathbf{s})$. As we saw in the proof of Proposition \ref{prop:sm root stack}, $Z$ is smooth over $k$. Therefore, by applying \cite[Chapter II Proposition 6]{no82} to the open immersion $Z\times_{\fX^{\bfr}} U\hookrightarrow Z$, we get an extension $Z\lto\cB H$ of the morphism $Z\times_{\fX^{\bfr}}U\lto U\lto\cB H$. 
\begin{equation*}
\begin{xy}
\xymatrix{
Z\times_{\fX^{\bfr}} U\ar[r]\ar[d]&Z\ar[d]\ar@/^2mm/@{-->}[ldd]\\
U\ar[r]\ar[d]&\fX^{\bfr}\ar[d]\\
\cB H\ar[r]&\cB G
}
\end{xy}
\end{equation*}
By dividing by the action of $\mu_{\bfr}$, we get a desired extension $\fX^{\bfr}=[Z/\mu_{\bfr}]\lto\cB H$. However, as $H\neq G$, this contradicts with the Nori-reducedness of the morphism $\fX^{\bfr}\lto\cB G$. Therefore, the morphism $U\lto\fX^{\bfr}\lto\cB G$ is Nori-reduced.
\end{proof}

\begin{rem}\label{rem:nori-reduced vs gerbe}
Let $\calX$ be an inflexible fibered category over a field $k$ with $\calX\lto\Pi^{\N}_{\calX}$ its Nori fundamental gerbe. Let $\phi:\Pi^{\N}_{\calX}\lto\Gamma$ be a morphism into a finite gerbe $\Gamma$ over $k$. Then $\phi$ is a gerbe if and only if the composition $\calX\lto\Pi^{\N}_{\calX}\xrightarrow{~\phi~}\Gamma$ is Nori-reduced. Indeed, as $\Pi^{\N}_{\calX}$ itself is inflexible~(cf.\ \cite[Proposition 5.4]{bv15}), by the universal property of the Nori fundamental gerbe $\calX\lto\Pi^{\N}_{\calX}$, we may assume that $\calX=\Pi^{\N}_{\calX}$, hence may assume that $\calX$ is pseudo-proper~(cf.\ \cite[Example 7.2(b)]{bv15}), in which case the equivalence is already remarked in \cite[Remark 1.14]{abetz}. 
\end{rem}

\begin{prop}\label{prop:Nori ger root stack}
Let $\bfr$ and $\bfr'$ be two indices with $\bfr\mid\bfr'$. The natural morphism $\Pi^{\N}_{\fX^{\bfr'}/k}\to\Pi^{\N}_{\fX^{\bfr}/k}$ is a gerbe. Moreover, for any finite $k$-group scheme $G$ and any $G$-torsor $\calY'\longrightarrow\fX^{\bfr'}$, there exists a $G$-torsor $\calY\longrightarrow\fX^{\bfr}$ such that $\calY'\simeq\calY\times_{\fX^{\bfr}}\fX^{\bfr'}$ if and only if for any closed point $\xi$ of $\fX^{\bfr'}$, the composition
\begin{equation*}
\calG'_{\xi}\longrightarrow\fX^{\bfr'}\xrightarrow{~\calY'~}\cB_kG
\end{equation*} 
factors through the gerbe $\calG'_{\xi}\longrightarrow\calG_{\xi}$~(cf.~Proposition \ref{prop:rel alper}(1)), where $\calG_{\xi}$ (respectively $\calG'_{\xi}$) denotes the residual gerbe of $\fX^{\bfr}$ (respectively of $\fX^{\bfr'}$) at $\xi$
\end{prop}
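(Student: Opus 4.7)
The plan is to first establish the ``moreover'' part and then deduce the gerbe statement about the Nori fundamental gerbes. In both steps I would combine the Tannakian reconstruction of torsors (Proposition~\ref{prop:nori reconst}) with the characterization of the essential image of $\pi^*:\Vect(\fX^{\bfr})\to\Vect(\fX^{\bfr'})$ provided by Proposition~\ref{prop:rel alper}(3).

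For the ``moreover'' part, the ``only if'' direction is immediate, because if $\calY'\simeq\pi^*\calY$ then the classifying map $\fX^{\bfr'}\to\cB_kG$ factors through $\fX^{\bfr}$, and Proposition~\ref{prop:rel alper}(1) then factors $\calG'_\xi\to\fX^{\bfr}$ through $\calG_\xi$. For the ``if'' direction I would translate the descent problem Tannakianly: by Proposition~\ref{prop:nori reconst} the torsor $\calY'$ corresponds to an exact tensor functor $\rho_{\calY'}:\Rep(G)\to\Vect(\fX^{\bfr'})$, and it descends to $\fX^{\bfr}$ precisely when $\rho_{\calY'}$ factors through $\pi^*$. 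By Proposition~\ref{prop:rel alper}(3) the latter factorization holds if and only if, for every $V\in\Rep(G)$ and every closed point $\xi\in|\fX^{\bfr'}|_0$, the restriction $\rho_{\calY'}(V)|_{\calG'_\xi}$ lies in the essential image of $\pi_\xi^*:\Vect(\calG_\xi)\to\Vect(\calG'_\xi)$. A second application of Proposition~\ref{prop:nori reconst} on the residual gerbe $\calG'_\xi$ identifies this pointwise condition with the requirement that the restricted classifying map $\calG'_\xi\to\cB_kG$ factors through $\calG_\xi$, which is exactly the hypothesis.

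Next, to prove that the morphism $\Pi^{\N}_{\fX^{\bfr'}/k}\to\Pi^{\N}_{\fX^{\bfr}/k}$ of profinite gerbes is a gerbe, after reducing to an algebraically closed base field as in \cite[\S6]{bv15} and arguing one finite quotient at a time, Remark~\ref{rem:nori-reduced vs gerbe} reduces matters to showing that for every finite $k$-group scheme $G$ and every Nori-reduced $G$-torsor $\calY\to\fX^{\bfr}$ the pullback $\calY'\Def\pi^*\calY$ is again Nori-reduced on $\fX^{\bfr'}$. Suppose for contradiction that $\calY'\simeq\mathrm{Ind}_H^G(\mathcal{Q}')$ for some proper closed subgroup scheme $H\subsetneq G$ and some $H$-torsor $\mathcal{Q}'\to\fX^{\bfr'}$. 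Since $\calY'=\pi^*\calY$, the ``moreover'' part already guarantees that $\rho_{\calY'}(V)|_{\calG'_\xi}$ lies in the essential image of $\pi_\xi^*$ for every $V\in\Rep(G)$ and every $\xi$. Now $\rho_{\calY'}$ factors as $\Rep(G)\to\Rep(H)\xrightarrow{~\rho_{\mathcal{Q}'}~}\Vect(\fX^{\bfr'})$, and every finite dimensional $W\in\Rep(H)$ is a subquotient of the restriction of some $V\in\Rep(G)$ (a standard fact for closed subgroup schemes of finite affine group schemes). Combined with the closure of the essential image of $\pi_\xi^*$ under subquotients (Proposition~\ref{prop:rel alper}(3)), this forces $\rho_{\mathcal{Q}'}(W)|_{\calG'_\xi}$ to lie in that essential image for all $W\in\Rep(H)$, so the residual gerbe condition also holds for $\mathcal{Q}'$. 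Applying the ``moreover'' part then yields an $H$-torsor $\mathcal{Q}\to\fX^{\bfr}$ with $\pi^*\mathcal{Q}\simeq\mathcal{Q}'$, and hence $\pi^*\mathrm{Ind}_H^G(\mathcal{Q})\simeq\pi^*\calY$. The full faithfulness of $\pi^*$ (Proposition~\ref{prop:rel alper}(3)), transferred from vector bundles to torsors via Proposition~\ref{prop:nori reconst}, then forces $\calY\simeq\mathrm{Ind}_H^G(\mathcal{Q})$, contradicting the Nori-reducedness of $\calY$.

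The main obstacle is the Tannakian translation between the residual-gerbe factorization condition at each closed point and the condition that the associated vector bundles lie in the essential image of $\pi_\xi^*$; it is also crucial that this essential image be closed under subquotients, so that a factorization of the induced torsor $\mathrm{Ind}_H^G(\mathcal{Q}')$ transfers to the underlying $H$-torsor $\mathcal{Q}'$. Both steps ultimately rest on the fact that the relative bands $\mu_{\bfr'_x/\bfr_x}$ are diagonalizable and hence linearly reductive, which is what makes Proposition~\ref{prop:rel alper} applicable.
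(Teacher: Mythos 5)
Your proof of the ``moreover'' part follows the paper's route exactly: translate both sides via Proposition~\ref{prop:nori reconst} and invoke Proposition~\ref{prop:rel alper}(3). Your argument for the gerbe assertion, however, is genuinely different and worth comparing. The paper's in-text argument presupposes the smoothness hypotheses of Proposition~\ref{prop:sm root stack}: it factors the restriction functor through $\Vect(\Pi^{\N}_U)$ and uses Proposition~\ref{prop:Nori ger sm root stack} to get full faithfulness of $u:\Vect(\Pi^{\N}_{\fX^{\bfr}})\to\Vect(\Pi^{\N}_{\fX^{\bfr'}})$ by a two-out-of-three argument, then cites \cite[Remark B.7]{tz17}; the general (non-smooth) case is pushed to Appendix B via Frobenius divided sheaves. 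Your argument instead proves directly, for arbitrary finite $G$, that pullback of a Nori-reduced $G$-torsor along $\fX^{\bfr'}\to\fX^{\bfr}$ remains Nori-reduced: if the pullback were induced from $H\subsetneq G$, the ``moreover'' part plus the subquotient-closure clause of Proposition~\ref{prop:rel alper}(3) (and the standard fact that every $W\in\Rep(H)$ is a subquotient of some $\mathrm{Res}^G_H V$) forces the $H$-torsor to descend as well, and full faithfulness of $\pi^*$ transferred through Proposition~\ref{prop:nori reconst} contradicts Nori-reducedness on $\fX^{\bfr}$. This is more elementary, never invokes smoothness, and therefore establishes the gerbe statement in the same generality as the ``moreover'' part, avoiding the $F$-divided-sheaves machinery of Appendix B entirely for this proposition. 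The trade-offs: the paper's smooth-case route is a one-liner once Proposition~\ref{prop:Nori ger sm root stack} is available, and the Appendix B technology is reused elsewhere; your argument is self-contained and buys full generality at the cost of the contradiction bookkeeping. Two minor points of phrasing: you cite Proposition~\ref{prop:rel alper}(3) for subquotient closure of the essential image of the \emph{residual} pullback $\pi_\xi^*$, while the proposition literally states it for the global $\pi^*$ (both are true, and the global version suffices and is cleaner, bypassing the pointwise residual-gerbe translation); and Remark~\ref{rem:loc nori red} is stated for finite local $G$, though the equivalence (b)$\Leftrightarrow$(c) you rely on holds for any finite $G$ over an algebraically closed field and this is the version you should quote.
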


\begin{proof}
First let us show the second assertion.  Thanks to Proposition \ref{prop:nori reconst}, we have a commutative diagram where horizontal arrows are equivalence of categories,
\begin{equation*}
\begin{xy}
\xymatrix{
\Hom_k(\fX^{\bfr'},\cB G)\ar[r]^{\simeq~~~~~~~~~~}&\Hom_{k,\otimes}(\Vect(\cB G),\Vect(\fX^{\bfr'}))\\
\Hom_k(\fX^{\bfr},\cB G)\ar[r]^{\simeq~~~~~~~~~~}\ar[u]&\Hom_{k,\otimes}(\Vect(\cB G),\Vect(\fX^{\bfr})).\ar[u] 
}
\end{xy}
\end{equation*}
Therefore, by applying Proposition \ref{prop:rel alper}(3), we get the second assertion. 

We prove the first assertion of the proposition under the same assumption as in Proposition \ref{prop:sm root stack}. In the general case, see Appendix \S\ref{sec:Nori ger root stack appendix}. Under the assumption in Proposition \ref{prop:sm root stack}, for any indices $\bfr\mid\bfr'$,  the commutative diagram
\begin{equation*}
\begin{xy}
\xymatrix{
\Pi^{\N}_U\ar[r]\ar[rd]&\Pi^{\N}_{\fX^{\bfr'}}\ar[d]\\
&\Pi^{\N}_{\fX^{\bfr}}
}
\end{xy}
\end{equation*}
induces the restriction functors
\begin{equation*}
\Vect(\Pi^{\N}_{\fX^{\bfr}})\xrightarrow{~u~}\Vect(\Pi^{\N}_{\fX^{\bfr'}})\xrightarrow{~v~}\Vect(\Pi^{\N}_U),
\end{equation*}
where $v$ and $v\circ u$ are fully faithful by Proposition \ref{prop:Nori ger sm root stack}. Thus, it follows that the restriction functor $u:\Vect(\Pi^{\N}_{\fX^{\bfr}})\lto\Vect(\Pi^{\N}_{\fX^{\bfr'}})$ is fully faithful as well.   
Now let us show that $\Pi^{\N}_{\fX^{\bfr'}}\lto\Pi^{\N}_{\fX^{\bfr}}$ is a gerbe. If we write $\Pi^{\N}_{\fX^{\bfr}/k}=\varprojlim_{i}\Gamma_i$, where $\fX^{\bfr}\lto\Gamma_i$ are Nori-reduced with $\Gamma_i$ finite gerbes~(cf.\ \cite[Proof of Theorem 5.7]{bv15}), then all the projections $\Pi^{\N}_{\fX^{\bfr}/k}\lto\Gamma_i$ are gerbes~(cf.\ Remark \ref{rem:nori-reduced vs gerbe}). We have to show that the composition $\Pi^{\N}_{\fX^{\bfr'}/k}\lto\Pi^{\N}_{\fX^{\bfr}/k}\lto\Gamma_i$ is still a gerbe for each $i$. However, as $\Gamma_i$ is finite, the map $\Pi^{\N}_{\fX^{\bfr'}}\lto\Gamma_i$ is a gerbe if and only if the restriction functor $\Vect(\Gamma_i)\lto\Vect(\Pi^{\N}_{\fX^{\bfr'}})$ is fully faithful~(cf.\ \cite[Remark B.7]{tz17}). The latter condition is fulfilled because the restriction functor is the composition of fully faithful functors $\Vect(\Gamma_i)\lto\Vect(\Pi^{\N}_{\fX^{\bfr}})\xrightarrow{~u~}\Vect(\Pi^{\N}_{\fX^{\bfr'}})$. This completes the proof. 
\end{proof}

\begin{rem}\label{rem:Nori ger root stack}
Proposition \ref{prop:Nori ger root stack} particularly implies that if $\bfr\mid\bfr'$, then the natural map between the local fundamental group schemes is surjective, i.e.\ 
$\pi^{\loc}(\fX^{\bfr'})\twoheadrightarrow\pi^{\loc}(\fX^{\bfr})$. One can prove this fact without the smoothness assumption. Indeed, for any surjective homomorphism $\pi^{\loc}(\fX^{\bfr})\twoheadrightarrow G$ onto a finite local $k$-group scheme $G$, let $H\subseteq G$ be the image of the composition of the homomorphisms $\pi^{\loc}(\fX^{\bfr'})\lto\pi^{\loc}(\fX^{\bfr})\twoheadrightarrow G$. Then we obtain a commutative diagram
\begin{equation*}
\begin{xy}
\xymatrix{
H^1_{\fppf}(\fX^{\bfr'},H)\ar@{^{(}->}[r]&H^1_{\fppf}(\fX^{\bfr'},G)\\
H^1_{\fppf}(\fX^{\bfr},H)\ar@{^{(}->}[r]\ar[u]&H^1_{\fppf}(\fX^{\bfr},G),\ar[u] 
}
\end{xy}
\end{equation*}
where the injectivity of the horizontal maps is valid because the root stacks $\fX^{\bfr}$ and $\fX^{\bfr'}$ are reduced and the reduced subscheme of the quotient space $G/H$ is trivial, i.e.\ $(G/H)_{\rm red}=\Spec k$. Then one can deduce from Propositions \ref{prop:nori reconst} and \ref{prop:rel alper} together with the identification $H^1_{\fppf}(-,G)=\Hom_k(-,\cB G)$ that the above diagram is Cartesian. This implies that $H=G$.
\end{rem}

\subsection{Torsors over root stacks and tamely ramified torsors}

We will continue to use the same notation as in the previous subsection.

\begin{definition}\label{def:tame cover}
Let $X$ be a scheme and $D$ an effective Cartier divisor of $X$. Put $U=X\setminus D$. A finite flat cover $Y\lto X$ which is \'etale over $U$ is said to be \textit{tamely ramified} along $D$ if for any point $x\in D$, all the connected components of $Y\times_X\Spec\scrO_{X,x}^{\rm sh}$ are of the form $\Spec\scrO_{X,x}^{\rm sh}[T]/(T^m-a)$ where $m$ is a positive integer which is prime to the characteristic of $k(x)$ and $a$ is a local equation of $D$ in $\Spec\scrO_{X,x}^{\rm sh}$, where $\scrO_{X,x}^{\rm sh}$ is the strict hensenlization of the local ring $\scrO_{X,x}$ at $x$. 
\end{definition}

\begin{lem}(Abhyankar, cf.~\cite[Theorem 2.3.2 and Corollary 2.3.4]{gm71})\label{lem:abhyankar}
Let $X$ be a normal scheme and $D\subset X$ a simple normal crossings divisor. Let $f:Y\lto X$ be a finite flat cover which is \'etale over $U=X\setminus D$. Then the following are equivalent.
\begin{enumerate}
\renewcommand{\labelenumi}{(\alph{enumi})}
\item $Y$ is normal and $f$ is tamely ramified above the generic points of $D$.
\item $f:Y\lto X$ is tamely ramified along $D$. 
\end{enumerate}
\end{lem}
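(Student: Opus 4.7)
Both conditions are \'etale-local on $X$, so I will work at the strict henselisation $A \Def \scrO_{X,x}^{\rm sh}$ at a closed point $x\in D$ and reduce the statement to an assertion on the finite flat $A$-algebra $Y\times_X\Spec A$. By the simple normal crossings hypothesis, $D|_{\Spec A}$ is cut out by $t_1\cdots t_r=0$ for some part $t_1,\ldots,t_r$ of a regular system of parameters of $A$, with $D_i|_{\Spec A}=V(t_i)$, and each $t_i$ is prime in $A$.

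The implication (b) $\Rightarrow$ (a) is the easy direction. If each connected component of $Y\times_X\Spec A$ is of the form $\Spec A[T]/(T^m-a)$ with $m$ prime to the residue characteristic of $k(x)$ and $a$ a local equation of $D$, then each such component is a complete intersection of dimension equal to $\dim A$, hence Cohen--Macaulay, and a Jacobian calculation shows that its singular locus has codimension at least two; so it is regular in codimension one, and therefore normal by Serre's criterion. Localising at the generic point of any $D_i$ identifies the component with a Kummer extension of a discrete valuation ring of ramification index dividing $m$, which is tame since $m$ is prime to the residue characteristic.

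For (a) $\Rightarrow$ (b), let $\Spec C$ be a connected component of $Y\times_X\Spec A$. Since $A$ is strictly henselian and $Y$ is normal, $C$ is a normal local $A$-algebra which is finite flat and \'etale outside $V(t_1\cdots t_r)$. For each $i$ let $e_i$ be the ramification index of $C/A$ at the generic point of $V(t_i)$; by hypothesis every $e_i$ is prime to the residue characteristic of $A$. Form the Kummer cover
\begin{equation*}
A'\Def A[T_1,\ldots,T_r]/(T_1^{e_1}-t_1,\ldots,T_r^{e_r}-t_r),
\end{equation*}
which is again a regular strictly henselian local ring, and set $C'\Def C\otimes_A A'$. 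Applying Abhyankar's lemma in dimension one at each height-one prime of $A'$ above $(t_i)$ shows that $C'/A'$ is unramified in codimension one; Zariski--Nagata purity of the branch locus (using the regularity of $A'$) then upgrades this to \'etaleness, and since $A'$ is strictly henselian the cover $C'/A'$ is trivial. Descending along the natural action of $G\Def\mu_{e_1}\times\cdots\times\mu_{e_r}$ on $A'$ (whose fixed ring is $A$) presents $C$ as a quotient of a monomial Kummer algebra by a subgroup of $G$, from which one reads off an explicit generator $T\in C$ satisfying $T^m=a$ for some $m\mid e_1\cdots e_r$ and some local equation $a$ of $D$, as required by Definition~\ref{def:tame cover}.

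The main obstacle will be the last descent step: translating the $G$-equivariant trivialisation $C'\simeq A'^{\oplus d}$ into the explicit monogenic Kummer form demanded by Definition~\ref{def:tame cover}. This boils down to analysing which subgroups $H\leq G$ can arise as stabilisers acting on a connected normal cover of $\Spec A$, and checking that the corresponding invariant ring $(A')^H$ admits a presentation $A[T]/(T^m-a)$ with $a$ a local equation of $D$; the regularity of $A$ and the SNC structure of $D$ are used crucially here to rule out pathological stabilisers.
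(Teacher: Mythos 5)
The paper does not give a proof of this lemma: it is imported verbatim from Grothendieck--Murre \cite[Thm.\ 2.3.2, Cor.\ 2.3.4]{gm71}, so there is no in-text argument to compare against. Your outline follows the classical strategy of that reference (localize at the strict henselisation, pull back along the Kummer cover, kill ramification in codimension one, invoke purity of the branch locus, descend along $\mu_{\bfr}$), which is the right route, and the direction (b)\,$\Rightarrow$\,(a) is essentially sound (though Serre's criterion R1+S2, checked directly at height-one primes, is cleaner than a Jacobian argument, which only detects smoothness over $A$ rather than regularity of the total space).

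For (a)\,$\Rightarrow$\,(b) there are two genuine gaps. First, you apply purity to $C'\Def C\otimes_A A'$, but $C\otimes_A A'$ is not normal in general: already with $A$ a DVR with uniformizer $t$, $C=A[T]/(T^2-t)$ and $A'=A[S]/(S^2-t)$ give $C\otimes_A A'\simeq A'[T]/\bigl((T-S)(T+S)\bigr)$, a reduced local ring with two minimal primes, hence not a domain and not normal. Purity of the branch locus requires the source to be normal, so you must replace $C'$ by the integral closure $\widetilde{C'}$ of $C\otimes_A A'$, and you then need to verify $C=(\widetilde{C'})^G$ (this uses normality of $C$ and Galois theory at the generic fibre) before the descent makes sense. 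Second, the monogenic target $A[T]/(T^m-a)$ with $a$ a local equation of $D$ is not merely ``the main obstacle'' — it is unattainable once $D$ has at least two branches through $x$. With $t_1,t_2$ part of a regular system of parameters of $A$ and $\operatorname{char}k\neq 2$, the algebra $C\Def A[T_1,T_2]/(T_1^2-t_1,T_2^2-t_2)$ is regular (hence normal), \'etale over $U$, and tamely ramified of index $2$ above both branches, so it satisfies (a); yet $C/\mathfrak{m}_A C\simeq k[T_1,T_2]/(T_1^2,T_2^2)$ has cube-zero maximal ideal, so $C\not\simeq A[T]/(T^4-a)$ for any $a$. What your Kummer-descent argument actually produces is the quotient-of-a-multivariate-Kummer-cover normal form of \cite[Cor.\ 2.3.4]{gm71}, and Definition~\ref{def:tame cover} has to be read in that sense (it literally matches the Grothendieck--Murre conclusion only when a single branch of $D$ passes through each point, e.g.\ on a curve). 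So the last step of your plan is chasing a target stronger than what is true; after the normalization fix and with the corrected target, the outline does reproduce the Grothendieck--Murre argument.
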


\begin{definition}\label{def:Kummer mor}
Suppose given a locally Noetherian $k$-scheme $X$ and an $n$-tuple $\mathbf{s}=(s_i)_{i=1}^n$ of regular functions $s_i\in H^0(X,\scrO_X)$. A \textit{Kummer morphism} associated with the data $(\mathbf{s},\bfr)$ is a finite flat $k$-morphism $Y\lto X$ defined by
\begin{equation*}
\scrO_{Y}=\scrO_X[\mathbf{t}]/(\mathbf{t}^{\bfr}-\mathbf{s})\Def\bigotimes_{i=1}^n\scrO_X[t_i]/(t_i^{r_i}-s_i),
\end{equation*}
which has a natural action of $\mu_{\bfr}=\prod_{i=1}^n\mu_{r_i}$.
\end{definition}

\begin{definition}(cf.~\cite[Definition 2.2]{bb17preprint})\label{def:tame ramif}
Let $G$ be a finite $k$-group scheme. A \textit{tamely ramified $G$-torsor} over $X$ with ramification data $(\bfD,\bfr)$ is a scheme $Y$ endowed with an action of $G$ and a finite flat $G$-invariant morphism $Y\longrightarrow X$ such that for any closed point $x$ of $X$, there exists a monomorphism $\mu_{\bfr_x}\longrightarrow G$ (cf.~(\ref{eq:bfr_x})) defined over an extension $k'/k$ such that in a fppf neighbourhood of $x$ in $X$, the morphism $Y\longrightarrow X$ is isomorphic to 
\begin{equation*}
(Z\times_k k')\times^{\mu_{\bfr_x}}G,
\end{equation*}
where $Z$ is the Kummer morphism associated with the data $(\mathbf{s}=(s_i)_{i=1}^n,\bfr_x)$, where each $s_i$ is a local equation of $D_i$.
\end{definition}

\begin{thm}(Biswas--Borne, cf.~\cite[\S3]{bb17preprint})\label{thm:biswas-borne}
Let $Y$ be a scheme endowed with an action of a finite $k$-group scheme $G$ and a finite flat $G$-invariant morphism $Y\longrightarrow X$. Then we have the following.
\begin{enumerate}
\renewcommand{\labelenumi}{(\arabic{enumi})}
\item If $Y\longrightarrow X$ is a tamely ramified $G$-torsor with ramification data $(\bfD,\bfr)$, then the morphism $Y\longrightarrow X$ uniquely factors through a $G$-torsor $Y\longrightarrow\sqrt[\bfr]{\bfD/X}$.
\item Suppose that $G$ is abelian. Then the converse of (1) is true. Namely, the morphism $Y\longrightarrow X$ factors through a $G$-torsor $Y\longrightarrow\sqrt[\bfr]{\bfD/X}$ if and only if $Y$ is a tamely ramified $G$-torsor over $X$ with ramification data $(\bfD,\bfr)$.
\end{enumerate}
\end{thm}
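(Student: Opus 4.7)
The plan is to establish both parts by working fppf-locally on $X$ around a closed point $x$ and exploiting the explicit presentation $\sqrt[\bfr]{\bfD/X}|_U \simeq [Z_x/\mu_{\bfr_x}]$ coming from Proposition \ref{prop:root stack D}(1), where $Z_x\to U$ is the Kummer morphism associated to local equations $\mathbf{s}$ of the divisors $D_i$ passing through $x$ and the index $\bfr_x$ defined in (\ref{eq:bfr_x}).

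For part (1), the tame ramification hypothesis supplies, in an fppf neighborhood of $x$, a monomorphism $\iota_x:\mu_{\bfr_x}\hookrightarrow G_{k'}$ and an isomorphism $Y|_U\simeq(Z_x\times_k k')\times^{\mu_{\bfr_x},\iota_x}G$ over $U$. The tautological atlas $Z_x\to[Z_x/\mu_{\bfr_x}]$ is a $\mu_{\bfr_x}$-torsor over $\sqrt[\bfr]{\bfD/X}|_U$, and inducing along $\iota_x$ yields a $G$-torsor
\[
(Z_x\times_k k')\times^{\mu_{\bfr_x},\iota_x}G\lto\sqrt[\bfr]{\bfD/X}|_U
\]
whose composition with the coarse moduli map $\pi$ recovers $f:Y|_U\to U$. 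I would then glue these local factorizations via fppf descent: on double intersections the two lifts of $f$ into the root stack differ by a $G$-action on the torsor $Y$, so the discrepancies assemble into genuine descent data. Uniqueness follows from the local uniqueness, up to canonical isomorphism, of the roots $(\scrL'_i,s'_i,\phi_i)$ extracted from the Kummer presentation: the coordinates $t_i$ on $Z_x$ descend $\mu_{\bfr_x}$-equivariantly via $\iota_x$ to give $r_i$-th roots of $(f^*\scrO_X(D_i),f^*s_{D_i})$ on the contracted product, and any other factorization produces isomorphic root data.

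For part (2), assume $G$ is abelian and that $Y\to X$ factors through a $G$-torsor $g:Y\to\sqrt[\bfr]{\bfD/X}$ with $Y$ a scheme. Pulling $g$ back along the atlas $Z_x\to[Z_x/\mu_{\bfr_x}]$ gives a $G$-torsor $\tilde Y\to Z_x$ equipped with a $\mu_{\bfr_x}$-action commuting with the right $G$-action and satisfying $\tilde Y/\mu_{\bfr_x}=Y|_U$. Restricting along the closed immersion of the residual gerbe $\calG_\xi\simeq\cB_{k(x)}\mu_{\bfr_x}$~(cf.~Proposition \ref{prop:root stack D}(2)) produces a $G$-torsor over $\cB_{k(x)}\mu_{\bfr_x}$; after fppf base change to an extension $k'/k(x)$ this torsor trivializes, and the compatible $\mu_{\bfr_x}$-action on the trivial $G$-torsor is encoded---precisely because $G$ is abelian---by a well-defined group homomorphism $\chi_x:\mu_{\bfr_x}\to G_{k'}$ independent of the choice of trivialization. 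Since $Y$ is a scheme, the $\mu_{\bfr_x}$-action on $\tilde Y$ must be set-theoretically free; a stabilizer calculation at points lying over $D$, where the $\mu_{\bfr_x}$-action on $Z_x$ has non-trivial inertia, forces $\ker(\chi_x)=1$, so $\chi_x$ is a monomorphism. Unwinding the quotient exhibits $Y|_U$ fppf-locally as $(Z_x\times_k k')\times^{\mu_{\bfr_x},\chi_x}G$, which is exactly the local form demanded by Definition \ref{def:tame ramif}.

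The main obstacle is the abelianness assumption in part (2): in the non-abelian case, two different trivializations of $\tilde Y$ as a $G$-torsor differ by a $G(k')$-valued section, and the induced $\mu_{\bfr_x}$-actions on the trivial $G$-torsor are related by an inner automorphism of $G$. Consequently there is no canonical way to extract a genuine group homomorphism $\mu_{\bfr_x}\to G$, only a $G$-conjugacy class of such, whereas Definition \ref{def:tame ramif} requires a literal monomorphism $\mu_{\bfr_x}\hookrightarrow G$ making the contracted product well-defined. This is precisely the gap identified by Rydh in \cite[Appendix~\S B]{bb17preprint} and flagged in Remarks \ref{rem:intro}(1) and \ref{rem:Rydh}, and it explains why part (2) cannot be extended beyond the abelian case, even though part (1) holds in full generality.
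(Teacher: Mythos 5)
The paper does not actually prove Theorem \ref{thm:biswas-borne}: it is imported wholesale from Biswas--Borne, and the only place the argument is even sketched is Remark \ref{rem:Rydh}, which recalls the mechanism for part (2). Your outline of (1) is reasonable in shape, and your setup for (2) up through the extraction of $\chi_x$ is correct: the restriction of $Y\lto\sqrt[\bfr]{\bfD/X}$ to the residual gerbe $\calG_\xi\simeq\cB_{k(x)}\mu_{\bfr_x}$ gives a homomorphism $\chi_x:\mu_{\bfr_x}\to G_{k'}$ well-defined up to $G$-conjugacy, abelianness of $G$ removes the conjugacy ambiguity, and representability of $Y$ (equivalently, representability of the restriction $\calG_\xi\to\cB G$) forces $\chi_x$ to be a monomorphism.

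The gap is the final sentence of the (2) argument, ``Unwinding the quotient exhibits $Y|_U$ fppf-locally as $(Z_x\times_kk')\times^{\mu_{\bfr_x},\chi_x}G$.'' This is asserted, not proved, and it is precisely where all the work hides. Knowing that a $G$-torsor over $[Z_x/\mu_{\bfr_x}]$ agrees with $Z_x\wedge^{\mu_{\bfr_x},\chi_x}G$ on the residual gerbe does not by itself produce a local isomorphism over an fppf cover of $U$; an fppf cover of $Z_x$ that trivializes $\widetilde Y\to Z_x$ has no reason to carry a $\mu_{\bfr_x}$-equivariant structure compatible with $\chi_x$. The missing step, which is exactly the one recalled in Remark \ref{rem:Rydh}, is to form $P\Def\uIsom_{[Z/\mu_{\bfr}]}(Z\wedge^{\mu_{\bfr},\chi}G,\ Y)$, a torsor under the twisted group $N=Z\wedge^{\mu_{\bfr}}G$; when $G$ is abelian the conjugation twist is trivial, so $N\simeq G$ and $P$ is an honest $G$-torsor over the root stack; your residual-gerbe computation shows $P|_{\calG_\xi}$ is trivial, so by the descent criterion (Proposition \ref{prop:nori reconst} together with Proposition \ref{prop:rel alper}, i.e.\ Proposition \ref{prop:Nori ger root stack}) $P$ descends to a $G$-torsor over $X$, and trivializing that over an fppf cover $X'\to X$ yields the local isomorphism $Y\simeq Z\wedge^{\mu_{\bfr},\chi}G$ that Definition \ref{def:tame ramif} requires. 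Without this descent step the abelianness hypothesis is never actually used to produce the local form; you have correctly diagnosed why the argument fails for non-abelian $G$, but you have not executed the step that abelianness makes possible.
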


Particularly, the first result (1) indicates that a $G$-torsor $Y\lto\sqrt[\bfr]{\bfD/X}$ over the root stack $\sqrt[\bfr]{\bfD/X}$ which is representable by a $k$-scheme gives a candidate of a tamely ramified $G$-torsor over $X$ with ramification data $(\bfD,\bfr)$ in the sense of Definition \ref{def:tame ramif}.

\begin{rem}
With the above notation, suppose that $X$ is a regular connected $k$-scheme, $D$ is a simple normal crossings divisor   and $G$ is a finite constant $k$-group scheme. Then, for a finite flat $G$-invariant morphism $Y\lto X$ whose restriction to $X\setminus D$ is an \'etale Galois $G$-cover, the following are equivalent.
\begin{enumerate}
\renewcommand{\labelenumi}{(\alph{enumi})}
\item The morphism $Y\lto X$ is tamely ramified along $D$ in the sense of Definition \ref{def:tame cover}.
\item The morphism $Y\lto X$ is a tamely ramified $G$-torsor with ramification data $(\bfD,\bfr)$ for some $n$-tuple $\bfr$ in the sense of Definition \ref{def:tame ramif}.
\item The morphism $Y\lto X$ (uniquely) factors through a $G$-torsor $Y\lto\sqrt[\bfr]{\bfD/X}$ for some $n$-tuple $\bfr$.
\end{enumerate}
Indeed, the implication (a)$\Longrightarrow$(b) is immediate from Definitions \ref{def:tame cover} and \ref{def:tame ramif}. The implication (b)$\Longrightarrow$ (c) is nothing other than the assertion of Theorem \ref{thm:biswas-borne}(1). Finally, the implication (c)$\Longrightarrow$(a) can be deduced from Abhyankar's lemma~(cf.~Lemma \ref{lem:abhyankar}). Indeed, since the problem is Zariski local, without loss of generality, we may assume that $[Z/\mu_{\bfr}]\simeq\sqrt[\bfr]{\bfD/X}$, where $Z$ is a Kummer morphism. Then, we get a commutative diagram
\begin{equation*}
\begin{xy}
\xymatrix{\ar@{}[rd]|{\square}
P\ar[r]\ar[d]&Y\ar[d]\ar@/^10mm/[dd]\\
Z\ar[r]\ar[rd]&\sqrt[\bfr]{\bfD/X}\ar[d]\\
& X
}
\end{xy}
\end{equation*}
with the 2-Cartesian square, where $P\lto Z$ is a $G$-torsor and $Z\lto\sqrt[\bfr]{\bfD/X}$ is the quotient map $Z\lto[Z/\mu_{\bfr}]\simeq\sqrt[\bfr]{\bfD/X}$. Let $x_i$ denote the generic point of the divisor $D_i$ for any $i$. Then, the above commutative diagram implies that the ramification index $e_{x_i}$ at $x_i$ divides $r_i$. As $Y$ is representable, the composition map
\begin{equation*}
\cB\mu_{r_i}=\calG_{x_i}\lto\sqrt[\bfr]{\bfD/X}\xrightarrow{~Y~}\cB G
\end{equation*}
is faithful, hence $\mu_{r_i}$ is \'etale group scheme, i.e.\ $r_i$ is prime-to-$p$. As $e_{x_i}$ divides $r_i$, this implies that $e_{x_i}$ is prime-to-$p$. Therefore, $Y\lto X$ is tamely ramified above $x_i$. Moreover, since $\sqrt[\bfr]{\bfD/X}$ is regular~(cf.~Proposition \ref{prop:sm root stack}), so is $Y$. Now, Lemma \ref{lem:abhyankar} implies that the condition (a) holds.
\end{rem}

\begin{rem}\label{rem:Rydh}
To figure out the reason why the abelianness assumption is required in Theorem \ref{thm:biswas-borne}(2), let us recall the argument due to Biswas--Borne. The problem is Zariski local, so we may assume that $\sqrt[\bfr]{\bfD/X}=[Z/\mu_{\bfr}]$. Let $P\Def\underline{\mathrm{Isom}}_{[Z/\mu_{\bfr}]}(Z\wedge^{\mu_{\bfr}}G,~Y)$ be the fppf sheaf of isomorphisms of $G$-torsors over $[Z/\mu_{\bfr}]$. Then $P$ is a right $N=Z\wedge^{\mu_{\bfr}}G$-torsor over $[Z/\mu_{\bfr}]$, where the action of $\mu_{\bfr}$ on $G$ is defined by conjugation. For example, if $G$ is abelian, then the conjugacy action is trivial, hence $N=[Z/\mu_{\bfr}]\times_k G$ and Proposition \ref{prop:nori reconst} can be applied to show that $P$ descents to a $G$-torsor over $X$~(cf.~Proposition \ref{prop:Nori ger root stack}), which implies that $P$ can be trivialized by an fppf cover $X'\lto X$. In the general case, it does not seem that $\cB N$ is a fibered category over schemes which satisfies a tannakian reconstruction~(\S\ref{sec:tann reconst}), and the same argument cannot be applied. 
In fact, as explained in \cite[Appendix \S B]{bb17preprint}, David Rydh constructs examples of $G=\mu_p\ltimes\alpha_p$-torsors over smooth root stacks $\sqrt[p]{D/X}$ for which the associated sheaf $P$ of isomorphisms of $G$-torsors cannot be trivialized by any fppf cover $X'\lto X$. 
\end{rem}

For later use, we shall show the following lemma.

\begin{lem}\label{lem:tors rs}
Suppose that $X$ is a connected smooth curve over an algebraically closed field $k$ of characteristic $p>0$ with $\bfD=(x_i)_{i\in I}$, where $x_i$ are finite distinct closed points of $X$.
Let $G$ be a finite $k$-group scheme. Let $\calY\longrightarrow\fX^{\bfr}$ be a Nori-reduced $G$-torsor. Then there exists a family $\bfr'=(r'_i)_{i\in I}$ of integers $r'_i> 0$ with $\bfr'|\bfr$ and a $G$-torsor $Y'\longrightarrow\fX^{\bfr'}$ with $Y'$ representable by a $k$-scheme such that
\begin{equation*}
\calY\simeq Y'\times_{\fX^{\bfr'}}\fX^{\bfr}
\end{equation*}
as a $G$-torsor over $\fX^{\bfr}$. 
\end{lem}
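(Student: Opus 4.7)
The plan is to define $\bfr'$ pointwise as the smallest divisor of $\bfr$ through which the classifying map restricted to each residual gerbe factors, and then to descend $\calY$ and verify representability.

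For each $i \in I$, I would consider the inclusion $\calG_{x_i} \simeq \cB\mu_{r_i} \hookrightarrow \fX^{\bfr}$ coming from Proposition \ref{prop:root stack D}(2). Composing with the classifying morphism $\fX^{\bfr} \to \cB G$ of $\calY$ gives a homomorphism $\phi_i : \mu_{r_i} \to G$. Since every quotient of $\mu_{r_i}$ is of the form $\mu_d$ for some $d \mid r_i$, the image of $\phi_i$ is a subgroup scheme of $G$ isomorphic to $\mu_{r'_i}$ for a unique $r'_i \mid r_i$, and $\phi_i$ factors canonically as $\mu_{r_i} \twoheadrightarrow \mu_{r'_i} \hookrightarrow G$. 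Setting $\bfr' = (r'_i)_{i \in I}$ gives the candidate, which satisfies $\bfr' \mid \bfr$.

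Next, I would apply Proposition \ref{prop:Nori ger root stack} (with the roles of $\bfr$ and $\bfr'$ in that statement exchanged to match our divisibility $\bfr' \mid \bfr$) to descend $\calY$ to a $G$-torsor $Y' \to \fX^{\bfr'}$. The descent criterion there is that at each closed point $\xi = [x]$ of $\fX^{\bfr}$, the composition from the residual gerbe of $\fX^{\bfr}$ at $\xi$ to $\cB G$ factors through the canonical map to the residual gerbe of $\fX^{\bfr'}$ at $\xi$. For $x \notin D$ both residual gerbes are trivial, so this is automatic; for $x = x_i$ the map of residual gerbes is precisely $\cB\mu_{r_i} \to \cB\mu_{r'_i}$ induced by the surjection $\mu_{r_i} \twoheadrightarrow \mu_{r'_i}$, through which $\phi_i$ factors by the very construction of $r'_i$. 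Proposition \ref{prop:Nori ger root stack} then yields the required $G$-torsor $Y' \to \fX^{\bfr'}$ together with the isomorphism $\calY \simeq Y' \times_{\fX^{\bfr'}} \fX^{\bfr}$.

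Finally, I would verify that $Y'$ is representable by a $k$-scheme. Representability of the torsor $Y' \to \fX^{\bfr'}$ amounts to the classifying morphism $\fX^{\bfr'} \to \cB G$ being representable by algebraic spaces, i.e., to its inducing monomorphisms on all stabilizer groups. At a closed point $x \notin D$ the stabilizer is trivial, while at $x = x_i$ the induced map is the inclusion $\mu_{r'_i} \hookrightarrow G$ by the very definition of $r'_i$. Consequently $Y'$ is an algebraic space, finite over $X$ via $Y' \to \fX^{\bfr'} \to X$, and hence a $k$-scheme since $X$ is a curve. The main obstacle is the pointwise factorization check required by Proposition \ref{prop:Nori ger root stack}; the deeper technical input (Alper's descent theorem for vector bundles) is already packaged inside Propositions \ref{prop:rel alper} and \ref{prop:Nori ger root stack}, so the rest of the argument is bookkeeping with quotients of $\mu$-group schemes.
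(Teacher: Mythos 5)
Your proposal is correct and recovers the lemma, but the route through the descent step differs from the paper's. You define $\bfr'$ exactly as the paper does, via the image of the inertia homomorphism $\phi_i : \mu_{r_i} \to G$ induced on the residual gerbe $\calG_{x_i} \simeq \cB\mu_{r_i}$. Where you diverge is in how the descent to $\fX^{\bfr'}$ is carried out: you invoke the second assertion of Proposition~\ref{prop:Nori ger root stack} (the residual-gerbe factorization criterion for descending $G$-torsors, which is itself powered by Alper's theorem via Proposition~\ref{prop:rel alper}), whereas the paper bypasses that machinery and constructs the descended morphism by hand, first restricting the classifying map to $[Z/\mu_{r''}]$ where $\mu_{r''}=\Ker\phi_i$, then using \cite[\S2.3 Corollary 5]{bb17} (morphisms that are trivial on inertia factor through the coarse moduli space) to obtain a map from $Z'=Z/\mu_{r''}$, and finally dividing by the residual $\mu_{r'}$-action to land on $\fX^{r'}\to\cB G$. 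The paper's proof is constructive and makes the representability of the outcome evident by design; yours is more modular and arguably cleaner, since it reuses the descent criterion already established in the paper, at the cost of a couple of additional bookkeeping points that deserve a word: (a) the identification of the residual gerbe morphism $\calG_\xi\to\calG'_\xi$ with $\cB\psi_i$ for the canonical surjection $\psi_i:\mu_{r_i}\twoheadrightarrow\mu_{r'_i}$, and the fact that $\phi_i$ factoring through its image $\mu_{r'_i}$ differs from $\psi_i$ only by an automorphism of $\mu_{r'_i}$, which does not affect the factorization; and (b) the fact that the induced map $\calG'_\xi\to\cB G$ is uniquely determined by the composite through the gerbe $\calG_\xi\to\calG'_\xi$ (full faithfulness from Proposition~\ref{prop:rel alper}(3)), which is what lets you read off that it is induced by the monomorphism $\mu_{r'_i}\hookrightarrow G$ and conclude representability, the latter step matching the paper's appeal to \cite[\S3.2 Proposition 11]{bb17} and \cite[Remark 3.6(2)]{bb17preprint}.
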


\begin{proof}
First note that if a $G$-torsor $\calY\lto\fX^{\bfr}$ is representable, then as remarked in \cite[Remark 3.6(2)]{bb17preprint}, $\calY$ can be represented by a $k$-scheme, and by \cite[\S3.2 Proposition 11]{bb17}, the latter condition is equivalent to the condition that the composition of morphisms
\begin{equation*}
\mathcal{G}_{x_i}\lto\fX^{\bfr}\xrightarrow{~\calY~}\mathcal{B}G
\end{equation*}
is representable for any $i\in I$. Since the problem is Zariski local, we may assume that $\# I=1$, so let us put $x=x_1$, $r=r_1$. Moreover, we may assume that $\fX^r=[Z/\mu_r]$ where $Z\lto X$ is a Kummer morphism. Then we have $\mathcal{G}_{x}=\cB\mu_r$~(cf.~Proposition \ref{prop:root stack D}(2)). Let $\mu_r\lto G$ be the homomorphism of $k$-group schemes associated with the composition $\cB\mu_r=\calG_x\lto[Z/\mu_r]=\fX^r\xrightarrow{~\calY~}\cB G$ and set $\mu_{r''}\Def\Ker(\mu_r\lto G)$.  Then, by the definition, the restriction
\begin{equation}\label{eq:tors rs}
[Z/\mu_{r''}]\lto[Z/\mu_r]=\fX^{r}\xrightarrow{~\mathcal{Y}~}\cB G
\end{equation}
is trivial on inertia groups. Therefore, by \cite[\S2.3 Corollary 5]{bb17}, which can be applied to non-proper tame stacks as discussed in the first paragraph of the proof of Proposition \ref{prop:Nori ger root stack}(1), the morphism (\ref{eq:tors rs}) factors through the coarse moduli space $Z'\Def Z/\mu_{r''}$.  By dividing by the natural action of $\mu_{r'}$, we get a morphism
\begin{equation*}
\fX^{r'}=[Z'/\mu_{r'}]\lto\cB G,
\end{equation*} 
which is representable by the construction. 
Therefore, we can conclude that the $G$-torsor $\calY\lto\fX^{r}$ descends to a $G$-torsor $Y\lto\fX^{r'}$ which is representable by a $k$-scheme. This completes the proof.   
\end{proof}

\subsection{Brauer groups of root stacks}\label{subsec:Br}

In this subsection, we always work with the lisse-\'etale topology~(cf.~\cite{LMB00}). 
Let $\calX$ be an algebraic stack over a field $k$. We denote by $\Liset(\calX)$ the lisse-\'etale site of $\calX$. A gerbe over $\calX$ always means a gerbe over the site $\Liset(\calX)$~(cf.~\cite[Definition 06NZ]{stack}). 
To each gerbe $\calG\lto\calX$, we associate a sheaf $\calI(\calG)$ on $\calG$, which we call the \textit{inertia sheaf}, as follows. For any object $(U,u)\in\Liset(\calX)$ and any section $x:U\lto\calG$, $\calI(\calG)(x)\Def\Aut_{\calG(U)}(x)$ is the group of automorphisms of the section $x$.  

\begin{definition}(cf.~\cite[Definition 2.2.1.6]{li08})\label{def:inertial action}
Let $\mathcal{F}$ be a sheaf on $\calG$. Then $\mathcal{F}$ admits a right action $\calF\times\calI(\calG)\lto\calF$ of the inertia sheaf $\mathcal{I}({\calG})$, which is called the \textit{inertial action} on $\mathcal{F}$, as follows. For any  object $(U,u)\in\Liset(\calX)$ and any section $x:U\lto\calG$, there exists a natural action
\begin{equation*}
\calF(x)\times\calI(\calG)(x)\lto \calF(x)~;~ (a,\sigma)\longmapsto \sigma^*a.
\end{equation*}
Here, note that each element $\sigma\in\calI(\calG)(x)$ induces an isomorphism $\sigma^*:\calF(x)\xrightarrow{~\simeq~}\calF(x)$. 
\end{definition}

A gerbe $f:\calG\lto\calX$ is said to be \textit{abelian} if for any $(U,u)\in\Liset(\calX)$ and any section $x:U\lto\calG$, the sheaf $\underline{\mathrm{Aut}}_U(x)$ on $\Liset(U)$ is abelian. If a gerbe $f:\calG\lto\calX$ is abelian, for any $(U,u)\in\Liset(\calX)$ and any two sections $x,y\in\calG_{(U,u)}$, there exists a canonical isomorphism $\underline{\mathrm{Aut}}_U(x)\simeq\underline{\mathrm{Aut}}_U(y)$ of sheaves. Moreover, there exists a sheaf $\mathcal{A}$ of abelian groups on $\Liset(\calX)$ such that for any $(U,u)\in\Liset(\calX)$ and any $x:U\lto \calG$, there exists an isomorphism $u^*\mathcal{A}\simeq \underline{\mathrm{Aut}}_U(x)$ of sheaves such that for any $(U,u)\in\Liset(\calX)$ and any two sections $x,y:U\lto \calG$, the diagram
\begin{equation*}
\begin{xy}
\xymatrix{
u^*\mathcal{A}\ar[r]\ar[rd]&\underline{\mathrm{Aut}}_U(x)\ar[d]\\
&\underline{\mathrm{Aut}}_U(y).
}
\end{xy}
\end{equation*}
is commutative~(cf.~\cite[Lemma 0CJY]{stack}). In other words, there exists an isomorphism of sheaves on $\calG$,
\begin{equation*}
\mathcal{A}_{\calG}\Def f^*\mathcal{A}\simeq \mathcal{I}({\calG}).  
\end{equation*} 
In this case, such a gerbe $\calG\lto\calX$ is called an $\mathcal{A}$-\textit{gerbe}. More precisely, for a given  abelian sheaf $\mathcal{A}$ on $\Liset(\calX)$, an $\mathcal{A}$-\textit{gerbe} is a gerbe $\calG\lto\calX$ together with an isomorphism $\mathcal{A}_{\calG}\simeq\mathcal{I}(\calG)$ of sheaves on $\calX$. 
Let $\mathcal{A}$ be an abelian sheaf on $\Liset(\calX)$. Let $\calG_1,\calG_2$ be $\mathcal{A}$-gerbes over $\calX$. Then $\calG_1$ is said to be isomorphic to $\calG_2$ if there exists an equivalence of fibered categories $\phi:\calG_1\xrightarrow{~\simeq~}\calG_2$ over $\calX$ such that the composition
\begin{equation*}
\mathcal{A}_{\calG_2}\xrightarrow{~\simeq~}\mathcal{I}({\calG_2})\lto \phi^*\mathcal{I}({\calG_1})\xrightarrow{~\simeq~}\phi^*\mathcal{A}_{\calG_1}=\mathcal{A}_{\calG_2}
\end{equation*}
is the identity~(cf.~\cite[Definition 2.2.1.3]{li08}). Then the set of isomorphism classes of $\mathcal{A}$-gerbes can be described as the second cohomology group  $H_{\liset}^2(\calX,\mathcal{A})$. The unit of the abelian group $H_{\liset}^2(\calX,\mathcal{A})$ is represented by the fibered category 
\begin{equation*}
\TORS_{\calX}(\mathcal{A})\lto\calX
\end{equation*}
which classifies $\mathcal{A}$-torsors on the site $\Liset(\calX)$. 

From now on, we shall consider the multiplicative group $\mathcal{A}=\G_{m}$. Let $f:\calG\lto\calX$ be a $\G_{m}$-gerbe. Then $\calG$ is an algebraic stack over $k$~(cf.~\cite[Proposition 1.1]{hs09}).  In this case, any quasi-coherent sheaf $\calF$ on $\calG$ admits a left action of $\G_{m}$ which comes from the left $\scrO_{\calG}$-module structure. 

\begin{definition}(cf.~\cite[Definition 3.1.1.1]{li08})
With the above notation, 
a quasi-coherent sheaf $\mathcal{F}$ on $\calG$ is said to be a \textit{twisted sheaf} if the right action associated with the left $\G_{m}$-action on $\calF$ coincides with the inertial action $\calF\times\calI(\calG)\lto\calF$~(cf.~Definition \ref{def:inertial action}). Here, recall that $\G_{m,\calG}=\calI(\calG)$.
\end{definition}

Let us recall the following result.

\begin{prop}(cf.~\cite[Lemma 3.1.1.8]{li08})\label{prop:lieblich's lem}
Let $\calX$ be an algebraic stack over a field $k$. 
A $\G_{m}$-gerbe $\calG$ is isomorphic to $\cB\G_{m,\calX}$ as a $\G_{m}$-gerbe if and only if there exists a  twisted invertible sheaf $\mathcal{L}$ on $\calG$. 
\end{prop}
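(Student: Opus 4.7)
The plan is to prove both implications; the heart of the argument is the frame bundle construction, which matches the band of $\calG$ with that of $\cB\G_{m,\calX}$ by means of the twisted condition.

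For the direction $(\Rightarrow)$, I would observe that $\cB\G_{m,\calX}$ carries a canonical twisted invertible sheaf $\mathcal{L}_{\mathrm{taut}}$: at a section $y:U\to\cB\G_{m,\calX}$ classifying a $\G_m$-torsor $P$ on $U$, the fiber of $\mathcal{L}_{\mathrm{taut}}$ is the associated invertible sheaf $P\wedge^{\G_m}\A^1_U$, and the inertia group $\G_m(U)\simeq\calI(\cB\G_{m,\calX})(y)$ acts on this fiber via its defining weight-one character, which coincides with the scalar $\scrO$-module action. Hence the pullback of $\mathcal{L}_{\mathrm{taut}}$ along an isomorphism $\calG\xrightarrow{\simeq}\cB\G_{m,\calX}$ of $\G_m$-gerbes yields a twisted invertible sheaf on $\calG$; preservation of the twisted property is exactly compatibility with the $\G_m$-bandings.

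For $(\Leftarrow)$, given a twisted invertible sheaf $\mathcal{L}$ on $\calG$, I would define a morphism of fibered categories over $\calX$
\begin{equation*}
\Phi:\calG\lto\cB\G_{m,\calX},\qquad (x:U\to\calG)\longmapsto\uIsom_{\scrO_U}(\scrO_U,\,x^*\mathcal{L}),
\end{equation*}
sending each section to the frame $\G_m$-torsor of the pulled-back line bundle, with the natural assignment on morphisms. The essential verification is that $\Phi$ is a morphism of $\G_m$-gerbes: for any section $x:U\to\calG$ and any $\sigma\in\calI(\calG)(x)$, identified with an element of $\G_m(U)$ via the band of $\calG$, the twisted condition on $\mathcal{L}$ says $\sigma^*a=\sigma\cdot a$ for every local section $a$ of $x^*\mathcal{L}$, so the induced automorphism of $\uIsom_{\scrO_U}(\scrO_U,x^*\mathcal{L})$ is multiplication by $\sigma$, which is precisely the band action of $\cB\G_{m,\calX}$ at $\Phi(x)$. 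Consequently the composition $\G_{m,\calG}\to\calI(\calG)\to\Phi^*\calI(\cB\G_{m,\calX})\to\G_{m,\calG}$ is the identity.

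The main obstacle is ensuring that the banding diagram commutes on the nose rather than only up to a character of $\G_m$; this is exactly the content of the twisted condition on $\mathcal{L}$, and any sign ambiguity (whether to use $\mathcal{L}$ or its dual) is pinned down by matching the convention adopted for $\mathcal{L}_{\mathrm{taut}}$ in the first direction. Once $\Phi$ is confirmed to be a morphism of $\G_m$-gerbes over $\calX$, it is automatically an equivalence of fibered categories, because any morphism between two gerbes over a common base that is compatible with a common band is an isomorphism. Thus $\calG\simeq\cB\G_{m,\calX}$ as $\G_m$-gerbes.
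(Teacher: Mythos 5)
Your proof is correct and follows essentially the same route as the paper: the paper starts from the equivalence $\Hom_{\calX}(\calG,\cB\G_{m,\calX})\simeq\mathsf{Pic}(\calG)$ and shows that the morphism corresponding to an invertible sheaf $\mathcal{L}$ is a $\G_m$-gerbe isomorphism precisely when the inertial action on $\mathcal{L}$ agrees with the scalar action, which is exactly your frame-bundle construction in the $(\Leftarrow)$ direction and the pullback of the tautological bundle in $(\Rightarrow)$. Both arguments reduce to identifying band-compatibility of $\Phi$ with the twistedness of $\mathcal{L}$, and both rely on the standard fact that a band-compatible morphism of gerbes over a common base is automatically an equivalence.
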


\begin{proof}
First note that there exists an equivalence of groupoids
\begin{equation*}
\Hom_{\calX}(\calG,\cB\G_{m,\calX})\simeq\mathsf{Pic}(\calG).
\end{equation*}
Let $\phi:\calG\lto\cB\G_{m,\calX}$ be a morphism of fibered categories over $\calX$ and $\mathcal{L}$ the invertible sheaf on $\calG$ corresponding to $\phi$. We have to show that $\phi$ is an isomorphism of $\G_{m}$-gerbes if and only if $\mathcal{L}$ is a twisted sheaf on $\calG$. Indeed, $\mathcal{L}$ is a twisted sheaf on $\calG$ if and only if, for any object $(U,u)\in\Liset(\calX)$ and any section $x:U\lto\calG$, the diagram
\begin{equation*}
\begin{xy}
\xymatrix{
x^*\mathcal{L}\times\G_{m,U}\ar[r]^{\simeq~~~}\ar[rd]&x^*\mathcal{L}\times\uAut_U(x)\ar[d]\\
&x^*\mathcal{L}
}
\end{xy}
\end{equation*}
is commutative, where the action $x^*\mathcal{L}\times\G_{m,U}\lto x^*\mathcal{L}$ is given by $(f,a)\longmapsto a^{-1}\cdot f$ and the one $x^{*}\mathcal{L}\times\uAut_U(x)\lto x^*\mathcal{L}$ is the inertial action~(cf.~Definition \ref{def:inertial action}). However, the commutativity of the diagram is equivalent to the commutativity of the following diagram,
{\small\begin{equation*}
\begin{xy}
\xymatrix{
\underline{\mathrm{Aut}}_U(x)\ar[r]\ar[d]&\underline{\mathrm{Aut}}_U(x^*\mathcal{L})\ar[d]\\
\G_{m,U}\ar[r]&\uAut_U(\phi\circ x), 
}
\end{xy}
\end{equation*}}
or equivalently, to saying that the composition
\begin{equation*}
\G_{m,U}\xrightarrow{~\simeq~}\uAut_U(x)\lto\uAut_U(\phi\circ x)\xrightarrow{~\simeq~}\G_{m, U}
\end{equation*}
is the identity. This completes the proof. 
\end{proof}

Now we apply the above arguments to root stacks. Let $X$ be a geometrically connected quasi-compact smooth scheme over the spectrum $S=\Spec k$ of a field $k$ with $\eta$ the generic point. Let $\bfD=(D_i)_{i\in I}$ be a finite family of reduced irreducible effective Cartier divisors on $X$ and put  $D=\cup_{i\in I}D_i\subset X$. For each $\bfr=(r_i)_{i\in I}$ with $r_i>0$, as in \S\ref{subsec:root stack}, we put
\begin{equation*}
\fX^{\bfr}=\sqrt[\bfr]{\bfD/X}.
\end{equation*} 
Recall that there exists a natural quasi-compact morphism 
$\eta\hookrightarrow\fX^{\bfr}$.

\begin{prop}(cf.~\cite[Proposition 3.1.3.3]{li08})\label{prop:brauer inj}
With the above notation, suppose further that $D_i$ are smooth and that $D$ is a simple normal crossings divisor on $X$. Then the restriction map 
\begin{equation*}
H_{\liset}^2(\fX^{\bfr},\G_m)\lto H_{\liset}^2(\eta,\G_m)
\end{equation*}
is injective.
\end{prop}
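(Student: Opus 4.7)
The plan is to reduce the injectivity statement to a question about twisted invertible sheaves via Proposition \ref{prop:lieblich's lem}. Let $\calG \to \fX^{\bfr}$ be a $\G_m$-gerbe whose class $[\calG]$ lies in the kernel of the restriction map. Then $\calG_\eta \Def \calG \times_{\fX^{\bfr}} \eta$ is a trivial $\G_m$-gerbe over $\eta$, so by Proposition \ref{prop:lieblich's lem} it carries a twisted invertible sheaf $\calL_\eta$. The goal is to produce a twisted invertible sheaf $\calL$ on $\calG$ itself, which by Proposition \ref{prop:lieblich's lem} again will force $[\calG] = 0$ in $H^2_{\liset}(\fX^{\bfr}, \G_m)$.

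The construction of $\calL$ proceeds in two steps. First, I would extend $\calL_\eta$ to a twisted coherent sheaf $\calF$ on $\calG$. Since $\calG$ is Noetherian (being a $\G_m$-gerbe over the Noetherian algebraic stack $\fX^{\bfr}$), coherent sheaves on the quasi-compact open substack $\calG_\eta$ extend to coherent sheaves on $\calG$; moreover, the twisted sheaves on a $\G_m$-gerbe form a direct summand of $\Qcoh(\calG)$ under the weight decomposition of the central $\G_m$-action, so one can take the weight-one component of any extension to remain twisted. Second, form the reflexive hull $\calL \Def \calF^{\vee\vee}$. The operations of dualizing and double-dualizing preserve the twisted condition (the dual of a weight-one sheaf has weight $-1$, and the double dual returns to weight one), so $\calL$ is a twisted reflexive coherent sheaf of generic rank one on $\calG$.

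It remains to show that $\calL$ is invertible. The hypotheses — $X$ smooth with $D = \bigcup_{i\in I} D_i$ a simple normal crossings divisor and each $D_i$ smooth — ensure by Proposition \ref{prop:sm root stack} that $\fX^{\bfr}$ is smooth over the perfect field $k$. Since $\G_m$ is smooth, the $\G_m$-gerbe $\calG$ is then itself smooth, and in particular regular and integral. On a regular Noetherian integral algebraic stack, any rank-one reflexive coherent sheaf is automatically invertible; this is the stacky analogue of the classical identification $\mathrm{Cl} = \mathrm{Pic}$ for regular integral schemes, and it can be verified smooth-locally on an atlas of $\calG$ where the statement is the usual one about schemes. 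Hence $\calL$ is a twisted invertible sheaf on $\calG$ extending $\calL_\eta$, and the proof concludes by invoking Proposition \ref{prop:lieblich's lem}.

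The main obstacle is the second step: one must check carefully that reflexive hulls of twisted coherent sheaves of generic rank one are invertible on a smooth Noetherian algebraic stack, and that the twisted structure is preserved throughout. Both are natural extensions of scheme-theoretic facts and descend from a smooth atlas, but they are the technical heart of the argument; the remaining ingredients (extension of coherent sheaves from an open substack, and the correspondence between triviality of a $\G_m$-gerbe and existence of a twisted line bundle) are purely formal.
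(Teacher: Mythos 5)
Your proof is correct and follows essentially the same strategy as the paper: extend the twisted invertible sheaf from $\calG_\eta$ to a coherent twisted sheaf on $\calG$, take the reflexive hull, and invoke smoothness of $\calG$ (via Proposition \ref{prop:sm root stack}) to conclude that the rank-one reflexive sheaf is invertible, then finish with Proposition \ref{prop:lieblich's lem}. The one place where you should be slightly more careful is the extension step: $\calG_\eta$ is not literally a quasi-compact open substack of $\calG$ (since $\eta$ is the generic point, not an open), so the correct way to proceed, as in the paper, is to push forward along the quasi-compact morphism $i:\calG_\eta\to\calG$ to obtain a quasi-coherent $\calG$-twisted sheaf and then write it as a colimit of coherent twisted subsheaves (using Lieblich's result), picking a large enough piece to recover $\calL_\eta$ after restriction.
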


As a consequence, we have the following.

\begin{cor}\label{cor:brauer=0}
With the same notation as in Proposition \ref{prop:brauer inj}, if $X$ is a connected smooth curve defined over an algebraically closed field, then we have $H_{\liset}^2(\fX^{\bfr},\G_m)=0$.
\end{cor}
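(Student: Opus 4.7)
The plan is to combine Proposition \ref{prop:brauer inj} with Tsen's theorem. By Proposition \ref{prop:brauer inj}, the restriction map
\begin{equation*}
H^2_{\liset}(\fX^{\bfr},\G_m)\lto H^2_{\liset}(\eta,\G_m)
\end{equation*}
is injective under the stated hypotheses on $(\bfD,X)$ (for a smooth curve, the requirement that $D$ be an SNC divisor with smooth components is automatic once we take $\bfD=(x_i)_{i\in I}$ to be a finite family of distinct closed points). So the corollary reduces to showing that the target group vanishes.

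Since $\eta=\Spec K$ is the spectrum of a field, with $K=k(X)$ the function field of the curve $X$ over the algebraically closed base field $k$, the lisse-\'etale cohomology of $\eta$ with coefficients in $\G_m$ agrees with the usual \'etale cohomology, i.e.\ $H^2_{\liset}(\eta,\G_m)=H^2_{\et}(\Spec K,\G_m)=\mathrm{Br}(K)$. Here $K$ has transcendence degree one over the algebraically closed field $k$, hence $K$ is a $C_1$-field by Tsen's theorem, and therefore $\mathrm{Br}(K)=0$.

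Combining the two steps, the injective map
\begin{equation*}
H^2_{\liset}(\fX^{\bfr},\G_m)\hookrightarrow H^2_{\liset}(\eta,\G_m)=\mathrm{Br}(k(X))=0
\end{equation*}
forces $H^2_{\liset}(\fX^{\bfr},\G_m)=0$, as claimed. There is no real obstacle: the only nontrivial input beyond Proposition \ref{prop:brauer inj} is Tsen's vanishing theorem for Brauer groups of function fields of curves over algebraically closed fields, together with the standard identification of lisse-\'etale and \'etale cohomology on a scheme.
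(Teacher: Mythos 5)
Your proof is correct and takes essentially the same route as the paper: apply Proposition \ref{prop:brauer inj} to reduce to the generic point $\eta$, then invoke Tsen's theorem to conclude $\mathrm{Br}(k(X))=0$ (the paper cites Stacks Project Lemma 03RF, which is exactly this vanishing).
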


This is an immediate consequence of Proposition \ref{prop:brauer inj} because if $X$ is a connected smooth curve defined over an algebraically closed field, then the Brauer group of the generic point $\eta$ vanishes~(cf.~\cite[Lemma 03RF]{stack}). Let us prove  Proposition \ref{prop:brauer inj}.

\begin{lem}\label{lem:regular noether rs}
With the same notation as in Proposition \ref{prop:brauer inj}, the root stack $\fX^{\bfr}$ is a smooth quasi-compact algebraic stack over $S$. 
\end{lem}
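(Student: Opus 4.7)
The plan is to verify the two asserted properties of $\fX^{\bfr}$ separately. Quasi-compactness is immediate: the canonical projection $\pi_{\bfr}:\fX^{\bfr}\to X$ is quasi-compact by Proposition \ref{prop:root stack}(2), and $X$ is quasi-compact by hypothesis, whence so is $\fX^{\bfr}$.

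For smoothness, the question is Zariski local on $X$, so by Proposition \ref{prop:root stack D}(1) I may assume that $X=\Spec A$ is affine and that each $\scrO_X(D_i)$ is trivial; in that case $\fX^{\bfr}\simeq[Z/\mu_{\bfr}]$ with $Z=\Spec A[\mathbf{t}]/(\mathbf{t}^{\bfr}-\mathbf{s})$, where $\mathbf{s}=(s_i)_{i\in I}$ is a family of local equations for the $D_i$. If $k$ is perfect, Proposition \ref{prop:sm root stack} applies directly. In general, I would imitate the proof of that proposition, replacing only the verification that $Z$ itself is smooth over $k$: since $X$ is smooth over $k$, each $D_i$ is smooth over $k$, and $D$ is simple normal crossings, the étale-local structure theorem for simple normal crossings divisors on smooth schemes furnishes étale coverings $U_\alpha\to X$ together with étale $k$-morphisms $\psi_\alpha:U_\alpha\to\A^n_k=\Spec k[y_1,\dots,y_n]$ carrying the local equations $s_i$ to the coordinate functions $y_i$. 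Writing $V\Def\Spec k[t_i\,(i\in I),\,y_j\,(j\notin I)]$ and mapping $V\to\A^n_k$ by $t_i\mapsto t_i^{r_i}$ and $y_j\mapsto y_j$, one obtains a natural identification $Z\times_X U_\alpha\simeq U_\alpha\times_{\A^n_k}V$; since $V\simeq\A^n_k$ is smooth over $k$ and $U_\alpha\to\A^n_k$ is étale, $Z\times_X U_\alpha$ is smooth over $k$, and hence so is $Z$.

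Once smoothness of $Z$ is known, smoothness of the quotient $[Z/\mu_{\bfr}]$ follows verbatim as in the last paragraph of the proof of Proposition \ref{prop:sm root stack}: pick a smooth atlas $W\to[Z/\mu_{\bfr}]$, form $W'\Def W\times_{[Z/\mu_{\bfr}]}Z$, which is smooth over the smooth $k$-scheme $Z$ and hence smooth over $k$; as $W'\to W$ is fppf and surjective, \cite[Lemma 05B5]{stack} transfers smoothness from $W'$ to $W$, and therefore $\fX^{\bfr}$ is smooth over $k$. The only substantive step is producing the smooth model of $Z$; the rest is formal once the étale-local coordinates are in place.
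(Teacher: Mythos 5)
Your proof is correct, and the quasi-compactness step coincides with the paper's. For smoothness, however, you take a genuinely different route. The paper simply cites Proposition~\ref{prop:sm root stack}, whose proof shows that the Kummer cover $Z=\Spec A[\mathbf{t}]/(\mathbf{t}^{\bfr}-\mathbf{s})$ is regular and then uses the perfectness of $k$ to promote regularity to smoothness. You instead exhibit an explicit smooth model: \'etale-locally on $X$ the simple normal crossings divisor becomes a union of coordinate hyperplanes in $\A^n_k$, so that $Z$ is \'etale over the affine space $V$ obtained by adjoining $r_i$-th roots of the coordinates, hence smooth, without ever passing through regularity. Your version therefore works over an arbitrary field $k$ rather than a perfect one, which is worth noting because Proposition~\ref{prop:brauer inj} --- and hence this lemma --- is stated over a field $k$ not assumed perfect, whereas Proposition~\ref{prop:sm root stack} formally hypothesizes a perfect base field; your argument closes that small discrepancy. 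One point to tighten (the paper's proof has the same implicit issue): on a Zariski patch $U_\alpha$ disjoint from some $D_i$, the function $s_i$ is a unit rather than a coordinate, and the corresponding factor of $Z\times_X U_\alpha$ need not be smooth when $p\mid r_i$ (e.g.\ $k[t]/(t^p-1)$ is non-reduced); one should shrink so that only the $D_i$ meeting $U_\alpha$ are rooted, discarding the others via Example~\ref{ex:root stack}(1) before invoking the \'etale-local model.
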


\begin{proof}
The smoothness follows from Proposition \ref{prop:sm root stack}. Recall that we have assumed that $X$ is quasi-compact. As the morphism $\fX^{\bfr}\lto X$ is of finite type~(cf.~\S\ref{subsec:root stack}), $\fX^{\bfr}$ is quasi-compact. 
\end{proof}

\begin{lem}\label{lem:gerbe regular noether}
With the same notation as in Proposition \ref{prop:brauer inj}, let $\calG\lto\fX^{\bfr}$ be a $\G_m$-gerbe. Then $\calG$ is a smooth quasi-compact algebraic stack over $k$.
\end{lem}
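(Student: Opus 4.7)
The plan is to verify three properties of $\calG$ in turn: algebraicity, smoothness over $k$, and quasi-compactness, exploiting the fact that the structure morphism $f:\calG\to\fX^{\bfr}$ is fppf locally isomorphic to the projection $V\times\cB\G_m\to V$.

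First, $\calG$ is already known to be an algebraic stack over $k$: this was recorded just before the statement, invoking \cite[Proposition 1.1]{hs09}. So the issue is geometric. The key observation is that a $\G_m$-gerbe is fppf locally trivial by definition, i.e.\ for any fppf atlas $V\to\fX^{\bfr}$ (which exists with $V$ a quasi-compact $k$-scheme by Lemma \ref{lem:regular noether rs}), after possibly refining $V$ by an fppf cover we have an isomorphism $\calG\times_{\fX^{\bfr}}V\simeq V\times_{k}\cB_{k}\G_m$ of fibered categories over $V$. Hence the morphism $f:\calG\to\fX^{\bfr}$ is fppf locally the base change of the structure morphism $\cB_k\G_m\to\Spec k$.

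Next I argue smoothness. The classifying stack $\cB_k\G_m$ is smooth over $\Spec k$ because $\G_m$ is smooth over $k$ (the canonical atlas $\Spec k\to\cB_k\G_m$ is a $\G_m$-torsor, which is smooth and surjective). By the local description above together with descent of smoothness under fppf morphisms, $f:\calG\to\fX^{\bfr}$ is smooth. Composing with the smooth structure morphism $\fX^{\bfr}\to\Spec k$ provided by Lemma \ref{lem:regular noether rs}, one concludes that $\calG$ is smooth over $k$.

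Finally, for quasi-compactness, I proceed analogously: $\cB_k\G_m$ is quasi-compact over $\Spec k$, since $\G_m$ is a quasi-compact $k$-group scheme (so $\Spec k\to\cB_k\G_m$ is a quasi-compact smooth surjection). By fppf descent of quasi-compactness for morphisms of algebraic stacks, $f:\calG\to\fX^{\bfr}$ is quasi-compact. Combined with the quasi-compactness of $\fX^{\bfr}$ established in Lemma \ref{lem:regular noether rs}, this yields that $\calG$ itself is quasi-compact. The main (very minor) subtlety throughout is simply verifying that the fppf local description of the $\G_m$-gerbe is good enough to descend smoothness and quasi-compactness; this is standard and the argument requires nothing beyond what was already used in the proof of Lemma \ref{lem:regular noether rs}.
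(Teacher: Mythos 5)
Your proposal is correct and follows the same route as the paper's own (much terser) proof: both rest on the local triviality of the $\G_m$-gerbe, the smoothness and quasi-compactness of $\cB_k\G_m$, and the corresponding properties of $\fX^{\bfr}$ from Lemma \ref{lem:regular noether rs}. You simply make explicit the fppf-descent of smoothness and quasi-compactness along the trivializing cover and the composition with $\fX^{\bfr}\to\Spec k$, which the paper leaves implicit.
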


\begin{proof}
The classifying stack $\cB \G_m$ is a smooth quasi-compact algebraic stack. As any $\G_m$-gerbe $\calG\lto\fX^{\bfr}$ is locally isomorphic to $\cB\G_m$, the gerbe $\calG$ must be smooth and quasi-compact. 
\end{proof}

\begin{proof}[Proof of Proposition \ref{prop:brauer inj}]
Let $\calG\lto\fX^{\bfr}$ be a $\G_m$-gerbe and $[\calG]\in H^2_{\liset}(\fX^{\bfr},\G_m)$ the corresponding class. Suppose that $[\calG_{\eta}]=0$ in $H_{\liset}^2(\eta,\G_m)$. We have to show that $[\calG]=0$. 
Since $[\calG_{\eta}]=0$, by Proposition \ref{prop:lieblich's lem}, there exists an invertible $\calG_{\eta}$-twisted sheaf $\mathcal{L}_{\eta}$. Since $j:{\eta}\lto\fX^{\bfr}$ is quasi-compact, so is $i:\calG_{\eta}\lto\calG$. Therefore, the sheaf $i_*\mathcal{L}_{\eta}$ is a quasi-coherent $\calG$-twisted sheaf~(cf.~\cite[Proposition (13.2.6)(i)]{LMB00}). Since $\calG$ is Noetherian~(Lemma \ref{lem:gerbe regular noether}), according to \cite[Proposition 3.1.1.9]{li08}, we can write $\mathcal{M}\Def i_*\mathcal{L}_{\eta}$ as the colimit of coherent $\calG$-twisted subsheaves of it as follows.
\begin{equation*}
\mathcal{M}=\varinjlim_{\lambda}\mathcal{M}_{\lambda}
\end{equation*}
However, since $\mathcal{M}_{\eta}$ is coherent, we have $\mathcal{M}_{\eta}=i^*\mathcal{M}=\varinjlim_{\lambda}i^*\mathcal{M}_{\lambda}=i^*\mathcal{M}_{\lambda}$ for some $\lambda$. By replacing $\mathcal{M}_{\lambda}$  with $\mathcal{L}\Def\mathcal{M}_{\lambda}^{\vee\vee}$, if necessary, we obtain a reflexible $\calG$-twisted sheaf $\mathcal{L}$ of rank one such that $i^*\mathcal{L}\simeq\mathcal{L}_{\eta}$. However, since $\calG$ is smooth over $k$~(cf.~Lemma \ref{lem:gerbe regular noether}), any reflexible sheaf of rank one must be an invertible sheaf. Therefore, again by using Proposition \ref{prop:lieblich's lem}, we can conclude that $[\calG]=0$. This completes the proof. 
\end{proof}

\subsection{Embedding problems over root stacks}

Let $k$ be an algebraically closed field of characteristic $p>0$ and $S=\Spec k$. Let $X$ be a projective smooth curve over $k$ of genus $g\ge 0$ and of $p$-rank $\gamma\ge 0$. Let $\emptyset\neq U\subsetneq X$ be a nonempty open subscheme with $\# X\setminus U=n\ge 1$. Let $X\setminus U=\{x_0,x_1,\cdots,x_{n-1}\}$. Fix an integer $m$ with $0\le m\le n-1$. We shall denote by $X_m$ the smooth affine curve $X\setminus\{x_0,x_1,\dots,x_m\}$ and consider $\bfD=(x_{i})_{i=m+1}^{n-1}$ as a family of reduced distinct Cartier divisors on $X_m$. For each family $\bfr=(r_{i})_{i=m+1}^{n-1}\in\prod_{i=m+1}^{n-1}\Z_{\ge 0}$ of integers, we denote by $\fX_m^{p^{\bfr}}=\sqrt[p^{\bfr}]{\bfD/X_m}$ the root stack associated with $X_m$ and the data $(\bfD,p^{\bfr})$. 

Then there exists an exact sequence
\begin{equation}\label{eq:Pic trunc}
0\lto\Pic(X_m)\lto \Pic(\fX_m^{p^{\bfr}})\lto\prod_{i={m+1}}^{n-1} H^0(x_{i},\Z/p^{r_i}\Z)\lto 0,
\end{equation}
where each group $H^0(x_i,\Z/p^{r_i}\Z)$ is nothing but the Picard group of the residual gerbe $\calG_{x_i}$ of $\fX^{p^{\bfr}}$ at the closed point $x_i$ (cf.~\cite{bo09}). From the definition of root stacks, for any two families $\bfr,\bfr'$ of positive integers with $\bfr \le \bfr'$, there exists a natural morphism of algebraic stacks
\begin{equation*}
\fX_m^{p^{\bfr'}}\lto\fX_m^{p^{\bfr}}.
\end{equation*} 
Then the exact sequence (\ref{eq:Pic trunc}) implies that there exists an exact sequence
\begin{equation}\label{eq:Pic p-typical}
0\lto\Pic(X_m)\lto\varinjlim_{\bfr}\Pic(\fX_m^{p^{\bfr}})\lto(\Q_p/\Z_p)^{\oplus n-m-1}\lto 0.
\end{equation}
Since $X_m$ is affine, there exists a surjective homomorphisms $\Pic^0(X)\twoheadrightarrow\Pic(X_m)$, hence $\Pic(X_m)$ is divisible. Thus,  the exact sequence (\ref{eq:Pic p-typical}) implies that the abelian group $\varinjlim_{\bfr}\Pic(\fX_m^{p^\bfr})$ is $p$-divisible.

\begin{lem}\label{lem:coho root stack}
Let $\bfr=(r_i)_{i=m+1}^{n-1}$ be a family of positive integers. Then we have the following.
\begin{enumerate}
\renewcommand{\labelenumi}{(\arabic{enumi})}
\item For any quasi-coherent sheaf $E$ on $\fX_m^{p^{\bfr}}$, we define the fppf sheaf $W(E)$ of abelian groups on $\fX_m^{p^{\bfr}}$ to be
 \begin{equation*}
 W(E)(Y)\Def\Gamma(Y,E\otimes\scrO_Y)
 \end{equation*}
 for any morphism $Y\lto\fX_m^{p^{\bfr}}$ from a scheme $Y$. Then $H_{\fppf}^q(\fX_m^{p^{\bfr}},W(E))=0$ for any $q>0$. 
\item $H^0_{\fppf}(\fX_m^{p^{\bfr}},\G_{m})=H^0(X_m,\scrO_{X_m})^{\times}$,
\item $H^1_{\fppf}(\fX_m^{p^{\bfr}},\G_m)={\rm Pic}(\fX_m^{p^{\bfr}})$.
\item $H^2_{\fppf}(\fX_m^{p^{\bfr}},\G_m)=0$. 
\end{enumerate}
\end{lem}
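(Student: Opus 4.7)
The plan is to leverage the tameness of the root stack together with the affineness of $X_m$, and for the last part to appeal to the already-established Brauer-type vanishing in Corollary \ref{cor:brauer=0}. Throughout, write $\pi: \fX_m^{p^{\bfr}} \to X_m$ for the coarse moduli map, which exists and is tame by Proposition \ref{prop:root stack}(3), and note that $X_m$ is affine since it is the complement in the projective smooth curve $X$ of the nonempty set $\{x_0,\ldots,x_m\}$.

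For part (1), I would first invoke the general comparison that fppf cohomology of the additive sheaf $W(E)$ attached to a quasi-coherent sheaf $E$ coincides with the lisse-\'etale (equivalently Zariski on a smooth atlas) cohomology of $E$ as a quasi-coherent sheaf; this comes from fppf descent for quasi-coherent sheaves together with Grothendieck's vanishing for quasi-coherent cohomology on affine fppf covers. Next, since $\fX_m^{p^{\bfr}}$ is tame, the pushforward $\pi_*$ is exact on quasi-coherent sheaves by Definition \ref{def:tame}, so $R^i\pi_* E = 0$ for $i>0$. The Leray spectral sequence then collapses to yield $H^q(\fX_m^{p^{\bfr}}, E) \simeq H^q(X_m, \pi_* E)$, and this vanishes for $q>0$ because $X_m$ is affine and $\pi_* E$ is quasi-coherent. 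For (2), the same comparison of fppf and lisse-\'etale cohomology of $\G_m$, combined with $\pi_* \scrO_{\fX_m^{p^{\bfr}}} \simeq \scrO_{X_m}$ from Proposition \ref{prop:root stack qcoh}(1), gives $H^0(\fX_m^{p^{\bfr}}, \G_m) = H^0(X_m, \G_m) = H^0(X_m, \scrO_{X_m})^{\times}$. Part (3) is the general tautology $H^1_{\fppf}(\calX, \G_m) \simeq \Pic(\calX)$ for any algebraic stack, following from fppf descent for invertible sheaves.

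For part (4), the strategy is to reduce to the lisse-\'etale vanishing of Corollary \ref{cor:brauer=0}. Since $\G_m$ is a smooth commutative group scheme, its fppf and \'etale cohomologies coincide on any algebraic stack, and on the smooth stack $\fX_m^{p^{\bfr}}$ (smooth by Proposition \ref{prop:sm root stack}) \'etale cohomology agrees with lisse-\'etale cohomology. The hypothesis of Corollary \ref{cor:brauer=0} holds verbatim in our setup: $X_m$ is a connected smooth curve over the algebraically closed field $k$, and $\bfD = (x_i)_{i=m+1}^{n-1}$ is a family of distinct closed points on the curve, which trivially forms a simple normal crossings divisor. Applying the corollary yields $H^2_{\liset}(\fX_m^{p^{\bfr}}, \G_m) = 0$, and transporting along the comparisons gives (4).

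The main technical obstacle I expect is making the cohomological comparisons precise: fppf versus lisse-\'etale (or Zariski via a smooth atlas) for quasi-coherent sheaves in (1)--(2), and fppf versus lisse-\'etale for the smooth group scheme $\G_m$ in (4), in the setting of algebraic stacks. These are standard but require assembling some machinery (fppf descent of quasi-coherent sheaves, smoothness of $\G_m$ implying fppf-\'etale comparison, and choice of a smooth atlas to identify \'etale and lisse-\'etale cohomology on a smooth stack) in a coherent way. Once these comparisons are granted, the arguments above reduce everything to either Serre's affine vanishing or to the previously established Corollary \ref{cor:brauer=0}.
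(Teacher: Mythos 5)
Your proposal is correct and follows essentially the same route as the paper: Brochard's comparison of fppf and lisse-\'etale cohomology (the paper cites Th\'eor\`eme B.2.5 of \cite{br09}) for (1), (2) and (4), vanishing of higher direct images for the coarse moduli map via cohomological affineness/tameness plus affineness of $X_m$ for (1), and Corollary \ref{cor:brauer=0} transported along the comparison for (4). The only cosmetic difference is that for (1) the paper first uses affineness of $X_m$ to reduce to showing $R^q\pi_*E=0$ and then cites Alper's remark on cohomologically affine morphisms, whereas you invoke tameness to kill $R^q\pi_*E$ first and then collapse the Leray spectral sequence; these are the same argument in a different order.
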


\begin{proof}
(1) First note that there exists a natural isomorphism
\begin{equation*}
H^q_{\fppf}(\fX_m^{p^{\bfr}},W(E))\xrightarrow{~\simeq~} H_{\liset}^q(\fX_m^{p^{\bfr}},W(E))=H^q(\fX_m^{p^{\bfr}},E)
\end{equation*}
for any $q\ge 0$~(cf.~\cite[Th\'eor\`eme B.2.5]{br09}). Next since $X_m$ is affine, we have
\begin{equation*}
H^q(\fX_m^{p^{\bfr}},E)\simeq H^0(X_m,R^q\pi_{p^{\bfr}*}E)
\end{equation*}
for any $q\ge 0$. 
On the other hand, since $\pi_{p^{\bfr}}:\fX_m^{p^{\bfr}}\lto X_m$ is cohomologically affine and $\fX_m^{p^{\bfr}}$ has affine diagonal~(cf.~Proposition \ref{prop:root stack}(1)(3)), $R^q\pi_{p^{\bfr}*}E=0$ for any $q>0$~(cf.~\cite[Remark 3.5]{alp13}). Therefore we can conclude that
\begin{equation*}
H_{\fppf}^q(\fX_m^{p^{\bfr}},W(E))=0
\end{equation*}
for any $q>0$. 

(2) This follows from
\begin{equation*}
\begin{aligned}
H^0_{\fppf}(\fX_m^{p^{\bfr}},\G_m)&\simeq H_{\liset}^0(\fX_m^{p^{\bfr}},\G_m)=\varprojlim_{(U,u)\in\Liset(\fX^{p^{\bfr}})}\G_m(U)\\
&\simeq \bigl(\varprojlim_{(U,u)\in\Liset(\fX_m^{p^{\bfr}})}\Gamma(U, \calO_U)\bigl)^{\times}
\simeq H^0(\fX_m^{p^{\bfr}},\calO_{\fX_m^{p^{\bfr}}})^{\times}\\
&\simeq H^0(X_m,\pi_{p^{\bfr}*}\calO_{\fX_m^{p^{\bfr}}})^{\times}=H^0(X_m,\calO_{X_m})^{\times}.
\end{aligned}
\end{equation*}
Here, recall that $\pi_{p^{\bfr}*}\calO_{\fX_m^{p^{\bfr}}}=\calO_{X_m}$. 

(3) See \cite{br09}.

(4) This follows from Corollary \ref{cor:brauer=0} together with \cite[Th\'eor\`eme B.2.5]{br09}.
\end{proof}

\begin{lem}\label{lem:Pic trunc torsion}
We have the following.
\begin{enumerate}
\renewcommand{\labelenumi}{(\arabic{enumi})}
\item For any integer $s\ge 1$ and any two families $\bfr,\bfr'$ of positive integers with $(s)_{i=m+1}^{n-1}\le\bfr\le\bfr'$, we have ${\rm Pic}(\fX_m^{p^{\bfr}})[p^s]={\rm Pic}(\fX_m^{p^{\bfr'}})[p^s]$.
\item For any family $\bfr$ of positive integers, we have ${\rm Pic}(\fX_m^{p^{\bfr}})/p{\rm Pic}(\fX_m^{p^{\bfr}})\simeq (\Z/p\Z)^{\oplus n-m-1}$.
\end{enumerate}
\end{lem}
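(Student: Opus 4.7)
Both parts will follow from applying the snake lemma to the exact sequence~(\ref{eq:Pic trunc}), using the fact recalled just after (\ref{eq:Pic p-typical}) that $\Pic(X_m)$ is divisible.

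For part~(1), the plan is to compare the sequence~(\ref{eq:Pic trunc}) for $\bfr$ and $\bfr'$ via the commutative diagram with exact rows whose vertical maps are induced by pullback along the canonical morphism $\fX_m^{p^{\bfr'}}\to\fX_m^{p^{\bfr}}$; the leftmost vertical map is the identity on $\Pic(X_m)$. Since $\Pic(X_m)$ is in particular $p^s$-divisible, the snake lemma applied to multiplication by $p^s$ on each row yields a short exact sequence
\[
0\lto \Pic(X_m)[p^s]\lto \Pic(\fX_m^{p^{\bfr}})[p^s]\lto \prod_{i=m+1}^{n-1}(\Z/p^{r_i}\Z)[p^s]\lto 0,
\]
and analogously for $\bfr'$. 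To conclude via the five-lemma it suffices to check that the induced right-hand vertical map is an isomorphism. By functoriality of~(\ref{eq:Pic trunc}) together with the description of the residual gerbes in Proposition~\ref{prop:root stack D}(2), this map is a product of pullbacks $\Pic(\cB\mu_{p^{r_i}})\to\Pic(\cB\mu_{p^{r_i'}})$ associated to the canonical surjection $\mu_{p^{r_i'}}\twoheadrightarrow\mu_{p^{r_i}}$; identifying $\Pic(\cB\mu_{p^m})=\X(\mu_{p^m})=\Z/p^m\Z$, this map sends $1\in\Z/p^{r_i}\Z$ to $p^{r_i'-r_i}\in\Z/p^{r_i'}\Z$. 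Since $r_i,r_i'\ge s$, the generator $p^{r_i-s}$ of $(\Z/p^{r_i}\Z)[p^s]$ is sent to $p^{r_i'-s}$, a generator of $(\Z/p^{r_i'}\Z)[p^s]$, so the map is an isomorphism, as needed.

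For part~(2), applying the snake lemma to multiplication by $p$ on~(\ref{eq:Pic trunc}) and using $\Pic(X_m)/p\Pic(X_m)=0$ gives at once
\[
\Pic(\fX_m^{p^{\bfr}})/p\Pic(\fX_m^{p^{\bfr}})\xrightarrow{~\simeq~}\prod_{i=m+1}^{n-1}(\Z/p^{r_i}\Z)/p(\Z/p^{r_i}\Z)\simeq(\Z/p\Z)^{\oplus n-m-1}.
\]

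The only delicate point I would want to verify carefully is the identification of the vertical transition maps on the $\prod\Z/p^{r_i}\Z$-factor of~(\ref{eq:Pic trunc}); this rests on the functoriality of that exact sequence together with the explicit description of the residual gerbes recalled in Proposition~\ref{prop:root stack D}(2). Once that is in hand, everything else is a formal diagram chase.
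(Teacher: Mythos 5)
Your proof is correct and follows exactly the strategy the paper intends: the paper's proof is a one-sentence pointer to the exact sequence~(\ref{eq:Pic trunc}) and the divisibility of $\Pic(X_m)$, and you have simply filled in the snake-lemma and diagram-chase details, including the (correct) identification of the right-hand transition map as multiplication by $p^{r_i'-r_i}:\Z/p^{r_i}\Z\hookrightarrow\Z/p^{r_i'}\Z$, which is what makes the $p^s$-torsion argument work when $r_i,r_i'\ge s$.
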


\begin{proof}
This can be verified by using the exact sequence (\ref{eq:Pic trunc}) and the fact that $\Pic(X_m)$ is divisible.
\end{proof}

\begin{lem}\label{lem:H^1 mu_p}
For any integer $s>0$, we have 
$\varinjlim_{\bfr}H_{\fppf}^1(\fX_m^{p^{\bfr}},\mu_{p^s})\simeq(\Z/p^s\Z)^{\oplus \gamma + n-1}$. 
\end{lem}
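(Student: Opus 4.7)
The plan is to combine the fppf Kummer exact sequences on $\fX_m^{p^{\bfr}}$ and on $X_m$ with the known structure of the Picard groups to identify $\varinjlim_{\bfr} H^1_{\fppf}(\fX_m^{p^{\bfr}}, \mu_{p^s})$ explicitly, and finally to invoke a simple structural observation about extensions inside $p^s$-torsion groups.

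First, I would apply the fppf Kummer sequence $1 \to \mu_{p^s} \to \G_m \xrightarrow{p^s} \G_m \to 1$ to both $\fX_m^{p^{\bfr}}$ and $X_m$. Using Lemma~\ref{lem:coho root stack}(2)--(4) (which gives $H^0_{\fppf}(\fX_m^{p^{\bfr}}, \G_m) = \Gamma(X_m, \scrO_{X_m})^{\times}$ and $H^2_{\fppf}(\fX_m^{p^{\bfr}}, \G_m) = 0$) together with the vanishing of $\Br(X_m)$ for a smooth affine curve over an algebraically closed field, this yields the short exact sequence
\begin{equation*}
0 \to \Gamma(X_m, \scrO_{X_m})^{\times}/(\Gamma(X_m, \scrO_{X_m})^{\times})^{p^s} \to H^1_{\fppf}(X_m, \mu_{p^s}) \to \Pic(X_m)[p^s] \to 0
\end{equation*}
and the analogous one for $\fX_m^{p^{\bfr}}$; the pullback $\pi^{*}$ along $\pi:\fX_m^{p^{\bfr}} \to X_m$ induces the identity on the leftmost terms.

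Second, since $\Pic(X_m)$ is a quotient of $\Pic^0(X)$ (as $K = \langle[x_i] : 0 \le i \le m\rangle$ surjects onto the degree factor of $\Pic(X)$) and hence is divisible, the exact sequence (\ref{eq:Pic p-typical}) splits, giving $\varinjlim_{\bfr}\Pic(\fX_m^{p^{\bfr}})[p^s] \simeq \Pic(X_m)[p^s] \oplus (\Z/p^s\Z)^{\oplus n-m-1}$. Passing to the colimit of the second Kummer sequence and applying the snake lemma to the comparison map with the first, I arrive at
\begin{equation*}
0 \to H^1_{\fppf}(X_m, \mu_{p^s}) \to \varinjlim_{\bfr} H^1_{\fppf}(\fX_m^{p^{\bfr}}, \mu_{p^s}) \to (\Z/p^s\Z)^{\oplus n-m-1} \to 0.
\end{equation*}
By Corollary~\ref{cor:max loc lr} applied to $X_m$ (which has $m+1$ punctures inside $X$), we have $\pi^{\loc}(X_m)^{\lr} \simeq \Diag((\Q_p/\Z_p)^{\oplus \gamma+m})$, and hence by Cartier duality $H^1_{\fppf}(X_m, \mu_{p^s}) = \Hom_k(\pi^{\loc}(X_m), \mu_{p^s}) \simeq (\Z/p^s\Z)^{\oplus \gamma+m}$.

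The final and only subtle step is to promote this extension to an isomorphism with $(\Z/p^s\Z)^{\oplus \gamma + n-1}$. The observation is that $\varinjlim_{\bfr} H^1_{\fppf}(\fX_m^{p^{\bfr}}, \mu_{p^s})$ is annihilated by $p^s$ (being a colimit of $\mu_{p^s}$-cohomology groups), so it is a module over $\Z/p^s\Z$ which is an extension of a free $\Z/p^s\Z$-module of rank $n-m-1$ by a free $\Z/p^s\Z$-module of rank $\gamma + m$. Any such extension is split: lifting a basis of the quotient to the middle gives elements which, being killed by $p^s$ and projecting to elements of order exactly $p^s$, themselves have order exactly $p^s$ and thus span a free complement to the submodule. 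I expect this structural argument to be the only point requiring explicit verification; the remaining parts rely only on the previously established Lemmas \ref{lem:coho root stack} and \ref{lem:Pic trunc torsion} and Corollary \ref{cor:max loc lr}.
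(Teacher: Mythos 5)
Your proposal is correct, and it reaches the result by a route that differs from the paper's in two respects. The paper first reduces to the case $m=0$: it uses Proposition \ref{prop:Nori ger sm root stack} to show that the restriction maps $\varinjlim_{\bfr'}H^1_{\fppf}(\fX_{m-1}^{p^{\bfr'}},\mu_{p^s})\to\varinjlim_{\bfr}H^1_{\fppf}(\fX_m^{p^{\bfr}},\mu_{p^s})$ are injective, notes that the tower terminates at $H^1_{\fppf}(U,\mu_{p^s})\simeq(\Z/p^s\Z)^{\gamma+n-1}$, and then observes that showing the $m=0$ group already has this size forces all the intermediate injections to be isomorphisms; for $m=0$ it then runs the Kummer sequence argument using $H^1_{\fppf}(X_0,\mu_{p^s})\simeq H^1_{\fppf}(X,\mu_{p^s})\simeq(\Z/p^s\Z)^{\gamma}$. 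You instead treat each $m$ directly, computing $H^1_{\fppf}(X_m,\mu_{p^s})\simeq(\Z/p^s\Z)^{\gamma+m}$ from Corollary \ref{cor:max loc lr} by Cartier duality, and obtain the short exact sequence by comparing the two Kummer sequences via the snake lemma. Your route therefore avoids Proposition \ref{prop:Nori ger sm root stack} (and its cardinality bootstrap) at the cost of invoking Corollary \ref{cor:max loc lr}; the paper's route is slightly more economical because it only needs the Picard computation on $X_0$, where there are no nonconstant units to track. The splitting step at the end is present in both arguments (the paper leaves it implicit in ``the assertion follows from (\ref{eq:Pic trunc}) and the $p$-divisibility of $\Pic(X_0)$''); your explicit justification is fine, and could be shortened to the observation that the quotient $(\Z/p^s\Z)^{\oplus n-m-1}$ is a projective $\Z/p^s\Z$-module.

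One small point worth tightening in your writeup: when you split the sequence (\ref{eq:Pic p-typical}), the relevant fact is that $\Pic(X_m)$ is a \emph{divisible} abelian group, hence injective as a $\Z$-module, so the subobject splits off; your phrase ``surjects onto the degree factor'' is a slightly garbled statement of why $\Pic(X_m)$ is a quotient of $\Pic^0(X)$, but the conclusion (divisibility) is what is actually used and is correct.
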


\begin{proof}
By Proposition \ref{prop:Nori ger sm root stack}, the restriction maps
\begin{equation*}
\varinjlim_{\bfr'}H^1_{\fppf}(\fX_{m-1}^{p^{\bfr'}},\mu_{p^s})\lto\varinjlim_{\bfr}H^1_{\fppf}(\fX_m^{p^{\bfr}},\mu_{p^s})
\end{equation*}
are injective for all $0< m\le n-1$. Since $\varinjlim_{\bfr}H^1_{\fppf}(\fX_{n-1}^{p^{\bfr}},\mu_{p^s})=H^1_{\fppf}(U,\mu_{p^s})\simeq(\Z/p^s\Z)^{\oplus \gamma+n-1}$~(cf.~\cite[Proposition 3.2]{ot17}), this reduces us to the case when $m=0$. In this case, as $H^0(X_0,\scrO_{X_0})=k^{\times}$, by Lemmas \ref{lem:coho root stack}(2)(3) and \ref{lem:Pic trunc torsion}(1), we can find that
\begin{equation*}
\varinjlim_{\bfr}H^1_{\fppf}(\fX_0^{p^{\bfr}},\mu_{p^s})\simeq\varinjlim_{\bfr}H^1_{\fppf}(\fX_0^{p^{\bfr}},\G_m)[p^s]
\simeq\varinjlim_{\bfr}\Pic(\fX_0^{p^{\bfr}})[p^s]\simeq\Pic(\fX_0^{p^{\bfr}})[p^s]
\end{equation*}
for an arbitrary family $\bfr\ge (s)_{i=m+1}^{n-1}$. Fix such an index $\bfr$. 
Since $H_{\fppf}^1(X_0,\mu_{p^s})\simeq H_{\fppf}^1(X,\mu_{p^s})\simeq (\Z/p^s\Z)^{\oplus \gamma}$~(cf.~\cite[Proposition 3.2]{ot17}), the assertion follows from the exact sequence (\ref{eq:Pic trunc}) together with the fact that $\Pic(X_0)$ is a  $p$-divisible group. 
\end{proof}

\begin{lem}\label{lem:vanish H^2}
We have the following.
\begin{enumerate}
\renewcommand{\labelenumi}{(\arabic{enumi})}
\item Let $N$ be a finite flat abelian $\fX_m^{p^{\bfr}}$-group scheme. If $N$ and its Cartier dual $N^D$ are of height one, then we have $H_{\fppf}^2(\fX_m^{p^{\bfr}},N)=0$.
\item If $G$ is a finite local diagonalizable $k$-group scheme, we have $\varinjlim_{\bfr}H_{\fppf}^2(\fX_m^{p^{\bfr}},G)=0$. 
\end{enumerate}
\end{lem}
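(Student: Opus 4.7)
The plan is to treat (1) and (2) separately, in each case exhibiting the relevant group as a kernel in a short exact sequence whose outer terms have vanishing higher fppf cohomology by Lemma \ref{lem:coho root stack}.

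For (1), I would first observe that the two hypotheses translate, via Cartier duality ($F_{N^{D}} = V_{N}^{D}$), into the simultaneous vanishing $F_{N} = 0$ and $V_{N} = 0$. Setting $\mathcal{E} = \omega_{N/\fX_m^{p^{\bfr}}}$, the conormal sheaf along the identity section (which is a finite locally free $\scrO_{\fX_m^{p^{\bfr}}}$-module since $N$ is finite flat), the plan is to establish a short exact sequence of fppf abelian sheaves
\begin{equation*}
0 \longrightarrow N \longrightarrow V(\mathcal{E}) \xrightarrow{~F~} V(\mathcal{E})^{(p)} \longrightarrow 0,
\end{equation*}
where $V(\mathcal{E}) = \mathbf{Spec}\,\mathrm{Sym}(\mathcal{E})$ is the associated vector-bundle group and $F$ is the relative Frobenius (whose fppf-surjectivity is standard). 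The identification of $N$ with this kernel rests on the fact that $V_{N} = 0$ is equivalent to the vanishing of the $p$-operation on the restricted $p$-Lie algebra $\mathrm{Lie}(N)$, which, combined with the classification of height-one commutative group schemes by their restricted $p$-Lie algebras, pins down $N$ as the Frobenius kernel of the vector-bundle group with Lie algebra $\mathrm{Lie}(N)$. Both outer terms of the sequence are of the form $W(\mathcal{F})$ in the notation of Lemma \ref{lem:coho root stack}(1), so $H^{q}_{\fppf} = 0$ for $q \geq 1$, and the long exact cohomology sequence immediately yields $H^{2}_{\fppf}(\fX_m^{p^{\bfr}}, N) = 0$.

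For (2), using the classification of finite local diagonalizable $k$-group schemes I would write $G \simeq \prod_{i} \mu_{p^{s_{i}}}$, reducing to showing $\varinjlim_{\bfr} H^{2}_{\fppf}(\fX_m^{p^{\bfr}}, \mu_{p^{s}}) = 0$ for each $s \ge 1$. The Kummer sequence together with Lemma \ref{lem:coho root stack}(3)(4) identifies
\begin{equation*}
H^{2}_{\fppf}(\fX_m^{p^{\bfr}}, \mu_{p^{s}}) \simeq \Pic(\fX_m^{p^{\bfr}})/p^{s}\Pic(\fX_m^{p^{\bfr}}).
\end{equation*}
Passing to the colimit in $\bfr$, exactness of filtered colimits rewrites the target as $\bigl(\varinjlim_{\bfr}\Pic(\fX_m^{p^{\bfr}})\bigr)/p^{s}\bigl(\varinjlim_{\bfr}\Pic(\fX_m^{p^{\bfr}})\bigr)$, and the $p$-divisibility of $\varinjlim_{\bfr}\Pic(\fX_m^{p^{\bfr}})$, already deduced from (\ref{eq:Pic p-typical}) together with the divisibility of $\Pic(X_m)$, forces this quotient to vanish.

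The main technical obstacle lies in (1), namely justifying that $N$ really is the Frobenius kernel of $V(\mathcal{E})$ in the relative setting over the (non-smooth-in-general) base $\fX_m^{p^{\bfr}}$. I would either appeal to the relative version of the Demazure--Gabriel classification of height-one commutative group schemes from SGA 3, or, if a more self-contained argument is preferred, build the closed immersion $N \hookrightarrow V(\mathcal{E})$ from the tautological surjection $\mathrm{Sym}(\mathcal{E}) \twoheadrightarrow \scrO_N$ (the existence of which follows from $F_N = 0$, which forces the augmentation ideal to be $p$-th-power-zero), then use $V_N = 0$ to verify that this is a group homomorphism, and finally compare ranks to conclude that the image coincides with the Frobenius kernel.
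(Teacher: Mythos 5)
Your proof is correct and takes essentially the same route as the paper's. For part~(1) the paper delegates to Proposition~\ref{prop:inf alpha p}, whose proof likewise resolves $N$ as a Frobenius kernel between vector groups and then applies the vanishing of Lemma~\ref{lem:coho root stack}(1); the only difference is that the paper builds the resolving sequence on $\omega_{N^D}$ citing Milne's Arithmetic Duality Theorems (Ch.~III, Thm.~5.1), whereas you build the analogous sequence on $\omega_N=\omega_{N/\fX_m^{p^{\bfr}}}$ via the Demazure--Gabriel classification of height-one group schemes, and either choice yields outer terms of $W$-type so the cohomological conclusion is identical. (A small remark: by Proposition~\ref{prop:sm root stack} the root stacks $\fX_m^{p^{\bfr}}$ are in fact smooth over $k$ here, so your caution about possible non-smoothness of the base is unnecessary, though harmless.) For part~(2) you reduce to $\mu_{p^s}$ via the product decomposition and vanish the colimit by the $p$-divisibility of $\varinjlim_{\bfr}\Pic(\fX_m^{p^{\bfr}})$, whereas the paper reduces to $\mu_p$ via successive extensions and proves the marginally finer statement that each individual transition map $H^2_{\fppf}(\fX_m^{p^{\bfr}},\mu_p)\to H^2_{\fppf}(\fX_m^{p^{\bfr+\unit}},\mu_p)$ is already zero; both arguments hinge on the same Kummer-sequence identification $H^2_{\fppf}(\fX_m^{p^{\bfr}},\mu_{p^s})\simeq\Pic(\fX_m^{p^{\bfr}})/p^s\Pic(\fX_m^{p^{\bfr}})$ furnished by Lemma~\ref{lem:coho root stack}(3),(4).
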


\begin{proof}
(1) See the last assertion of Proposition \ref{prop:inf alpha p}.

(2) As $G$ is a successive extension of $\mu_p$, without loss of generality, we may assume that $G=\mu_p$. We will prove that the transition map
\begin{equation*}
H^2_{\fppf}(\fX_m^{p^{\bfr}},\mu_p)\lto H^2_{\fppf}(\fX_m^{p^{\bfr+\unit}},\mu_p)
\end{equation*}
is the zero map for all $\bfr$. 
By Lemma \ref{lem:coho root stack}(4), for any family $\bfr$ of positive integers, there exists an isomorphism of abelian groups 
${\rm Pic}(\fX_m^{p^{\bfr}})/p{\rm Pic}(\fX_m^{p^{\bfr}})\xrightarrow{~\simeq~} H_{\fppf}^2(\fX_m^{p^{\bfr}},\mu_p)$. 
Therefore it suffices to show that the composition
\begin{equation*}
\begin{xy}
\xymatrix{
{\rm Pic}(\fX_m^{p^{\bfr}})\ar[r]&{\rm Pic}(\fX_m^{p^{\bfr+\unit}})\ar@{->>}[r]&{\rm Pic}(\fX_m^{p^{\bfr+\unit}})/p{\rm Pic}(\fX_m^{p^{\bfr+\unit}
})
}
\end{xy}
\end{equation*}
is the zero map. To prove this, it suffices to notice that there exists a commutative diagram
{\small\begin{equation*}
\begin{xy}
\xymatrix{
&&&0\ar[d]&\\
0\ar[r]&{\rm Pic}(X_m)\ar[r]\ar@{=}[d]&{\rm Pic}(\fX_m^{p^{\bfr}})\ar[r]\ar[d]&\prod_{i=m+1}^{n-1}\Z/p^{r_i}\Z\ar[r]\ar@{^{(}->}[d]&0\\
0\ar[r]&{\rm Pic}(X_m)\ar[r]&{\rm Pic}(\fX_m^{p^{\bfr+\unit}})\ar[r]\ar@{->>}[d]&\prod_{i=m+1}^{n-1}\Z/p^{r_i+1}\Z\ar[r]\ar[d]&0\\
&&{\rm Pic}(\fX_m^{p^{\bfr+\unit}})/p{\rm Pic}(\fX_m^{p^{\bfr+\unit}})\ar[r]^{~~~~~~\simeq}&\prod_{i=m+1}^{n-1}\Z/p\Z\ar[d]&\\
&&&0&
}
\end{xy}
\end{equation*}}
where the right vertical sequence is exact, and the isomorphism $\Pic(\fX_m^{p^{\bfr+\unit}})/p\Pic(\fX_m^{p^{\bfr+\unit}})\xrightarrow{~\simeq~}\prod_{i=m+1}^{n-1}\Z/p\Z$ is Lemma \ref{lem:Pic trunc torsion}(2). This completes the proof.  
\end{proof}

Now we prove the main theorem. As a notation, we define $\pi^{\loc}(\fX_m^{\mathbf{p}^{\infty}})$ to be the projective limit of the pro-system $\{\pi^{\loc}(\fX_m^{p^{\bfr}})\}_{\bfr}$, i.e.\ 
\begin{equation}\label{eq:pi loc inf root stack}
\pi^{\loc}(\fX_m^{\mathbf{p}^{\infty}})~\Def~\varprojlim_{\bfr}\pi^{\loc}(\fX_m^{p^{\bfr}}). 
\end{equation}

\begin{thm}\label{thm:PISAC solv}
Suppose given an exact sequence of finite local $k$-group schemes
\begin{equation}\label{eq:thm PISAC solv}
1\lto G'\lto G\xrightarrow{~\pi~} G''\lto 1.
\end{equation}
Suppose that the following conditions are satisfied.
\begin{enumerate}
\renewcommand{\labelenumi}{(\roman{enumi})}
\item There exists an injective homomorphism $\X(G)\hookrightarrow(\Q_p/\Z_p)^{\oplus\gamma+n-1}$.
\item There exists a surjective $k$-homomorphism $\overline{\phi}:\pi^{\loc}(\fX_m^{\mathbf{p}^{\infty}})\twoheadrightarrow G''$. 
\item $G'$ is solvable.
\end{enumerate}
Then there exist an $(n-m-1)$-tuple $\bfr$ of non-negative integers and a Nori-reduced $G$-torsor $Y\lto\fX_m^{p^{\bfr}}$ which is representable by a $k$-scheme. 
\end{thm}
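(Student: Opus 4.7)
I plan to prove the theorem by induction on the order $|G'|$, using three key ingredients: a reduction via the derived series to the case where $G'$ is abelian (possible because $G'$ is solvable); the unobstructedness of the embedding problem in the limit of root stacks, supplied by the vanishing of $\varinjlim_{\bfr}H^2_{\fppf}(\fX_m^{p^{\bfr}},G')$ (Lemma \ref{lem:vanish H^2}); and a twisting argument using the size of $\varinjlim_{\bfr}H^1_{\fppf}(\fX_m^{p^{\bfr}},G')$ (Lemma \ref{lem:H^1 mu_p}) together with the character condition (i) to make the lift surjective. The representability assertion then follows from Lemma \ref{lem:tors rs}.

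The base case $G'=1$ is immediate: here $G=G''$, condition (ii) yields a Nori-reduced surjection $\pi^{\loc}(\fX_m^{\mathbf{p}^\infty})\twoheadrightarrow G$ corresponding (via Remark \ref{rem:loc nori red}) to a Nori-reduced $G$-torsor $\calY\to\fX_m^{p^{\bfr}}$ for some $\bfr$, and Lemma \ref{lem:tors rs} then produces a representable one. For the inductive step, I first reduce to the abelian case: if $G'$ is non-abelian, $D(G')$ is characteristic in $G'$ and hence normal in $G$, and I apply the theorem inductively to
\begin{equation*}
1\to G'/D(G')\to G/D(G')\to G''\to 1,
\end{equation*}
whose hypotheses are inherited from those of the original extension (in particular (i) descends to $G/D(G')$ because $\X(G/D(G'))\hookrightarrow\X(G)$). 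This produces a surjection $\pi^{\loc}(\fX_m^{\mathbf{p}^\infty})\twoheadrightarrow G/D(G')$ which then serves as condition (ii) for the sequence $1\to D(G')\to G\to G/D(G')\to 1$, to which induction applies since $D(G')$ is solvable of strictly smaller order. If $G'$ is already abelian but $|G'|>p$, I choose a proper nontrivial $G$-stable closed subgroup scheme $N\subsetneq G'$ (e.g.\ from the canonical decomposition of $G'$ into multiplicative and unipotent parts, or via the Frobenius kernel filtration) and iterate the same two-step splitting argument.

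This reduces to the critical case $G'\in\{\mu_p,\alpha_p\}$. The obstruction to lifting $\overline{\phi}:\pi^{\loc}(\fX_m^{\mathbf{p}^\infty})\twoheadrightarrow G''$ to a (possibly non-surjective) homomorphism $\phi:\pi^{\loc}(\fX_m^{\mathbf{p}^\infty})\to G$ lies in $\varinjlim_{\bfr}H^2_{\fppf}(\fX_m^{p^{\bfr}},G')$, which vanishes by Lemma \ref{lem:vanish H^2}; hence after suitably enlarging $\bfr$ a lift $\phi$ exists. The essential difficulty, and the main obstacle of the proof, is arranging $\phi$ to be surjective: if the image of $\phi$ is a proper subgroup $H\subsetneq G$ surjecting onto $G''$, then $H\cap G'=1$, so $H\simeq G''$ splits the sequence through $\phi$, and I must twist $\phi$ by a suitable class in $H^1_{\fppf}(\fX_m^{p^{\bfr}},G')$. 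That a twist forcing surjectivity exists is ensured by the character condition (i): by Corollary \ref{cor:max loc lr} the maximal linearly reductive quotient $\pi^{\loc}(U)^{\lr}$ is $\Diag((\Q_p/\Z_p)^{\oplus\gamma+n-1})$, and (i) together with the surjectivity $\pi^{\loc}(\fX_m^{\mathbf{p}^\infty})^{\lr}\twoheadrightarrow G^{\lr}$ that it entails, yields enough $H^1$-twists (whose size in the limit is determined by Lemma \ref{lem:H^1 mu_p}) to render $\phi$ surjective. Once a Nori-reduced lift $\phi$ is in hand, Lemma \ref{lem:tors rs} converts the corresponding $G$-torsor into a representable one over a suitable $\fX_m^{p^{\bfr'}}$, completing the induction.
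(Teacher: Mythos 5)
The student's high-level architecture is sound: induction on $|G'|$, reduction via the derived series to the abelian case, unobstructedness from $\varinjlim H^2$ vanishing, and twisting to force surjectivity, with representability extracted at the end via Lemma \ref{lem:tors rs}. This mirrors the paper. However, there is a genuine gap in the critical case.

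Your claimed reduction ``to the critical case $G'\in\{\mu_p,\alpha_p\}$'' does not go through. Once $G'$ is abelian, the local--diagonalizable $\times$ local--unipotent decomposition is $G$-stable, and for the diagonalizable part the conjugation action of the local group $G''$ factors through the \'etale $\uAut(G')$ and so is trivial, which lets you pick an arbitrary subgroup and reduce to $\mu_p$. For the unipotent part the Frobenius and Verschiebung filtrations are indeed characteristic, but after exhausting them you land on $G'\simeq\alpha_p^{\oplus t}$ with both $G'$ and $G'^D$ of height one, and \emph{this group need not admit any nontrivial proper $G$-stable closed subgroup scheme}: the conjugation action $G''\to\uAut(\alpha_p^t)$ can be irreducible with $t>1$, because $\uAut(\alpha_p^t)$ is not \'etale and admits nontrivial homomorphisms from local groups. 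The paper does not reduce below $\alpha_p^t$ with irreducible action; instead it counts dimensions directly in that case.

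Even in the genuine rank-one case your surjectivity argument cites the wrong lemma for $\alpha_p$. Lemma \ref{lem:H^1 mu_p} computes $\varinjlim_{\bfr}H^1_{\fppf}(\fX_m^{p^{\bfr}},\mu_{p^s})$ and supplies exactly the character-count comparison $\Dim_{\F_p}\Hom_k(G'',\mu_p)<\Dim_{\F_p}\Hom_k(G,\mu_p)\le\gamma+n-1$ that works in the diagonalizable case (where the action is forced to be trivial and $G=\mu_p\times G''$). For the unipotent case the relevant input is Proposition \ref{prop:inf alpha p}: $H^1_{\fppf}(\fX_m^{p^{\bfr}},N)$ is \emph{infinite}-dimensional over $k$, where $N=P''\wedge^{G''}G'$ is the twisted form of $G'$ (not $G'$ itself, as you wrote). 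One must then identify the lifts that are not surjective with the image of the finite-dimensional $k$-space $Z(G'',G')$ of crossed homomorphisms, and observe that $P''\wedge\Hom_{G''}(G'',G)\subsetneq H^1_{\fppf}(\fX_m^{p^{\bfr}},N)$ by a dimension count. Your proposal does not make this identification; the phrase ``if the image of $\phi$ is a proper subgroup $H\subsetneq G$ surjecting onto $G''$, then $H\cap G'=1$'' already assumes $G'$ has no nontrivial proper subgroups, which is exactly the false reduction above. Without the crossed-homomorphism comparison the proof of surjectivity in the $\alpha_p^t$ case is missing.
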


To prove the theorem, we need the following lemma.

\begin{lem}\label{lem:PISAC solv}
Suppose given an exact sequence of finite local $k$-group schemes,
\begin{equation}\label{eq:lem PISAC solv}
1\lto G'\lto G\lto G''\lto 1
\end{equation}
together with a surjective homomorphism $\overline{\phi}:\pi^{\loc}(\fX_m^{\mathbf{p}^{\infty}})\twoheadrightarrow G''$. If $G'$ is abelian, then $\overline{\phi}$ can be lifted to a homomorphism $\pi^{\loc}(\fX_m^{\mathbf{p}^{\infty}})\lto G$. 
\end{lem}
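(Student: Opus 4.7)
The plan is to translate the statement into an obstruction-theoretic problem and then kill the obstruction by a d\'evissage which separates the multiplicative and local-local parts of $G'$. Since $G''$ is finite, $\overline{\phi}$ factors as $\pi^{\loc}(\fX_m^{\mathbf{p}^{\infty}})\twoheadrightarrow\pi^{\loc}(\fX_m^{p^{\bfr_0}})\twoheadrightarrow G''$ for some $\bfr_0$, corresponding to a Nori-reduced $G''$-torsor $P\to\fX_m^{p^{\bfr_0}}$; and because $G$ is finite, a lift of $\overline{\phi}$ to $G$ is exactly the data of a $G$-torsor $Q\to\fX_m^{p^{\bfr}}$ (for some $\bfr\ge\bfr_0$) whose pushout along $G\to G''$ is the pullback of $P$. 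Since $G'$ is abelian, the stack of such lifts is an abelian gerbe banded by the twisted form $G'_P\Def P\wedge^{G''}G'$, and a lift exists precisely when the class of this gerbe in $H^2_{\fppf}(\fX_m^{p^{\bfr}},G'_P)$ is trivial. It therefore suffices to produce $\bfr\ge\bfr_0$ for which that class vanishes.

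To do so I would exploit the canonical Cartier decomposition $G'\simeq G'_{\mu}\times G'_{u}$ over the perfect field $k$, where $G'_{\mu}$ is of multiplicative type (a product of $\mu_{p^{n_i}}$'s) and $G'_{u}$ is local-local (both $F$ and $V$ nilpotent). Both summands are characteristic in $G'$ and hence normal in $G$, and the decomposition is preserved by the conjugation action of $G''$; so the lifting problem splits into the two successive embedding problems
\[
1\to G'_{\mu}\to G/G'_{u}\to G''\to 1\quad\text{and}\quad 1\to G'_{u}\to G\to G/G'_{u}\to 1,
\]
which I handle in turn.

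For the first, the conjugation of $G''$ on $G'_{\mu}$ is a homomorphism into $\uAut_{k\text{-gr}}(G'_{\mu})$, which is an \'etale $k$-group scheme; as $G''$ is local this map is trivial, the twist is $G'_{\mu}$ itself, and Lemma~\ref{lem:vanish H^2}(2) (reduced to the case $\mu_p$ by the $F$-filtration of $G'_{\mu}$ and the associated long exact sequence) yields $\varinjlim_{\bfr}H^2_{\fppf}(\fX_m^{p^{\bfr}},G'_{\mu})=0$. Hence after enlarging $\bfr$ the obstruction dies, producing a lift $\phi':\pi^{\loc}(\fX_m^{\mathbf{p}^{\infty}})\to G/G'_{u}$.

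For the second, let $P'$ be the $(G/G'_{u})$-torsor classified by $\phi'$ and set $H\Def(G'_{u})_{P'}$, a finite flat abelian group scheme over $\fX_m^{p^{\bfr}}$ which is fppf-locally isomorphic to $G'_{u}$; so $F$ and $V$ remain nilpotent on $H$. I would filter $H$ by its characteristic subgroup $K\Def\Ker(F)\cap\Ker(V)$, which is nonzero (as $V|_{\Ker F}$ is a nilpotent endomorphism of a nonzero group scheme) and on which $F=V=0$ identically; since these conditions are fppf-local, both $K$ and its Cartier dual are of height one, and Lemma~\ref{lem:vanish H^2}(1) gives $H^2_{\fppf}(\fX_m^{p^{\bfr}},K)=0$. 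Induction on $|G'_{u}|$ using the long exact sequence for $1\to K\to H\to H/K\to 1$ then yields $H^2_{\fppf}(\fX_m^{p^{\bfr}},H)=0$, killing the second obstruction and producing the desired lift. The hard part will be justifying this d\'evissage step rigorously: I must verify that $\Ker F$ and $\Ker V$ really define finite flat characteristic subgroup sheaves of the twisted form $H$, so that the short exact sequence $1\to K\to H\to H/K\to 1$ is a genuine sequence of finite flat abelian group schemes over the stack, and that both subquotients continue to satisfy hypotheses under which Lemma~\ref{lem:vanish H^2}(1) can be reapplied along the induction.
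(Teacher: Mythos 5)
Your plan is essentially the same as the paper's: both decompose $G'=G'_\mu\times G'_u$ into its multiplicative (= diagonalizable, since $k$ is algebraically closed) and local-local (= unipotent) parts — a decomposition preserved by all automorphisms and hence giving normal subgroups of $G$ — and both treat the two successive embedding problems in the same way. For the multiplicative part, both of you observe that $\uAut(G'_\mu)$ is \'etale while $G''$ is local, so the conjugation action is trivial, the extension is central, and Giraud's theory reduces the obstruction to $\varinjlim_{\bfr}H^2_{\fppf}(\fX_m^{p^{\bfr}},G'_\mu)$, killed by Lemma~\ref{lem:vanish H^2}(2). For the local-local part, both of you twist to an $N$-gerbe with $N=P\wedge^{G''}G'_u$ and invoke Lemma~\ref{lem:vanish H^2}(1), which requires $N$ and $N^D$ of height one. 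The only real difference is the order of the d\'evissage: the paper first filters $G'_u$ over $k$ by (iterated Frobenius kernels of $G'_u$ and of $G'^D_u$) to reduce to $G'_u\simeq\alpha_p^t$ \emph{before} twisting, whereas you twist first and then try to filter the twisted form $H$ over the stack by $\Ker F\cap\Ker V$.

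Your order, as stated, has two genuine soft spots which you are right to flag. First, $\Ker V$ lives naturally inside $H^{(p)}$, not inside $H$; over a perfect field you can identify these, but $\fX_m^{p^{\bfr}}$ is not a perfect base, so $\Ker F\cap\Ker V$ cannot be formed directly as a subgroup sheaf of $H$. The correct fix is exactly the paper's: form the characteristic subgroup $K_0=\Ker F\cap\Ker V\subseteq G'_u$ over $k$ (where the identification is available), note $K_0$ is normal in $G$, and take $K$ to be its twist; then your induction runs as intended. Second, your reason that $K_0\neq 0$ — namely that ``$V|_{\Ker F}$ is a nilpotent endomorphism of a nonzero group scheme'' — needs a small repair: since $FV=VF=p$, the Verschiebung preserves $\Ker F$ only after first restricting to the $p$-torsion $G'_u[p]$. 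Restrict to $G'_u[p]$ first (nonzero since $G'_u$ is nonzero), then $F$ and $V$ commute into each other's kernels and the argument goes through. With those two repairs your proof collapses to the paper's.
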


\begin{proof}
As $G'$ is a finite abelian $k$-group scheme with $k$ perfect, it decomposes into a direct product $G'=G'_1\times G'_2$ of a finite local unipotent group scheme $G'_1$ and a finite local diagonalizable group scheme $G'_2$.  As the decomposition is preserved under any automorphism of $G'$, we get a commutative diagram of group schemes with exact rows and columns
\begin{equation*}
\begin{xy}
\xymatrix{
&1\ar[d]&1\ar[d]&&\\
&G'_1\ar@{=}[r]\ar[d]&G'_1\ar[d]&&\\
1\ar[r]&G'\ar[r]\ar[d]&G\ar[r]\ar[d]&G''\ar[r]\ar@{=}[d]&1\\
1\ar[r]&G_2'\ar[r]\ar[d]&G/G'_1\ar[r]\ar[d]&G''\ar[r]&1,\\
&1&1&&
}
\end{xy}
\end{equation*}
which allows us to assume that $G'=G'_1$, or that $G'=G'_2$. In the latter case, as the automorphism group scheme $\uAut(G')$ is \'etale, the extension (\ref{eq:lem PISAC solv}) is central, i.e.\ the conjugacy action of $G''$ on $G'$ is trivial, hence Giraud's theory of non-abelian cohomology~(cf.~\cite[p.\ 284, Remarque 4.2.10]{gi71}) can be applied, which implies that the assertion follows from the vanishing of the cohomology group $\varinjlim_{\bfr}H^2_{\fppf}(\fX_m^{p^{\bfr}},G')$~(cf.~Lemma \ref{lem:vanish H^2}(2)). 

Therefore, we have only to deal with the case when $G'$ is a finite local abelian unipotent group scheme. As $G'$ is obtained by successive extensions of finite local group schemes of height one, without loss of generality, we may assume that $G'$ is of height one. Similarly, we may further assume that the Cartier dual $G'^D$ is also of height one. Take an $m$-tuple $\bfr$ so that $\overline{\phi}$ factors through $\pi^{\loc}(\fX_m^{p^{\bfr}})$. The exact sequence (\ref{eq:lem PISAC solv}) gives rise to a gerbe $\cB G\lto\cB G''$. By pulling back the gerbe along the morphism 
\begin{equation*}
\fX_m^{p^{\bfr}}\lto\cB\pi^{\loc}(\fX_m^{p^{\bfr}})\lto\cB G'',
\end{equation*}
we get a gerbe $\calG\lto \fX_m^{p^{\bfr}}$. If $P''\lto \fX_m^{p^{\bfr}}$ denotes the $G''$-torsor associated with the above morphism, then the composition $P''\lto \fX_m^{p^{\bfr}}\lto\cB G''$ factors through the neutral section $\Spec k\lto \cB G''$ and the restriction $\calG\times_{\fX_m^{p^{\bfr}}} P''$ is isomorphic to the trivial gerbe $\cB G'\times_k P''$ over $P''$.
\begin{equation*}
\begin{xy}
\xymatrix{\ar@{}[rd]|{\square}
\cB G'\times_k P''\ar[r]\ar[d]&\calG\ar[d]\\
P''\ar[r]& \fX_m^{p^{\bfr}}.
}
\end{xy}
\end{equation*}
Therefore, it turns out that $\calG\lto \fX_m^{p^{\bfr}}$ is an $N$-gerbe over $\fX_m^{p^{\bfr}}$, where $N\Def P''\wedge^{G''}G'$ is a finite flat abelian $\fX_m^{p^{\bfr}}$-group scheme. Here the action of $G''$ on $G'$ is given by the conjugation, which is well-defined because $G'$ is abelian. As $G'$ and its Cartier dual $G'^D$ are of height one, the same is true for the twist $N$, i.e.\ both $N$ and $N^D$ are of height one as well. Therefore, by Lemma \ref{lem:vanish H^2}(1), we have $H^2_{\fppf}(\fX_m^{p^{\bfr}},N)=0$, hence $\calG\simeq\cB N$. This completes the proof. 
\end{proof}

\begin{proof}[Proof of Theorem \ref{thm:PISAC solv}]
Thanks to Lemma \ref{lem:tors rs}, we have only to solve the embedding problem
\begin{equation*}
\begin{xy}
\xymatrix{
&&&\pi^{\loc}(\fX_m^{\mathbf{p}^{\infty}})\ar@{->>}[d]^{\overline{\phi}}&\\
1\ar[r]&G'\ar[r]&G\ar[r]^{\pi}&G''\ar[r]&1.
}
\end{xy}
\end{equation*}
By virtue of Lemmas \ref{lem:H^1 mu_p} and \ref{lem:PISAC solv}, a similar argument in the proof of \cite[Proposition 3.4]{ot17} can work even after replacing $U$ by the pro-system of root stacks $\{\fX_m^{p^{\bfr}}\}_{\bfr}$. Recall that, thanks to Corollary \ref{cor:loc ger neutral}, for any finite local $k$-group scheme $G$, there exists a canonical bijective map
\begin{equation*}
\Hom(\pi^{\loc}(\fX_m^{p^{\bfr}}),G)\xrightarrow{~\simeq~}H^1_{\fppf}(\fX_m^{p^{\bfr}},G). 
\end{equation*}
of pointed sets. 

First we assume that $G'$ is abelian, in which case, thanks to Lemma \ref{lem:PISAC solv}, without loss of generality, we may assume that the extension (\ref{eq:thm PISAC solv}) is split~(cf.~\cite[Appendix, 1), a) and b)]{pop}). Again by considering the decomposition $G'=G'_1\times G'_2$ into the direct product of a unipotent group scheme and a diagonalizable group scheme, we may assume that $G'$ is either unipotent or diagonalizable. In the latter case, as $\uAut(G')$ is \'etale, we get  $G=G'\times G''$. Moreover, as $G'$ is a successive extension of $\mu_p$, we are reduced to the case when $G'=\mu_p$, in which case we have the   inequalities 
\begin{equation*}
\Dim_{\F_p}\Hom_k(G'',\mu_p)<\Dim_{\F_p}\Hom_k(G,\mu_p)\le \gamma+n-1,
\end{equation*}
where the second one is due to the condition (i). Therefore, by Lemma \ref{lem:H^1 mu_p}, one can see that the inclusion 
\begin{equation*}
H^1_{\fppf}(\fX_m^{p^{\bfr}},\mu_p)\supset\Ker(\Hom_k(\pi^{\loc}(\fX_m^{p^{\bfr}}),\mu_p)\to\Hom_k(K_{\bfr},\mu_p))\xleftarrow{~\simeq~}\Hom_k(G'',\mu_p),
\end{equation*}
is strict for sufficiently large $\bfr$, where we set $K_{\bfr}\Def\Ker(\pi^{\loc}(\fX_m^{p^{\bfr}})\to G'')$ for any index $\bfr$ for which $\overline{\phi}$ factors through $\pi^{\loc}(\fX_m^{p^{\bfr}})$~(cf.\ (\ref{eq:pi loc inf root stack})). Thus, one can take an element
\begin{equation*}
g\in\Hom(\pi^{\loc}(\fX_m^{p^{\bfr}}),\mu_p)\setminus \Ker(\Hom_k(\pi^{\loc}(\fX_m^{p^{\bfr}}),\mu_p)\to\Hom_k(K_{\bfr},\mu_p))
\end{equation*}
for some $\bfr$ so that the homomorphism $(g,\overline{\phi}):\pi^{\loc}(\fX_m^{p^{\bfr}})\lto \mu_p\times G''=G$ gives rise to a surjective lifting $\phi:\pi^{\loc}(\fX_m^{\mathbf{p}^{\infty}})\twoheadrightarrow G$ of $\overline{\phi}$. This completes the proof in the case where $G'$ is diagonalizable.

Let us assume that $G'$ is abelian and unipotent. As the Frobenius kernels $\Ker(F^{(i)}:G'\lto G'^{(i)})\subseteq  G'~(i\ge 1)$ are stable under any automorphism of $G'$, they are normal in $G$. Thus, by taking the quotients by them, we are reduced to the case where $G'$ is of height one. Similarly for the Frobenius kernels of the Cartier dual $G'^D$ of $G'$, and thus we may further assume that the Cartier dual $G'^D$ is of height one as well. Recall that such a $G'$ must be isomorphic to a direct sum of $\alpha_p$.
Namely, we are reduced to the case where $G'=\alpha_p^t$ for some integer $t>0$. Moreover, without loss of generality, we may assume that the conjugacy action $G''\lto\uAut(G')$ is irreducible, i.e.\ $G'$ does not contain a nontrivial strict subgroup scheme $1\neq H\subsetneq G'$ which is stable under the conjugacy action by $G''$. Now we use the same notation as in the proof of Lemma \ref{lem:PISAC solv}. Suppose $\overline{\phi}$ factors through $\pi^{\loc}(\fX_m^{p^{\bfr}})$ for some $\bfr$ and let $P''\lto\fX_m^{p^{\bfr}}$ be the $G''$-torsor associated with the resulting homomorphism $\overline{\psi}:\pi^{\loc}(\fX_m^{p^{\bfr}})\twoheadrightarrow G''$. Fix a section $\sigma_0:G''\lto G$ of $\pi$. Put $N=P''\wedge^{G''} G'$. Then, the liftings of the given surjective homomorphism $\overline{\psi}$ are completely parametrized by the abelian group $H^1_{\fppf}(\fX_m^{p^{\bfr}},N)$~(cf.\ Remark \ref{rem:pf PIAC solv}).  
\begin{equation*}
\begin{xy}
\xymatrix{\ar@{}[rd]|{\square}
\cB G'\ar[r]\ar[d]&\ar@{}[rd]|{\square}\cB (G''\ltimes G')\ar[d]&\cB N\ar[d]\ar[l]\\
\Spec k\ar[r]&\cB G''\ar@/^3mm/[u]^{\sigma_{0*}}&\fX_m^{p^{\bfr}}\ar[l]^{~P''}.
}
\end{xy}
\end{equation*}
Let $\Hom_{G''}(G'',G)$ be the set of sections of the surjective homomorphism $\pi:G\twoheadrightarrow G''$, i.e.\ 
\begin{equation*}
\Hom_{G''}(G'',G)\Def\{\sigma\in\Hom(G'',G)\,|\,\pi\circ\sigma={\rm id}_{G''}\}.
\end{equation*}
Moreover, set
\begin{equation*}
P''\wedge\Hom_{G''}(G'',G)~\Def~\{P''\wedge^{G''}{}^{\sigma}{G}~|~\sigma\in\Hom_{G''}(G'',G)\}\subset H^1_{\fppf}(\fX_m^{p^{\bfr}},G).
\end{equation*} 
Note that
\begin{equation*}
P''\wedge\Hom_{G''}(G'',G)~\subseteq~ H^1_{\fppf}(\fX_m^{p^{\bfr}},N)~\subseteq~ H^1_{\fppf}(\fX_m^{p^{\bfr}},G).
\end{equation*}
Since $G'$ is irreducible under the conjugacy action of $G''$, for a lifting $\psi\in H^1_{\fppf}(\fX^{p^{\bfr}}_m,N)$ of $\overline{\psi}$, if $\im(\psi)\cap G'\neq 1$, then the $G''$-orbit of $\im(\psi)\cap G'$ generates $G'$. However, as $G'$ is abelian, the $G''=\im(\overline{\psi})$-orbit coincides with the $\im(\psi)$-orbit of $\im(\psi)\cap G'$ by the conjugacy action. 
Therefore, the complement
\begin{equation*}
H^1_{\fppf}(\fX_m^{p^{\bfr}},N)\setminus (P''\wedge\Hom_{G''}(G'',G))
\end{equation*}
exactly consists of surjective liftings of $\overline{\psi}$. Therefore, we have to show that the inclusion is strict, i.e.
\begin{equation*}
P''\wedge\Hom_{G''}(G'',G)~\subsetneq~H^1_{\fppf}(\fX_m^{p^{\bfr}},N).
\end{equation*}
We denote by $Z(G'',G')$ the set of crossed homomorphisms $G''\lto G'$ with respect to the conjugacy action $G''\lto\uAut_k(G')$. 
Namely, each element $z\in Z(G'',G')$ is a morphism $z:G''\lto G'$ of $k$-schemes satisfying the condition that
\begin{equation*}
z(g_1g_2)=z(g_1)+g_1^{-1}z(g_2)g_1
\end{equation*}
for any sections $g_1,g_2\in G''$, where the multiplication of $G'$ is written additively. 
Note that $Z(G'',G')$ is a subset of the group $G'(k[G''])$ of $k[G'']$-valued points of $G'$. As $G'=\alpha_p^{\oplus t}$, the group $G'(k[G''])=\alpha_p(k[G''])^{\oplus t}$ has a natural structure of $k$-vector space and $Z(G'',G')$ becomes a $k$-subspace of $G'(k[G''])$. As both the group schemes $G'$ and $G''$ are finite, the $k$-vector space $Z(G'',G')$ is finite dimensional. Moreover, the map
\begin{equation*}
\Hom_{G''}(G'',G)\lto Z(G'',G')~;~\sigma\longmapsto \sigma\cdot \sigma_0^{-1}
\end{equation*}
is bijective with the inverse map $z\mapsto z\cdot \sigma_0$, and the composition
\begin{equation*}
Z(G'',G')\xrightarrow[\simeq]{z\mapsto z\cdot\sigma_0}P''\wedge\Hom_{G''}(G'',G)\hookrightarrow H^1_{\fppf}(\fX_m^{p^{\bfr}},N)
\end{equation*} 
is a $k$-linear map. By Lemma \ref{prop:inf alpha p}, we have
\begin{equation*}
\Dim_k Z(G'',G')<\infty=\Dim_k H^1_{\fppf}(\fX_m^{p^{\bfr}},N),
\end{equation*}
hence $P''\wedge\Hom_{G''}(G'',G)\neq H^1_{\fppf}(\fX_m^{p^{\bfr}},N)$. Therefore, $\overline{\psi}$ has a surjective lifting onto $G$. This implies the theorem is true if $G'$ is abelian.

Let us prove the theorem in the general case by induction on the order of $G'$. As $G'$ is solvable, the derived subgroup scheme $D(G')$ is a strict subgroup scheme of $G'$, i.e.\ $D(G')\subsetneq G'$. Since $D(G')$ is also normal in $G$, we get a commutative diagram of group schemes with exact rows and columns
\begin{equation*}
\begin{xy}
\xymatrix{
&1\ar[d]&1\ar[d]&&\\
&D(G')\ar@{=}[r]\ar[d]&D(G')\ar[d]&&\\
1\ar[r]&G'\ar[r]\ar[d]&G\ar[r]\ar[d]&G''\ar[r]\ar@{=}[d]&1\\
1\ar[r]&G'^{\rm ab}\ar[r]\ar[d]&G/D(G')\ar[r]\ar[d]&G''\ar[r]&1.\\
&1&1&&
}
\end{xy}
\end{equation*}
As we have already seen that the theorem is true for $G'^{\rm ab}$, we can take a surjective homomorphism $\pi^{\loc}(\fX_m^{\mathbf{p}^{\infty}})\twoheadrightarrow G/D(G')$. As the order of $D(G')$ is strictly less than the order of $G'$, by the induction hypothesis, the theorem is true for the exact sequence
\begin{equation*}
1\lto D(G')\lto G\lto G/D(G')\lto 1.
\end{equation*}
Therefore, there exist an $m$-tuple $\bfr$ and a Nori-reduced $G$-torsor over $\fX_m^{p^{\bfr}}$ which is representable by a $k$-scheme. This completes the proof of the theorem. 
\end{proof}

\begin{rem}\label{rem:pf PIAC solv}
Let
\begin{equation*}
1\lto G'\lto G\lto G''\lto 1
\end{equation*}
be an exact sequence of finite local $k$-group schemes. Let $\calX$ be a reduced algebraic stack over $k$ which admits the local fundamental group scheme $\pi^{\loc}(\calX)$. Let $\overline{\phi}:\pi^{\loc}(\calX)\lto G''$ be a homomorphism of $k$-group schemes. Let $P''$ denote a $G''$-torsor over $\calX$ which corresponds to the homomorphism $\overline{\phi}$ via the bijection in Corollary \ref{cor:loc ger neutral}. Then, the set of liftings $\phi:\pi^{\loc}(\calX)\lto G$ of $\overline{\phi}$ is naturally bijective into the set
\begin{equation*}
S(\overline{\phi})\Def \{[P]\in H^1_{\fppf}(\calX,G)\,;\,[P\wedge^G G'']=[P'']~\text{in $H^1_{\fppf}(\calX,G'')$}\}.
\end{equation*} 
Recall that for any finite local $k$-group scheme $H$, the category $\Hom_k(\calX,\cB H)$ is a setoid~(cf.\ Proposition \ref{prop:loc ger neutral}), and it is actually equivalent to the set $H^1_{\fppf}(\calX,H)$. In particular, the isomorphism class $[P'']$ in $H^1_{\fppf}(\calX,G'')$ gives rise to a unique morphism $\xi'':\calX\lto\cB G''$ up to unique isomorphism. We set
\begin{equation*}
\calG\Def \cB G\times_{\cB G'',\xi''}\calX,
\end{equation*}
which is a gerbe over $\calX$. By forgetting isomorphisms into $\xi''$ in $\Hom(\calX,\cB G'')$, we obtain a natural map $\Hom_{\calX}(\calX,\calG)_{/{\simeq}}\lto S(\overline{\phi})$. However, as $\Hom(\calX,\cB G'')$ is a setoid, this map has to be bijective~(cf.\ \cite[Lemma 04SD]{stack}). 
\end{rem}

\begin{cor}\label{cor:PIAC solv}
Suppose given an exact sequence of finite local $k$-group schemes
\begin{equation*}
1\lto G'\lto G\lto G''\lto 1.
\end{equation*}
Suppose that the following conditions are satisfied.
\begin{enumerate}
\renewcommand{\labelenumi}{(\roman{enumi})}
\item There exists an injective homomorphism $\X(G)\hookrightarrow(\Q_p/\Z_p)^{\oplus\gamma+n-1}$.
\item There exists a surjective $k$-homomorphism $\pi^{\loc}(U)\twoheadrightarrow G''$. 
\item $G'$ is solvable.
\end{enumerate}
Then there exists a Nori-reduced $G$-torsor over $U$. In particular, for any finite local solvable $k$-group scheme $G$, the group scheme $G$ appears as a quotient of $\pi^{\loc}(U)$ if and only if the character group $\X(G)$ can be embedded into the abelian group $(\Q_p/\Z_p)^{\oplus\gamma+n-1}$.
\end{cor}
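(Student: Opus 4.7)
The plan is to deduce the corollary from Theorem \ref{thm:PISAC solv} by specializing to $m=0$, so that $X_0 = X \setminus \{x_0\}$ and $U$ is an open substack of every root stack $\fX_0^{p^{\bfr}}$, and then restricting a Nori-reduced $G$-torsor on $\fX_0^{p^{\bfr}}$ along the open immersion $U \hookrightarrow \fX_0^{p^{\bfr}}$.

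For the easy (``only if'') direction of the second assertion, a surjection $\pi^{\loc}(U) \twoheadrightarrow G$ induces a surjection on maximal linearly reductive quotients. Since $k$ is algebraically closed of characteristic $p>0$, finite local linearly reductive $k$-group schemes coincide with the diagonalizable ones, so Corollary \ref{cor:max loc lr} identifies the relevant quotient of $\pi^{\loc}(U)$ with $\Diag((\Q_p/\Z_p)^{\oplus \gamma+n-1})$; applying the character functor $\X(-)$ gives $\X(G) \hookrightarrow (\Q_p/\Z_p)^{\oplus \gamma+n-1}$.

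For the substantive direction, the key subtle point is that hypothesis (ii) supplies a surjection out of $\pi^{\loc}(U)$, whereas Theorem \ref{thm:PISAC solv} demands one out of $\pi^{\loc}(\fX_0^{\mathbf{p}^{\infty}})$. I would bridge this gap by a preliminary application of the main theorem itself. Note that $G''$ is solvable (as a quotient of the solvable $G$) and $\X(G'') \hookrightarrow \X(G) \hookrightarrow (\Q_p/\Z_p)^{\oplus \gamma+n-1}$. Applying Theorem \ref{thm:PISAC solv} to the trivial sequence $1 \to G'' \to G'' \to 1 \to 1$ with the trivial surjection $\pi^{\loc}(\fX_0^{\mathbf{p}^{\infty}}) \twoheadrightarrow 1$ then produces a Nori-reduced $G''$-torsor over some $\fX_0^{p^{\bfr}}$, and hence a new surjection $\overline{\psi}: \pi^{\loc}(\fX_0^{\mathbf{p}^{\infty}}) \twoheadrightarrow G''$. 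This $\overline{\psi}$ may differ from the one induced by hypothesis (ii), but this is immaterial since the corollary only asserts the existence of some $G$-torsor on $U$.

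Next, apply Theorem \ref{thm:PISAC solv} a second time, now to the original exact sequence equipped with $\overline{\psi}$, obtaining a Nori-reduced, scheme-representable $G$-torsor $Y \to \fX_0^{p^{\bfr'}}$ for some $\bfr'$ refining $\bfr$. Restrict $Y$ along the open immersion $U \hookrightarrow \fX_0^{p^{\bfr'}}$. Proposition \ref{prop:Nori ger sm root stack} implies that the natural map $\pi^{\loc}(U) \twoheadrightarrow \pi^{\loc}(\fX_0^{p^{\bfr'}})$ is surjective, so the composition $\pi^{\loc}(U) \twoheadrightarrow \pi^{\loc}(\fX_0^{p^{\bfr'}}) \twoheadrightarrow G$ remains surjective, whence by Remark \ref{rem:loc nori red} the restricted $G$-torsor $Y|_U \to U$ is Nori-reduced. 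The concluding ``in particular'' statement is then just the special case $G'=G$, $G''=1$ of the first assertion. The main obstacle throughout is precisely the need to replace the given surjection from $\pi^{\loc}(U)$ with one originating on the root-stack tower, which the two-step application of Theorem \ref{thm:PISAC solv} handles cleanly.
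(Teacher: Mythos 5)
Your proof is correct, but you've taken a much harder route than the paper: the author's proof is a one-line specialization of Theorem~\ref{thm:PISAC solv} to $m = n-1$ rather than $m=0$. When $m=n-1$, the curve $X_{n-1}$ is $U$ itself, the divisor family $\bfD=(x_i)_{i=n}^{n-1}$ is empty, and so $\fX_{n-1}^{p^{\bfr}}=U$ for every $\bfr$; in particular $\pi^{\loc}(\fX_{n-1}^{\mathbf{p}^{\infty}})=\pi^{\loc}(U)$, the hypotheses of the theorem match those of the corollary verbatim, and the conclusion is already a Nori-reduced $G$-torsor over the scheme $U$ (representability being automatic). The ``gap'' you identify --- that hypothesis (ii) supplies a surjection out of $\pi^{\loc}(U)$ rather than out of $\pi^{\loc}(\fX_0^{\mathbf{p}^{\infty}})$ --- only arises because of your choice $m=0$, and your two-step application of Theorem~\ref{thm:PISAC solv} (first to the trivial sequence $1\to G''\to G''\to 1\to 1$ to manufacture a new surjection out of the root-stack tower, then to the original sequence) is a legitimate workaround, followed by restriction along $U\hookrightarrow\fX_0^{p^{\bfr'}}$ and Proposition~\ref{prop:Nori ger sm root stack} to preserve Nori-reducedness. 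That all works, and your argument for the easy direction of the ``in particular'' clause is also fine, but it is worth noticing that the theorem was formulated with the free parameter $m$ precisely so that the affine-curve case drops out at the extreme $m=n-1$ with no extra work.
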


\begin{proof}
This is nothing other than Theorem \ref{thm:PISAC solv} specialized in the case when $m=n-1$. 
\end{proof}

\begin{cor}\label{cor:PISAC solv}
Let $G$ be a finite local abelian $k$-group scheme. Suppose that there exists an injective homomorphism $\X(G)\hookrightarrow(\Q_p/\Z_p)^{\oplus \gamma+n-1}$. Then there exists an $(n-1)$-tuple $\bfr$ of integers $r_i\ge 0$ and a tamely ramified $G$-torsor $Y\lto X_0$ with ramification data $(\bfD,p^{\bfr})$ such that the restriction $Y\times_{X_0} U\lto U$ gives a Nori-reduced $G$-torsor.  
\end{cor}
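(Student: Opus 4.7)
The plan is to deduce Corollary \ref{cor:PISAC solv} as a direct application of Theorem \ref{thm:PISAC solv} combined with the Biswas--Borne correspondence of Theorem \ref{thm:biswas-borne}(2). First I would invoke Theorem \ref{thm:PISAC solv} in the special case $m=0$ applied to the trivial exact sequence
\begin{equation*}
1\lto G\lto G\lto 1\lto 1.
\end{equation*}
Condition (i) of Theorem \ref{thm:PISAC solv} is exactly the hypothesis on $\X(G)$; condition (ii) is vacuously satisfied since $G''=1$; and condition (iii) holds because $G$ is abelian, hence solvable. The theorem therefore provides an $(n-1)$-tuple $\bfr$ of non-negative integers together with a Nori-reduced $G$-torsor $\calY\lto\fX_0^{p^{\bfr}}=\sqrt[p^{\bfr}]{\bfD/X_0}$ which is representable by a $k$-scheme.

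Next, since $G$ is a finite local \emph{abelian} $k$-group scheme, Theorem \ref{thm:biswas-borne}(2) applies: the representable $G$-torsor $\calY\lto\sqrt[p^{\bfr}]{\bfD/X_0}$ corresponds to a tamely ramified $G$-torsor $Y\lto X_0$ with ramification data $(\bfD,p^{\bfr})$. (Here the $k$-scheme underlying $\calY$ is precisely $Y$, and the canonical factorization of $Y\lto X_0$ through the root stack is given by the correspondence.)

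It remains to check that the restriction $Y\times_{X_0}U\lto U$ is a Nori-reduced $G$-torsor. Since forming the restriction amounts to pulling back along the open immersion $U\hookrightarrow X_0$, which factors through the natural morphism $U\lto\fX_0^{p^{\bfr}}$, we obtain a commutative diagram
\begin{equation*}
\begin{xy}
\xymatrix{
Y\times_{X_0}U\ar[r]\ar[d]&\calY\ar[d]\\
U\ar[r]&\fX_0^{p^{\bfr}}.
}
\end{xy}
\end{equation*}
Thus the $G$-torsor $Y\times_{X_0}U\lto U$ corresponds to the composition $U\lto\fX_0^{p^{\bfr}}\lto\cB_kG$. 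By the universal property of the Nori fundamental gerbes, this composition factors as $U\lto\Pi^{\N}_U\lto\Pi^{\N}_{\fX_0^{p^{\bfr}}}\lto\cB_kG$. By Proposition \ref{prop:Nori ger sm root stack}, the first structural map $\Pi^{\N}_U\lto\Pi^{\N}_{\fX_0^{p^{\bfr}}}$ is a gerbe; and because $\calY\lto\fX_0^{p^{\bfr}}$ is Nori-reduced, the second arrow $\Pi^{\N}_{\fX_0^{p^{\bfr}}}\lto\cB_kG$ is also a gerbe by Remark \ref{rem:nori-reduced vs gerbe}. The composition of two gerbes is a gerbe, so again by Remark \ref{rem:nori-reduced vs gerbe} the morphism $U\lto\cB_kG$ is Nori-reduced, which is the desired conclusion.

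No serious obstacle is expected in this argument, since all three ingredients—Theorem \ref{thm:PISAC solv}, Theorem \ref{thm:biswas-borne}(2), and Proposition \ref{prop:Nori ger sm root stack}—have already been established; the only point requiring care is the preservation of Nori-reducedness under restriction, which is handled by the gerbe criterion above.
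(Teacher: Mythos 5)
Your proposal is correct and follows the same route as the paper, which simply cites Theorems \ref{thm:PISAC solv} and \ref{thm:biswas-borne}(2) without elaboration. Your application of Theorem \ref{thm:PISAC solv} with $m=0$ and the trivial exact sequence $1\to G\to G\to 1\to 1$ is exactly what is intended, and your careful verification that the restriction to $U$ stays Nori-reduced (via Proposition \ref{prop:Nori ger sm root stack} combined with Remark \ref{rem:nori-reduced vs gerbe}) correctly fills in a step the paper leaves implicit.
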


\begin{proof}
This is immediate from Theorems \ref{thm:biswas-borne}(2) and  \ref{thm:PISAC solv}.
\end{proof}

\appendix

\section{On local-to-global extensions of finite local  torsors}\label{sec:local global}

Let $k$ be an algebraically closed field of characteristic $p>0$. We denote by $t$ the coordinate of the affine line $\A_k^1$, i.e.\ $\A_k^1=\Spec k[t]$. Let $\fD\Def\Spec k[[t^{-1}]]$ be the formal disk and $\eta\Def\Spec k((t^{-1}))$ the formal punctured disk. Note that there exist natural morphisms $\eta\lto\A^1_k$ and $\eta\lto \fD$. 

\begin{definition}
Let $G$ be a finite group. An \'etale Galois $G$-cover  $f:Y\lto\G_{m,k}=\Spec k[t,t^{-1}]$ is said to be \textit{special} if the following conditions are satisfied.
\begin{enumerate}
\renewcommand{\labelenumi}{(\roman{enumi})}
\item $f$ is tamely ramified above $t=0$.
\item The monodromy group of $f$, i.e.\ the image of the homomorphism $\pi_1^{\et}(\G_m,1)\lto G$ which corresponds to $f$ (up to conjugacy) has a unique $p$-Sylow subgroup.   
\end{enumerate}
\end{definition}

Then the \textit{Katz--Gabber correspondence} for finite \'etale coverings can be stated as follows.

\begin{thm}(Katz--Gabber, cf.~\cite{ka86})\label{thm:katz-gabber}
Let $G$ be a finite group. 
The morphism $\eta\lto\G_{m,k}$ induces an equivalence of categories between the category of special $G$-covers of $\G_{m,k}$ and the category of $G$-covers of $\eta$. 
\end{thm}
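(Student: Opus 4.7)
The natural functor is pullback along the canonical map $\eta\to\G_{m,k}$: a special $G$-cover $f\colon Y\to\G_{m,k}$ is sent to $Y\times_{\G_{m,k}}\eta\to\eta$, which is an \'etale $G$-torsor since $f$ is \'etale over $\G_{m,k}$. I will show that this functor is fully faithful by rigidity, and then construct a quasi-inverse.

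\textbf{Full faithfulness.} Given two special $G$-covers $Y,Y'\to\G_{m,k}$, any $G$-equivariant isomorphism of their restrictions to $\eta$ extends uniquely to an isomorphism over the generic point $\Spec k(t)$ of $\G_{m,k}$ (the extension $k(t)\hookrightarrow k((t^{-1}))$ is faithfully flat on the torsors' function fields). Since $\G_{m,k}$ is normal and $Y,Y'$ are finite \'etale, the isomorphism then extends uniquely over $\G_{m,k}$ by Zariski--Nagata purity of the branch locus.

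\textbf{Essential surjectivity.} Suppose given a $G$-cover of $\eta$, corresponding to a surjection $\Gal(\overline K/K)\twoheadrightarrow G$ with $K=k((t^{-1}))$. Since $k$ is algebraically closed, the inertia group of the corresponding extension $L/K$ equals $G$, so the wild inertia $P\triangleleft G$ is the unique $p$-Sylow subgroup and $G/P$ is cyclic of order $n$ prime to $p$. The tame subextension $L^P/K$ is a Kummer extension, canonically identified with $k((s^{-1}))/k((t^{-1}))$ via $s^n=t$. I first globalize this tame part by the $n$-th power map $[n]\colon\G_{m,k}\to\G_{m,k}$, $s\mapsto s^n=t$, which is an \'etale cyclic $G/P$-cover of $\G_{m,k}$ tamely ramified at $0$. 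Next, base-changing $L$ along $K\hookrightarrow k((s^{-1}))$ yields a $P$-Galois extension $L/k((s^{-1}))$; the heart of the argument is to produce an \'etale $P$-cover $Z\to\G_{m,k,s}$ (in the $s$-coordinate) whose completion at $s=\infty$ is this $P$-extension. This is done by induction on $|P|$, splitting off central $\F_p$-quotients from $P$ and solving the resulting embedding problems via Artin--Schreier--Witt theory on $\G_{m,k}$; the obstructions live in $H^2_{\fppf}(\G_{m,k},\F_p)$, which vanishes for the affine curve $\G_{m,k}$, while the abundance of Artin--Schreier covers ($H^1_{\fppf}(\G_{m,k},\F_p)$ is infinite-dimensional) provides enough room to match the prescribed local extension at $\infty$. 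Taking the compositum of $Z\to\G_{m,k,s}$ with $[n]\colon\G_{m,k,s}\to\G_{m,k,t}$ and then the Galois closure over $\G_{m,k,t}$ yields an \'etale $G$-cover $Y\to\G_{m,k,t}$ which is tamely ramified at $t=0$ (the only new ramification comes from $[n]$) and whose completion at $t=\infty$ recovers $L/K$; the monodromy group is $G$ itself, whose unique $p$-Sylow is $P$, so $Y$ is special.

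\textbf{Main obstacle.} The delicate step is the wild globalization: producing a $P$-cover of $\G_{m,k}$ (not merely of its generic point) matching a \emph{prescribed} local $P$-extension at $\infty$, and not just some abstract one. Matching the higher ramification filtration — equivalently, showing that the embedding problems at each inductive stage can be solved while simultaneously prescribing the local behavior at $\infty$ — requires a careful analysis of the surjection $\Gal(\overline K/K)\to\pi_1^{\mathrm{\'et}}(\G_{m,k})^{\mathrm{tame\ at\ }0}$ and uses the fact that local Artin--Schreier--Witt classes at $\infty$ can always be realized by global Artin--Schreier--Witt classes on $\G_{m,k}$ because $k[s,s^{-1}]\to k((s^{-1}))$ is surjective modulo $\wp=F-\mathrm{id}$. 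The verification that the resulting global cover has no extra ramification outside $\{0,\infty\}$ reduces to the vanishing of wild ramification at finite closed points of $\G_{m,k}$, which follows from the construction.
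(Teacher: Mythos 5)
The paper does not prove Theorem~\ref{thm:katz-gabber}: it is cited from Katz's article \cite{ka86} and used as a black box (only Proposition~\ref{prop:katz-gabber} and Example~\ref{ex:katz-gabber} are extracted from it), so there is no proof in the paper to compare against. Judged on its own, your proposal has a genuine gap in the full-faithfulness step.

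You assert that a $G$-equivariant isomorphism $Y_\eta\cong Y'_\eta$ descends to $\Spec k(t)$ because $k(t)\hookrightarrow k((t^{-1}))$ is faithfully flat. Faithful flatness only yields descent of a morphism in the presence of a descent datum over $k((t^{-1}))\otimes_{k(t)}k((t^{-1}))$, which you do not have; so this step is unjustified. More tellingly, your fullness argument never uses that $Y,Y'$ are \emph{special}, yet fullness is false without that hypothesis. Indeed, take an \'etale $G$-cover $Y$ of $\G_{m,k}$ which is tame at $0$ and whose monodromy group $H$ does not have a unique $p$-Sylow subgroup (Raynaud--Harbater produces many). The image of $\Gal\bigl(\overline{k((t^{-1}))}/k((t^{-1}))\bigr)$ in $H$ is then a proper subgroup $D\subsetneq H$, since every local inertia group does have a unique $p$-Sylow; the canonical extension $Y'$ of $Y_\eta$ is a special cover with monodromy group $D$, so $Y_\eta\cong Y'_\eta$ while $Y\not\simeq Y'$, and your descent-plus-purity argument would apply to this pair verbatim. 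The real content of the full-faithfulness half of Katz--Gabber is group-theoretic: one proves that if the monodromy group of a cover tame at $0$ has a unique $p$-Sylow, then the inertia $I_\infty$ surjects onto it, using that the tame loops at $0$ and $\infty$ are inverse in $\pi_1(\G_{m,k})$ and that $I_\infty$ normally generates the tame-at-$0$ fundamental group (there being no nontrivial tame cover of $\mathbb{P}^1_k$ branched at a single point). That lemma is exactly what your argument is missing. Your essential-surjectivity sketch is structurally closer to Katz's construction and you correctly flag its delicate points, but the fullness step needs a different argument.
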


In particular, the theorem includes the following fact.

\begin{prop}\label{prop:katz-gabber}
The morphism $\eta\lto\A^1_k$ induces an isomorphism of pro-$p$-groups
\begin{equation*}
\Gal\bigl(\overline{k((t^{-1}))}/k(((t^{-1}))\bigl)^{(p)}\xrightarrow{~\simeq~}\pi_1^{\et}(\A^1_k,1)^{(p)}, 
\end{equation*}
where $\overline{k((t^{-1}))}$ is a separable closure of $k((t^{-1}))$ and $(-)^{(p)}$ means the maximal pro-$p$-quotient. 
\end{prop}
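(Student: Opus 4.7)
The plan is to deduce the statement directly from Theorem \ref{thm:katz-gabber} by showing that, for a finite $p$-group $G$, restriction along $\G_{m,k}\hookrightarrow\A^1_k$ identifies the category of finite \'etale Galois $G$-covers of $\A^1_k$ with the category of special $G$-covers of $\G_{m,k}$; the Katz--Gabber equivalence then translates into a bijection between $G$-covers of $\eta$ and finite \'etale Galois $G$-covers of $\A^1_k$, which upon passage to the limit over all finite $p$-group quotients yields the asserted isomorphism of pro-$p$ groups.

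First I would fix a finite $p$-group $G$. Any finite \'etale Galois $G$-cover $Y\lto\A^1_k$ restricts to a Galois $G$-cover of $\G_{m,k}$ which is \'etale, a fortiori tame, at $t=0$; since $G$ is itself its only $p$-Sylow subgroup, this restriction is special. Conversely, given a special $G$-cover $f\colon Y\lto\G_{m,k}$, the inertia groups above $t=0$ are cyclic of order prime to $p$ by tameness and embed into the $p$-group $G$, hence are trivial. Thus $f$ is \'etale above $t=0$ and extends uniquely to a finite \'etale Galois $G$-cover of $\A^1_k$. The two constructions are mutually inverse, yielding the claimed equivalence.

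Composing with Theorem \ref{thm:katz-gabber} (and using that $\eta\lto\A^1_k$ factors as $\eta\lto\G_{m,k}\hookrightarrow\A^1_k$), pullback along $\eta\lto\A^1_k$ furnishes, for every finite $p$-group $G$, an equivalence between $G$-covers of $\A^1_k$ and $G$-covers of $\eta$. After choosing compatible geometric basepoints, the standard dictionary between finite Galois covers and continuous surjections from the appropriate fundamental group then gives, for every finite $p$-group $G$, a bijection
\begin{equation*}
\Hom_{\rm cont}\bigl(\pi_1^{\et}(\A^1_k,1),G\bigr)\xrightarrow{~\simeq~}\Hom_{\rm cont}\bigl(\Gal(\overline{k((t^{-1}))}/k((t^{-1}))),G\bigr)
\end{equation*}
induced by $\eta\lto\A^1_k$. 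Since a pro-$p$ group is determined by its continuous homomorphisms into finite $p$-groups, this implies the desired isomorphism of maximal pro-$p$ quotients. The only non-formal step is the identification of special $G$-covers of $\G_{m,k}$ with \'etale $G$-covers of $\A^1_k$ for $p$-groups $G$ via the tameness-plus-$p$-group argument, for which no serious obstacle is anticipated.
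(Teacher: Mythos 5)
Your proof is correct and is the intended derivation. The paper itself offers no proof of Proposition \ref{prop:katz-gabber}, merely asserting it as a consequence of Theorem \ref{thm:katz-gabber} (and illustrating only the rank-one case $G=\F_p$ in Example \ref{ex:katz-gabber}); what you have written is precisely the bridging argument the paper leaves implicit. The key observation — that for a finite $p$-group $G$, tameness above $t=0$ forces the inertia there to embed as a prime-to-$p$ cyclic subgroup of a $p$-group, hence to be trivial, so that special $G$-covers of $\G_{m,k}$ and \'etale $G$-covers of $\A^1_k$ coincide, while condition (ii) in the definition of special is vacuous for $p$-groups — is exactly right. Composing this identification with Theorem \ref{thm:katz-gabber} and noting that $\eta\to\A^1_k$ factors through $\G_{m,k}$ gives the equivalence of categories of $G$-torsors, and since the equivalence is realized by pullback along a single morphism and holds for all finite $p$-groups $G$, the standard Yoneda-type argument for pro-$p$ groups (using that injectivity of the induced map on $\Hom$ sets corresponds to density of the image and surjectivity to triviality of the kernel, both detected on finite $p$-quotients) yields the asserted isomorphism of maximal pro-$p$ quotients.
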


\begin{ex}\label{ex:katz-gabber}
Let us consider $G=\F_p$. The natural inclusion $k[t]\hookrightarrow k((t^{-1}))$ induces an isomorphism
\begin{equation*}
H^1_{\et}(\A^1_k,\F_p)=k[t]/\mathscr{P}(k[t])\xrightarrow{~\simeq~}k((t^{-1}))/\mathscr{P}(k((t^{-1})))=H^1_{\et}(\eta,\F_p),
\end{equation*}
where $\mathscr{P}$ is the $\F_p$-linear map $f\longmapsto f-f^p$. This is valid because
\begin{equation*}
k((t^{-1}))=k[t]\oplus t^{-1}k[[t^{-1}]]~~\text{and}~~\mathscr{P}(t^{-1}k[[t^{-1}]])=t^{-1}k[[t^{-1}]], 
\end{equation*}
where the second equation can be seen as follows. For an arbitrary element $f\in t^{-1}k[[t^{-1}]]$, we have
\begin{equation*}
f=\sum_{i=0}^{\infty}\mathscr{P}(f^{p^i})=\mathscr{P}(\sum_{i=0}^{\infty}f^{p^i})
\end{equation*}
with $\sum_{i=0}^{\infty}f^{p^i}\in t^{-1}k[[t^{-1}]]$, hence $f\in\mathscr{P}(t^{-1}k[[t^{-1}]])$.  
\end{ex}

Let us consider what happens for finite local torsors. 
To ease of notation, for a reduced inflexible algebraic stack $\calX$ over a field $k$, we write $\varpi(\calX)$ for the local fundamental group scheme $\pi^{\loc}(\calX)$ of $\calX$. 
Since the natural inclusion $\eta\lto \fD$ is an open immersion and $\fD$ is normal, according to \cite[Chapter II \S2]{no82}, the natural homomorphism $\varpi(\eta)\lto\varpi(\fD)$ is surjective.

\begin{lem}\label{lem:ab varpi eta}
We have the following.
\begin{enumerate}
\renewcommand{\labelenumi}{(\arabic{enumi})}
\item For any integer $m>0$, the composition of natural maps
\begin{equation*}
H^1_{\fppf}(\G_m,\mu_{p^m})\hookrightarrow H^1_{\fppf}(\eta,\mu_{p^m})\twoheadrightarrow H^1_{\fppf}(\eta,\mu_{p^m})/H^1_{\fppf}(\fD,\mu_{p^m})
\end{equation*} 
is an isomorphism of abelian groups.
\item The composition of natural maps
\begin{equation*}
H^1_{\fppf}(\A_k^1,\alpha_{p})\hookrightarrow H^1_{\fppf}(\eta,\alpha_{p})\twoheadrightarrow H^1_{\fppf}(\eta,\alpha_{p})/H^1_{\fppf}(\fD,\alpha_{p})
\end{equation*} 
is an isomorphism of abelian groups.
\end{enumerate}
\end{lem}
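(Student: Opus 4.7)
The plan is to compute each of the six fppf $H^1$ groups explicitly via the Kummer (for (1)) and Frobenius (for (2)) short exact sequences, reducing each assertion to an elementary linear-algebra identification of quotients.

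For (1), I apply the Kummer sequence $1 \to \mu_{p^m} \to \G_m \xrightarrow{p^m} \G_m \to 1$. Each of $\G_m$, $\eta=\Spec k((t^{-1}))$, $\fD=\Spec k[[t^{-1}]]$ is the spectrum of a UFD with trivial Picard group, so the long exact sequence collapses to
\[
H^1_{\fppf}(Y,\mu_{p^m}) \simeq R^\times / (R^\times)^{p^m},\qquad R=\Gamma(Y,\scrO_Y).
\]
Since $k$ is algebraically closed, $(k^\times)^{p^m}=k^\times$, so $H^1_{\fppf}(\G_m,\mu_{p^m}) \simeq \Z/p^m\Z$ is generated by $t$. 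The splitting $k((t^{-1}))^\times = t^{\Z}\cdot k[[t^{-1}]]^\times$ (together with its $p^m$-th power analogue $(k((t^{-1}))^\times)^{p^m} = t^{p^m\Z}\cdot (k[[t^{-1}]]^\times)^{p^m}$) yields a direct-sum decomposition
\[
H^1_{\fppf}(\eta,\mu_{p^m}) \simeq \Z/p^m\Z \oplus H^1_{\fppf}(\fD,\mu_{p^m})
\]
whose $\Z/p^m\Z$-factor is generated by the class of $t$. The displayed composition then carries the generator of $H^1_{\fppf}(\G_m,\mu_{p^m})$ to the generator of $H^1_{\fppf}(\eta,\mu_{p^m})/H^1_{\fppf}(\fD,\mu_{p^m})$, hence is an isomorphism.

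For (2), I apply the Frobenius sequence $0 \to \alpha_p \to \G_a \xrightarrow{F} \G_a \to 0$. Each of $\A^1_k$, $\eta$, $\fD$ is affine with vanishing coherent $H^1$, so
\[
H^1_{\fppf}(Y,\alpha_p) \simeq R/R^p,\qquad R=\Gamma(Y,\scrO_Y).
\]
Algebraic closedness of $k$ gives $k[t]^p = k[t^p]$, $k((t^{-1}))^p = k((t^{-p}))$, $k[[t^{-1}]]^p = k[[t^{-p}]]$, so each quotient is a $k$-vector space with monomial basis $\{t^i:p\nmid i\}$ ranging over the appropriate integers. Combining the splitting $k((t^{-1})) = k[t] \oplus t^{-1}k[[t^{-1}]]$ of Example \ref{ex:katz-gabber} with the elementary identity $k[[t^{-1}]]\cap k((t^{-p})) = k[[t^{-p}]]$, one checks that the quotient $k((t^{-1}))/(k[[t^{-1}]] + k((t^{-p})))$ has $k$-basis $\{t^i : i\geq 1,\ p\nmid i\}$, which is precisely the basis of $k[t]/k[t^p] \simeq H^1_{\fppf}(\A^1_k,\alpha_p)$. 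Tracing through the restriction maps, the composition sends basis to basis.

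The only nontrivial step is the compatibility of the monomial decompositions under the three restriction maps; this reduces in both parts to the two splitting/intersection identities mentioned above, each of which is immediate from the monomial description of Laurent series in $t^{-1}$. Once these are in place, both assertions become visible isomorphisms of $k$-vector spaces (respectively, of cyclic groups).
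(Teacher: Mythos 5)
Your proof is correct and takes essentially the same route as the paper: both compute each $H^1$ via the Kummer (for $\mu_{p^m}$) and Frobenius (for $\alpha_p$) short exact sequences, using triviality of $\Pic$ and of coherent $H^1$ on these affine schemes, and then read off the conclusions from the decompositions $k((t^{-1}))^{\times}=t^{\Z}\cdot k[[t^{-1}]]^{\times}$ and $k((t^{-1}))=k[t]\oplus t^{-1}k[[t^{-1}]]$. The paper's proof is terser (it simply lists the identifications of the six cohomology groups), but the underlying computations are identical to yours.
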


\begin{proof}
(1) This follows from the descriptions of cohomology groups
\begin{equation*}
\begin{aligned}
H_{\fppf}^1(\eta,\mu_{p^m})
~&\simeq~ k((t^{-1}))^{\times}/k((t^{-1}))^{\times p^m}\\
~&\simeq~ t^{-\Z}/t^{-p^m\Z}\times k[[t^{-1}]]^{\times}/k[[t^{-1}]]^{\times p^m},\\
H_{\fppf}^1(\fD,\mu_{p^m})~&\simeq~ k[[t^{-1}]]^{\times}/k[[t^{-1}]]^{\times p^m},\\
H^1_{\fppf}(\G_m,\mu_{p^m})~&\simeq~ t^{-\Z}/t^{-p^m\Z}.
\end{aligned}
\end{equation*}
(2) This follows from the descriptions of cohomology groups
\begin{equation*}
\begin{aligned}
H_{\fppf}^1(\eta,\alpha_{p})
~&\simeq~ k((t^{-1}))/k((t^{-1}))^{p}\\
~&\simeq~ k[t]/k[t]^p \oplus k[[t^{-1}]]/k[[t^{-1}]]^{p},\\
H^1_{\fppf}(\fD,\alpha_{p})~&\simeq~ k[[t^{-1}]]/k[[t^{-1}]]^{p},\\
H^1_{\fppf}(\A^1_k,\alpha_p)~&\simeq~k[t]/k[t]^p.
\end{aligned}
\end{equation*}
\end{proof}

\begin{prop}\label{prop:ab varpi eta}
There exists a canonical isomorphism of affine $k$-group schemes
\begin{equation*}
\varpi^{\mathrm{ab}}(\eta)\xrightarrow{~\simeq~}\varpi^{\mathrm{ab}}(\fD)\times\varpi^{\mathrm{ab}}\bigl(\sqrt[p^{\infty}]{0/\A^1_k}\bigl),
\end{equation*}
where $\varpi(\sqrt[p^{\infty}]{0/\A^1_k})$ is defined to be 
\begin{equation*}
\varpi\bigl(\sqrt[p^{\infty}]{0/\A^1_k}\bigl)~\Def~\varprojlim_{r>0}\varpi\bigl(\sqrt[p^r]{0/\A^1_k}\bigl). 
\end{equation*}
\end{prop}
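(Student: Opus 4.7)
The plan is to use Cartier duality: both sides of the proposed isomorphism are pro-local abelian affine $k$-group schemes, so by the dual description of $\varpi^{\mathrm{ab}}$ via the functor $G \mapsto \Hom_k(\varpi^{\mathrm{ab}}(-),G) \simeq H^1_{\fppf}(-,G)$ (where $G$ ranges over finite local abelian $k$-group schemes), the asserted isomorphism is equivalent to showing that the natural restriction map
\[
\rho_G\colon H^1_{\fppf}(\fD,G) \oplus \varinjlim_{r} H^1_{\fppf}\bigl(\sqrt[p^r]{0/\A^1_k},G\bigr) \longrightarrow H^1_{\fppf}(\eta,G),
\]
induced by the morphisms $\eta\to\fD$ and $\eta\to\sqrt[p^r]{0/\A^1_k}$ (the latter factoring through the open immersion $\G_{m,k}\hookrightarrow\sqrt[p^r]{0/\A^1_k}$), is an isomorphism for every finite local abelian $k$-group scheme $G$.

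The base cases $G=\mu_p$ and $G=\alpha_p$ are handled separately. For $G=\mu_{p^m}$, Lemma \ref{lem:ab varpi eta}(1) already gives $H^1(\eta,\mu_{p^m}) \simeq H^1(\fD,\mu_{p^m}) \oplus H^1(\G_{m,k},\mu_{p^m})$; one then identifies $\varinjlim_r H^1(\sqrt[p^r]{0/\A^1_k},\mu_{p^m}) \xrightarrow{\sim} H^1(\G_{m,k},\mu_{p^m})$ explicitly. Both groups are cyclic of order $p^m$ (for $r\ge m$, the source is $\Pic(\sqrt[p^r]{0/\A^1_k})[p^m]$, generated by the tautological line bundle $\mathcal{L}=\scrO(D/p^r)^{p^{r-m}}$ satisfying $\mathcal{L}^{p^m}\simeq\scrO(D)$), and the natural restriction sends the tautological generator---equipped with the trivialization $\scrO(D)|_{\G_m}\simeq\scrO$ given by multiplication by $t$---to the Kummer torsor $y^{p^m}=t$, which generates $H^1(\G_{m,k},\mu_{p^m})$. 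For $G=\alpha_p$, Lemma \ref{lem:ab varpi eta}(2) suffices once one observes that $H^1(\sqrt[p^r]{0/\A^1_k},\alpha_p) \simeq H^1(\A^1_k,\alpha_p) = k[t]/k[t]^p$, which follows from the Frobenius sequence $0\to\alpha_p\to\G_a\to\G_a\to 0$ together with the fact that the pushforward to the affine coarse moduli $\A^1_k$ is $\scrO_{\A^1_k}$ with vanishing higher coherent cohomology.

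For the general case, proceed by dévissage: over the algebraically closed field $k$ of characteristic $p$, any finite local abelian $k$-group scheme admits a composition series whose successive quotients are isomorphic to $\mu_p$ or $\alpha_p$. One applies the five-lemma to the commuting diagram of long exact sequences in fppf cohomology attached to such an extension. This reduction requires the vanishing $H^2_{\fppf}(\calX,G)=0$ for $\calX\in\{\eta,\fD,\sqrt[p^r]{0/\A^1_k}\}$ and $G$ finite local abelian, which by a further dévissage itself reduces to $G=\mu_p$ and $G=\alpha_p$. For $\mu_p$, the Kummer sequence identifies $H^2(\calX,\mu_p)$ with $\Br(\calX)[p]$; vanishing holds for the root stack by Corollary \ref{cor:brauer=0} (which still applies since the stack is smooth over $k$ with affine coarse moduli), for $\fD=\Spec k[[t^{-1}]]$ because it is strictly henselian, and for $\eta=\Spec k((t^{-1}))$ because the latter is a $C_1$-field. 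For $\alpha_p$, the Frobenius sequence reduces $H^2(\calX,\alpha_p)$ to data built from $H^1(\calX,\G_a)$ and $H^2(\calX,\G_a)$, both of which vanish in all three cases (affineness, or affineness of the coarse moduli together with tameness of the root stack).

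The main obstacle will be verifying the $H^2$-vanishing on $\eta$ rigorously in the non-smooth setting of finite local group schemes---in particular, justifying that the fppf (as opposed to étale) $H^2$ of $\mu_p$ and $\alpha_p$ on the spectrum of the local field $k((t^{-1}))$ vanishes, so that the inductive dévissage produces a genuine chain of short exact sequences to which the five-lemma applies. Once those vanishings are in place, compatibility of the dévissage for $G$ on all three stacks $\eta$, $\fD$, and $\sqrt[p^r]{0/\A^1_k}$ with the natural restriction maps of $\rho_G$ is automatic, and the five-lemma yields the isomorphism for all finite local abelian $G$, hence the proposition.
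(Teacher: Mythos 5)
Your proposal is correct and follows essentially the same route as the paper: pass to $\Hom_k(\varpi^{\ab}(-),G)\simeq H^1_{\fppf}(-,G)$ for finite local abelian $G$, reduce by dévissage (using $H^2$-vanishing) to $G=\mu_p$ or $G=\alpha_p$, identify the direct limit of $H^1$ on the root stacks with $H^1(\G_m,\mu_p)$ resp.\ $H^1(\A^1_k,\alpha_p)$, and conclude via Lemma \ref{lem:ab varpi eta}. The one real difference is that you explicitly verify the $H^2_{\fppf}$-vanishing on $\eta$ and $\fD$ (Tsen--Lang/$C_1$ for $\mu_p$, the Frobenius sequence plus affineness for $\alpha_p$), whereas the paper treats this implicitly behind the citations of Lemma \ref{lem:vanish H^2} and \cite[Rem.\ 4.2.10]{gi71} — a helpful expansion but not a different method.
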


\begin{proof}
Since we have natural $k$-homomorphisms $\varpi(\eta)\lto\varpi(\fD)$ and $\varpi(\eta)\lto\varpi\bigl(\sqrt[p^{\infty}]{0/\A_k^1}\bigl)$, there exists a canonical homomorphism
\begin{equation*}
\varpi(\eta)\lto\varpi(\fD)\times\varpi\bigl(\sqrt[p^{\infty}]{0/\A_k^1}\bigl).
\end{equation*}
We shall show that this homomorphism induces the desired isomorphism. 
Since we have
\begin{equation*}
\varpi^{\ab}\bigl(\fD\bigsqcup\sqrt[p^{\infty}]{0/\A^1_k}\bigl)=\varpi^{\ab}(\fD)\times\varpi^{\ab}\bigl(\sqrt[p^{\infty}]{0/\A^1_k}\bigl),
\end{equation*}
by the universal property of the local fundamental group scheme, it suffices to show that for any abelian local finite $k$-group scheme $G$, 
the induced map
\begin{equation*}
H^1_{\fppf}\bigl(\fD\bigsqcup\sqrt[p^{\infty}]{0/\A_k^1},G\bigl)=H^1_{\fppf}(\fD,G)\oplus H^1_{\fppf}\bigl(\sqrt[p^{\infty}]{0/\A^1_k},G\bigl)\lto H^1_{\fppf}(\eta,G)
\end{equation*}
is an isomorphism. By virtue of Lemma \ref{lem:vanish H^2}, we may assume that $G$ is isomorphic to $\mu_p$ or $\alpha_p$~(cf.~\cite[p. 284, Remarque 4.2.10]{gi71}). Since there exist canonical isomorphisms
\begin{equation*}
\begin{aligned}
&H^1_{\fppf}\bigl(\sqrt[p^{\infty}]{0/\A_k^1},\mu_p\bigl)\xrightarrow{~\simeq~}H^1_{\fppf}(\G_m,\mu_p),\\
&H^1_{\fppf}(\A_k^1,\alpha_p)\xrightarrow{~\simeq~}H_{\fppf}^1\bigl(\sqrt[p^{\infty}]{0/\A_k^1},\alpha_p\bigl),
\end{aligned}
\end{equation*}
(cf.\ Lemma \ref{lem:H^1 mu_p} and Proposition \ref{prop:Nori ger root stack} respectively), 
the claim is a consequence of Lemma \ref{lem:ab varpi eta}.
\end{proof}

\begin{cor}\label{cor:ab varpi eta}
There exists a natural isomorphism
\begin{equation*}
\Ker(\varpi^{\ab}(\eta)\lto\varpi^{\ab}(\fD))~\xrightarrow{~\simeq~}~\varpi^{\ab}\bigl(\sqrt[p^{\infty}]{0/\A_k^1}\bigl).
\end{equation*} 
\end{cor}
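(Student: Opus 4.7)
The plan is to deduce this corollary directly from Proposition \ref{prop:ab varpi eta}. The isomorphism
\begin{equation*}
\Phi:\varpi^{\ab}(\eta)\xrightarrow{~\simeq~}\varpi^{\ab}(\fD)\times\varpi^{\ab}\bigl(\sqrt[p^{\infty}]{0/\A^1_k}\bigl)
\end{equation*}
is constructed from the pair of natural homomorphisms $\varpi^{\ab}(\eta)\to\varpi^{\ab}(\fD)$ and $\varpi^{\ab}(\eta)\to\varpi^{\ab}(\sqrt[p^{\infty}]{0/\A^1_k})$. By the very definition of this map, the composition ${\rm pr}_1\circ\Phi$ with the first projection equals the natural restriction homomorphism $\varpi^{\ab}(\eta)\to\varpi^{\ab}(\fD)$.

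Therefore, applying $\Ker({\rm pr}_1)=\varpi^{\ab}(\sqrt[p^{\infty}]{0/\A^1_k})$ on the target side and transporting along the isomorphism $\Phi$ yields
\begin{equation*}
\Ker\bigl(\varpi^{\ab}(\eta)\lto\varpi^{\ab}(\fD)\bigl)\xrightarrow{~\simeq~}\varpi^{\ab}\bigl(\sqrt[p^{\infty}]{0/\A^1_k}\bigl),
\end{equation*}
which is the claim. This is a purely formal consequence of the preceding proposition: for abelian group schemes $A\simeq B\times C$ compatibly with a surjection $A\twoheadrightarrow B$, the kernel of that surjection is canonically identified with $C$.

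There is no substantive obstacle here, since all the content has already been absorbed into Proposition \ref{prop:ab varpi eta}; the only thing to record is the compatibility of $\Phi$ with the first projection, which is immediate from its construction in the proof of that proposition.
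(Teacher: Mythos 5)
Your proof is correct and is precisely the formal deduction the paper intends (no separate proof is even given for the corollary, which is stated immediately after Proposition~\ref{prop:ab varpi eta}). You correctly identify the only point requiring justification, namely that the isomorphism of Proposition~\ref{prop:ab varpi eta} is compatible with the first projection because it is built from the two restriction maps.
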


\section{Frobenius divided sheaves on root stacks}\label{sec:Nori ger root stack appendix}

The aim of this appendix is to complete the proof of Proposition \ref{prop:Nori ger root stack}, namely to prove the proposition without the smoothness assumption. 
We will use the same notation as in \S\ref{subsec:Nori ger root stack}. Let $X$ be a geometrically connected and geometrically reduced scheme of finite type over the spectrum $S=\Spec k$ of a perfect field $k$ of characteristic $p>0$. Let $\bfD=(D_i)_{i\in I}$ be a finite family of reduced irreducible effective Cartier divisors on $X$ and put  $D=\cup_{i\in I}D_i\subset X$. For each $\bfr=(r_i)_{i\in I}$ with $r_i>0$, as in \S\ref{subsec:root stack}, we put
\begin{equation*}
\fX^{\bfr}=\sqrt[\bfr]{\bfD/X}.
\end{equation*} 
By definition of root stacks, the $n$th Frobenius twist commutes with the root construction, 
\begin{equation*}
\bigl(\sqrt[\bfr]{\bfD/X}\bigl)^{(n)}=\sqrt[\bfr]{\bfD^{(n)}/X^{(n)}}.
\end{equation*}

\begin{prop}\label{prop:Fdiv root stack}
Let $\bfr$ and $\bfr'$ be two indices with $\bfr\mid\bfr'$. With the same notation as in \S\ref{sec:Fdiv}, we have the following. 
\begin{enumerate}
\renewcommand{\labelenumi}{(\arabic{enumi})}
\item The functor $\pi^*:\Fdiv(\fX^{\bfr})\longrightarrow\Fdiv(\fX^{\bfr'})$ is fully faithful and the essential image consists of all the Frobenius divided sheaves $\calE=(\calE_i,\sigma_i)_{i=0}^{\infty}$ with $\pi^*\pi_*(\calE_i|_{\calG'^{(i)}_{\xi}})\xrightarrow{~\simeq~}\calE_i|_{\calG'^{(i)}_{\xi}}$ for any $i$ and for any closed point $\xi$ of $\fX^{\bfr'}$, where $\calG'_{\xi}$ denotes the residual gerbe of $\fX^{\bfr'}$ at $\xi$~(cf.~Proposition \ref{prop:root stack D}(2)).
\item The functor $\pi^*:\Fdiv_{\infty}(\fX^{\bfr})\longrightarrow\Fdiv_{\infty}(\fX^{\bfr'})$ is fully faithful and the essential image consists of all the objects $(\calF,\calE,\lambda)$ of $\Fdiv_{\infty}(\fX^{\bfr'})$ with $\pi^*\pi_*(\calF|_{\calG'_{\xi}})\xrightarrow{~\simeq~}\calF|_{\calG'_{\xi}}$ and $\pi^*\pi_*(\calE_i|_{\calG'^{(i)}_{\xi}})\xrightarrow{~\simeq~}\calE_i|_{\calG'^{(i)}_{\xi}}$ for any $i$ and for any closed point $\xi$ of $\fX^{\bfr'}$.
\item The induced morphism of tannakian gerbes $\Pi_{\Fdiv_{\infty}(\fX^{\bfr'})}\lto\Pi_{\Fdiv_{\infty}(\fX^{\bfr})}$ is a gerbe. 
\end{enumerate}
\end{prop}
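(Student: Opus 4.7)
The plan is to reduce all three parts to Proposition \ref{prop:rel alper}(3) applied levelwise, combined with the commutativity of the absolute Frobenius with arbitrary base-preserving functors. The key observation is that since the $i$-th Frobenius twist commutes with the root stack construction, each morphism $\pi^{(i)}: (\fX^{\bfr'})^{(i)} \to (\fX^{\bfr})^{(i)}$ is itself a root stack morphism of the shape to which Proposition \ref{prop:rel alper} applies; in particular the residual gerbes $\calG'^{(i)}_{\xi}$ of $(\fX^{\bfr'})^{(i)}$ sit over the residual gerbes $\calG^{(i)}_{\xi}$ of $(\fX^{\bfr})^{(i)}$ as gerbes, and the pointwise descent condition in the statement is exactly the one controlling the essential image in Proposition \ref{prop:rel alper}(3), applied at each Frobenius twist.

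For (1), I view a Frobenius divided sheaf on $\fX^{\bfr'}$ as a compatible system $(\calE_i, \sigma_i)_{i \geq 0}$ with $\calE_i \in \Vect((\fX^{\bfr'})^{(i)})$ and $\sigma_i$ the transition isomorphisms coming from the relative Frobenius. Full faithfulness of $\pi^*: \Fdiv(\fX^{\bfr}) \to \Fdiv(\fX^{\bfr'})$ is then immediate from level-wise full faithfulness. For the essential image, if each $\calE_i$ satisfies the pointwise descent condition, Proposition \ref{prop:rel alper}(3) produces a unique vector bundle $\calF_i$ on $(\fX^{\bfr})^{(i)}$ with $\pi^{(i)*}\calF_i \simeq \calE_i$, and because the absolute Frobenius commutes with $\pi$, the isomorphisms $\sigma_i$ descend uniquely to a Frobenius divided structure on $(\calF_i)$, producing the desired $\calF \in \Fdiv(\fX^{\bfr})$ with $\pi^*\calF \simeq \calE$. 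Part (2) then follows in the same spirit, combining (1) applied to the $\Fdiv$-component $\calE$ of a triple $(\calF, \calE, \lambda) \in \Fdiv_\infty(\fX^{\bfr'})$ with Proposition \ref{prop:rel alper}(3) applied to the vector bundle $\calF$; the compatibility isomorphism $\lambda$ then descends uniquely from the full faithfulness of $\Vect(\fX^{\bfr}) \to \Vect(\fX^{\bfr'})$. For (3), I will appeal to the standard criterion (cf. \cite[Remark B.7]{tz17}) that a morphism of tannakian gerbes is a gerbe precisely when the corresponding tensor pullback functor on vector bundles is fully faithful; combined with the full faithfulness established in (2), this yields the claim.

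The main anticipated obstacle is bookkeeping rather than a deep technical issue: one must verify that the levelwise descended bundles $\calF_i$ together with the descended transition data assemble into a genuine Frobenius divided sheaf (and similarly for $\Fdiv_\infty$), and that this assembly is functorial. The uniqueness clause in Proposition \ref{prop:rel alper}(3) makes the construction canonical, but careful tracking of how the relative Frobenius interacts with $\pi^{(i)}$ at each step is needed to ensure the pieces fit together coherently; in particular, one must use that $F^*$ sends the essential image of $\pi^{(i)*}$ into that of $\pi^{(i+1)*}$, which is precisely the content of the compatibility of $F$ with the morphism $\pi$ of root stacks.
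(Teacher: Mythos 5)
Your arguments for (1) and (2) match the paper's approach closely: levelwise application of Proposition \ref{prop:rel alper}, the commutation of the absolute Frobenius with $\pi$, and full faithfulness deduced from the pushforward/pullback adjunction (Proposition \ref{prop:root stack qcoh}(3)) combined with the essential-image description of Proposition \ref{prop:rel alper}(3). These two parts are fine.

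For (3), however, there is a genuine gap. You cite the criterion from \cite[Remark B.7]{tz17} --- that a morphism of tannakian gerbes is a gerbe precisely when the pullback on vector bundles is fully faithful --- and conclude from full faithfulness alone. But that criterion, as invoked elsewhere in the paper (see the proof of Proposition \ref{prop:Nori ger root stack}), applies when the \emph{target} gerbe is \emph{finite}. The target here is $\Pi_{\Fdiv_{\infty}(\fX^{\bfr})}$, which is a tannakian gerbe associated with $\Fdiv_{\infty}(\fX^{\bfr})$ and is in general not finite (nor profinite), so the finite-gerbe criterion does not apply. For a morphism into a general tannakian gerbe, being a gerbe requires, in addition to full faithfulness of the corresponding tensor functor, that its essential image be closed under taking subquotients. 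The paper supplies exactly this missing ingredient via the \emph{last} assertion of Proposition \ref{prop:rel alper}(3), which states that the essential image of $\pi^*:\Vect(\fX^{\bfr})\to\Vect(\fX^{\bfr'})$ is closed under subquotients; combined with the pointwise essential-image description in (2), this yields closedness under subquotients in $\Fdiv_{\infty}$ as well, whence the gerbe property. Your proposal omits this step entirely, so (3) as written is not justified.
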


\begin{proof}
(1) For the full faithfulness, let us begin with an observation. Let $\calE_i~(i=1,2)$ be two vector bundles on $\fX^{\bfr}$ which admit Frobenius descents, i.e.\ there exist vector bundles $\calE^{(1)}_i$ on $\fX^{\bfr(1)}$ together with isomorphisms $\sigma_i:F^{(1)*}\calE_i^{(1)}\simeq\calE_i$. Suppose given an $\scrO_{\fX^{\bfr}}$-linear map $\alpha:\calE_1\lto\calE_2$ and an $\scrO_{\fX^{\bfr(1)}}$-linear map $\beta:\calE_1^{(1)}\lto\calE_2^{(1)}$. Then, by Proposition \ref{prop:root stack qcoh}(3), the diagram
\begin{equation*}
\begin{xy}
\xymatrix{
F^{(1)*}\calE_1^{(1)}\ar[r]^{~~~\sigma_1}\ar[d]_{F^{(1)*}\beta}&\calE_1\ar[d]^{\alpha}\\
F^{(1)*}\calE_2^{(1)}\ar[r]_{~~~\sigma_2}&\calE_2
}
\end{xy}
\end{equation*}
is commutative if and only if it is commutative after applying the pullback functor $\pi^*$. Therefore, the full faithfulness of the functor $\pi^*:\Fdiv(\fX^{\bfr})\lto\Fdiv(\fX^{\bfr'})$ follows from this observation together with Proposition \ref{prop:root stack qcoh}(3).

For the description of the essential image, by virtue of  Proposition \ref{prop:rel alper}, it suffices to show that for any vector bundle $\calE$ on $\fX^{\bfr}$, if $\pi^*\calE$ admits a Frobenius descent which is of the form $\pi^{(1)*}\calE^{(1)}$ for some vector bundle $\calE^{(1)}$ on $\fX^{\bfr(1)}$, the isomorphism $\sigma:F^{(1)*}\pi^{(1)*}\calE^{(1)}\simeq\pi^{*}\calE$ defines a canonical isomorphism $F^{(1)*}\calE^{(1)}\simeq\calE$. However, since $F^{(1)*}\pi^{(1)*}\calE^{(1)}=\pi^*F^{(1)*}\calE^{(1)}$, the pushforward $\pi_*\sigma$ defines a desired isomorphism $F^{(1)*}\calE^{(1)}\simeq\calE$.

(2) Similarly, the assertion is a consequence of Propositions \ref{prop:root stack qcoh}(3) and \ref{prop:rel alper}. Let us begin with showing the full faithfulness. Fix an arbitrary integer $j>0$. 
Let $\calF_i~(i=1,2)$ be two vector bundles on $\fX^{\bfr}$ such that there exist $F$-divided sheaves $\calG_{i}=\{\calE^{(n)}_i,\sigma_{i}^{(n)}\}_{n=0}^{\infty}$ on $\fX^{\bfr}$ together with isomorphisms $\lambda_i:F^{j*}\calF_i\xrightarrow{~\simeq~}\calG_i|_{\fX^{\bfr}}=\calE_i^{(0)}$. Suppose given an $\scrO_{\fX^{\bfr}}$-linear map $\alpha:\calF_1\lto\calF_2$ and a morphism $\beta=\{\beta^{(n)}\}_{n=0}^{\infty}:\calG_1\lto\calG_2$ of $F$-divided sheaves. Then, by Proposition \ref{prop:root stack qcoh}(3), the diagram
\begin{equation*}
\begin{xy}
\xymatrix{
F^{j*}\calF_1\ar[r]^{~~\lambda_1}\ar[d]_{F^{j*}\alpha}&\calE_1^{(0)}\ar[d]^{\beta^{(0)}}\\
F^{j*}\calF_2\ar[r]_{~~\lambda_2}&\calE_2^{(0)}
}
\end{xy}
\end{equation*}
is commutative if and only if it is commutative after applying the pullback functor $\pi^*$.
From the definition of $\Fdiv_{\infty}(-)$ together with the full faithfulness of the functor $\pi^*:\Fdiv(\fX^{\bfr})\lto\Fdiv(\fX^{\bfr'})$~(cf.~(1)), the observation implies that the functor $\Fdiv_{\infty}(\fX^{\bfr})\lto\Fdiv_{\infty}(\fX^{\bfr'})$ is fully faithful.   

Let us discuss on the description of the essential image. Fix an arbitrary integer $j>0$. Let  $(\calF,\calG,\lambda)$ be an object of $\Fdiv_{j}(\fX^{\bfr'})$ satisfying $\pi^*\pi_*(\calF|_{\calG'_{\xi}})\xrightarrow{~\simeq~}\calF|_{\calG'_{\xi}}$ and $\pi^*\pi_*(\calE_i|_{\calG'^{(i)}_{\xi}})\xrightarrow{~\simeq~}\calE_i|_{\calG'^{(i)}_{\xi}}$ for any $i$ and for any closed point $\xi$ of $\fX^{\bfr'}$. From Proposition \ref{prop:rel alper}, we have $\pi^*\pi_*\calF\xrightarrow{~\simeq~}\calF$ in $\Vect(\fX^{\bfr'})$. On the other hand, by (1), we also have $\pi^*\pi_*\calG\xrightarrow{~\simeq~}\calG$ in $\Fdiv(\fX^{\bfr'})$. The isomorphism $\lambda:F^{j*}\calF\xrightarrow{~\simeq~}\calG|_{\fX^{\bfr'}}$ then  descends to a one $\pi_*\lambda:F^{j*}\pi_*\calF\xrightarrow{~\simeq~}(\pi_*\calG)|_{\fX^{\bfr}}$, which implies that $(\calF,\calG,\lambda)$ descends to an object of $\Fdiv_j(\fX^{\bfr})$. This completes the proof.  

(3) This follows from (2) together with the last assertion in Proposition \ref{prop:rel alper}(3).  
\end{proof}

\begin{proof}[Proof of Proposition \ref{prop:Nori ger root stack}]
We have already seen the second assertion~(cf.~Proof of Proposition \ref{prop:Nori ger root stack}). For the first assertion, as we saw in the proof of Proposition \ref{prop:Nori ger root stack}, we have only to prove that the restriction functor
\begin{equation*}
\Vect(\Pi^{\N}_{\fX^{\bfr}/k})\lto\Vect(\Pi^{\N}_{\fX^{\bfr'}/k})
\end{equation*}
is fully faithful. However, by virtue of Corollary \ref{cor:Fdiv}(2), the full faithfulness is an immediate consequence of Proposition \ref{prop:Fdiv root stack}.
\end{proof}

\section{Infiniteness of the first fppf cohomology group with coefficient $\alpha_p$}\label{sec:inf alpha_p}

\begin{lem}\label{lem:inf alpha p}
Let $U$ be an affine smooth geometrically connected curve over a perfect field $k$ of characteristic $p>0$, $\calX$ an algebraic stack over $k$ and $f:\calX\lto U$ a proper quasi-finite flat generically \'etale morphism. Let $E$ be a locally free $\scrO_{\calX}$-module of finite rank. Let
\begin{equation*}
\phi:E\lto E^{(1)}
\end{equation*}
be the natural $p$th power map, where $E^{(1)}$ is the pullback of $E$ along the absolute Frobenius morphism $F:\calX\lto\calX$. Then the $k$-vector space $\Gamma(\calX,E^{(1)})/\Gamma(\calX,E)^p$ is of infinite dimension.  
\end{lem}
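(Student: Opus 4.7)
The strategy is to push the problem down to $U$ via $f_*$. Since $f$ is proper and quasi-finite, it is finite; combined with flatness, this makes $M\Def f_*E$ a locally free $\scrO_U$-module of positive rank (positivity coming from the generic étaleness, which ensures $\calX_\eta\to\eta$ is a non-empty finite étale cover). The commutative square $F_U\circ f=f\circ F_{\calX}$ of absolute Frobenii, together with flat base change along $F_U$ (which is flat by Kunz's theorem, since $U$ is smooth over the perfect field $k$), yields a canonical isomorphism $(f_*E)^{(1)}\simeq f_*(E^{(1)})$ intertwining $\phi_E$ with the analogous semi-linear map $\phi_M\colon M\to M^{(1)}$. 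As $U$ is affine, this identifies $\Gamma(\calX,E^{(1)})/\phi(\Gamma(\calX,E))$ with $M^{(1)}/\phi(M)$, and the lemma reduces to proving that $M^{(1)}/\phi(M)$ is infinite-dimensional over $k$ for any vector bundle $M$ of positive rank $n$ on $U$.

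Write $A=\Gamma(U,\scrO_U)$, a Dedekind domain, and let $A^{[p]}\Def\{a^p\mid a\in A\}$, its subring of $p$-th powers. A local trivialization $M\simeq\scrO_U^n$ identifies $\phi$ with the coordinate-wise $p$-th power map and hence $M^{(1)}/\phi(M)$ with $(A/A^{[p]})^n$ locally. Globally, $M^{(1)}/\phi(M)$ is then a locally free sheaf of rank $n(p-1)\geq 1$ on the affine scheme $U^{(1)}\Def\Spec A^{[p]}$ (sharing the underlying topological space with $U$), where I am using that, since $U$ is smooth of dimension one over the perfect field $k$, the relative Frobenius $U\to U^{(1)}$ is finite flat of degree $p$ and so $A$ is locally free of rank $p$ over $A^{[p]}$.

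From the short exact sequence of abelian sheaves on $U$,
\[
0\to\phi(M)\to M^{(1)}\to M^{(1)}/\phi(M)\to 0,
\]
the injectivity of $\phi$ on stalks (since $U$ is reduced) yields $\Gamma(U,\phi(M))=\phi(\Gamma(U,M))$, while $H^1(U,\phi(M))=0$ because $\phi(M)$ is itself a locally free rank-$n$ sheaf of $\scrO_{U^{(1)}}$-modules on the affine scheme $U^{(1)}$. The associated long exact sequence then provides an isomorphism
\[
\Gamma(U,M^{(1)})/\phi(\Gamma(U,M))~\xrightarrow{\sim}~\Gamma(U^{(1)},M^{(1)}/\phi(M)),
\]
whose right-hand side is a projective $A^{[p]}$-module of rank $n(p-1)\geq 1$, hence infinite-dimensional over $k$ (since $A^{[p]}\simeq A$ as a ring, and $A$ is an infinite-dimensional $k$-algebra).

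The main technical obstacle lies in the reduction step: verifying in the algebraic-stack setting that $f_*E$ is locally free of positive rank and that the flat base-change isomorphism $(f_*E)^{(1)}\simeq f_*(E^{(1)})$ really does intertwine $\phi_E$ with $\phi_M$. Both should follow from the finite-flat pushforward theory for algebraic stacks combined with Kunz's theorem applied to the smooth $k$-scheme $U$, but warrant careful justification when $\calX$ is genuinely stacky; once the reduction is in hand, the computation on $U$ itself is essentially a direct Frobenius descent argument.
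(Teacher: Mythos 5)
Your reduction step contains a genuine gap. You invoke ``flat base change along $F_U$'' applied to the commutative square of absolute Frobenii
\begin{equation*}
\begin{xy}
\xymatrix{
\calX\ar[r]^{F_{\calX}}\ar[d]_{f}&\calX\ar[d]^{f}\\
U\ar[r]_{F_U}&U
}
\end{xy}
\end{equation*}
to produce an isomorphism $(f_*E)^{(1)}\simeq f_*(E^{(1)})$. But flat base change requires the square to be \emph{Cartesian}, and this one is not: the induced morphism $\calX\lto\calX\times_{U,F_U}U$ is the relative Frobenius of $\calX$ over $U$, and it is an isomorphism precisely when $f$ is \'etale, whereas the hypothesis gives only that $f$ is \emph{generically} \'etale. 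What one does have, unconditionally, is the canonical base change morphism $(f_*E)^{(1)}\lto f_*(E^{(1)})$, which is a generic isomorphism because the square is generically Cartesian. That is exactly the point where the paper's proof works harder: since both $(f_*E)^{(1)}$ and $f_*(E^{(1)})$ are torsion-free coherent on the smooth affine curve $U$, the base change map is injective with torsion (hence finite-dimensional) cokernel, so the quotient one cares about is infinite-dimensional if and only if the corresponding quotient computed with $(f_*E)^{(1)}=M^{(1)}$ in place of $f_*(E^{(1)})$ is. Note that this failure has nothing to do with $\calX$ being stacky, contrary to the concern you flag at the end --- it already fails when $\calX$ is a ramified double cover of $U$, so the ``careful justification'' needed is not about pushforward for stacks but about Cartesianity.

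Once that patch is inserted, your cohomological computation on $U$ --- identifying $\Gamma(U,M^{(1)})/\phi(\Gamma(U,M))$ with the global sections of a locally free $\scrO_{U^{(1)}}$-module of rank $n(p-1)\ge 1$ on the affine scheme $U^{(1)}$ --- is correct and is in fact slightly cleaner than the paper's endgame: you avoid the auxiliary reduction to a dense affine open of $\PL^1_k$, which the paper introduces only so that torsion-free coherent sheaves become free and the computation can be done coordinatewise. So the gap is in the justification of the key isomorphism, not in the subsequent Frobenius-descent computation.
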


\begin{proof}
By taking a finite separable surjective $k$-morphism $g:U\lto U_0$ between smooth affine curves with $U_0$ an open subscheme of the projective line $\mathbb{P}^1_k$. Since $g$ is generically \'etale, by replacing $f$ with $g\circ f$, we are reduced to the case when $U$ is an open dense subscheme of the projective line $\mathbb{P}^1_k$. 
We are reduced to showing that the $k$-vector space $\Gamma(U,f_*(E^{(1)}))/\Gamma(U,f_*E)^p$ is of infinite dimension. Since $f$ is generically \'etale, the commutative diagram
\begin{equation*}
\begin{xy}
\xymatrix{
\calX\ar[r]^F\ar[d]_{f}&\calX\ar[d]^{f}\\
U\ar[r]^{F}&U
}
\end{xy}
\end{equation*}
is generically Cartesian. Since $F$ is a flat morphism on the smooth curve $U$, it follows that the canonical base change map
\begin{equation*}
(f_*E)^{(1)}\lto f_*(E^{(1)})
\end{equation*}
is a generically isomorphism of torsion-free coherent sheaves, hence its kernel is trivial and its cokernel is a torsion coherent sheaf over $U$. Therefore, $\Gamma(U,f_*(E^{(1)}))/\Gamma(U,f_*E)^p$ is of infinite dimension if and only if $\Gamma(U,(f_*E)^{(1)})/\Gamma(U,f_*E)^p$ is of infinite dimension. However, as $U$ is an affine dense open subscheme of $\mathbb{P}^1_k$, any torsion-free coherent sheaf is a free $\scrO_U$-module, and $\Gamma(U,\scrO_U)/\Gamma(U,\scrO_U)^p$ is of infinite dimension. Therefore, $\Gamma(U,(f_*E)^{(1)})/\Gamma(U,f_*E)^p$ is of infinite dimension. This completes the proof. 
\end{proof}

As an application, one can see the following result.

\begin{prop}\label{prop:inf alpha p}
Let $U$ be an affine smooth curve over a perfect field $k$ of characteristic $p>0$ with function field $K$. Let $\bfD=(x_i)_{i=1}^m$ be a family of distinct closed points of $U$. Let $\bfr=(r_i)_{i=1}^m\in\prod_{i=1}^m\Z_{\ge 1}$ be a family of integers, we denote by $\fX=\sqrt[\bfr]{\bfD/U}$ the associated root stack. 
Let $N$ be a finite flat abelian $\fX$-group scheme. If both $N$ and its Cartier dual $N^D$ are of height one, then the first cohomology group $H^1_{\fppf}(\fX,N)$ is infinite dimensional over $k$. Furthermore, we have $H^q_{\fppf}(\fX,N)=0$ for $q>1$.  
\end{prop}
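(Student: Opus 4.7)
The plan is to fit $N$ into a short exact sequence of commutative group schemes on $\fX$,
\[
0 \lto N \lto W(E) \xrightarrow{~\phi~} W(E^{(1)}) \lto 0,
\]
where $E = \Lie(N) = \omega_N^{\vee}$ is the Lie algebra sheaf of $N$ (a locally free $\scrO_{\fX}$-module of finite rank), $W(E)$ is the vector group scheme defined in Lemma \ref{lem:coho root stack}(1), and $\phi$ is the relative Frobenius of $W(E)/\fX$. Once this sequence is in hand, taking the long exact sequence in fppf cohomology and invoking Lemma \ref{lem:coho root stack}(1) to get $H^q_{\fppf}(\fX, W(E)) = H^q_{\fppf}(\fX, W(E^{(1)})) = 0$ for all $q > 0$ collapses it to
\[
0 \lto H^0_{\fppf}(\fX, N) \lto \Gamma(\fX, E) \xrightarrow{~\phi_*~} \Gamma(\fX, E^{(1)}) \lto H^1_{\fppf}(\fX, N) \lto 0
\]
together with $H^q_{\fppf}(\fX, N) = 0$ for $q > 1$. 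This gives the vanishing assertion at once, and identifies $H^1_{\fppf}(\fX, N)$ with $\Gamma(\fX, E^{(1)}) / \Gamma(\fX, E)^p$, the cokernel of the natural $p$-th power map considered in Lemma \ref{lem:inf alpha p}.

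The infinite-dimensionality will then be a direct application of Lemma \ref{lem:inf alpha p} to the coarse moduli projection $\pi : \fX \lto U$ and the locally free sheaf $E$ on $\fX$. The map $\pi$ is proper and quasi-finite flat by Proposition \ref{prop:root stack}, and it is generically \'etale because it restricts to an isomorphism over the open subscheme $U \setminus \bfD$. Hence the lemma yields that $\Gamma(\fX, E^{(1)}) / \Gamma(\fX, E)^p$ is infinite-dimensional over $k$, and so therefore is $H^1_{\fppf}(\fX, N)$.

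The main obstacle is the construction of the short exact sequence. The hypothesis that both $N$ and $N^D$ have height one amounts to the simultaneous vanishing of the relative Frobenius $F_N : N \lto N^{(1)}$ and, through Cartier duality, of the Verschiebung $V_N : N^{(1)} \lto N$. By Cartier's structure theorem for such infinitesimal commutative group schemes, this forces the Hopf-algebra presentation
\[
\scrO_N ~\simeq~ \mathrm{Sym}(\omega_N) / (e^{\otimes p} : e \in \omega_N),
\]
which displays $N$ as the closed subgroup scheme of the vector group $W(\omega_N^{\vee}) = \mathbf{Spec}_{\fX}(\mathrm{Sym}(\omega_N))$ cut out by the ideal generated by all $p$-th powers of elements of $\omega_N$. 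This embedding identifies $N$ with the kernel of the relative Frobenius on $W(\omega_N^{\vee})$, producing the desired exact sequence, after which the remainder of the argument is a formal cohomological computation.
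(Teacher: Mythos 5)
Your proof is correct and follows essentially the same strategy as the paper: reduce to a short exact sequence $0 \to N \to V \xrightarrow{F^{(1)}} V^{(1)} \to 0$ with $V$ a vector group on $\fX$, use the vanishing of higher cohomology of vector groups (Lemma~\ref{lem:coho root stack}(1)) to identify $H^1_{\fppf}(\fX,N)$ with a cokernel of the $p$-th power map on global sections, and then apply Lemma~\ref{lem:inf alpha p} to the coarse moduli map $\pi:\fX\to U$. The one place you diverge is the source of the short exact sequence: you derive it by invoking Cartier's classification of commutative group schemes killed by both $F$ and $V$ (so $N$ is the Frobenius kernel of $W(\Lie(N))$, with $\scrO_N\simeq\mathrm{Sym}(\omega_N)/(\omega_N^{\otimes p})$), whereas the paper simply cites Milne's \emph{Arithmetic Duality Theorems}, Chapter~III, Theorem~5.1, phrased in terms of $V(\omega_{N^D})$; these produce the same vector group up to the canonical identifications, so the difference is cosmetic. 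The only thing you gloss over slightly is that Cartier's structure theorem must be applied over the base $\fX$ (not just a perfect field), i.e.\ one needs the closed immersion $N\hookrightarrow W(\Lie(N))$ to be canonical and compatible with the group structure fppf-locally — this is exactly what the Milne reference packages — but that is a matter of citation, not a gap in the reasoning.
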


\begin{proof}
As $N^D$ is of height one, there exists a locally free $\calO_{\fX}$-module $\omega_{N^D}$ of finite rank together with the exact sequence of fppf abelian sheaves
\begin{equation*}
0\lto N\lto V(\omega_{N^D})\xrightarrow{~\phi~} V(\omega_{N^D})^{(1)}\lto 0
\end{equation*} 
(cf.~\cite[Chapter III, Theorem 5.1]{mil ADT}), where $V(\omega_{N^D})$ is the vector group over $\fX$ associated with $\omega_{N^D}$, which is defined to be
\begin{equation*}
V(\omega_{N^D})(Y)\Def\Hom_{\scrO_{Y}}(\omega_{N^D}\otimes\scrO_{Y},\scrO_Y).
\end{equation*}
for any morphism $Y\lto\fX$ from a scheme $Y$. Moreover, as $N$ is of height one, the homomorphism $\phi:V(\omega_{N^D})\lto V(\omega_{N^D})^{(1)}$ is the $p$th power Frobenius map $\phi=F^{(1)}$ of the vector group $V(\omega_{N^D})$~(cf.~\cite[Chapter III, \S5]{mil ADT}).  
As $V(\omega_{N^D})=W(\omega_{N^D}^{\vee})$, by the same argument as in the proof of Lemma \ref{lem:coho root stack}(1),  we have $H^q_{\fppf}(\fX,V(\omega_{N^D}))=0$ 
for $q>0$ and the same is true for $V(\omega_{N^D})^{(1)}=V(\omega_{N^D}^{(1)})$. Therefore, by considering the long exact sequence associated with the above short exact sequence, we have
\begin{equation*}
H^1_{\fppf}(\fX,N)\simeq\Gamma(\fX,\omega_{N^D}^{\vee (1)})/\Gamma(\fX,\omega_{N^D}^{\vee})^p,
\end{equation*}
which is an infinite dimensional $k$-vector space by Lemma \ref{lem:inf alpha p}, and $H^q_{\fppf}(\fX,N)=0$ for $q>1$. This completes the proof. 
 \end{proof}

\begin{rem}
In the proof of \cite[Proposition 3.4]{ot17}, the infiniteness of the cohomology group $H^1_{\fppf}(U,\alpha_p)$ for an affine smooth geometrically connected curve over $k$ is mentioned, but is wrongly explained. As in Lemma \ref{lem:inf alpha p}, one should have taken a dominant    morphism $U\lto\A^1_k$ which is generically \'etale. 
\end{rem}


{\small

}

\vspace{3mm}

{\small
\begin{flushleft}
Kavli Institute for the Physics and Mathematics of the Universe (WPI), The University of Tokyo\\ 
5-1-5 Kashiwanoha, Kashiwa, Chiba, 277-8583, Japan\\
\textit{E-mail adress}: \texttt{shusuke.otabe@ipmu.jp}
\end{flushleft}
}

\end{document}